\numberwithin{equation}{section}
\numberwithin{figure}{section}
\newcommand{\wt}{\widetilde}
\newcommand{\wh}{\widehat}
\newcommand{\beqa}{\begin{eqnarray}}
\newcommand{\eeqa}{\end{eqnarray}}
\newcommand{\e}{\varepsilon}
\newcommand{\pt}{\partial}
\newcommand{\rd}{{\rm d}}
\newcommand{\bR}{{\mathbb R}}
\newcommand{\tr}{\mbox{Tr\,}}
\newcommand{\bx}{{\bf{x}}}
\newcommand{\by}{{\bf{y}}}
\newcommand{\bv}{{\bf{v}}}
\newcommand{\bw}{{\bf{w}}}
\newcommand{\bz}{{\bf {z}}}
\newcommand{\bX}{{\bf{X}}}
\newcommand{\bY}{{\bf{Y}}}
\newcommand{\fq}{{\frak q}}
\newcommand{\bla}{\mbox{\boldmath $\lambda$}}
\newcommand{\al}{\alpha}
\newcommand{\be}{\begin{equation}}
\newcommand{\ee}{\end{equation}}
\newcommand{\la}{\lambda}
\newcommand{\om}{{\omega}}
\newcommand{\cU}{{\mathscr U}}
\newcommand{\cB}{{\mathscr B}}
\newcommand{\cL}{{\mathscr L}}
\newcommand{\cG}{{\mathcal G}}
\newcommand{\cK}{{\mathcal K}}
\newcommand{\cT}{{\mathcal T}}
\newcommand{\cM}{{\mathcal M}}
\newcommand{\cH}{{\mathcal H}}
\newcommand{\re}{{\mathrm{Re} \, }}
\newcommand{\im}{{\mathrm{ Im }\, }}
\newcommand{\E}{{\mathbb E }}
\newcommand{\R}{{\mathbb R }}
\newcommand{\N}{{\mathbb N}}
\newcommand{\g}{\gamma}
\newcommand{\Z}{{\mathbb Z}}
\renewcommand{\P}{{\mathbb P}}
\newcommand{\C}{{\mathbb C}}
\newcommand{\euler}[1]{\mathrm{e}^{#1}}
\newcommand{\ii}{\mathrm{i}}
\newcommand{\dd}{\mathrm{d}}
\newcommand{\Hi}[1]{\left(\mathrm{T}{#1}\right)}
\newcommand{\supp}{\mathrm{supp}\,}
\newcommand{\ie}{\emph{i.e., }}
\newcommand{\eg}{\emph{e.g., }}
\newcommand{\cf}{\emph{c.f., }}
\newcommand{\Gauss}{\textsc{G}}
\newcommand{\aux}{\textsc{aux}}
\newcommand{\regu}{{\eta_*}}
\newcommand{\deq}{\mathrel{\mathop:}=}
\theoremstyle{plain} %plain, definition, remark
\newtheorem{theorem}{Theorem}[section]
\newtheorem*{theorem*}{Theorem}
\newtheorem{lemma}[theorem]{Lemma}
\newtheorem{corollary}[theorem]{Corollary}
\newtheorem*{corollary*}{Corollary}
\newtheorem{proposition}[theorem]{Proposition}
\newtheorem*{proposition*}{Proposition}
\newtheorem{definition}[theorem]{Definition}
\newtheorem*{definition*}{Definition}
\theoremstyle{definition} %plain, definition, remark
\newtheorem*{example*}{Example}
\newtheorem{remark}[theorem]{Remark}
\newtheorem*{remark*}{Remark}
\newtheorem*{remarks*}{Remarks}
\let\OLDthebibliography\thebibliography
\renewcommand\thebibliography[1]{
  \OLDthebibliography{#1}
  \setlength{\parskip}{0pt}
  \setlength{\itemsep}{0pt plus 0.3ex}
}
\newcommand{\msp}[1] {\mspace{#1 mu}}
\newcommand{\digammao}{\overset{\msp{5}\circ}{\digamma}\overset{(N)}{\phantom{,}}}
\newcommand{\digammac}{{\digamma^{(N)}}}
\newcommand{\ol}[1]{\overline{#1}}
\newcommand{\cy}{y_{\circ}}
\newcommand{\cz}{z_{\circ}}
\newcommand{\IO}{I_0}
\newcommand{\II}{I_\sigma}
\begin{document}

 \begin{minipage}{0.85\textwidth}
 \vspace{3.5cm}
 \end{minipage}
\begin{center}
\large\bf
Universality for Random Matrix Flows with Time-dependent Density
\end{center}

\renewcommand{\thefootnote}{\fnsymbol{footnote}}	
\vspace{1cm}
\begin{center}
 \begin{minipage}{0.5\textwidth}
\begin{center}
L\'aszl\'o Erd{\H o}s\footnotemark[1]  \\
\footnotesize {IST Austria}\\
{\it lerdos@ist.ac.at}
\end{center}
\end{minipage}
\begin{minipage}{0.45\textwidth}
 \begin{center}
Kevin Schnelli\footnotemark[2]\\
\footnotesize 
{IST Austria}\\
{\it kevin.schnelli@ist.ac.at}
\end{center}
\end{minipage}
\footnotetext[1]{Partially supported by ERC Advanced Grant RANMAT No.\ 338804.}
\footnotetext[2]{Supported by ERC Advanced Grant RANMAT No.\ 338804.}

\renewcommand{\thefootnote}{\fnsymbol{footnote}}	

\end{center}
\vspace{1cm}

\begin{center}
 \begin{minipage}{0.9\textwidth}
We show that the Dyson Brownian Motion exhibits local universality
after a very short time assuming that local rigidity and level repulsion hold.
These conditions are verified, hence bulk spectral universality is proven,
 for a large class of Wigner-like matrices, 
including deformed Wigner ensembles and ensembles with non-stochastic variance
matrices whose limiting densities differ from the Wigner semicircle law.

\end{minipage}
\end{center}
 
 \vspace{2mm}
 
 {\small

 \noindent\textit{AMS Subject Classification (2010)}: 15B52, 60B20, 82B44
 
 \vspace{2mm}
 
 }

\thispagestyle{headings}

\section{Introduction and motivation}\label{sec:introduction}

In his groundbreaking paper~\cite{W}, Wigner conjectured that
the eigenvalue gap distribution of large random matrices is universal
and that it serves as a ubiquitous model for the local  spectral statistics
of many quantum systems. The Gaussian case was fully understood
in the subsequent works of Dyson, Gaudin and Mehta; see~\cite{M} 
for a summary. This simplest case can be generalized in two
directions. For invariant ensembles, the joint density function
of the eigenvalues can be explicitly expressed in terms of
a Vandermonde determinant; a formula that can also be interpreted
as the Gibbs measure of a gas of one-dimensional particles with
a logarithmic interaction. For specific values of the inverse temperature $\beta=1,2,4$,
the correlation functions may be expressed and analyzed using asymptotics
of orthogonal polynomials~\cite{FIK} and universality was proved under 
various conditions on the potential in~\cite{De1, DG1, PS:97, PS}, with many 
consecutive works following. 
This method, however,  is not applicable for other values of~$\beta$ even in the Gaussian
case, where the correlation functions were described in~\cite{VV}.
Universality for general $\beta$-ensembles was first established recently in~\cite{BEY, BEY2} for $\beta\ge 1$, 
with different proofs given later in~\cite{BFG, SM} that also hold for~$\beta>0$
albeit with more restrictions on the potential.

Among the non-invariant ensembles, the most prominent case is the~$N\times N$ symmetric or hermitian Wigner matrix
characterized by the independence of the entries (up to the constraint imposed by the symmetry class).
Beyond the Gaussian case there is no explicit formula for the eigenvalue distribution in general,
but in the hermitian case ($\beta=2$) and for distributions with a Gaussian component, 
the correlation functions can still be expressed using  an algebraic identity (Harish-Chandra-Itzykson-Zuber
integral). A rigorous analysis of this approach yielded universality for hermitian Wigner matrices
with a substantial Gaussian component,~\cite{J, BP}. The first proof of hermitian Wigner universality for 
an arbitrary smooth distribution was given in~\cite{EPRSY}, the smoothness condition was
later removed in~\cite{TV, ERSTVY}. Lacking the algebraic identity, the symmetric case ($\beta=1$) 
required a completely different approach based on the analysis of the Dyson Brownian motion.
The basic observation of Dyson~\cite{DyB} was that the eigenvalues of a  matrix ensemble, embedded
in a simple stochastic flow ({\it Dyson matrix flow}),
 evolve autonomously and satisfy a system of $N$ stochastic differential
equations, called the {\it Dyson Brownian Motion (DBM)}. The universal eigenvalue statistics emerge in the bulk spectrum
as a consequence of the invariant measure of the DBM. Local statistics require to understand only 
the local equilibration mechanism which occurs on a very small time scale that can be bridged 
by perturbative methods. The rigorous theory of this idea was initiated in~\cite{ESY3}
and developed in a series of papers~\cite{ESYY, EYYrigi} leading to the complete proof of
the Wigner-Dyson-Mehta universality conjecture for Wigner matrices in
all symmetry classes; see~\cite{EYBull} for a summary. More recently 
two stronger versions of the bulk universality have been proved. In contrast to the
previous results that required a  local averaging, the universality of each 
single gap was shown to be universal 
in~\cite{EYSG}, while the universality of correlation
functions at each fixed energy was obtained in~\cite{FE}. 
These papers heavily relied on a new tool from~\cite{EYSG}, the concept of H\"older regularity
theory for the parabolic equation with random coefficients given by the~DBM.

In all these  works  on the spectral universality for Wigner matrices, the global limiting
density was the semicircle law; in particular it did not change in time under the DBM.
The same Gaussian measure and its localized versions could be used
as equilibrium reference measures for all times. The main idea was to 
artificially speed up the global convergence by considering 
the {\it local relaxation flow}~\cite{ESY3} and then to prove that 
the  additional local relaxation terms do not substantially
modify the local statistics thanks to a-priori bounds on the location of the particles. These bounds are called  
{\it rigidity estimates} and they directly follow from short scale versions of the Wigner semicircle law 
that are called {\it local~laws}. 

The method of local relaxation flow has two main limitations that are related. 
First, it operates with  global measures, in particular, quite
precise rigidity information is needed for {\it all} eigenvalues.
This is clearly unnecessary (and in some cases hard to obtain); far away eigenvalues should not influence
local statistics too much. 
Second, if the initial matrix of the Dyson matrix flow does not
obey the semicircle law, then the density changes with time following the {\it semicircular flow}, 
related to the complex Burgers equation for its Stieltjes transform. The time dependence of the density
 was  originally not incorporated in the method of the local relaxation flow. 
 This second limitation was tackled  very recently in~\cite{LSSY}, where universality for {\it deformed} Wigner matrices with large diagonal elements was
proved. Using ideas from hydrodynamic limits~\cite{Y}, a {\it global} reference measure
was constructed as an invariant $\beta$-ensemble with a ``time'' parameter so that
its equilibrium density trails the semicircular flow. This equilibrium measure was then used 
as a basis to construct the local relaxation flow. Once the fast convergence to
the reference measure is established one can infer to the universality of the $\beta$-ensemble~\cite{BEY, BEY2},
or, alternatively, one can use the uniqueness of the local Gibbs measure established
in~\cite{EYSG} to conclude universality with a tiny Gaussian component. This result is then
easily complemented by a standard Green function comparison method to remove the Gaussian component entirely.

As a technical input for the analysis in~\cite{LSSY}, 
the global rigidity for the reference $\beta$-ensemble is required, which is not available
for the case when the equilibrium density is supported on several intervals.  In particular,
the result of~\cite{LSSY} is limited to the deformed Wigner ensembles with a single
interval support that excludes the case when the diagonal has a strongly bimodal distribution.

We remark that bulk universality for special classes of deformed Wigner matrices in the hermitian symmetry
class has also been
proven with different methods. The local sine kernel statistics
 for the sum of a GUE and a diagonal matrix with two eigenvalues $\pm a$
of equal multiplicity has been obtained with Riemann-Hilbert method~\cite{BK1,BK2,CW}.
In particular, the density in this model is supported on two disjoint intervals
if $a$ is sufficiently large. The GUE matrix
can be replaced with an arbitrary Hermitian matrix if the first four moments of its single entry distribution
matches those of the Gaussian~\cite{OV}. 
A much more general class of deformations of the GUE has been tackled in~\cite{S1}
relying on a version of  the Harish-Chandra-Itzykson-Zuber integral. Using Green function comparison techniques~\cite{EYY}
and the local laws from~\cite{LS,KY1,KY2}, one can replace the GUE with any hermitian Wigner matrix under the four moment
matching condition.

Random matrices whose limiting densities are supported on several intervals arise in 
other prominent contexts as well. We call symmetric or hermitian matrix ensembles, $H= (h_{ij})$, 
{\it Wigner-like}
if their entries are independent (up to the symmetry constraint). If, in addition, the
matrix elements are centered, $\E h_{ij}=0$,
and the sum of the variances $S_{ij} =\E |h_{ij}|^2$ in each row is constant, say one, \ie
\begin{align}\label{zerorowsum}
  \sum_{j=1}^N S_{ij} = \mbox{const} =1\,, \quad\qquad \forall i\,,
\end{align}
 then the limiting density is the semicircle law. If either condition is violated,
the limiting density is generally not the semicircle law and typically it may be supported on several intervals.
The case of~$H=W+A$, where~$W$ is a standard Wigner matrix
with i.i.d.\ centered entries and~$A$ is a deterministic matrix (representing the 
nonzero expectations $\E h_{ij}$), was considered in~\cite{KY1, KY2}, where local
laws and rigidity were established. 
If condition~\eqref{zerorowsum} is dropped, then an even richer class of possible
limiting densities arise. These were extensively analyzed in~\cite{AEK1, AEK2}, where
all possible density shapes are classified, local laws and rigidity are  proven.

In the current paper, we prove bulk universality  for all these models. As in the previous
papers using DBM, the key part is  to show
universality for matrices with a tiny Gaussian component.

Beyond these applications, our main result is formulated on a more conceptual level.
Dyson argued in~\cite{DyB} that the local equilibrium of the DBM is attained after 
a very short time {\it irrespective of the global density.} In fact, the global
density equilibrates on a time scale of order one, while the local equilibration time is of order $1/N$.
The local equilibration is solely due to the logarithmic interaction in the DBM, while
the evolution of the global density is given by the semicircular flow. In this paper we
fully decouple the effects of these two processes. In the main Theorem~\ref{main theorem} we 
 prove bulk local universality 
for the DBM assuming that it satisfies rigidity and level repulsion, but {\it only locally}.
On the global scale only a very weak version of rigidity is required, in particular
the condition is insensitive to outliers or to the behavior at the edges. 
These assumptions can then easily be verified from local laws in each model.

After completing this manuscript, we learned that
similar results were obtained independently in~\cite{LY}.

{\it Notational conventions:}
We use the symbol $O(\,\cdot\,)$ and $o(\,\cdot\,)$ for the standard big-O and little-o notation. The notations $O$, $o$, $\ll$, $\gg$, refer to the limit $N\to \infty$. Here $a\ll b$ means $|a|\le N^{-\xi}|b|$, for some small $\xi>0$. We use~$c$ and~$C$, $C'$ to denote positive constants that do not depend on~$N$. Sometimes we use subscripts or superscripts to distinguish $N$-independent constants, \eg $c_0,c_1,c'$ {\it etc}. Their value may change from line to line.  Similarly, we will use $\xi>0$ for a small, respectively $D>0$ for a large positive exponent, mainly appearing in various rigidity bounds.
Their precise values are immaterial; at the end of the proof it may be chosen sufficiently small, respectively sufficiently large, depending on 
all other exponents along the proof. Finally, we use double brackets to denote index sets, \ie for $n_1, n_2 \in \R$,
$$
\llbracket n_1, n_2 \rrbracket \deq [n_1, n_2] \cap \Z\,,\qquad\qquad  \N_N\deq\llbracket 1,N\rrbracket\,.
$$

{\it Acknowledgement:}
The authors thank O.\ Ajanki and T.\ Kr\"uger for many valuable discussions at the early stage of the project.

\section{Main results}

In this section, we give a detailed description of our model, including all assumptions, and state our main results. We start with introducing basic concepts such as the Stieltjes transform, the semicircular flow and the Dyson Brownian motion (DBM).

\subsection{Stieltjes transform}
Given a probability measure, $\nu$, on $\R$, define its Stieltjes transform,~$m_\nu$,~by

\begin{align}\label{le stieltjes transfrom}
 m_{\nu}(z)\deq\int_\R\frac{\dd\nu(v)}{v-z}\,,\qquad\qquad  z\in\C^+\deq\{ z\in\C\,, \im z>0\}\,.
\end{align}
Note that $m_\nu$ is an analytic function in upper half plane. In the following we usually write $z=E+\ii\eta$, $E\in\R$, $\eta>0$, and we refer to $E$ as an ``energy'' and to $z$ as the spectral parameter. For given $\eta> 0$, we let $P_\eta$ denote the Poisson kernel defined by
\begin{align}\label{le poisson kernel}
 P_\eta(E)\deq \frac{1}{\pi}\frac{\eta}{E^2+\eta^2}\,,\qquad\qquad E\in\R\,,
\end{align}
and we note that $ \int_{\R}P_{\eta}(E)\dd E=1$ and $P_{\eta_1+\eta_2}(E)=(P_{\eta_1}*P_{\eta_2})(E)$, for all $\eta,\eta_1,\eta_2>0$, $E\in\R$, where~$*$ denotes the convolution on $\R$. We further remark that
\begin{align}\label{le 1eleven}
\frac{1}{\pi} \im m_\nu(E+\ii\eta)= (P_{\eta}*\nu)(E)\,.
\end{align}
Assuming that $\nu$ admits a density, which we also denote by $\nu$, we can recover $\nu$ from~$m_\nu$ through the Stieltjes inversion formula
\begin{align}\label{le stieltjes inversion}
 \nu(E)=\frac{1}{\pi}\lim_{\eta\searrow 0}\im m_{\nu}(E+\ii\eta)=\lim_{\eta\searrow 0}(P_{\eta}*\nu)(E)\,,\qquad\qquad E\in\R\,.
\end{align}
The Hilbert transform, $\Hi{\nu}$, of $\nu$ is defined by as the principal value integral
 \begin{align}\label{le def of hilbert transform}
      \Hi{\nu}(E) \deq \int_\R \frac{\dd \nu(v)}{v-E}\,,\qquad \qquad E\in\R \,.
 \end{align}
  
\subsection{Semicircular flow}\label{le subsection semicicular flow} We next introduce the semicircular or classical flow. Let $\cM(\R)$ denote the set of probability measures on $\R$. Then the semicircular flow is the process $\R^+\times\cM(\R)\rightarrow \cM(\R)$, $(t,\varrho)\mapsto \mathcal{F}_t[\varrho]$ obtained via its Stieltjes transform as follows. For $t=0$, set $\mathcal{F}_0[\varrho]\deq \varrho$. For $t>0$, let $ m_t(z)$ satisfy
\begin{align}\label{free convolution first}
  m_t(z)=\int_\R\frac{\dd\varrho(y)}{\euler{-t/2} y-z-(1-\euler{-t}) m_t(z)}\,,\qquad\qquad \im  m_t(z)>0\,,\qquad z\in\C^+\,.
\end{align}
It is straightforward to check~\cite{Pastur} that~\eqref{free convolution first} has indeed a unique solution such that $\liminf_{\eta\searrow 0}\im  m_t(E+\ii \eta)<\infty$, for any $E\in\R$, $t>0$. In fact, for $t>0$, $ m_t$ has a continuous extension to $\C^+\cup\R$~\cite{B} that we also denote by~$ m_t$. Set then
\begin{align}\label{le varrhot in inversion}
\mathcal{F}_t[\varrho](E)\deq\frac{1}{\pi}\lim_{\eta\searrow 0}\im m_t(E+\ii\eta)\,,\qquad \qquad t>0\,,\qquad E\in\R\,,
\end{align}
so that $\mathcal{F}_t[\varrho]$ is defined through its density $\mathcal{F}_t[\varrho](E)$, $E\in\R$. In particular, for $t>0$, $\mathcal{F}_t[\varrho]$ is an absolutely continuous measure. (For simplicity we use the same symbol for absolutely continuous measures and their densities.)

Further, it is easy to check that $ m_t(z)$ converges pointwise to 
\begin{align}\label{le m0}
  m_0(z)=\int_\R\frac{\dd\varrho_0(y)}{y-z}\,,
\end{align}
for all $z\in\C^+$, as $t\searrow 0$. It follows that $\mathcal{F}_t[\varrho]$ converges weakly to $\varrho$ as $t\searrow 0$. Starting from~\eqref{free convolution first} and~\eqref{le varrhot in inversion}, one also checks that
\begin{align*}
 \mathcal{F}_{t+s}[\varrho]=\mathcal{F}_t\circ\mathcal{F}_s[\varrho]\equiv \mathcal{F}_t[\mathcal{F}_s[\varrho]]\,,\qquad\qquad t\ge s\,,\qquad \varrho\in\mathcal{M}(\R)\,.
\end{align*}
In fact, using the additive free convolution, the flow $t\mapsto \mathcal{F}_t$ can be endowed with a ($w^*$-continuous) semigroup structure~\cite{Voi86,Maa,NiSp}; see also~\cite{VDN,HP} for reviews. Yet, we will not pursue this point of view in the present paper. 

In the following, we often write $\varrho_t\deq \mathcal{F}_t[\varrho]$ with $\varrho_0=\varrho$ and we call $t\mapsto \varrho_t$ the {\it semicircular flow} started at $\varrho$. Recalling~\eqref{le stieltjes transfrom} it is clear that $ m_t$ is the Stieltjes transform of $\varrho_t$ and we simply write $ m_t\equiv m_{\varrho_t}$. We remark that the standard semicircle law,~$\varrho_{sc}$, is invariant under the semicircular flow, \ie $\mathcal{F}_t[\varrho_{sc}]=\varrho_{sc}$, for all $t\ge 0$, and that~$\varrho_t=\mathcal{F}_t[\varrho]$ converges weakly to~$\varrho_{sc}$, as $t\nearrow\infty$, for any~$\varrho\in\mathcal{M}(\R)$. This follows directly from~\eqref{free convolution first} and the fact that the Stieltjes transform, $m_{\varrho_{sc}}\equiv m_{sc}$, of~$\varrho_{sc}$ satisfies $m_{sc}(z)=-(m_{sc}(z)+z)^{-1}$, $z\in\C^+$.

For $N\in\N$ and fixed $t\ge 0$, let $\boldsymbol{\gamma}(t)\equiv (\gamma_k(t))$ denote the set of~$N$-quantiles with respect the density~$\varrho_t$, where~$\gamma_k(t)$ is the smallest number satisfying
\begin{align}\label{the gammas}
 \int_{-\infty}^{\gamma_k(t)}\varrho_t(x)\,\dd x=\frac{k}{N}\,,\qquad\qquad \varrho_t=\mathcal{F}_t[\varrho]\,,
\end{align}
for all $t\ge0$. It is straightforward to check that $\gamma_k(t)$ inside the ``bulk'', \ie where $\varrho_{t}$ is strictly positive, is a continuous function of $t$. This follows from the (weak) continuity of the flow  $t\mapsto\varrho_t$. Moreover, the points $\boldsymbol{\gamma}(t)$ in the bulk approximately 
satisfy a gradient flow of a classical  particle system
with a logarithmic two-body interaction potential between the particles (see Lemma~\ref{le lemma for new gamma} below). We refer to Appendix~\ref{section classical semicircle flow} for a more detailed discussion.

\subsection{Dyson Brownian motion}
Fix $N\in\N$ and let $\digammao\subset\R^N$ denote the set
\begin{align}\label{Weyl chamber}
\digammao\deq\{ \bla = (\la_1,\la_2 ,\ldots , \la_N )\subset \R^N \,:\, \la_1< \la_2<\ldots< \la_N\}\,,
\end{align}
and denote its closure by $\digammac$.

Dyson Brownian motion (DBM) is given by the following stochastic differential equation~(SDE)
\begin{align}\label{le original dbm}
 \dd \lambda_i(t)=\sqrt{\frac{2}{\beta N}} \,\rd B_i(t) - \frac{1}{N} \sum_{{ j\not=i}} \frac{1}{\lambda_j(t)-\lambda_i(t)} \,\rd t -\frac{\lambda_i(t)}{2}\,\rd t\,,\qquad i\in \N_N\,,\qquad \beta\ge 1\,,
\end{align}
with fixed initial condition $\bla(t=0)\in\digammac$, where $\beta\ge 1$ is a fixed parameter with the interpretation of inverse temperature, and where $(B_i)_{i=1}^N$ are a collection of independent standard Brownian motions in some probability space $(\Omega,\P)$. We denote by $\E$ the expectation with respect to $\P$.

It is well known, see Section~4.3.1 of~\cite{AGZ}, that~\eqref{le original dbm} with $\beta\ge 1$ has a unique strong solution, $\bla(t)$, for any initial condition $\bla(0)\in\digammac$. Further, for any $t>0$, we have $\bla(t)\in\digammao$ almost surely.

The equilibrium measure for the DBM is the Gaussian invariant ensemble explicitly given by
\begin{align}\label{le gaussian general measure}
 \mu_\Gauss(\bla)\,\dd\bla\equiv\mu^{(N)}_{\beta,\Gauss}(\bla)\,\dd\bla=\frac{1}{Z^{(N)}_{\beta,\Gauss}}\euler{-\beta N \cH_\Gauss}\,\rd\bla\,,\qquad \cH_\Gauss \deq\sum_{i=1}^N\frac{1}{4}\la_i^2-\frac{1}{N}\sum_{1\le i<j\le N}\log (\la_j-\la_i)\,,
\end{align}
where $\rd\bla\deq {\bf 1}(\bla\in\digamma^{(N)})\,\rd \la_1\rd \la_2\cdots\rd \la_N$ and where $Z^{(N)}_{\beta,\Gauss}$ is a normalization. For fixed $\beta$, we denote by $\E^{\Gauss}$ the expectation with respect the measure $\mu_\Gauss$ in~\eqref{le gaussian general measure}.

Consider next a sequence of vectors $\bla^{(N)}(0)=(\la^{(N)}_1(0),\ldots,\la_N^{(N)}(0))\in\digammac$, $N\in\N$. Let $\bla^{(N)}(t)=(\la^{(N)}_1(t),\ldots,\la_N^{(N)}(t))\in\digammac$ denote the sequence of vectors such that, for each $N\in\N$, $\bla^{(N)}(t)\in \digammac$ is the solution to~\eqref{le original dbm} with initial condition $\bla^{(N)}(0)$. For simplicity we abbreviate $\bla(t)\equiv\bla^{(N)}(t)$, respectively $\la_i(t)=\la_i^{(N)}(t)$, $i\in\N_N$, in the following. 

Assume that there is a probability measure, $\varrho_0^{\infty}$, on $\R$ such that
\begin{align*}
 \varrho^{(N)}_0\deq\frac{1}{N}\sum_{i=1}^N \delta_{\lambda_i(0)}\overset{w}{\longrightarrow} \varrho_0^{\infty}\,,
\end{align*}
as $N\to\infty$, \ie the empirical distribution of the initial data $\bla^{(N)}(0)$ converges weakly to $\varrho_0^{\infty}$. Then, under some mild technical assumptions on $\bla^{(N)}(0)$, Proposition~4.3.10 of~\cite{AGZ} states that 
\begin{align}\label{le GG limit}
 \varrho^{(N)}_t\deq\frac{1}{N}\sum_{i=1}^N\delta_{\lambda_i(t)}\overset{w}{\longrightarrow}\varrho_t^{\infty}\,,\qquad\qquad t\ge 0\,,
\end{align}
as $N\to\infty$, where $\varrho_t^{\infty} \deq\mathcal{F}_t[\varrho_0^{\infty}]$ denotes the semicircular flow started from $\varrho_0^{\infty}$, 
\cf Subsection~\ref{le subsection semicicular flow}.

\newcommand{\rl}{\ell}

\subsection{Main result}
In this subsection, we state our main result. We need one more definition:
A labeling $\rl$ is a random variable $\rl\,:\, \R \to \Z$, $x\mapsto \rl(x)$ such that $\rl(x+1)-\rl(x)=1$ and $\rl(x)=\rl(\lfloor x\rfloor)$.

\begin{theorem}\label{main theorem}
Let $\bla(t)$, $t\ge 0$, be the solution to the DBM in~\eqref{le original dbm} with deterministic initial condition~$\bla(0)$. Given any small positive $\epsilon>0$ and any small $\delta\in[0,1/20]$, with $\epsilon\ge 2\delta$, consider times $t_1,t_2\in\R^+$ with $N^{-1+\epsilon}\le t_2-t_1\le N^{-\epsilon}$. Let~$\varrho$ be a probability measure on $\R$. Denote by $\varrho_t\equiv \mathcal{F}_{t}[\varrho]$ the semicircular flow started from $\varrho$. Choose $E_*\in\R$ such that
$\varrho_{t_1}(E_*)>c$, for some small $c>0$. 

 Assume that $\bla(t)$ and $\varrho$ are such that the following conditions are satisfied.
\begin{itemize}[noitemsep,topsep=0pt,partopsep=0pt,parsep=0pt]
 \item[$(1)$] At time $t_1$, the density $\varrho_{t_1}\equiv \mathcal{F}_{t_1}[\varrho]$ is regular in the following sense. There is a constant $\Sigma>0$, independent of $N$, such that the Stieltjes transform $m_{\varrho_{t_1}}$ of $\varrho_{t_1}$, \ie
 \begin{align}
  m_{\varrho_{t_1}}(z)=\int_{\R}\frac{\varrho_{t_1}(y)\,\dd y}{y-z}\,,\qquad\qquad z\in\C^+\,,
 \end{align}
 extends to a continuous function on $\mathcal{D}_\Sigma\deq\{ z=E+\ii\eta \in \C\,:\,  E\in [E_*-\Sigma,E_*+\Sigma]\,, \eta \ge 0\}$, and satisfies
 \begin{align}\label{le bounds in assumption 1}
  |m_{\varrho_{t_1}}(z)|\le C\,,\qquad \qquad|\partial_z^n m_{\varrho_{t_1}}(z)|\le C (N^{\delta})^n\,, \qquad \quad n=1,2\,,
 \end{align}
uniformly on $\mathcal{D}_\Sigma$, for some constant $C$. Moreover, $\varrho_{t_1}$ has finite second moment and satisfies
\begin{align}\label{le positivity of the density}
 \varrho_{t_1}(E)\ge c\,,\qquad \qquad E\in [E_*-\Sigma,E_*+\Sigma]\,,
\end{align}
for some $c>0$.

 \item[$(2)$] The process $\bla(t)$ is \emph{rigid} and is related to $\varrho_t$
 in the sense that there is a small $\sigma\equiv\sigma(\Sigma)>0$, independent of $N$, such that the following holds. 
 \begin{itemize}[noitemsep,topsep=0pt,partopsep=0pt,parsep=0pt]
 \item[$(a)$] \emph{Strong rigidity inside $\II$:}  There is a time-independent labeling~$\rl$ such that $\gamma_{\rl(i)}(t_1)\in[E_*-\Sigma/4,E_*+\Sigma/4]$, for all $i\in \II=\llbracket L-\sigma N,L+\sigma N\rrbracket$, where $L\in N_\N$ is the largest integer such that $\gamma_{\rl(L)}(t_1)\le E_*$. Moreover, for any (small) $\xi>0$ and any (large) $D>0$ we have
 \begin{align}\label{assumption:rigidity strong}
\P\left(\left|\lambda_{i}(t)-\gamma_{\rl(i)}(t) \right| \le \frac{N^{\xi}}{N}\,,\;\forall t\in[t_1,t_2]\right)\ge 1- N^{-D}\,,\qquad  \forall i\in \II\,,
 \end{align}
 for large enough $N\ge N_0(\xi,D)$, where~$(\gamma_i(t))$ are $N$-quantiles with respect to the measure~$\varrho_t$; see~\eqref{the gammas}.
 \item[$(b)$] \emph{Weak rigidity outside $\II$:}  For any $\xi\in(0,\delta)$ and any (large) $D>0$, we have
 \begin{align}\label{assumption:rigidity weak}
  \P\bigg(\bigg|\frac{1}{N}\sum_{k\,:\, |L-k|\ge \sigma N}\frac{1}{\lambda_k(t)-{E_*}  }- \int\limits_{\R\backslash\boldsymbol{I}(t)}\frac{\varrho_t(x)\dd x}{x-{E_*} }\bigg|\le\frac{N^{\xi}}{N^{\delta}}\,, \forall t\in[t_1,t_2]\bigg)\ge 1-N^{-D}\,,
 \end{align}
for large enough $N\ge N_0(\xi,D)$, where $\boldsymbol{I}(t)\deq[\gamma_{\rl(L-\sigma N)}(t),\gamma_{\rl(L+\sigma N)}(t)]$.

 \end{itemize}

 \item[$(3)$] \emph{Level repulsion inside $\II$:} For any  $i\in \II$ and  $t\in[t_1,t_2]$,
 \begin{align}\label{lever}
 \P\left(|\lambda_i(t)-\lambda_{i\pm 1}(t)|\le \frac{u}{N}\,,  \; |\lambda_i(t)-\gamma_{\ell(i)}(t)|\le \frac{N^{\xi}}{N} 
 \right)\le N^{\delta} u^{\beta+1}\,,\qquad\qquad u>0\,,
 \end{align}
for large enough $N$.

 \item[$(4)$] \emph{ H\"older continuity of DBM:}  For any (small) $\xi>0$ and any (large) $D>0$, we have
 \begin{align}
  \P\Big(|\lambda_k(t)-\lambda_k(s)|\le N^\xi\sqrt{t-s}\,,\; \forall t,s\in[t_1,t_2]\,,t\ge s\Big)\ge 1- N^{-D}\,,\qquad  \forall k\in \N_N\backslash \II\,,
 \end{align}
for large enough $N\ge N_0(\xi,D)$.	
\end{itemize}

Then, there are small constants $\frak{f}, \chi,\alpha>0$, such that the following holds. Fix $n\in\N$ and let $\mathcal{O}\,:\,\R^n\longrightarrow\R$ be smooth and compactly supported. Fix any $T\in[t_1+N^{2\delta}/N,t_2]$. Set $\varrho_*\deq \varrho_{T}(\gamma_{L}(T))$. Then
\begin{align}\label{equation main result}
\E\left[\mathcal{O}\left(\Big((N\varrho_*)(\lambda_{i_0}(T)-\lambda_{i_0+j}(T))\right)_{j=1}^n\Big) \right]=\E^\Gauss\left[\mathcal{O}\left(\Big((N\varrho_\#)(\lambda_{i_0'}-\lambda_{i_0'+j})\right)_{j=1}^n\Big) \right]+ O(N^{-\frak{f}})\,,
\end{align}
for $N$ sufficiently large, for any $i_0,i_0'\in\N_N$ satisfying $|i_0-L|\le N^\chi$, $|i_0'-L'|\le N^{\chi}$ with any $L'\in [\alpha N,(1-\alpha)N]$, and where $\varrho_\#\deq \varrho_{sc}(\gamma_{L',sc})$ denotes the density of the semicircle law $\varrho_{sc}$ at the location of the $L'$-th $N$-quantile of $\varrho_{sc}$.

\end{theorem}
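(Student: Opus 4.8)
The plan is to run the DBM over the interval $[t_1,T]$ and show that the local gap statistics relax to the universal equilibrium ones on the scale $N^{-1}$---this is why the lower bound $T-t_1\ge N^{2\delta}/N$ (a little above the local relaxation time) is exactly what is needed, while the upper bound $t_2-t_1\le N^{-\epsilon}$ only serves to keep the density and the a priori bounds of the assumptions essentially frozen along the flow. \emph{Step 1 (short-range cutoff).} Fix a cutoff scale $K$ with $N^{c_1}\le K\le N^{c_2}$, $0<c_1<c_2<1$, chosen so that the relaxation time of a block of $K$ particles is $\ll T-t_1$ while the cutoff error below is $\ll N^{-1}$, and replace the DBM by the finite-range process $\wh\lambda$ in which $\wh\lambda_i$ interacts only with $\{j:|i-j|\le K\}$, the removed long-range interaction being replaced by a deterministic external force. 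The contribution of the particles outside $\II$ to that force is controlled by the weak-rigidity assumption $(2b)$ and is, to leading order, \emph{independent of the local index}~$i$, so it merely translates the whole local block and does not affect gaps; the contribution of the particles in $\II$ at index-distance $>K$ is controlled by the strong-rigidity assumption $(2a)$ and is a slowly varying function of $i$ that, together with the confinement term $-\lambda_i/2$, is absorbed into a harmless smooth external potential; the time-regularity needed for these estimates is supplied by the H\"older-continuity assumption $(4)$. A Gr\"onwall argument based on $(2a)$--$(2b)$ then gives $\sup_{t\in[t_1,T]}|\lambda_i(t)-\wh\lambda_i(t)|\ll N^{-1}$ for all $i$ in a slightly contracted window around $L$, so it suffices to prove the statement for $\wh\lambda$.

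\emph{Step 2 (reference process).} In parallel, fix a reference process $\wh{\mathbf z}(t)$, $t\ge t_1$: the finite-range cutoff (with the \emph{same} cutoff scale and the \emph{same} Brownian motions) of a DBM $\mathbf z(t)$ started from a configuration whose empirical density agrees with $\varrho_{t_1}$ to sufficient precision on $[E_*-\Sigma,E_*+\Sigma]$ and is otherwise regular---for instance the eigenvalue process of a suitably tuned Gaussian-divisible ensemble, or of an invariant $\beta$-ensemble with matching equilibrium density. Such a $\mathbf z$ satisfies the analogues of $(1)$--$(4)$ (rigidity and H\"older continuity from the local laws for the DBM, level repulsion from the built-in Gaussian component), and its local gap statistics near the matching bulk index $L_z$ are already known to be the universal Gaussian ones---e.g.\ by $\beta$-ensemble universality, the near-invariance of the local Gibbs structure over the short interval $[t_1,T]$ being guaranteed by~\cite{EYSG}. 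Matching the local density is precisely what allows one to couple $\wh\lambda$ and $\wh{\mathbf z}$ index by index, $\wh\lambda_{L+k}\leftrightarrow\wh z_{L_z+k}$, with quantile differences across the window small enough to be harmless; any genuine local density mismatch would introduce a spurious affine drift into the coupling and corrupt the conclusion.

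\emph{Step 3 (parabolic regularity---the heart).} Set $u_i(t)\deq\wh\lambda_i(t)-\wh z_i(t)$; driven by the common Brownian motions it solves the discrete parabolic equation
\[
 \partial_t u_i(t)=\sum_{|i-j|\le K}B_{ij}(t)\bigl(u_j(t)-u_i(t)\bigr)-\tfrac12 u_i(t)+r_i(t)\,,\qquad B_{ij}(t)\deq\frac{1}{N\bigl(\wh\lambda_i(t)-\wh\lambda_j(t)\bigr)\bigl(\wh z_i(t)-\wh z_j(t)\bigr)}>0\,,
\]
with a negligible remainder $r_i$. By the strong rigidity $(2a)$ the coefficients $B_{ij}$ are, off an exceptional set of small probability, comparable to $N/(i-j)^2$---the discretisation of a finite-range operator with the critical $1/x^2$ scaling---while the level-repulsion bound $(3)$ controls the exceptional set on which two nearby particles nearly collide and $B_{ij}$ blows up. I would then invoke the De Giorgi--Nash--Moser-type regularity theory for this class of random parabolic operators developed for the DBM in~\cite{EYSG} (iterated over a sequence of shrinking scales down to the microscopic one) and combine it with the a priori control $\sup_{|i-L|\le K,\,t\in[t_1,T]}|u_i(t)|\lec K/N$, which holds precisely because the local densities match: this shows that on the scale $T-t_1\gtrsim N^{2\delta}/N$ the $n$-gap observable near $L$ becomes insensitive to the difference between $\wh\lambda(t_1)$ and $\wh{\mathbf z}(t_1)$, that is,
\[
 \E\Bigl[\mathcal O\bigl((N\varrho_*)(\wh\lambda_{i_0}(T)-\wh\lambda_{i_0+j}(T))_{j=1}^n\bigr)\Bigr]=\E\Bigl[\mathcal O\bigl((N\varrho^z_*)(\wh z_q(T)-\wh z_{q+j}(T))_{j=1}^n\bigr)\Bigr]+O(N^{-\frak{f}})\,,
\]
where $q\deq L_z+i_0-L$ and $\varrho^z_*$ is the local density of $\mathbf z(T)$. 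Since $\varrho^z_*\approx\varrho_*$ by construction, combining this with the known universal statistics of the reference from Step 2, the reduction of Step 1, and the fact that the universal bulk gap law does not depend on the location $L'$ yields~\eqref{equation main result}.

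The step I expect to be the main obstacle is Step 3: making the De Giorgi iteration robust enough to yield a quantitative homogenisation estimate down to the relaxation scale $N^{2\delta}/N$ from only the \emph{local} inputs $(2a)$, $(3)$ and the very weak global input $(2b)$---in particular controlling the heavy $1/(i-j)^2$ tail of the coefficients together with the rare near-collisions, and book-keeping the interplay of the cutoff $K$, the time scale, and the exponents $\delta,\epsilon,\xi,D$ so that every error term is genuinely $\ll N^{-1}$. A secondary difficulty is constructing the reference process with sufficiently well matched local density and verifying its a priori bounds and its universality.
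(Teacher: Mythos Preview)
Your proposal takes a genuinely different route from the paper, closer to the homogenization approach of Landon--Yau~\cite{LY}. The paper does \emph{not} introduce a short-range cutoff of the interaction. Instead it \emph{conditions} on the full external trajectory $\bY=\{\by(t):t\in[T_1,T]\}$ of the particles with $|k-L|>K$, and compares the conditioned internal process $\bx(t)$ to a process $\tilde\bx(t)$ driven by the \emph{same} Brownian motions and the \emph{same} initial data $\tilde\bx(T_1)=\bx(T_1)$, but with the moving external points $\by(t)$ replaced by carefully chosen frozen points~$\tilde\gamma_k$. The $\tilde\bx$-dynamics has a time-independent local equilibrium measure~$\om_{T_1}$ whose gap universality is read off directly from Theorem~4.1 of~\cite{EYSG}; an auxiliary quadratic $\beta$-ensemble enters only to construct $\tilde\gamma_k$ near $\partial I$ so that the hypotheses of that theorem can be checked. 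Because the initial data agree, the difference $\bv=\bx-\tilde\bx$ satisfies your parabolic equation \emph{with forcing} coming from $\by(t)-\tilde{\boldsymbol\gamma}$, and the paper removes the forcing by Duhamel plus the finite-speed-of-propagation bound of~\cite{EYSG} before applying the H\"older regularity. So where you compare two processes with different initial data and the same dynamics, the paper compares two processes with the same initial data and different (frozen vs.\ moving) boundaries.

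There is a concrete problem with your Step~1 as written: you run the cutoff comparison over the full $[t_1,T]$, of length up to $N^{-\epsilon}$, and claim $\sup_i|\lambda_i-\hat\lambda_i|\ll N^{-1}$ by Gr\"onwall. The pointwise error in replacing the long-range force by its deterministic surrogate is $O(N^\xi/K)$ (from the $\II$-part via~$(2a)$; the outside-$\II$ part contributes $O(N^\xi/N^\delta)$ via~$(2b)$), and the short-range generator has no $L^\infty$-contraction beyond the $O(1)$ confinement, so over time $N^{-\epsilon}$ the error accumulates to $N^{\xi-\epsilon}/K$; this is $\ll N^{-1}$ only for $K\gg N^{1-\epsilon}$, which is incompatible with any useful choice of $K$. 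The paper avoids this entirely by choosing a starting time $T_1$ with $T-T_1\sim K^2/N$---the assumptions hold on all of $[t_1,t_2]$, so $T_1$ need not equal $t_1$---and your scheme must do the same. With that adjustment, and $K$ chosen as in the paper ($N^\varpi\le K$, $K^{10}N^\delta\le N$), both routes ultimately rest on the H\"older regularity of~\cite{EYSG}; the trade-off is that your approach needs a reference process whose universality and a~priori bounds~$(1)$--$(4)$ are already in hand, whereas the paper is self-contained through the local equilibrium~$\om_{T_1}$ at the cost of the frozen-boundary construction and the forcing-term analysis.
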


\begin{remark} The formula~\eqref{equation main result} expresses the {\it single gap universality}, \ie
that the joint distribution of $n$ consecutive gaps coincides with that of a Gaussian invariant  ensemble for any fixed $n$.
Single gap universality clearly implies the weaker {\it averaged gap universality}, where
\eqref{equation main result} is averaged over~$N^b$ consecutive~$i_0$'s, for some $0<b<1$.
 It is well known that averaged  gap universality implies the {\it averaged energy universality}, \ie the universality of
 the local correlation functions around an energy~$E$, averaged over~$E$ near the reference energy $E_*$; see \eg Section 7 of~\cite{ESYY}. 
\end{remark}

\begin{remark} The measure $\varrho$ in Theorem~\ref{main theorem} is assumed to be deterministic, but it may depend on $N$ in contrast to the measure $\varrho_0^{(\infty)}$ of~\eqref{le GG limit} which is indeed the limiting object as $N\to\infty$. Consequently, the semicircular flow $\varrho_t=\mathcal{F}_t[\varrho]$ will also be $N$-dependent. Typically one expects that $\varrho_t$ converges weakly to $\varrho_t^{\infty}$, yet the speed of convergence may be very slow and hence not be compatible with Assumption~$(2)$ of Theorem~\ref{main theorem}. In Subsection~\ref{appendix hoelder}, we discuss Assumption~$(1)$ in more detail.

Notice that the initial condition~$\bla(0)$ of
the DBM and the initial data~$\varrho$ of the semicircular flow do not have to be  related; this will allow us
for an additional freedom in the applications. We only require that~$\bla(t)$ is close to the quantiles of~$\varrho_t$ in a short time interval $t\in [t_1, t_2]$ and only locally near the reference energy~$E_*$. 
We also allow for a possible relabeling~$\ell$ that can be used to accommodate outliers in applications.
At first reading the reader may ignore~$\ell$ and consider~$\ell (i)=i$ for simplicity.
 \end{remark}

\subsection{Random matrix flow and universality} 
In this subsection, we briefly explain how Theorem~\ref{main theorem} can be used to prove bulk universality for many random matrix ensembles
$H$. We will follow the three-step strategy initiated in a series of works~\cite{ESY4, ESYY, EYY}; see~\cite{EYBull} for a concise summary. 

{\it Step 1} is to prove a local law, which includes
 rigidity for the eigenvalues and bounds on the resolvent matrix elements $G(z) = (H-z)^{-1}$ down almost to
 the scale of the eigenvalue spacing, \ie  for  $\eta = \im z \gg N^{-1}$.
 This step is typically  model dependent, mainly because the limiting  density of the eigenvalues varies from 
 model to model. The key tool is the self-consistent equation for the Stieltjes transform of the
 density (and its vector version for the individual matrix elements $G_{ii}$); its solvability and
 stability properties need to be investigated for each model.

 {\it Step 2} is to prove universality for matrices with a small Gaussian component
 that can conveniently be generated by running a matrix valued Ornstein-Uhlenbeck process. 
Theorem~\ref{main theorem} is used in this step and it replaces the previous argument that relied
on a global equilibrium measure and its version with  relaxation.
As advertised in the introduction, Theorem~\ref{main theorem}
 requires rigidity information only locally, in particular it also applies to
 models where the limiting density  is supported on several intervals.
 Step~2 is model independent once the input conditions of Theorem~\ref{main theorem} are verified.
 
 Finally, {\it Step 3} is a perturbation argument which is also very general. Using the Green function comparison
 strategy~\cite{EYY} and the moment matching (introduced first in~\cite{TV} in the context of random matrices),
 one can remove the tiny Gaussian component. The main input here is the a priori bound
 on the resolvent matrix elements obtained in Step~1.

More concretely, 
consider a random $N\times N$ hermitian or symmetric matrix $H_t=H^*_t$ with matrix elements $(h_{ij})$.
Suppose the matrix elements are time-dependent and they satisfy the Ornstein-Uhlenbeck~(OU) process 
\begin{align}
  \rd h_{ij}&= \frac{\rd B_{ij}}{\sqrt{N}} - \frac{1}{2} h_{ij}\,\rd t\,, \qquad\qquad
i,j\in \N_N\,,\quad i\le j \,,\label{zij1}
\end{align}
where $(B_{ij}\,:\, i<j)$ are independent complex Brownian motions with variance $t$ and $(B_{ii})$ are independent real Brownian with variance $t$ for $\beta=2$; while for $\beta=1$, $(B_{ij}\,:\,i<j)$ are independent real Brownian motions with variance $t$ and $B_{ii}$ are real Brownian motion with variance $2t$. It is easy to check that the solution to~\eqref{zij1}, $H_t$, with initial condition $ H_0$, satisfies the distributional equality
\begin{align}\label{OUbis}
  H_t \sim \euler{-t/2} H_0 + (1-\euler{-t})^{1/2} U\,,
\end{align}
where $U$ is Gaussian, \ie belongs to the GUE ($\beta=2$), respectively to the GOE ($\beta=1$),
and  $U$ is independent of $ H_0$. 

The eigenvalues of $H_t$, here denoted by $\bla(t)$, satisfy~\cite{DyB} the SDE~\eqref{le original dbm}, with $\beta=1$ or $\beta=2$, where the initial condition $\bla(0)$ is given by the eigenvalues of the initial matrix $H_0$.
We will run the OU process until time $t_2=o(1)$.
Let $\varrho$ denote the limiting density of $H_0$. 
We fix an energy $E_*$ in the bulk spectrum of $H_0$, \ie $\varrho(E_*)\ge c>0$; it is easy to see that
$E_*$ stays in the bulk of $H_t$ as well for any $t\le t_2$. 
The assumptions of Theorem~\ref{main theorem} can then, 
via the identification~\eqref{OUbis}, be checked from the matrix flow $H_t$ in the time slice $t\in [t_1, t_2]$. 
The typical choice is $t_2= N^{-\epsilon}$ and $t_1 = t_2 - N^{-1+2\delta}$, with some small positive 
exponents $\epsilon\ll \delta$.

Assumption~$(2)$ can be checked from a local law for the random matrix $H_t$.
We need such information not only for the original matrix $H_0$, but along the
whole OU flow.  Typically, however, when the local law is proven for some matrix $H_0$,
it also holds for $H_t$, \ie for $H_0$  with a Gaussian convolution.
 Notice that
the strong form of rigidity, an almost optimal bound on $\lambda_i(t) -\gamma_i(t)$
expressed in~\eqref{assumption:rigidity strong}, is
needed only for eigenvalues near $E_*$ in the bulk. Much weaker information  is needed for
far away eigenvalues; the condition~\eqref{assumption:rigidity weak} involves
controlling the density only on the macroscopic scale. In terms of the Stieltjes
transform, $m^{(N)}_t(z) \deq \frac{1}{N} \tr ( H_t-z)^{-1}$, $z\in \C^+$, of the empirical density, 
Assumption~$(2)$ follows if the bounds 
\begin{align}
   &|m^{(N)}_t(z) - m_t(z)| \le \frac{N^\xi}{N\eta}\,, \qquad \mbox{for} \;\;   z= E+i\eta\,, \quad
   \eta \in [N^{-1+\xi},c\Sigma]\,, \quad |E-E_*|\le \Sigma\,,\label{mm1}\\
  &|m^{(N)}_t(E_*+\ii\eta) - m_t(E_*+\ii\eta)| \le \frac{N^\xi}{N^\delta}\,, \qquad \mbox{for} \;\; 
   \eta \in { [c\Sigma,1]}\,,\label{mm2}
\end{align}
hold with high probability, for any $t\in [t_1, t_2]$. Indeed, \eqref{mm2} directly implies 
\eqref{assumption:rigidity weak}. By a simple application
of the Helffer-Sj\"ostrand formula
(\eg following the proof of  Lemma 8.1 in \cite{EKYY1}), we see from \eqref{mm1} that
\begin{align}\label{nee}
    \Big| \# \{ \lambda_j(t) \in [E_1,E_2]\} - \# \{ \gamma_j(t) \in [E_1,E_2]\}\Big| \le CN^\xi\,, 
    \qquad \forall \; E_1, E_2 \in \left[ E-\frac{1}{2}\Sigma, E+\frac{1}{2}\Sigma\right]\,.
\end{align}
 In particular, rigidity between the $\bla(t)$ and $\boldsymbol{\gamma}(t)$ sequences holds
on scale $N^{-1+\xi}$  within $[E-\frac{1}{2}\Sigma, E+\frac{1}{2}\Sigma]$. This implies
$|\lambda_i(t) -\gamma_{\ell(i)}(t)|\le N^{-1+\xi}$, for any $i\in \II$, up to an overall shift in the labeling that is encoded in the labeling function $\ell(i)$.
We only need to show that the labeling $\ell(i)$ is time-independent, \ie that along the whole time 
interval $t\in [t_1, t_2]$ it is 
{\it the same} element of the $\boldsymbol{\gamma}(t)$ sequence that stays close to
a given element of $\bla(t)$ within the rigidity precision $N^{-1+\xi}$. 
 We call this property the {\it persistent trailing} of DBM by the flow of the quantiles.
Given \eqref{nee}, it is sufficient to check this for one element of the sequence; \eg that
if $|\lambda_L(t_1) - \gamma_{\ell(L)}(t_1)|\le N^{-1+\xi}$ with some shifted index $\ell(L)$,
then $|\lambda_L(t) - \gamma_{\ell(L)}(t)|\le N^{-1+\xi}$, for all $t\in [t_1, t_2]$.
Notice that persistent trailing  is  a nontrivial feature of the DBM
 since the  length of the time interval $t_2-t_1 = N^{-1+2\delta}$ is
much bigger than the rigidity scale $N^{-1+\xi}$. Nevertheless, in Proposition~\ref{prop:persist} 
in Appendix~\ref{appendix:persist} we show that there is an event $\Xi_0$ in the probability space
of the Brownian motions with $\P (\Xi_0)\ge 1-N^{-\xi/2}$ such that 
$\gamma_{\ell(L)}(t)$ persistently trails $\lambda_L(t)$. It is easy to see that the universality in  Theorem~\ref{main theorem} also holds if Assumptions $(2)$-$(3)$ are valid only on the event~$\Xi_0$.

Level repulsion estimates of the form of Assumption~$(3)$ for random matrix ensembles can
be obtained using the method of~\cite{ESY4}. This approach requires two inputs:
strong local rigidity as in~\eqref{assumption:rigidity strong} and smoothness of the 
distribution of the matrix elements of~$H$. The former is already verified by Assumption~$(2)$,
the latter needs a slight extension of~\cite{ESY4} to ``almost-smooth'' distributions,
where smoothing may be provided by the OU process.
Indeed, in Appendix~B of~\cite{FE} it was shown that~$H_t$ satisfies  level repulsion
in the form~\eqref{lever}, if~$t = N^{-c\delta}$ with some small constant $c>0$
(another merit of the proof in~\cite{FE} is that it also presents the necessary modifications
to cover  symmetric matrices as well, while~\cite{ESY4} was written for hermitian matrices only).
So we will choose $\epsilon = \frac{c}{2}\delta$ in the definition $t_2= N^{-\epsilon}$
to guarantee that~\eqref{lever} holds for any $t\in [t_1, t_2]$.
Notice that the only reason to run the DBM up to a relatively large time $t_2= N^{-\epsilon}$ is
to guarantee that the  smoothing effect is substantial to yield level repulsion. If the
distribution of $H_0$ were smooth initially, so level repulsion
in its original form~\cite{ESY4} applied, we could have chosen $t_1=0$, $t_2= N^{-1+2\delta}$
with some small $\delta>0$.

Finally, Assumption~$(4)$ can easily checked as follows. For any two $N\times N$ matrices $A=A^*$, $B=B^*$, we have $ \mathrm{dist} \{\mathrm{Sp}(A),\mathrm{Sp}(B)\}\le \|A-B\|_{\infty}$, where  $\mathrm{Sp}(A)$, $\mathrm{Sp}(B)$ denote the spectra of $A,B$ and where $\|\,\cdot\|_{\infty}$ denotes the operator norm. Also recall that the operator norm of $U$ is bounded by a constant with overwhelming probability; see, \eg Exercise 2.1.30 of~\cite{AGZ}. Thus, choosing $A=H_t$, $B=H_0$, we see that Assumption~$(4)$ is satisfied provided that $\|H_0\|_\infty\le N^{\xi/2}$ with overwhelming probability. 
This bound can be easily proven for all matrix models we have in mind.

Having checked the assumptions, the conclusion of Theorem~\ref{main theorem} is that gap
universality holds for any matrix with a substantial Gaussian component of size $t_2\sim N^{-\epsilon}$.
The rest is a standard moment matching and Green function comparison argument
that we sketch for completeness.

Given an initial Wigner-like matrix $ \wh H$ for which we eventually wish to
prove universality, we choose $t_2= N^{-\epsilon}$ with 
a sufficiently small $\epsilon>0$. By moment matching (see, \eg Lemma~6.5 of~\cite{EYY}),
we construct another matrix $H_0$ 
such that the solution $H_{t_2}$ at time $t_2$
of the matrix Ornstein-Uhlenbeck process~\eqref{zij1} with initial condition~$H_0$ is close to~$\wh H$ 
in the four moment sense. Choosing~$T=t_2$ in Theorem~\ref{main theorem}, we obtain gap universality
for $H_T$ which also implies universality of local correlation functions at $E$
with a  small averaging in the energy parameter $E$ around $E_*$.
The local eigenvalue statistics of $\wh H$ and $H_T$ 
coincide by the Green function comparison theorem introduced in~\cite{EYY}.
More precisely, the method of~\cite{EYY} gives coincidence in the sense of correlation functions
while Theorem~1.10 of ~\cite{KY} 
 extends the Green function comparison
method to individual eigenvalues, hence to gaps as well. This completes our sketch on how to apply
Theorem~\ref{main theorem} for random matrix models.

\subsection{Strategy of the proof of Theorem~\ref{main theorem}}
The first part of the proof is to understand the dynamics on 
a macroscopic scale, \ie to control the semicircular flow
and the induced dynamics on the time-dependent quantiles $\gamma_i(t)$.
This analysis is of interest itself and it is deferred to the Appendix~\ref{section classical semicircle flow}
since it requires quite different tools than the main part of the proof. The key information (collected in Section~\ref{sec:reg})  is that the quantiles in the bulk move
coherently with a local mean velocity that varies in
time on the macroscopic scale. Since we concentrate on
the vicinity of a fixed energy~$E_*$ and on a small time window, by a simple
linear shift we can achieve that the mean velocity 
is negligible near~$E_*$.

The second step is to localize the problem:
we choose an integer $K\gg 1$ such that
\begin{align}\label{le choice of K}
N^{\delta}K^{10}\le N\,, \qquad \qquad K\le N^{\delta}\,.
\end{align}
We consider the conditional measure on $\cK=2K+1$ consecutive {\it internal 
points} $\bx = (\lambda_{L-K}, \ldots ,\lambda_{L+K})$, labeled by $I\deq \llbracket L-K, L+K\rrbracket $, conditioned
on the remaining $N-\cK$ {\it external points} $\by = ( \lambda_i \; : \; |i-L|>K)$. 
The index $L$ is chosen so that $\gamma_{\ell(L)}$ is close to $E_*$,
where $\gamma_k$ is the $k$-th $N$-quantile of the density 
at $t_1$.
In the equilibrium setup this corresponds to the local Gibbs measure
with boundary conditions given by~$\by$ (this idea was first
introduced in~\cite{BEY} in the $\beta$-ensemble context).
In our non-equilibrium setup, we work in the path space
and condition on  the whole trajectory $\bY = \{ \by(t)\; : \; t\in [T_1, t_2]\}$,
starting at some time $T_1\ge t_1$ chosen later.
The configuration interval $\boldsymbol{J}(t)= [y_{L-K-1}(t), y_{L+K+1}(t)]$ for the conditional measure 
is time-dependent, but by rigidity it is quite close to the corresponding
interval $[\gamma_{\ell(L-K-1)}(t), \gamma_{\ell(L+K+1)}(t)]$ given by the quantiles
that remains practically constant owing to the removal  of the mean velocity.
Still, $\boldsymbol{J}(t)$ may wiggle on the rigidity  scale $N^\xi/N$ which is
much bigger than our target scale, $1/N$,  the size of the gap, so that we cannot tolerate
this imprecision.
Furthermore, similarly to the basic idea of the local relaxation flow~\cite{ESY4, ESYY} we want to 
achieve universality by showing that the measure converges to a
(local) reference equilibrium measure. The local Gibbs measures
with boundary condition $\by(t)$ change too quickly to serve
as useful reference measures.

Therefore, in the third step, we define a {\it time-independent} local measure,~$\om_{T_1}$
with exterior points~$\wt\gamma_k$, $k\in I^c$. These exterior points coincide with~$y_k(T_1)$
for~$k$ far away from~$L$ while they are given by a typical configuration~$\bz$ of an auxiliary quadratic $\beta$-ensemble for $k$ near the boundary of $I$ (with a smooth interpolation in between). The auxiliary ensemble is chosen in such a way that the local density around~$E_*$ matches.
Using the rigidity bounds for both~$\by$ and~$\bz$, we establish that~$\om_{T_1}$
satisfies the logarithmic Sobolev inequality (LSI) and the corresponding
dynamics approaches to equilibrium on a time scale of order~$K/N$.
Furthermore, we show that the measure~$\om_{T_1}$ is rigid by using
a general criterion for rigidity of local measures given in  Theorem~4.2 of~\cite{EYSG}
together with the careful choice of the auxiliary ensemble. Moreover, we notice
that~$\om_{T_1}$ satisfies a level repulsion bound due to Theorem~4.3 of~\cite{EYSG}.
Finally, Theorem~4.1 of~\cite{EYSG} implies that the gap statistics of~$\om_{T_1}$
are universal.

The fourth step is to consider $\wt x_i(t)$, $i\in I$, $t\ge T_1$, 
the solution of the local DBM with exterior points $\wt\gamma_k$, $k\in I^c$,
and with initial condition $\wt x_i (T_1)=x_i (T_1)$. Writing
the distribution of $\wt\bx(t)$ as $g_t\, \om_{T_1}$, we derive
fast convergence to equilibrium, \ie for times $t\ge T_1'\deq T_1 + K(K/N)$
the measure $g_t\, \om_{T_1}$ is exponentially close to equilibrium 
in the relative entropy sense. This information can be used to
transfer rigidity and level repulsion from~$\om_{T_1}$ to~$g_t\, \om_{T_1}$,
furthermore it shows that the gap statistics of~$\wt \bx(t)$ are the same
as those of~$\om_{T_1}$, hence are universal.

The next idea, in the fifth step, is to {\it couple} the evolution of $\wt \bx$ to $\bx$ by using
the same Brownian motions in the DBMs. This basic coupling idea first appeared in~\cite{FE} in this
context ; its main advantage is that taking the difference
of the original DBM and the DBM for $\wt \bx$, we see that the 
difference vector $\bv \deq \bx - \wt \bx$ satisfies 
a system of ordinary differential equations (ODEs); the stochastic differentials drop out.
Roughly speaking, these ODEs have the form (see~\eqref{heat})
\begin{align}\label{heat3}
  \frac{\rd v_i}{\rd t} =  -  (\cB \bv)_i + F_i\,, \qquad   (\cB \bv)_i\deq \sum_{j\in I} B_{ij} (v_i-v_j) + W_i v_i\,,
\end{align}
with time-dependent coefficients $B_{ij}$, $W_i$ and a ``forcing term'' $F_i$
that all depend on the paths $\bx(t), \wt\bx(t)$. These coefficients are crudely
given by
$$
    B_{ij} = \frac{1}{N(x_i-x_j)(\wt x_i- \wt x_j)}\,, \quad\quad W_i = \frac{1}{N}\sum_{k\not\in I}
     \frac{1}{(x_i-y_k)(\wt x_i- \wt\gamma_k)}\,, \quad\quad F_i = \frac{1}{N}\sum_{k\not\in I}
     \frac{y_k-\wt\gamma_k}{(x_i-y_k)(\wt x_i- \wt\gamma_k)}\,.
$$
The equation~\eqref{heat3} is very similar to the basic equation studied in~\cite{EYSG}
but the forcing term is new. The key result of~\cite{EYSG} is a H\"older regularity theory
for~\eqref{heat3} without forcing, under suitable conditions on the coefficients.
We extend this statement to include the forcing term; here we rely on 
the finite speed of propagation, proved also in~\cite{EYSG}.
H\"older regularity in this context yields that, after some time of order $K^c/N$, $c\sim 1/100$, the discrete derivative
$v_{i+1} - v_{i}$ is much smaller than its naive size~$1/N$. Since $v_{i+1} - v_{i}= x_{i+1} - x_{i} - (\wt x_{i+1}-\wt x_{i})$,
we see that the gaps of $\bx$ and $\wt \bx$ coincide to leading order. Since the gaps of $\wt\bx$ were shown to be universal
in the previous step, we obtain that the gaps of $\bx(T)$, $T\deq T_1' + K^cN^{-1}$,  are universal.

There are several technical complications behind this scheme, most importantly 
we need to regularize the local singularity in the kernel~$B_{ij}$ when $x_i\approx x_{i\pm 1}$.
In fact, two different regularizations are used; the regularization of the dynamics
 in Section~\ref{subsection:step 1}
is borrowed from Section~3.1 of~\cite{FE}, while the regularization  of the equilibrium
measure $\om_{T_1}$ explained at the end of Section~\ref{subsection:three}  
is similar to the one in Section~9.3 of~\cite{EYSG} but with a different choice of regularization scale.

\section{Concepts}
In this section we recall essential concepts that will be used in the proof of Theorem~\ref{main theorem}.
\subsection{Definition of general $\beta$-ensembles }\label{sec:beta ensemble}
We first recall the notion of  $\beta$-ensembles or log-gases.  Let $N\in\N$ and recall the definition of the set $\digammac\subset\R^N$ in~\eqref{Weyl chamber}. Consider the probability distribution on $\digammac$ given by
\begin{align}\label{eqn:measure}
\mu_{\beta, V}^{(N)} (\rd\bla)\equiv \mu_V (\rd \bla) \deq\frac{1}{Z_{V}}\euler{-\beta N \cH}\rd\bla\,,\qquad\cH\equiv\cH(\bla) \deq\sum_{i=1}^N\frac{1}{2}V(\la_i)-\frac{1}{N}\sum_{1\le i<j\le N}\log (\la_j-\la_i)
\end{align}
 where $\beta>0$, $\rd\bla\deq {\bf 1}(\bla\in\digamma^{(N)})\,\rd \la_1\rd \la_2\cdots\rd \la_N$,
and $Z_{V}\equiv Z_{\beta,V}^{(N)}$ is a normalization. Here $V$ is a $N$-independent potential, \ie a real-valued, sufficiently regular function on $\R$ to be specified in each case. In the following, we often omit the parameters $N$ and~$\beta$ from the notation. We use~$\mathbb{P}^{\mu_V}$ and $\E^{\mu_V}$ to denote the probability and the expectation with respect to~$\mu_V$.
We view~$\mu_V$ as a Gibbs measure of~$N$ particles on~$\R$ with a logarithmic interaction, where the parameter
$\beta > 0$ may be interpreted as the inverse temperature. We refer to the variables~$(\la_i)$ as particles or points and we call the system a $\beta$-log-gas or a $\beta$-ensemble. We assume that the potential~$V$ is a~$C^4$
function on $\R$ such that its second derivative is bounded below, \ie we have
\begin{align}\label{assumption 1 for beta universality}
\inf_{x\in\R} V'' (x)\ge -2C_V\,,
\end{align}
for some constant $C_V\ge 0$, and we further assume that
\begin{align}\label{assumption 2 for beta universality}
V(x) > (2 + c) \log(1 + |x|)\,, \qquad\qquad x\in\R\,,
\end{align}
for some $c > 0$, for large enough $|x|$. It is also well known, see, \eg~\cite{BPS}, that under these conditions
the measure is normalizable, $Z_{V} < \infty$. Further, the averaged density of the empirical spectral measure,~$\varrho_V^{(N)}$,
defined as
\begin{align}\label{reference section 7 1}
\varrho_{V}^{(N)}\deq \E^{\mu_{V}}\frac{1}{N}\sum_{i=1}^N\delta_{\la_i}\,,
\end{align}
converges weakly in the limit $N\to\infty$ to a continuous function, $\varrho_V$, the equilibrium density, of compact support. It is
well known that $\varrho_V$ satisfies 
\begin{align}\label{regular potential U}
V'(x)=-2\int_\R\frac{\varrho_V(y)\rd y}{y-x}\,,\qquad\qquad x\in\supp\varrho_V\,.
\end{align}
In fact, equality in~\eqref{regular potential U} holds if and only if $x\in\supp\varrho_V$. 

Viewing the points $\bla=(\la_i)$ as points or particles on~$\R$, we define the quantile  of the $k$-th particle,~$\gamma_k$, under the $\beta$-ensemble~$\mu_V$ by
\begin{align}\label{general beta ensemble classical location}
 \int_{-\infty}^{\gamma_k}\varrho_V(x)\rd x=\frac{k}{N}\,.
\end{align}
 For a detailed discussion of general $\beta$-ensemble and the proof of the properties mentioned above we refer, \eg to~\cite{AGZ,BEY}.

Assume for the moment that the minimizer  $\varrho_V$ is supported on a single interval $[a,b]$, and that $V$ is ``regular''
in the sense of~\cite{KML}, \ie the equilibrium density of $V$ is positive on $(a,b)$ and vanishes
like a square root at each of the endpoints of $[a, b]$.  From~\cite{BEY,BEY2} we then have the following rigidity result.
\begin{proposition}\label{rigidity for time dependent beta one}
Let $V\in\C^4(\R)$ be a ``regular'' potential and assume that $\varrho_V$ is supported on a single interval. Then, for any $\xi>0$ there are constants $c_0,c_1>0$, such that
\begin{align}\label{rigidity for b ensemble one}
\P\left(|\lambda_k-\gamma_k|\ge N^{\xi} N^{-\frac{2}{3}}{\check k}^{-\frac{1}{3}}\right)\le \euler{-c_0N^{c_1}}\,,
\quad\qquad  1\le k\le N\,,
\end{align}
where $\check k\deq\min\{k,N-k+1\}$, for $N$ sufficiently large.
\end{proposition}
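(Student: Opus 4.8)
\emph{Proof proposal.} The statement is the rigidity theorem of~\cite{BEY,BEY2} and may be cited directly; for completeness I sketch how I would reprove it. The plan is to first establish a \emph{local law} for the Stieltjes transform $m_N(z)\deq\frac1N\sum_{i=1}^N\frac1{\la_i-z}$ of the empirical measure --- namely that, for any $\xi>0$ and any $D>0$,
\[
\P\Big(|m_N(z)-m_V(z)|\le\tfrac{N^{\xi}}{N(\kappa_E+\eta)}\ \text{ for all }z=E+\ii\eta,\ |E|\le C,\ N^{-1+\xi}\le\eta\le1\Big)\ge1-\euler{-c_0N^{c_1}}
\]
where $m_V$ is the Stieltjes transform of $\varrho_V$ and $\kappa_E$ is the distance of $E$ to the endpoints of $\supp\varrho_V$ --- and then to convert it into particle rigidity. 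The starting point is an \emph{a priori} estimate: from the variational characterization of $\varrho_V$ (\cf~\eqref{regular potential U}) and the large-deviation principle for $\mu_V$ (with~\eqref{assumption 1 for beta universality}--\eqref{assumption 2 for beta universality} ensuring normalizability and confinement), with probability $\ge1-\euler{-cN}$ all particles sit in a fixed neighbourhood of $\supp\varrho_V$ and the empirical measure is within, say, $N^{-1/4}$ of $\varrho_V$ when tested against Lipschitz functions; this gives a crude bound $|m_N(z)-m_V(z)|\le N^{\xi}/(N\eta)+N^{-1/8}$ at scales $\eta\gtrsim N^{-1/2}$, which seeds a bootstrap.

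The core of the argument is the bootstrap down to the optimal scale $\eta\sim N^{-1+\xi}$, driven by the \emph{loop (Schwinger--Dyson) equations}. Integrating by parts the identity $\int\partial_{\la_j}\!\big(\tfrac1{\la_j-z}\euler{-\beta N\cH}\big)\,\rd\bla=0$ and summing over $j$ produces, after symmetrizing the double sum over particles, a self-consistent relation of the schematic form $\big(m_N(z)-m_V(z)\big)\,b(z)=\big(m_N(z)-m_V(z)\big)^2+\mathcal{E}(z)$, where $b(z)=m_N(z)+m_V(z)+V'(z)\approx 2\ii\pi\varrho_V$ is bounded below in the bulk but degenerates like $\sqrt{\kappa_E+\eta}$ near the edges, and $\mathcal{E}(z)$ gathers a linear-statistics remainder together with a term of size $\mathrm{Var}^{\mu_V}(m_N(z))$. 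The variance is controlled by concentration of measure for log-gases: after localizing to the a-priori region and adding a compactly supported quadratic to convexify the energy (legitimate since $V''\ge-2C_V$ is bounded below), a Brascamp--Lieb/Bakry--\'Emery inequality gives $\mathrm{Var}^{\mu_V}\big(\im m_N(E+\ii\eta)\big)\lesssim \im m_V(E+\ii\eta)/(N\eta)^2$ with a Gaussian tail. Inserting this into the self-consistent relation and using the stability of the fixed-point equation $\int\frac{\varrho_V(y)\,\dd y}{y-z}=m_V(z)$ --- Lipschitz stability in the bulk, the usual cubic-equation stability near the edge --- improves the accuracy from scale $\eta$ to scale $\eta/2$; iterating $O(\log N)$ times reaches $\eta\sim N^{-1+\xi}$, the exponential tail surviving each step.

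To pass from the local law to rigidity, fix $E$ and let $\mathfrak{n}(E)\deq\#\{i:\la_i\le E\}$. A Helffer--Sj\"ostrand computation on the contour $\{E'+\ii\eta:\eta\in[N^{-1+\xi},1]\}$, fed with the local law, gives $\big|\mathfrak{n}(E)-N\int_{-\infty}^{E}\varrho_V(x)\,\dd x\big|\le N^{\xi}$ uniformly in $E$. Since $\varrho_V$ vanishes like a square root at the two endpoints, $\gamma_k$ lies at distance $\asymp(\check k/N)^{2/3}$ from the nearest edge and $\varrho_V\asymp(\check k/N)^{1/3}$ on an interval around $\gamma_k$ of length $\gg N^{\xi}N^{-2/3}\check k^{-1/3}$; combining this lower bound on the density with the counting estimate, the monotonicity of $\mathfrak{n}$, and the relations $\mathfrak{n}(\la_k)=k=N\int_{-\infty}^{\gamma_k}\varrho_V(x)\,\dd x$, and then inverting, yields $|\la_k-\gamma_k|\le N^{\xi}N^{-2/3}\check k^{-1/3}$ after a harmless relabeling of $\xi$.

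I expect the main obstacle to be the edge analysis inside the bootstrap: near the two endpoints of $\supp\varrho_V$ the coefficient $b(z)$ degenerates, so the stability of the quadratic self-consistent relation must be carried out uniformly in $\kappa_E$, exactly as in the edge analysis of Wigner matrices, and this is where the ``regular potential'' hypothesis (strict positivity of $\varrho_V$ in the interior, square-root vanishing at the endpoints) and the single-interval assumption (only one pair of edges, no internal degeneracies) are used in an essential way. A secondary technical point is reconciling the convexity required for the concentration input with the possible non-convexity of $V$, which is handled by restricting the analysis to the high-probability a-priori region.
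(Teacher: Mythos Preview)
Your proposal is correct: the paper does not prove this proposition at all but simply quotes it from~\cite{BEY,BEY2}, exactly as you say in your first sentence. Your sketch of the argument (large-deviation a-priori bound, loop-equation bootstrap with Brascamp--Lieb/Bakry--\'Emery concentration to get the local law down to scale $\eta\sim N^{-1+\xi}$, then Helffer--Sj\"ostrand to convert to counting and hence to rigidity) is an accurate summary of the strategy in~\cite{BEY,BEY2}, so there is nothing to compare beyond noting that the paper offers no proof of its own.
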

Proposition~\ref{rigidity for time dependent beta one} will only be used as an auxiliary result (see Subsection~\ref{subsubsection:aux beta} below), since, for most potentials of interests in the present paper, the equilibrium density~$\varrho_V$ is not supported on a single interval. The extension of Proposition~\ref{rigidity for time dependent beta one} to that settings has not been established.

Finally, for the Gaussian case, $V(x)=x^2/2$, we write $\mu_\Gauss$ instead of $\mu_V$, since $\mu_\Gauss$ is the equilibrium measure for the DBM. More precisely, the Gaussian distribution on $\digamma^{(N)}$ is given by
\begin{align}\label{gaussian measures}
\mu_{\Gauss}(\rd\bla)=\frac{1}{Z_{\Gauss}}\euler{-\beta N \cH_\Gauss}\rd\bla\,,\qquad\quad\cH_\Gauss(\bla) \deq\sum_{i=1}^N\frac{1}{4}\la_i^2-\frac{1}{N}\sum_{1\le i<j\le N}\log (\la_j-\la_i)\,,
\end{align}
where $Z_{\Gauss}\equiv Z_{\beta,\Gauss}^{(N)}$ is the normalization.

\subsection{Dyson Brownian motion}\label{sec:DBM}
Consider the DBM, $\bla(t)$, $t\ge 0$, on $\digammac$ of~\eqref{le original dbm} with initial condition $\bla(0)\in\digammac$. Denote by $f_0\,\mu_\Gauss$,  the distribution of $\bla(0)$\footnote{Strictly speaking, the distribution of $\bla(0)$ may not allow a density $f_0$ with respect to $\mu_\Gauss$, but for $t>0$, $\bla(t)$ admits such a density. Our proofs are not affected by this technicality.} and let $f_t\,\mu_\Gauss$ denote the distribution of $\bla(t)$. 
Then $f_t\equiv f_{t,N}$ satisfies the forward equation 
\begin{align}\label{dy1}
\partial_{t} f_t =  \cL f_t\,,
\end{align}
where 
\begin{align}\label{L}
\cL=\cL_N\deq   \sum_{i=1}^N \frac{1}{\beta N}\partial_{i}^{2}  +\sum_{i=1}^N
\Bigg(- \frac{1}{2} \lambda_{i} -  \frac{1}{N}\sum_{j\ne i}
\frac{1}{\lambda_j - \lambda_i}\Bigg) \partial_{i}\,, \qquad 
\partial_i=\frac{\partial}{\partial\lambda_i}\,,
\end{align}
or in short $\cL=\frac{1}{\beta N}\Delta-(\nabla \cH_\Gauss)\cdot\nabla$, with $\cH_\Gauss$ as in~\eqref{gaussian measures}.

\subsection{Relative entropy, Bakry-\'{E}mery criterion and the logarithmic Sobolev inequality}\label{subsec:rel entro}
A cornerstone in our proof is the analysis of the relaxation of the dynamics~\eqref{dy1}. Such an approach  was first introduced  in Section 5.1 of~\cite{ESY3}.
The presentation here follows~\cite{ESYY}.

Let $\mu$ be a probability measure on $\digamma^{(N)}$ be given by a general 
Hamiltonian $\cH$:
\begin{align}
  \rd\mu(\bx) = \frac{1}{Z}\euler{-\beta N \cH(\bx)}\,\rd \bx\,, \qquad 
\label{convBE}
\end{align}
and let $L$ be the generator, symmetric with respect to the measure $\rd\mu$,
 defined by the Dirichlet form
\begin{align}\label{D}
   D(f)= D_\mu(f) = -\int f L f \rd \mu\deq \frac{1}{\beta  N}\sum_j 
  \int(\pt_j f)^2 \rd \mu\,, \qquad \pt_j = \pt_{x_j}.
\end{align}
The relative entropy of two absolutely continuous probability measures on $\R^N$ is given by 
$$
   S (\wt\nu| \nu )\deq 
\int   \frac {\rd \wt\nu}  {\rd \nu} \log \left ( \frac {\rd \wt\nu}  {\rd \nu} \right ) \rd\nu\,.
$$
If $\rd\wt\nu = f\rd\nu$, then we use the notation $S_{\nu}(f)\deq S(f\nu|\nu)$.
The entropy can be used to control the total variation norm via the well-known inequalities
\begin{align}\label{entropyneq}
   \int |f-1|\rd \nu \le \sqrt{ 2 S_{\nu}(f)}\,,\qquad\qquad \mathbb{P}^{f\nu}(A)\le \mathbb{P}^{\nu}(A)+\sqrt{2S_{\nu}(f)}\,,
\end{align}
for any $\nu$-measurable event $A$.

Let $f_t$ be the solution to  the evolution equation $\partial_t f_t = L f_t$, $t>0$, with a given initial condition~$f_0$. Assuming that the Hamiltonian~$\cH$ satisfies
\begin{align}\label{convexham}
 \nabla^2\cH(\bla)=\mbox{Hess} \, \cH(\bla) \ge \vartheta \,,
 \quad\qquad \bla\in\digamma^{(N)}\,,
\end{align}
the Bakry-\'{E}mery criterion~\cite{BakEme} yields the logarithmic Sobolev inequality (LSI)
\begin{align}\label{first LSI}
 S_\mu(f) \le\frac{2}{\vartheta}  D_\mu (\sqrt{f})\,, \qquad\qquad f = f_0\in \mathrm{L}^{\infty}(\R^N,\rd\bla)\,,
\end{align}
and the exponential relaxation of the entropy and Dirichlet form 
\begin{align*}
    S_\mu(f_t )  \le \euler{-2\vartheta t } S_\mu(f_0  )\,,
 \qquad\qquad D_\mu (\sqrt{f_t}) \le \euler{-2\vartheta t } D_\mu (\sqrt{f_0}) \,,\qquad\quad t>0\,.
\end{align*}

We assume from now on
that~$\cH$ is given by~$\cH_\Gauss$ (see~\eqref{gaussian measures}),~$L$ is given by~$\cL$ (see~\eqref{L}) and that the
equilibrium measure is the Gaussian one, $\mu=\mu_\Gauss$.
We then have  the convexity inequality
\begin{align}\label{convex}
\Big\langle \bv , \nabla^2  \cH_\Gauss(\bx)\bv\Big\rangle
\ge   \frac{1}{2} \,  \|\bv\|^2 + \frac{1}{N}
 \sum_{i<j} \frac{(v_i - v_j)^2}{(x_i-x_j)^2} \ge  \frac{1 }{2} \,  \|\bv\|^2,
 \qquad\qquad \bv\in\bR^N\,.
\end{align}
This  guarantees that $\mu_\Gauss$ satisfies the LSI with $\vartheta=1/2$ and the relaxation time is of order one.

\subsection{Localized measures}\label{subsect:localized measures}
Following~\cite{EYSG}, we choose $K\in\llbracket N^{\varpi}, N^{1/10}\rrbracket$, for some small $0<\varpi<1/10$ and pick $L\in\llbracket K,N-K\rrbracket$. 
We denote by $ I=\llbracket L-K,L+K\rrbracket$ a set of $\cK\deq 2K+1$ consecutive indices around $L$. Recall the definition of the set $\digamma^{(N)}\subset\R^N$ in~\eqref{Weyl chamber}. For $\bla\in\digamma^{(N)}$, we rename the points~as
\begin{align}\label{the bflambda}
 \bla=(\lambda_1,\lambda_2,\ldots,\lambda_N)=(y_1,\ldots, y_{L-K-1},x_{L-K},\ldots, x_{L+K},y_{L+K+1},\ldots ,y_{N})\,,
\end{align}
and we call $\bla$ a configuration (of $N$ particles or points on the real line). Note that on the right side of~\eqref{the bflambda} the points retain their original indices and are in increasing order, 
\begin{align}\label{bfx notation}
 \bx=(x_{L-K},\ldots, x_{K+L})\in\digamma^{(\cK)}\,,\qquad\by=(y_1,\ldots, y_{L-K-1},y_{L+K+1},\ldots, y_N)\in \digamma^{(N-\cK)}\,.
\end{align}
We refer to $\bx$ as the {\it internal points or particles} and to $\by$ as the {\it external points or particles}. In the following, we often fix the external points and consider the conditional measures on the internal points: Let~$\nu$ be a measure with density on $\digamma^{(N)}$. Then we denote by $\nu^{\by}$ the measure obtained by conditioning on~$\by$, \ie for~$\bla$ of the form~\eqref{the bflambda},
\begin{align*}
\nu^{\by}(\rd\bx)\equiv \nu^{\by}(\bx)\rd\bx\deq\frac{\nu(\bla)\rd\bx}{\int\nu(\bla)\rd \bx}=\frac{\nu(\bx,\by)\rd \bx}{\int \nu(\bx,\by)\rd\bx}\,,
\end{align*}
where, with slight abuse of notation, $\nu(\bx,\by)$ stands for $\nu(\bla)$. We refer to the fixed external points~$\by$ as boundary conditions of the measure $\nu^{\by}$. For fixed $\by\in\digamma^{(N-\cK)}$, all $(x_i)$ lie in the configuration interval
\begin{align}\label{localization interval}
 \boldsymbol{J}_\by\deq [y_{L-K-1},y_{L+K+1}]\,.
\end{align}
Thus $\nu^\by$ is supported on $(\boldsymbol{J}_\by)^{\mathcal{K}}\cap \digamma^{(\mathcal{K})}$, but with  a slight abuse of terminology we often say that~$\nu^{\by}$ is supported on $\boldsymbol{J}_\by$. In case $\nu=f\mu$, we define the conditioned density by~$f^{\by}\mu^{\by}=(f\,\mu)^{\by}$.

For a potential $V$, we consider the $\beta$-ensemble $\mu_V$ of~\eqref{eqn:measure}. For $K,L$ and $\by$ fixed, we can write~$\mu_V^{\by}$~as
\begin{align}\label{le definition of a localized gibbs measure}
 \mu_V^{\by}(\rd\bx)=\frac{1}{Z_V^{\by}}\euler{-\beta N\cH^{\by}(\bx)}\rd\bx\,,\qquad\qquad \cH^{\by}(\bx)=\sum_{i\in I}\frac{1}{2} V^{\by}(x_i)-\frac{1}{N}\sum_{\substack{i,j\in  I\\ i<j }}\log |x_j-x_i|\,,
\end{align}
 $x_i\in\boldsymbol{J}_\by$, with $Z_V^{\by}\equiv Z_{\beta,V}^{\by}$ a normalization and with the {\it external potential}
\begin{align}
V^{\by}(x)&= V(x)-\frac{2}{N}\sum_{i\not\in I}\log |x-y_i|\,.\label{equation external potential}
\end{align}

\section{Localizing the measures}\label{subsec:localizing the measures}

\subsection{Localization at time $T_1$}
Let $K\in\N_N$ satisfy~\eqref{le choice of K}, with $K\ge \lfloor N^{\varpi}\rfloor$, $0<\varpi<1/10$, \ie  
\begin{align*}
 K^{10} N^{\delta}\le N\,,\qquad\qquad K\ge N^{\varpi}\,.
\end{align*}
Recall the constant $\sigma>0$ from the assumptions of Theorem~\ref{main theorem}. Let $\chi\in (0,\varpi)$ be a small constant, to be chosen later on. Note that $N^\chi\ll K$. Then introduce the intervals of integers
\begin{align}\label{le definition of Is}
 I\deq\llbracket L-K, L+K\rrbracket\,,\; \IO\deq\llbracket L-K^5-N^{\chi}K^4, L+K^5+N^{\chi}K^4\rrbracket\,,\;\II\deq\llbracket L-\sigma N,L+\sigma N\rrbracket\,,
\end{align}
and we denote by $I^{c}$, $\IO^c$, $\II^c$ the complements of $I$, $\IO$, $\II$ in $\N_N$. Note that $I\subset \IO\subset \II$. For a configuration $\bla\in\digamma^{(N)}$, we introduce $\bx$ and $\by$ as in~\eqref{bfx notation}. 

Fix a small $\xi>0$. Let
\begin{align}\label{le Cgs1}
 \cG_s^1\deq &\left\{ \by\in \digamma^{(N-\cK)}\: \; : |y_k-\gamma_{\rl(k)}(s)|\le N^\xi/N\;, \;\forall k\in \II \right\}\,,
\end{align}
respectively,
\begin{multline}\label{le Cgs2}
\cG_s^{2}\deq \bigg\{ \by\in \digamma^{(N-\cK)}\: \; :
 \left|\frac{1}{N}\sum_{k\in \II^c}\frac{1}{y_k-x}- \int_{y\in[y_{L-\sigma N},y_{L+\sigma N}]^c}\frac{\varrho_s(y)\dd y}{y-x}\right|\le \frac{N^{\xi}}{N^{\delta}}\,,\\ \forall x\in[y_{L-K^5},y_{L+K^5}]\,
 \bigg\}\,,
\end{multline}
where $\cK=2K+1$. Note that for each $s\ge 0$, we choose the labeling in $\cG_s^1$ to be the one of Assumption~$(2)$ of Theorem~\ref{main theorem}. We then set
\begin{align}\label{le cGs}
 \cG_s\deq \cG_s^1\cap \cG_s^2\,.
\end{align}

For  any $\bY\deq\{\by(s)\in\digamma^{(N-\cK)}\, : \, s\in [t_1,t_2]\}$
 trajectory, we define the conditional measure  $\P^{\bY}$
on the~$\bX\deq\{\bx(s)\in\digamma^{(N-\cK)}\,:\, s\in [t_1,t_2]\}$ trajectories in the usual way. We use $\P^\bY$ to denote
the conditional measure on the whole $\bX$ trajectories, while
for any fixed time $s$, we use  $\P^{\by(s)}$ for the conditional
measure (on the $\bx(s)$ configurations, for any fixed $s$). We set
\begin{multline}\label{le event calG}
  \cG\deq \Big\{ \bY=\{ \by(s)\;: \; s\in [t_1, t_2]\} \; : \\ \; \by(s)\in \cG_s, \; \forall s\; : \; \P^{\bY}( |x_i(s)-\gamma_{\rl(i)}(s)|
\le N^\xi/N\,,  \forall i\in I ) \ge 1-N^{-D}\,,\\ \textrm{ and }\sup_{t,s\in [t_1,t_2]}\max_{k\in I^c}| y_k(t)-y_k(s)|\le N^{\xi}\sqrt{t-s} \Big\} \,,
\end{multline}
for small $\xi>0$ and large $D>0$.
\begin{lemma}\label{le lemma probability estimates}
Let $\bla(t)$, $t\ge 0$ be the DBM on $\digammac$ of~\eqref{le original dbm} with fixed initial condition $\bla(0)$. Let $x_i(t)\equiv\lambda_i(t)$, $i\in I$, respectively $y_k(t)\equiv\lambda_k(t)$, $k\not\in I$, for all $t\ge 0$.  Under the assumptions of Theorem~\ref{main theorem}, for any (small) $\xi>0$ and any (large) $D>0$, we have
\begin{align}\label{proba cGs}
   \P \left(\by(s)\in  \cG_s,\, s\in [t_1,t_2]\right) \ge 1- N^{-cD}\,,
\end{align}
and
\begin{align}\label{proba cG}
   \P (\cG) \ge 1-N^{-cD}\,,
\end{align}
$c>0$, for large enough $N\ge N_0(\xi,D)$, where $\P$ is with respect the Brownian motions $(B_i)$ in~\eqref{le original dbm}.
\end{lemma}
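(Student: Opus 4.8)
The plan is to deduce the two probability bounds directly from the assumptions of Theorem~\ref{main theorem}, which are themselves stated in probabilistic form, so the main task is bookkeeping: showing that the events $\cG_s^1$, $\cG_s^2$, and $\cG$ are implied (up to negligible probability) by Assumptions $(2a)$, $(2b)$, $(3)$, $(4)$, and by Assumption $(1)$ together with the rigidity/Hölder regularity of the DBM. First I would handle $\cG_s^1$. By definition $\cG_s^1$ asks that $|y_k-\gamma_{\rl(k)}(s)|\le N^\xi/N$ for all $k\in \II$; but the $y_k$ are just the particles $\lambda_k(s)$ with $k\in I^c$, and $\II$ is the large index set on which strong rigidity \eqref{assumption:rigidity strong} holds. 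So $\cG_s^1$ for all $s\in[t_1,t_2]$ is exactly the event in \eqref{assumption:rigidity strong} intersected over $i\in \II$ (minus the finitely many internal indices, which are irrelevant for the $\by$-configuration), and a union bound over the $|\II|\le 2\sigma N+1$ indices turns the individual $1-N^{-D}$ bounds into $1-N^{-D+1}$, hence $1-N^{-cD}$ after relabeling $D$.

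Next I would handle $\cG_s^2$. The event \eqref{le Cgs2} controls the difference between the empirical Stieltjes-type sum $\frac1N\sum_{k\in\II^c}(y_k-x)^{-1}$ and its continuum counterpart $\int_{[y_{L-\sigma N},y_{L+\sigma N}]^c}\varrho_s(y)(y-x)^{-1}\dd y$, uniformly for $x$ in the small interval $[y_{L-K^5},y_{L+K^5}]$. Assumption \eqref{assumption:rigidity weak} gives exactly this bound at the single point $x=E_*$, with $\boldsymbol{I}(t)$ defined via the quantiles $\gamma_{\rl(L\pm\sigma N)}(t)$. To upgrade from the single point $E_*$ to the whole interval, and to replace the quantile endpoints by the particle endpoints $y_{L\pm\sigma N}$, I would use the following two observations. (i) On the event $\cG_s^1$ (already controlled), $|y_{L\pm\sigma N}(s)-\gamma_{\rl(L\pm\sigma N)}(s)|\le N^\xi/N$, and by Assumption $(1)$ the density $\varrho_s$ is bounded near these points, so the discrepancy between integrating over $[\gamma_{L-\sigma N},\gamma_{L+\sigma N}]^c$ and over $[y_{L-\sigma N},y_{L+\sigma N}]^c$ is at most $O(N^\xi/N)\ll N^\xi/N^\delta$, since the integrand $(y-x)^{-1}$ is of order $1/\sigma$ on that thin sliver. (ii) The two functions of $x$ being compared are both analytic in $x$ on $[y_{L-K^5},y_{L+K^5}]$, which lies deep inside $[y_{L-\sigma N},y_{L+\sigma N}]$ at macroscopic distance from its boundary, so their $x$-derivative is $O(\sigma^{-2})$, while the interval $[y_{L-K^5},y_{L+K^5}]$ has length $O(K^5/N)$ by strong rigidity; hence the uniform bound over this interval differs from the value at $E_*$ by $O(K^5/(N\sigma^2))\ll N^\xi/N^\delta$ using \eqref{le choice of K}. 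Combining (i), (ii) and \eqref{assumption:rigidity weak} gives $\cG_s^2$ up to probability $N^{-cD}$; intersecting with $\cG_s^1$ yields \eqref{proba cGs}.

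Finally, for \eqref{proba cG}: the event $\cG$ additionally requires the conditional bound $\P^{\bY}(|x_i(s)-\gamma_{\rl(i)}(s)|\le N^\xi/N,\,\forall i\in I)\ge 1-N^{-D}$ and the Hölder bound $\sup_{t,s}\max_{k\in I^c}|y_k(t)-y_k(s)|\le N^\xi\sqrt{t-s}$. The Hölder bound on the external particles is immediate from Assumption $(4)$, since $I\subset\II$ and $I^c\supset \N_N\setminus\II$ — one applies \eqref{equation} of Assumption~$(4)$ and a union bound over $k\in I^c$; note $I^c\cap\II$ is handled by strong rigidity, which a fortiori gives Hölder-$1/2$ continuity on the window $[t_1,t_2]$ of length $N^{-1+2\delta}$ at rigidity precision. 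For the conditional rigidity statement I would argue by conditional expectation: let $A=\{|x_i(s)-\gamma_{\rl(i)}(s)|\le N^\xi/N,\forall i\in I, \forall s\}$, which by strong rigidity \eqref{assumption:rigidity strong} and a union bound over $i\in I$ has $\P(A)\ge 1-N^{-D'}$; then $\E[\mathbf{1}(A^c)\mid \bY]$ is a nonnegative random variable with expectation $\le N^{-D'}$, so by Markov's inequality the event $\{\E[\mathbf{1}(A^c)\mid\bY]\le N^{-D}\}$ has probability $\ge 1-N^{D-D'}$, and on this event $\P^{\bY}(A)\ge 1-N^{-D}$. Taking $D'$ large relative to $D$ and intersecting with the event from \eqref{proba cGs} and the Hölder event, a final union bound gives \eqref{proba cG}.

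The main obstacle, and the only place where real (rather than routine) work is needed, is step two: converting the single-point weak-rigidity input \eqref{assumption:rigidity weak} at $E_*$ into the uniform-in-$x$ statement $\cG_s^2$ while simultaneously swapping quantile endpoints for particle endpoints. This requires genuinely using the analyticity/derivative bounds from Assumption~$(1)$ together with the length estimate $K^5/N$ and the scale separation \eqref{le choice of K}; everything else is union bounds and Markov's inequality.
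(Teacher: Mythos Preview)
Your approach is exactly what the paper intends: its own proof is the single sentence ``Both estimates follow directly from the assumptions $(1)$--$(4)$ in Theorem~\ref{main theorem}.'' You have correctly unpacked this into the natural pieces (union bound over $\II$ for $\cG_s^1$; derivative bound $O(\sigma^{-2})$ plus length $O(K^5/N)$ to pass from the single point $E_*$ to the full interval in $\cG_s^2$; Markov on the conditional expectation for the $\P^{\bY}$ clause; Assumption~$(4)$ for the H\"older clause).

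One small point worth tightening: your handling of the H\"older bound for indices $k\in \II\cap I^c$ via strong rigidity gives $|y_k(t)-y_k(s)|\le 2N^\xi/N + C|t-s|$, and this does \emph{not} imply $N^\xi\sqrt{t-s}$ when $t-s$ is extremely small (say $t-s\ll N^{-2}$), since then $N^\xi\sqrt{t-s}\ll N^\xi/N$. This is harmless for two reasons: first, the H\"older clause in $\cG$ is in fact only ever used in the paper for $k\in\II^c$ (see~\eqref{le speed estimate 4}), where Assumption~$(4)$ applies directly; second, for $k\in\II$ and $|t-s|\le N^{-2}$ one can control the increment by the stochastic integral term of the DBM alone (the drift being bounded on the rigidity event), giving $O(\sqrt{(t-s)/N})\ll N^\xi\sqrt{t-s}$ with overwhelming probability. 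Either patch closes the gap; otherwise your argument is complete and matches the paper.
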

\begin{proof}
 Both estimates~\eqref{proba cGs} and~\eqref{proba cG} follow directly from the assumptions $(1)$-$(4)$ in Theorem~\ref{main theorem}.
\end{proof}

Throughout the rest of this section
we will consider the trajectory $\bY\in\cG$ as fixed. Nevertheless, all estimates will be uniform on $\cG$. In particular, all constants only depend on the constants in~\eqref{le event calG}, 
the constants $\delta,\epsilon$ and $\sigma$ of Theorem~\ref{main theorem} as well as the parameter $\xi>0$. 

\subsection{Regularity of the semicircular flow and removal of mean drift}\label{sec:reg}
Consider the DBM, $\bla(t)$ of~\eqref{le original dbm} with initial condition $\bla(0)$ and the semicircular flow $\varrho_t=\mathcal{F}_t[\varrho]$. We first study some regularity properties of $\varrho_t$ for $t\in [t_1,t_2]$. The following result is proven in Subsection~\ref{le subsubsection proof of classical flow result} of the Appendix~\ref{section classical semicircle flow}.
\begin{lemma}\label{le density lemma}
  Under the assumptions of Theorem~\ref{main theorem}, the semicircular flow $\varrho_t$ satisfies
  \begin{align}\label{assumption on regularity of varrho}
  C^{-1}\le \varrho_t(E)\le C\,,\qquad\qquad |\partial_E\varrho_t(E)|\le CN^\delta\,,
  \end{align}
for all $E\in[E_*-\Sigma/2,E_*+\Sigma/2]$ and all $t\in[t_1,t_2]$.
\end{lemma}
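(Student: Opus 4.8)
\textbf{Proof plan for Lemma~\ref{le density lemma}.}
The plan is to transfer the regularity bounds on the Stieltjes transform $m_{\varrho_{t_1}}$ assumed in Assumption~$(1)$ along the semicircular flow from time $t_1$ to any time $t\in[t_1,t_2]$, and then invoke the Stieltjes inversion formula~\eqref{le stieltjes inversion} to read off the pointwise bounds on the density. The key point is that the time increment $t-t_1\le t_2-t_1\le N^{-\epsilon}$ is small, so the flow cannot distort the density by more than a controlled amount on the relevant scales. First I would record that, by~\eqref{free convolution first}, the Stieltjes transform $m_t\equiv m_{\varrho_t}$ solves the characteristic equation
\begin{align*}
 m_t(z)=\int_\R\frac{\dd\varrho(y)}{\euler{-t/2}y-z-(1-\euler{-t})m_t(z)}=m_{t_1}\big(\euler{(t-t_1)/2}(z+(1-\euler{-(t-t_1)})m_t(z))\big)\euler{(t-t_1)/2}\,,
\end{align*}
i.e. the flow for $s=t-t_1\in[0,N^{-\epsilon}]$ can be written in terms of $m_{t_1}$ alone via the semigroup property $\mathcal{F}_{t}=\mathcal{F}_{t-t_1}\circ\mathcal{F}_{t_1}$. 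Writing $z=E+\ii\eta$ with $E\in[E_*-\Sigma/2,E_*+\Sigma/2]$ and $\eta\ge0$, the characteristics tell us that $m_t(z)=m_{t_1}(\zeta_t(z))$ up to the explicit factor, where $\zeta_t(z)=z+(1-\euler{-s})m_t(z)+O(s)$; since $|m_t|\le C$ (propagated from the $t_1$ bound by a standard fixed-point/continuity argument in $s$, using that the map defining~\eqref{free convolution first} is a contraction for the relevant range of parameters) and $s\le N^{-\epsilon}$, the characteristic point $\zeta_t(z)$ stays inside the domain $\mathcal{D}_\Sigma$ where $m_{t_1}$ is controlled, provided $E$ is taken in the slightly smaller interval $[E_*-\Sigma/2,E_*+\Sigma/2]$ and $\epsilon$ is small relative to the gap $\Sigma/2$ between the two intervals.

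Next I would establish the three claims in order: (i) $|m_t(z)|\le C$ uniformly on the smaller domain, which follows by a continuity-in-$s$ argument started from $|m_{t_1}|\le C$, shrinking the constant $\Sigma$ slightly and using that $\im m_t\ge0$ together with the lower bound $\varrho_{t_1}\ge c$ to prevent the characteristics from escaping; (ii) the lower bound $\varrho_t(E)\ge C^{-1}$ and upper bound $\varrho_t(E)\le C$ on $[E_*-\Sigma/2,E_*+\Sigma/2]$: here $\pi\varrho_t(E)=\im m_t(E+\ii0)=\im m_{t_1}(\zeta_t(E))\cdot(\text{explicit positive factor})$, and since $\zeta_t(E)$ has imaginary part of order $(1-\euler{-s})\im m_{t_1}\sim s\varrho_{t_1}\ge c\,s>0$, this is $\im$ of $m_{t_1}$ evaluated just above the real axis; using $\varrho_{t_1}\ge c$ and the Lipschitz-type control from $|\partial_z m_{t_1}|\le CN^\delta$ gives $\im m_{t_1}(\zeta_t(E))=\pi\varrho_{t_1}(\re\zeta_t(E))+O(s N^\delta\cdot s)=\pi\varrho_{t_1}(\re\zeta_t(E))+O(N^{-2\epsilon}N^\delta)$, which is $\ge c/2$ once $2\epsilon>\delta$, and bounded above by $C$; (iii) the derivative bound $|\partial_E\varrho_t(E)|\le CN^\delta$: differentiating $m_t(E+\ii0)=m_{t_1}(\zeta_t(E))\cdot(\cdots)$ in $E$, using the chain rule, the bound $|\partial_z m_{t_1}|\le CN^\delta$, and $|\partial_E\zeta_t(E)|=1+(1-\euler{-s})\partial_E m_t(E)$, which after solving a linear relation (possible since $(1-\euler{-s})|\partial_E m_t|$ is small) gives $|\partial_E m_t(E+\ii0)|\le CN^\delta$, hence $|\partial_E\varrho_t(E)|=\frac1\pi|\partial_E\im m_t(E+\ii0)|\le CN^\delta$.

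The main obstacle I anticipate is step (i)–(iii) bookkeeping near $\eta=0$: one must justify that $m_t$ extends continuously to the real axis on the subdomain and that the characteristic map $z\mapsto\zeta_t(z)$ is a bijection onto a neighborhood containing $\zeta_t([E_*-\Sigma/2,E_*+\Sigma/2])\subset\mathcal{D}_\Sigma$, so that evaluating $m_{t_1}$ at $\zeta_t$ is legitimate and the inverse-function estimates for $\partial_E$ are uniform. This is where the precise relation between $\epsilon$, $\delta$, and $\Sigma$ enters: one needs $s N^\delta\ll1$ (guaranteed by $\epsilon>\delta$, which is part of the hypotheses since $\epsilon\ge2\delta$) to keep the derivative perturbation under control, and $s\ll\Sigma$ to keep characteristics inside $\mathcal{D}_\Sigma$. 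Since this lemma is stated as being proven in Appendix~\ref{section classical semicircle flow}, I would set up the characteristic-curve ODE for $\zeta_t$ explicitly there (this is the complex Burgers / inviscid-type flow mentioned in the introduction), prove a quantitative stability estimate for it on the time interval $[t_1,t_2]$, and then the three bounds in~\eqref{assumption on regularity of varrho} fall out by composition with $m_{t_1}$ and Assumption~$(1)$.
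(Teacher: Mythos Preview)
Your approach is essentially the same as the paper's: write $m_t(z)=\euler{s/2}m_{t_1}(\zeta_t(z))$ with $\zeta_t(z)=\euler{s/2}(z+\sigma_s m_t(z))$, $\sigma_s=1-\euler{-s}$, propagate the uniform bound $|m_t|\le C$ for $s\le t_2-t_1$, and then read off the density and derivative bounds. Two small points are worth noting.

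First, in your step~(i) you invoke a ``contraction/continuity'' argument to get $|m_t|\le C$. The paper instead starts from the elementary \emph{a~priori} bound $|m_t(z)|\le\sigma_t^{-1/2}$, obtained directly from Schwarz's inequality applied to the defining integral~\eqref{free convolution first}, and then bootstraps once via $|m_t(z)-\widetilde m_t(z)|\le CN^\delta\sigma_t|m_t(z)|\le CN^\delta\sigma_t^{1/2}$. This is what makes the threshold $t\le C'N^{-2\delta}$ (hence the hypothesis $\epsilon\ge 2\delta$) appear naturally, rather than the weaker $\epsilon>\delta$ you state.

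Second, there is a small circularity in your step~(ii): the imaginary part of $\zeta_t(E)$ is $\sigma_s\,\im m_t(E+\ii0)$, not $\sigma_s\,\im m_{t_1}$, so you cannot use the lower bound on $\varrho_{t_1}$ there before you know the lower bound on $\varrho_t$. The paper avoids this by proving the stronger estimate $|m_t(z)-m_{t_1}(z)|\le C tN^\delta$ uniformly on $\mathcal D_{\Sigma/2}$ (Lemma~\ref{le lemma mfc}); taking imaginary parts on the real axis gives $|\varrho_t(E)-\varrho_{t_1}(E)|\le CtN^\delta\ll1$ directly, so both the upper and lower bounds on $\varrho_t$ follow immediately from those on $\varrho_{t_1}$ without needing to analyze $\im m_{t_1}(\zeta_t(E))$.
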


Let $L\in\N_N$ be as in Theorem~\ref{main theorem}. In particular, we have $\varrho_{t_1}(\gamma_L(t_1))\ge c>0$. Then, we have 
\begin{align}\label{sloth}
 |x_i(t)-\gamma_{\rl(i)}(t)|\le\frac{N^{\xi}}{N}\,,\qquad \qquad i\in I\,,\qquad \by(t)\in\cG_t\,,
\end{align}
with high probability for $N$ sufficiently large, uniformly in $t\in[t_1,t_2]$, for some labeling $\ell$ that will be fixed throughout the paper. Recall from~\eqref{the gammas} that the quantiles $\boldsymbol\gamma$ are determined by
\begin{align}\label{nochmals classical locations}
 \int_{-\infty}^{\gamma_{k}(t)}\,\varrho_t(x)\,\dd x=\frac{k}{N}\,.
\end{align}
The evolution of $\boldsymbol{\gamma}$ is studied in the Appendix~\ref{section classical semicircle flow} where the following result is proved.
\begin{lemma}\label{le lemma for new gamma}
 Under the assumptions of Theorem~\ref{main theorem}, the quantiles $(\gamma_k)$ defined through~\eqref{nochmals classical locations}, satisfy
\begin{align}\label{le eom dotgamma}
 \dot\gamma_{\rl(i)}(t)=-\int_\R\frac{\varrho_t(y)\dd y}{y-\gamma_{\rl(i)}(t)}-\frac{\gamma_{\rl(i)}(t)}{2}\,,\qquad\qquad \ell(i)\in\II\,,
\end{align}
for all $t>0$, where $\dot\gamma_{\rl(i)}(t)\equiv\frac{\dd}{\dd t}\gamma_{\rl(i)}(t)$.  In particular, we have $|\dot\gamma_{\rl(L)}|\le C$. Further, we have 
\begin{align}\label{le eom dotgamma 2}
 \dot\gamma_{\rl(i)}(t)  = -\frac{1}{N}\sum_{k=1}^N \frac{1}{\gamma_k(t) -\gamma_{\rl(i)}(t)}-\frac{\gamma_{\rl(i)}(t)}{2} + O\Big(\frac{N^\delta}{N}
  \Big)\,,\qquad\qquad \ell(i)\in\II\,,
\end{align}
uniformly in $t\in[T_1,t_2]$. Moreover, we have the estimates
\begin{align}
|\dot\gamma_{\rl(i)}(t)-\dot\gamma_{\rl(L)}(t)|\le CN^{-1+\delta} |i-L|\,,\qquad
\qquad |\dot\gamma_{\rl(i)}(t)-\dot\gamma_{\rl(i)}(T_1)|\le  CN^{\delta}(t-T_1)\,,\label{le estimate on dotgamma}
\end{align}
uniformly in $t\in[T_1,t_2]$ and $\ell(i)\in\II$. 
\end{lemma}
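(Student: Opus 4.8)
\textbf{Proof proposal for Lemma~\ref{le lemma for new gamma}.}
The plan is to differentiate the defining relation~\eqref{nochmals classical locations} in $t$ and then use the known PDE satisfied by the density $\varrho_t$ under the semicircular flow. First I would recall that the Stieltjes transform $m_t\equiv m_{\varrho_t}$ satisfies~\eqref{free convolution first}, which upon differentiation in $t$ yields the complex Burgers-type equation $\partial_t m_t = -\tfrac12\partial_z\big((z m_t - H[m_t]\,?)\cdots\big)$; more precisely, taking boundary values and using~\eqref{le stieltjes inversion} one gets the continuity equation $\partial_t \varrho_t(x) + \partial_x\big(\varrho_t(x) b_t(x)\big)=0$ on the bulk, where the velocity field is $b_t(x) = -\,\Hi{\varrho_t}(x) - \tfrac{x}{2}$, with $\Hi{\varrho_t}$ the Hilbert transform from~\eqref{le def of hilbert transform}. (This is exactly the classical/hydrodynamic description of the semicircular flow; the details are the content of Appendix~\ref{section classical semicircle flow}, so I would quote them.) Differentiating $\int_{-\infty}^{\gamma_{\rl(i)}(t)}\varrho_t(x)\,\dd x = \rl(i)/N$ in $t$ gives $\dot\gamma_{\rl(i)}(t)\,\varrho_t(\gamma_{\rl(i)}(t)) + \int_{-\infty}^{\gamma_{\rl(i)}(t)}\partial_t\varrho_t(x)\,\dd x = 0$, and substituting the continuity equation turns the integral into the boundary term $-\varrho_t(\gamma_{\rl(i)}(t))\,b_t(\gamma_{\rl(i)}(t))$ (the contribution at $-\infty$ vanishes since $\varrho_t$ has finite second moment, hence decays). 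Dividing by $\varrho_t(\gamma_{\rl(i)}(t))$, which is bounded below by Lemma~\ref{le density lemma} when $\rl(i)\in\II$, produces $\dot\gamma_{\rl(i)}(t) = b_t(\gamma_{\rl(i)}(t)) = -\int_\R \frac{\varrho_t(y)\,\dd y}{y-\gamma_{\rl(i)}(t)} - \frac{\gamma_{\rl(i)}(t)}{2}$, which is~\eqref{le eom dotgamma}. The bound $|\dot\gamma_{\rl(L)}|\le C$ then follows since $\Hi{\varrho_t}$ is bounded near $E_*$ (from Assumption~$(1)$ on $m_{\varrho_{t_1}}$ propagated by Lemma~\ref{le density lemma}, or directly from regularity of $m_t$ on $\mathcal D_\Sigma$) and $\gamma_{\rl(L)}(t)$ stays near $E_*$.

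Next I would prove~\eqref{le eom dotgamma 2}, the discretization of the integral. Write $\int_\R\frac{\varrho_t(y)\,\dd y}{y-\gamma_{\rl(i)}(t)}$ as a principal value and compare it to the Riemann-type sum $\frac1N\sum_{k=1}^N\frac{1}{\gamma_k(t)-\gamma_{\rl(i)}(t)}$. By the quantile definition, the points $\gamma_k(t)$ are spaced so that each carries density-mass $1/N$, so the sum is a quadrature approximation of the integral with nodes at the quantiles. Away from the singularity (say $|\gamma_k(t)-\gamma_{\rl(i)}(t)|\ge N^{-1+c}$) the error per interval is controlled by the derivative of $1/(y-\gamma_{\rl(i)}(t))$ times the square of the local spacing, i.e.\ $O(N^{-2}\varrho_t^{-2}\cdot$ (distance)$^{-2})$, and summing over the bulk gives $O(N^{\delta}/N)$ after using $|\partial_E\varrho_t|\le CN^\delta$ from Lemma~\ref{le density lemma} to control the non-uniformity of the spacing. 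Near the singularity the principal-value cancellation between the symmetric neighbors $\gamma_{\rl(i)\pm k}(t)$ and the corresponding integral contribution has to be exploited — this is the standard argument showing a principal-value quadrature error is no worse than the smooth case — and one picks up at most another $O(N^\delta/N)$. Outside the interval $\II$, where no density lower bound is assumed, one simply bounds the contribution crudely using that $\varrho_t$ is a probability measure with finite second moment, so those far tails contribute $O(1/N)$ to both the sum and the integral with matching errors. Assembling these pieces gives~\eqref{le eom dotgamma 2}.

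Finally, the two-point estimates~\eqref{le estimate on dotgamma} follow by differentiating the formula~\eqref{le eom dotgamma}. For the first bound, $\dot\gamma_{\rl(i)}(t) - \dot\gamma_{\rl(L)}(t) = b_t(\gamma_{\rl(i)}(t)) - b_t(\gamma_{\rl(L)}(t))$, so it suffices to control $|\partial_x b_t(x)| = |\partial_x \Hi{\varrho_t}(x) + \tfrac12|$ on the relevant range and multiply by $|\gamma_{\rl(i)}(t)-\gamma_{\rl(L)}(t)|$. The latter distance is $O(N^{-1}|i-L|\,\varrho_t^{-1}) = O(N^{-1}|i-L|)$ by the quantile spacing and the density lower bound on $\II$; the former is $O(N^\delta)$ because $\partial_x\Hi{\varrho_t}(x) = \Re\,\partial_z m_t(x+\ii 0)$ and $|\partial_z m_t|\le C N^\delta$ on $\mathcal D_\Sigma$ (Assumption~$(1)$ together with Lemma~\ref{le density lemma}; this is also established in the appendix as part of the regularity analysis of $m_t$). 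Hence $|\dot\gamma_{\rl(i)}(t)-\dot\gamma_{\rl(L)}(t)|\le C N^{-1+\delta}|i-L|$. For the second bound I would instead estimate $\partial_t\dot\gamma_{\rl(i)}(t) = \partial_t b_t(\gamma_{\rl(i)}(t)) + b_t'(\gamma_{\rl(i)}(t))\dot\gamma_{\rl(i)}(t)$; the second summand is $O(N^\delta)$ by the previous display (with $i$ in place of $i-L$, bounded using $|\dot\gamma|\le C$), and the first, $\partial_t\Hi{\varrho_t}$, is controlled by the time-regularity of the flow established in the appendix, which again gives $O(N^\delta)$. Integrating from $T_1$ to $t$ yields $|\dot\gamma_{\rl(i)}(t)-\dot\gamma_{\rl(i)}(T_1)|\le CN^\delta(t-T_1)$.

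\textbf{Main obstacle.} The genuinely delicate point is~\eqref{le eom dotgamma 2}: controlling the quadrature error of the singular kernel $1/(y-\gamma_{\rl(i)}(t))$ by the quantiles uniformly down to distance $\sim 1/N$ from the singularity, and doing so while the density is only known to be Lipschitz with a possibly large constant $N^\delta$ (rather than genuinely smooth). One must carefully pair up symmetric neighbors around $\gamma_{\rl(i)}(t)$ to harvest the principal-value cancellation, and separately handle the transition from the inside of $\II$ (density bounded above and below) to the outside (only the probability-measure and finite-second-moment properties available). The cleanest route is probably to freeze $t$, regard this as a deterministic statement about the measure $\varrho_t$, and invoke the density regularity of Lemma~\ref{le density lemma} together with the constraint that $\varrho_t$ has finite second moment; the rest is the estimate I sketched. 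The time-regularity inputs needed for the second bound in~\eqref{le estimate on dotgamma} are also nontrivial but are exactly what Appendix~\ref{section classical semicircle flow} is designed to supply, so I would simply cite them.
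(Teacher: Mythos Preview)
Your proposal is correct and follows essentially the same route as the paper: derive the continuity equation for $\varrho_t$ from the complex Burgers equation, differentiate the quantile relation to obtain~\eqref{le eom dotgamma}, get~\eqref{le estimate on dotgamma} from bounds on $\partial_x b_t$ and $\partial_t b_t$ (the paper packages these as a second-derivative estimate on $\gamma_w^{\eta_*}$ via an $\eta_*$-regularization), and obtain~\eqref{le eom dotgamma 2} by a quadrature comparison with principal-value cancellation near the singularity. The one concrete device the paper adds that you did not name is the use of \emph{half-quantiles} $\wh\gamma_k=\gamma_{k-1/2}$ as interval endpoints, so that $\gamma_k$ sits at the midpoint of $[\wh\gamma_k,\wh\gamma_{k+1}]$; this makes the symmetric pairing $k\leftrightarrow 2j-k$ precise and reduces the near-diagonal error to a second-order Taylor remainder controlled by $|\varrho_t'|\le CN^\delta$, yielding $O(N^{-1+\delta}\log N)$.
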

Equation~\eqref{le eom dotgamma} shows that the points~$( \gamma_{\rl(i)}(t))$  approximately satisfy 
a gradient flow evolution of particles with quadratic confinement and
interacting via the
mean field potential $ \frac{1}{N}\log |x-y|$.

Lemma~\ref{le lemma for new gamma} is proved in Subsection~\ref{le subsubsection proof of classical flow result} of Appendix~\ref{section classical semicircle flow}. Let us briefly mention how the constant~$\Sigma$ in Assumption~$(1)$ and the constant $\sigma$ in Assumption~$(2)$ can be related. For given $\Sigma>0$, we can choose $\sigma>0$ such that $\gamma_{\rl(i)}(t_1)$, for any $i\in\II
=\llbracket L-\sigma N,L+\sigma N\rrbracket $, all lie inside $[E_*-\Sigma/4,E_*+\Sigma/4]$. Then we know from Lemma~\ref{le lemma for new gamma} that $\gamma_{\rl(i)}(t)\in [E_*-\Sigma/2,E_*+\Sigma/2]$ for all $t\in[t_1,t_2]$. By Lemma~\ref{le density lemma}, we have control over $\varrho_t$ on $[E_*-\Sigma/2,E_*+\Sigma/2]$.

For simplicity of notation, we henceforth drop the labeling $\ell$ and simply write, with some abuse of notation, $\gamma_{i}(t)\equiv \gamma_{\rl(i)} (t)$. From~\eqref{le estimate on dotgamma} and~\eqref{sloth}, we conclude that
\begin{align*}
 |\lambda_L(t)-\lambda_L(T_1)|\le \upsilon_L (t-T_1)+o(t-T_1)+O\left(\frac{N^\xi}{N}\right)\,, \qquad\qquad t\in[ T_1,t_2]\,,
\end{align*}
 with high probability, where we have set $\upsilon_L\deq \dot\gamma_L(T_1)$.
We denote by $\overline\bla(t)$, $t\ge T_1$ the process obtained from $\bla(t)$ by setting 
\begin{align}\label{the shift}
 \overline\bla(t)\deq\bla(t)-\upsilon_L (t-T_1)\,,\qquad t\ge T_1\,.
\end{align}
Thus $\overline\bla(t)$ satisfies the SDE
\begin{align*}
 \dd\overline\bla(t)=\sqrt{\frac{2}{\beta N}} \,\rd B_i(t)-\upsilon_L\,\dd t - \frac{1}{N} \sum_{{ j\not=i}} \frac{1}{\overline\lambda_j(t)-\overline\lambda_i(t)} \,\rd t -\frac{\overline\lambda_i(t)+\upsilon_L (t-T_1)}{2}\,\rd t\,,\qquad i\in \N_N\,,\quad \beta\ge 1\,,
\end{align*}
for $t\ge T_1$. In the following we write $\overline{x}_i\equiv \overline{\lambda}_i$, $i\in I$, respectively $\overline{y}_k\equiv \overline{\lambda}_k$, $k\not\in I$, so that
$ \overline{\bY}=\left\{\overline{\by}(t)\in\digamma^{(N-\cK)}\,:\, t\in[T_1,t_2]\right\}$. Having shifted the original process $(\bla(t))$ as in~\eqref{the shift}, we also shift the distribution $\varrho_t$ and the quantiles $\boldsymbol{\gamma}$, for $t\ge T_1$, accordingly:
\begin{align*}
 \overline{\varrho}_t(x)\deq \varrho_t(x+\upsilon_L(t-T_1))\,,\qquad \overline\gamma_i(t)\deq \gamma_i(t)-\upsilon_L(t-T_1)\,,\quad\qquad t\in[T_1,t_2]\,, 
\end{align*}
$x\in\R$, $i\in\N_N$. In a similar way, we introduce the events $\overline\cG_s^1$, $\overline\cG_s^2$, $\overline\cG_s$ and $\overline\cG$ by replacing the quantities without bars with bars in~\eqref{le Cgs1},~\eqref{le Cgs2},~\eqref{le cGs} and~\eqref{le event calG}.

\subsection{The reference points~$\widetilde\gamma_j$}\label{subsection the reference points}
Once $\bY\in\cG$, thus also $\overline\bY\in\overline\cG$, is fixed, we introduce time-independent ``reference points'', $\wt{\boldsymbol{\gamma}}\equiv (\wt \gamma_k)$, as follows: For $k\in\N_N$, let
\begin{align*}
 \iota_k\deq \begin{cases}\frac{k-(L-K^5-N^{\chi}K^4)}{N^{\chi}K^4}&\text{if } \quad L-K^5-N^{\chi}K^4 \le k\le L-K^5\,,\\ \frac{L+K^5+N^\chi K^4-k}{N^{\chi}K^4}&\text{if }\quad L+K^5\le k\le L+K^5+N^{\chi}K^4\,, \\ 1 &\text{if }\quad L-K^5\le k\le L+K^5\,,\\ 0&\text{else}\,,\end{cases}
\end{align*}
with $\chi>0$ as in~\eqref{le definition of Is}, \ie $\iota_k$ is a linearly mollified cutoff of the
 indicator function ${\bf 1}(k\in \II)$. Set
\begin{align}\label{def:wtg}
 \wt\gamma_k\deq \iota_k\, z_k+(1-\iota_k)\,y_k(T_1)\,,\qquad\qquad k\in I^c\,,
\end{align}
where the external points $\bz\equiv (z_k)\in\digamma^{(N-\cK)} $ in~\eqref{def:wtg} will be chosen in Subsection~\ref{subsubsection:aux beta} below. Note that $\widetilde\gamma_k=z_k$, for$ |L-k|\le K^5$;$\widetilde\gamma_{k}=y_k(T_1)=\overline{y}_k(T_1)$, for $|L-k|\ge K^5+N^{\chi}K^4$. Thus the sequence~$\widetilde\gamma$ smoothly interpolates between the external points~$\by(T_1)$ from the DBM and~$\bz$. The external points~$\bz$ are constructed from an appropriate $\beta$-ensemble whose equilibrium density has a single interval support. This will guarantee rigidity; in particular, 
 \begin{align}\label{rigidity deterministic z}
 |z_{k+1}-z_{k}|\le C \frac{N^\xi}{N^{2/3}\check k^{1/3}}\,,
 \end{align} 
 with $\check k=\min\{k,N-k +1\}$, for all $k\in\llbracket 1,N-1\rrbracket$; see~\eqref{le rigidity for zk and gammaauxk} below.

Anticipating the precise choice of $\bz$, we mention that they are chosen such that
\begin{align}
z_{L-K-1}=y_{L-K-1}(T_1) \,,\qquad z_{L+K+1}=y_{L+K+1}(T_1)\,.
\end{align}
In fact, this choice will assure
that the configuration interval of the localized measures, both with~$\by(T_1)$ and with~$\wt{\boldsymbol{\gamma}}$ as external
points, will have the same (and time-independent) support 
\begin{align}\label{le boldsymbolJbz}
\boldsymbol{J}_{\boldsymbol{y}(T_1)}=\boldsymbol{J}_\bz = [z_-, z_+]\,,
\end{align}
where $z_- \equiv z_{L-K-1}$, $z_+ \equiv z_{L+K+1}$. We next estimate the size of the interval $\boldsymbol{J}_\bz$.
\begin{lemma}\label{le lemma le estimate on lenght of interval bz}
 Let $\boldsymbol{J}_\bz$ be as in~\eqref{le boldsymbolJbz} and assume that $K$ satisfies~\eqref{le choice of K}. Then, we have
 \begin{align}\label{le estimate on lenght of interval bz}
  |\boldsymbol{J}_\bz|=\frac{\mathcal{K}}{N\varrho_{T_1}(\cz)}+O\left(\frac{N^\xi}{N} \right)\,,
 \end{align}
on $\cG_{T_1}^{1}$, where $\cz\deq (z_{L+K+1}-z_{L-K-1})/2 $. 
\end{lemma}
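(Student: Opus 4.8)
The plan is to estimate the length $|\boldsymbol{J}_\bz| = z_{L+K+1} - z_{L-K-1}$ by counting particles of the auxiliary $\beta$-ensemble in this interval and converting the count into a length via the density $\varrho_{T_1}$. Since $\bz$ is a typical configuration of the auxiliary $\beta$-ensemble whose equilibrium density agrees with $\varrho_{T_1}$ near $E_*$, on the event $\cG_{T_1}^1$ the rigidity estimate \eqref{rigidity deterministic z} (equivalently the strong rigidity $|z_k - \gamma_k^{\aux}| \le N^\xi/N$ in the bulk that it encodes) tells us that both endpoints $z_{L\pm(K+1)}$ are within $N^\xi/N$ of the corresponding quantiles $\gamma_{L\pm(K+1)}^{\aux}$ of the auxiliary ensemble, which in turn (by the matching of the densities) are within $O(N^\xi/N)$ of $\gamma_{L\pm(K+1)}(T_1)$.

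The key computation is then the following. By the defining property \eqref{general beta ensemble classical location} of the quantiles, the number of quantile points of the auxiliary density in the interval $[z_{L-K-1}, z_{L+K+1}]$ equals $\cK + O(N^\xi)$; equivalently the mass of the auxiliary equilibrium density over that interval is $(\cK/N) + O(N^\xi/N)$. First I would write
\begin{align*}
 \int_{z_{L-K-1}}^{z_{L+K+1}} \varrho_{T_1}(x)\,\dd x = \frac{\cK}{N} + O\!\left(\frac{N^\xi}{N}\right)\,,
\end{align*}
using that the auxiliary density matches $\varrho_{T_1}$ on the relevant scale (the interval has length $O(K/N) = O(N^{-1+\delta}) \ll \Sigma$, so it stays well inside the region where the densities coincide). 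Next, since $|\boldsymbol{J}_\bz| = O(K/N)$ is short and $\varrho_{T_1}$ is, by Lemma~\ref{le density lemma} (or Assumption~$(1)$, using $\im m_{\varrho_{t_1}}$ and \eqref{le bounds in assumption 1}), bounded above and below and has derivative $O(N^\delta)$ on $[E_*-\Sigma/2, E_*+\Sigma/2]$, I would Taylor-expand $\varrho_{T_1}$ around the midpoint $\cz = (z_{L-K-1}+z_{L+K+1})/2$: on $\boldsymbol{J}_\bz$ we have $\varrho_{T_1}(x) = \varrho_{T_1}(\cz) + O(N^\delta |\boldsymbol{J}_\bz|) = \varrho_{T_1}(\cz) + O(N^{\delta} K/N)$. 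Integrating,
\begin{align*}
 \int_{z_{L-K-1}}^{z_{L+K+1}} \varrho_{T_1}(x)\,\dd x = |\boldsymbol{J}_\bz|\,\varrho_{T_1}(\cz) + O\!\left(\frac{N^\delta K^2}{N^2}\right)\,.
\end{align*}
Combining the two displays and dividing by $\varrho_{T_1}(\cz)$ (which is bounded below) gives
\begin{align*}
 |\boldsymbol{J}_\bz| = \frac{\cK}{N\varrho_{T_1}(\cz)} + O\!\left(\frac{N^\xi}{N}\right) + O\!\left(\frac{N^\delta K^2}{N^2}\right)\,,
\end{align*}
and the last error term is $\le N^{-1+\xi}$ because $N^\delta K^{10} \le N$ forces $N^\delta K^2/N \le N^{-1+\delta}K^2 \ll 1$, in fact $N^\delta K^2 / N \le N^{-4\delta/5}\cdot$(lower powers) $\ll N^\xi$ for $\xi$ chosen appropriately (one may also simply absorb it by noting $K \le N^\delta$ and $N^{3\delta} \le N^{3/20} \ll N$). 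This yields \eqref{le estimate on lenght of interval bz}.

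The main obstacle is the first displayed estimate — controlling $\int_{z_{L-K-1}}^{z_{L+K+1}} \varrho_{T_1}$ by $\cK/N$ with error $N^\xi/N$ — which requires knowing precisely how $\bz$ relates to the quantiles of its own auxiliary ensemble \emph{and} how those quantiles relate to the quantiles of $\varrho_{T_1}$. This rests on the not-yet-presented construction of $\bz$ in Subsection~\ref{subsubsection:aux beta} and on Proposition~\ref{rigidity for time dependent beta one}; I would invoke \eqref{rigidity deterministic z} together with the density-matching property stated there, plus the (to-be-established) fact that the auxiliary quantiles $\gamma_k^{\aux}$ near index $L$ lie within $O(N^\xi/N)$ of $\gamma_k(T_1)$, which itself uses the regularity of $\varrho_{T_1}$ from Lemma~\ref{le density lemma}. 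Everything else — the Taylor expansion, the bounds on the density, and the arithmetic with the exponents from \eqref{le choice of K} — is routine.
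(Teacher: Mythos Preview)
Your Taylor-expansion step and the arithmetic with the exponents from~\eqref{le choice of K} are fine, and they match the paper's computation. But your ``main obstacle'' --- getting $\int_{z_{L-K-1}}^{z_{L+K+1}}\varrho_{T_1} = \cK/N + O(N^\xi/N)$ --- is in fact trivial, and your route through the auxiliary $\beta$-ensemble is both unnecessary and circular.

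The point you are missing is that immediately before the lemma it is stipulated that $z_{L-K-1}=y_{L-K-1}(T_1)$ and $z_{L+K+1}=y_{L+K+1}(T_1)$; see~\eqref{le boldsymbolJbz}. So on $\cG_{T_1}^1$ the rigidity of $\by(T_1)$ gives $|y_{L\pm(K+1)}(T_1)-\gamma_{L\pm(K+1)}(T_1)|\le N^\xi/N$ directly, and then $\int_{\gamma_{L-K-1}(T_1)}^{\gamma_{L+K+1}(T_1)}\varrho_{T_1}=\cK/N$ \emph{exactly} by the quantile definition~\eqref{the gammas}. No reference to $\gamma_k^{\aux}$, no density matching, no forward reference to Subsection~\ref{subsubsection:aux beta}. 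This is precisely what the paper does.

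Your proposed route is circular: the construction of $\bz$ in Lemma~\ref{lemma: construction of the auxilary measure} actually \emph{uses} the present lemma (to compute $|\boldsymbol{J}_{\by(T_1)}|$ in Step~2 of that proof), so you cannot invoke properties of the auxiliary ensemble here.
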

\begin{proof}
 We mainly follow the proof of Lemma~4.5 in~\cite{EYSG}. First, we write, by~\eqref{le boldsymbolJbz},
 \begin{align*}
  |\boldsymbol{J}_\bz|=z_+-z_-=y_{L+K+1}(T_1)-y_{L-K-1}(T_1)=\gamma_{L+K+1}(T_1)-\gamma_{L-K-1}(T_1)+O(N^{-1+\xi})\,,
 \end{align*}
where we used that $\by(T_1)\in\cG_{T_1}^{1}$. Next, we note that by Assumption~$(1)$ of Theorem~\ref{main theorem} we have
$ \varrho_{T_1}(x)=\varrho_{T_1}(\overline z)+O\left(N^\delta |x-\overline z|\right)$.
Thus from~\eqref{the gammas},
\begin{align*}
 \mathcal{K}&=N\int_{\gamma_{L-K-1}(T_1)}^{\gamma_{L+K+1}(T_1)}\varrho_{T_1}(x)\,\dd x
 =N\varrho_{T_1}(y)|\boldsymbol{J}_\bz|+O(N^{1+\delta}|\boldsymbol{J}_\bz|^2)+O(N^{\xi})\,,
\end{align*}
where we used that $\varrho_{T_1}\sim 1$. Since ${K}N^{\delta}\ll N$, by~\eqref{le choice of K}, we get~\eqref{le estimate on lenght of interval bz}.
\end{proof}

\subsection{Localizing the DBM and the reference measure $\omega_{T_1}$}
Having $\overline\bY\in\overline\cG$ fixed, we consider the DBM on the $\overline\bx$-variables given by the stochastic differential equation (SDE)
\begin{multline}\label{DBMx}
  \rd \overline{x}_i(t) = \sqrt{\frac{2}{\beta N}} \,\rd B_i(t)-\upsilon_L\,\dd t - \frac{1}{N} \sum_{\substack{j\in I \\ j\not=i}} \frac{1}{\overline{x}_j(t)-\overline{x}_i(t)}\, \rd t\\ -\frac{1}{N} \sum_{k\not\in I} \frac{1}{\overline{y}_k(t)-\overline{x}_i(t)}\, \rd t -\frac{\overline{x}_i(t)+\upsilon_L(t-T_1)}{2}\,\rd t\,,
\end{multline}
$i\in I$, $t\ge T_1$, with $(B_i)_{i\in I}$ a collection of independent standard Brownian motions. We let $\P^{\ol\bY}$ denote the associated path space measure.

For $t\ge T_1$, we define an approximate coupled dynamics, $\wt\bx(t)$ by letting 
\begin{multline}\label{DBMwtx}
  \rd \wt x_i(t) = \sqrt{\frac{2}{\beta N}} \,\rd B_i(t) -\upsilon_L\,\dd t-\frac{1}{N}
\sum_{\substack{j\in I \\ j\not=i}} \frac{1}{\wt x_j(t)-\wt x_i(t)}\,\rd t  -\frac{1}{N}\sum_{k\not\in I} \frac{1}{\wt \gamma_k-\wt x_i(t)}\,\rd t   -\frac{\widetilde x_i(t)}{2}\,\rd t\,, 
\end{multline}
$i\in I$, with initial condition~$\wt{\bx}(T_1)=\ol\bx(T_1)$. The corresponding path space measure is denoted by~$\wt \P^{\ol\bY}$. Going from~\eqref{DBMx} to~\eqref{DBMwtx} we replaced the time-dependent external points $\by(t)$ by the time-independent reference points~$\wt{\boldsymbol{\gamma}}$ and we neglected the drift term~$\upsilon_L(t-T_1)\dd t/2$. Note that the Brownian motions in~\eqref{DBMx} and~\eqref{DBMwtx} are the same.

We define the local ``reference'' measure
\begin{align}\label{def:om}
   \om_{T_1}({\bx})\,\dd\bx\deq \frac{1}{Z_{T_1}}\euler{-\beta N \cH_{T_1}(\bx)}\,\dd\bx\,, \qquad \cH_{T_1}(\bx)\deq \sum_{i\in I} V^{\wt{\boldsymbol{\gamma}}}(x_i) - \frac{1}{N}\sum_{\substack{i,j\in I \\ i<j}} \log (x_j-x_i)\,,
\end{align}
where the external potential is given by
\begin{align}\label{le external potential for omega}
  V^{\wt{\boldsymbol{\gamma}}}(x) \deq
   { \frac{1}{2}}V(x)-\frac{1}{N}\sum_{k\not\in I} \log |x-\wt\gamma_k| \,, \quad\qquad V(x) =\frac{x^2}{2}+2\upsilon_L x\,.
\end{align}
The subscript $T_1$ in $\om_{T_1}$  indicates that 
the external points $\wt{\boldsymbol{\gamma}}$ in the construction of this measure
were obtained in~\eqref{def:wtg} by matching the external points $\by(T_1)$ of the original DBM at time $T_1$.
Note that this measure as well as the measure  $\widetilde\P^{\ol \bY}$ are supported on the fixed configuration interval 
\begin{align*}
\boldsymbol{J}_{\by(T_1)}=[y_{L-K-1}(T_1),y_{L+K+1}(T_1)]\,.
\end{align*}
The measure $\om_{T_1}$ is the equilibrium measure of 
 the SDE~\eqref{DBMwtx}.

We write the distribution of  $\wt\bx(t)$  as  $g_t\,\om_{T_1}$ (for $t\ge T_1$). Since they are supported on the same configuration interval,
 the measures $g_t\,\omega_{T_1}$ (for $t> T_1$) and $\omega_{T_1}$ are both absolutely continuous with respect to the Lebesgue measure, hence also to each other.

\subsubsection{Entropy bound}\label{sec:ent}
In this section, we compare the measures $\om_{T_1}$ and $g_t\,\om_{T_1}$ for $t\ge T_1$. We show that the process $(\wt\bx(t))$ equilibrates on a time scale $\sim K/N$,
\ie the local statistics of $g_t\,\om_{T_1}$ and $\om_{T_1}$ are very close beyond times $t\ge T_1'\deq  T_1 + K(K/N)$, with $t\le t_2$.

Since $\om_{T_1}$ is supported on an interval of size $O(K/N)$ (see Lemma~\ref{le lemma le estimate on lenght of interval bz}), the Hessian of its Hamiltonian~$\cH_{T_1}$ from~\eqref{def:om}  satisfies
\begin{align}\label{lower bound on hessian 1}
    \cH_{T_1}''(\bx) &\ge \min_{i\in I}\frac{1}{N} \sum_{k\not\in I} \frac{1}{( x_i- \wt\gamma_k)^2}\ge  \min_{i\in I}\frac{1}{N} \sum_{k\,:\, K<|k-L|\le K^5} \frac{1}{(x_i- z_k)^2}\ge \frac{cN}{K}\,,
\end{align}
for all $\bx\in(\boldsymbol{J}_{\by(T_1)})^{\cK}\cap\digamma^{(\cK)}$, where we used~\eqref{rigidity deterministic z}. Thus, recalling the discussion in Section~\ref{subsec:rel entro},  $\om_{T_1}$ satisfies the logarithmic Sobolev inequality
\begin{align}\label{LSI}
    S_{\om_{T_1}}(f) \le \frac{CK}{N} D_{\om_{T_1} }(\sqrt{f})\,;
\end{align}
\cf~\eqref{first LSI}. Further, for $t\ge T_1+\frac{1}{2}K(K/N)$ the process $(\wt\bx(t))$ has become absolutely continuous with respect to Lebesgue measure, and  one can easily prove that
\begin{align}\label{inent}
    S(g_t\,\om_{T_1}| \om_{T_1}) \le N^C\,,\qquad \qquad t\ge T_1+\frac{1}{2}K(K/N)\,;
\end{align}
for some large $C$; see, \eg Lemma~4.7 in~\cite{EKYY2}. Therefore, running the Bakry-\'{E}mery argument of Subsection~\ref{subsec:rel entro} from time $T_1+\frac{1}{2}K(K/N)$ to time $T_1'= T_1+ K(K/N)$ and using the initial entropy estimate~\eqref{inent}, we immediately get the following result.

\begin{lemma}\label{lm:SD} For any $t\ge T_1'=T_1+K(K/N)$, we have
\begin{align}\label{SD}
  D_{\om_{T_1}}(\sqrt{g_t}) +  S_{\om_{T_1}} (g_t) \le \euler{- c  K}\,, \qquad\qquad t\in[T_1',t_2]\,,
\end{align}
for some $c>0$. In particular, the statistics of $\wt \bx(t)$ for any $t\in[T_1',t_2]$ are the same as the
 statistics of the local equilibrium measure $\om_{T_1}$ as follows from
 \begin{align}
  \left|\int O\,(g_t-1)\dd\om_{T_1}\right|\le \|O\|_\infty \sqrt{2S_{\om_{T_1}}(g_t)}\le C\euler{-cK/2}\,,
 \end{align}
for any bounded observable $O$.
\end{lemma}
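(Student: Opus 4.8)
The plan is to establish \eqref{SD} via a standard Bakry-\'Emery entropy-dissipation argument applied to the relaxation flow $\partial_t g_t = \cL_{\om_{T_1}} g_t$, where $\cL_{\om_{T_1}}$ is the generator associated to the SDE~\eqref{DBMwtx} (with respect to which $\om_{T_1}$ is reversible). First I would record the time-derivative identity for the relative entropy along the flow, namely $\partial_t S_{\om_{T_1}}(g_t) = -\,2\,D_{\om_{T_1}}(\sqrt{g_t})$, which is the usual de Bruijn-type identity; combined with the logarithmic Sobolev inequality~\eqref{LSI}, $S_{\om_{T_1}}(g_t)\le \frac{CK}{N}\,D_{\om_{T_1}}(\sqrt{g_t})$, this yields the differential inequality $\partial_t S_{\om_{T_1}}(g_t)\le -\frac{2N}{CK}\,S_{\om_{T_1}}(g_t)$, hence exponential decay $S_{\om_{T_1}}(g_t)\le \euler{-\frac{2N}{CK}(t-s)}\,S_{\om_{T_1}}(g_s)$ for $t\ge s$. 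Starting the clock at $s=T_1+\frac12 K(K/N)$, using the a priori bound~\eqref{inent} $S_{\om_{T_1}}(g_s)\le N^C$, and running to $t\ge T_1'=T_1+K(K/N)$ so that $t-s\ge \frac12 K(K/N)=\frac{K^2}{2N}$, the exponent becomes $-\frac{2N}{CK}\cdot\frac{K^2}{2N}=-\frac{K}{C}$, which beats the polynomial factor $N^C$ since $K\ge N^\varpi$; this gives $S_{\om_{T_1}}(g_t)\le \euler{-cK}$ for a suitable $c>0$ after adjusting constants.

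For the Dirichlet form bound, I would invoke the second half of the Bakry-\'Emery conclusion recorded in Subsection~\ref{subsec:rel entro}: under the convexity bound~\eqref{lower bound on hessian 1}, the Dirichlet form also decays exponentially, $D_{\om_{T_1}}(\sqrt{g_t})\le \euler{-\frac{2N}{CK}(t-T_1'')}\,D_{\om_{T_1}}(\sqrt{g_{T_1''}})$ for any intermediate time $T_1''$, so it suffices to control $D_{\om_{T_1}}(\sqrt{g_{T_1''}})$ at one slightly earlier time by a polynomial in $N$; this follows either directly from the LSI applied at that time together with the entropy bound~\eqref{inent}, or from the standard energy estimate on the flow. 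Splitting the time interval $[T_1+\frac12 K(K/N),\,T_1']$ into two halves and applying entropy decay on the first half and Dirichlet-form decay on the second half then produces $D_{\om_{T_1}}(\sqrt{g_t})\le \euler{-cK}$ for $t\ge T_1'$, with the same $c$ after shrinking it if needed. Adding the two estimates gives~\eqref{SD}.

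Finally, the consequence for the statistics is immediate from the entropy inequality~\eqref{entropyneq}: for any bounded observable $O$ on the internal configurations,
\begin{align*}
  \left|\int O\,(g_t-1)\,\dd\om_{T_1}\right|\le \|O\|_\infty\int |g_t-1|\,\dd\om_{T_1}\le \|O\|_\infty\sqrt{2\,S_{\om_{T_1}}(g_t)}\le C\euler{-cK/2}\,,
\end{align*}
which is exactly the displayed bound in the lemma. The only mild care needed here is the passage between times $\frac12 K(K/N)$ and $K(K/N)$ to absorb the initial polynomial factor; everything else is a verbatim application of the LSI~\eqref{LSI} and the Bakry-\'Emery machinery quoted in Subsection~\ref{subsec:rel entro}, so there is no substantial obstacle. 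The one point requiring a sentence of justification is the a priori entropy bound~\eqref{inent}, i.e.\ that the law of $\wt\bx(t)$ becomes absolutely continuous with respect to $\om_{T_1}$ with at most polynomially large entropy after a time of order $\frac12 K(K/N)$; this is a routine parabolic smoothing estimate for the Dyson-type dynamics~\eqref{DBMwtx} on a bounded configuration interval, and I would cite Lemma~4.7 of~\cite{EKYY2} exactly as stated, noting that the confinement and the nonsingular external potential $V^{\wt{\boldsymbol\gamma}}$ there are uniformly regular on $\boldsymbol{J}_{\by(T_1)}$ thanks to the rigidity bound~\eqref{rigidity deterministic z}.
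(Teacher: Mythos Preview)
Your proposal is correct and follows essentially the same approach as the paper: the paper's argument (given just before the statement of the lemma in Section~\ref{sec:ent}) consists precisely of invoking the convexity bound~\eqref{lower bound on hessian 1}, the LSI~\eqref{LSI}, the a priori entropy bound~\eqref{inent} at time $T_1+\tfrac12 K(K/N)$, and then ``running the Bakry--\'Emery argument of Subsection~\ref{subsec:rel entro}'' over the remaining half-interval, exactly as you describe. Your write-up simply spells out the standard details (the de Bruijn identity, the differential inequality, and the time-splitting to control the Dirichlet form) that the paper leaves implicit.
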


\subsubsection{Construction of an auxiliary $\beta$-ensemble}\label{subsubsection:aux beta} We now turn to the choice of the reference points $\bz$ introduced first at the beginning of Subsection~\ref{subsection the reference points}. We construct a global $\beta$-ensemble, $\mu_{\aux}$, with potential $V_\aux$ and equilibrium density $\varrho_{\aux}$ such that it has a single interval support and such that the density matches with $\varrho_{T_1}$ at~$\gamma_L(T_1)$. The main properties of $\mu_\aux$ are summarized in the next lemma.
\begin{lemma}\label{lemma: construction of the auxilary measure}
 There exists a $\beta$-ensemble $\mu_\aux\equiv \mu_{\aux}^{(N)}$, with quadratic potential $V_\aux$ and equilibrium density $\varrho_\aux$, and a set of external configurations $\bz\in\digamma^{(N-\cK)}$, with $z_{L-K-1}=y_{L-K-1}(T_1)$, $z_{L+K+1}=y_{L+K+1}(T_1)$, such that the following holds for $N$ sufficiently large.
 \begin{itemize}[noitemsep,topsep=0pt,partopsep=0pt,parsep=0pt]
  \item[$(1)$] The limiting equilibrium density $\varrho_\aux$ of $\mu_\aux$ is a shifted semicircle law with finite variance satisfying, for any $\xi>0$,
\begin{align}\label{le almost matching densities}
 \varrho_\aux(y)=\varrho_{T_1}(y)+O(N^\xi/K)+O(N^{\delta}|y-z_{L-K-1}|)\,,\qquad y\in\R\,.
\end{align}
\item[$(2)$] The external points $\bz$ satisfy, for any $\xi>0$,
  \begin{align}\label{le rigidity for zk and gammaauxk}
   |z_k-\gamma_{\aux,k}|\le C\frac{N^\xi}{N^{2/3}\check k}\,,\qquad\qquad k\in I^c\,,
  \end{align}
where $(\gamma_{\aux,k})$ are the quantiles of the equilibrium density $\varrho_\aux$, \ie $\int_{-\infty}^{\gamma_{\aux,k}}\dd\varrho_\aux =k/N$, and $\check k=\min\{k,N-k+1\}$. In particular, since $V_\aux$ is ``regular'', the rigidity estimate~\eqref{rigidity deterministic z} holds.

  \item[$(3)$] The localized measure $\mu_\aux^\bz$ satisfies, for any $\xi>0$,
  \begin{align}
   \P^{\mu_\aux^\bz}(|x_i-\alpha_i|\ge N^{\xi}/N\,,\forall i\in I)\le CN^{\xi}\frac{K}{N}\,,
  \end{align}
where $(\alpha_i)$ are $\cK$ equidistant points in $\boldsymbol{J}_\bz=[z_{L-K-1},z_{L+K+1}]=\boldsymbol{J}_{\by(T_1)}$, \ie
\begin{align}
 \alpha_i=\cz+\frac{i-L}{2K+1}|\boldsymbol{J}_{\bz} |\,,\qquad\qquad \cz=\frac{1}{2}(z_{L-K-1}+z_{L+K+1})\,.
\end{align}

 \end{itemize}

\end{lemma}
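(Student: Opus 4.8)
The plan is to build $\mu_\aux$ as a \emph{quadratic} $\beta$-ensemble, $V_\aux(x) = a(x-b)^2 + c$, whose two free parameters $(a,b)$ (up to the irrelevant additive constant $c$) are tuned so that the equilibrium measure $\varrho_\aux$ — which is an explicit affine image of the standard semicircle $\varrho_{sc}$ — matches $\varrho_{T_1}$ both in \emph{value} and in \emph{derivative} at the single point $\gamma_L(T_1)$. Concretely, for $V_\aux(x)=\tfrac{a}{2}(x-b)^2$ the equilibrium density is the semicircle supported on $[b-2/\sqrt{a},\,b+2/\sqrt{a}]$ with density $\varrho_\aux(x)=\tfrac{a}{2\pi}\sqrt{(2/\sqrt a)^2-(x-b)^2}$; the center $b$ and the scale $a$ are then fixed by imposing $\varrho_\aux(\gamma_L(T_1))=\varrho_{T_1}(\gamma_L(T_1))$ and $\varrho_\aux'(\gamma_L(T_1))=\varrho_{T_1}'(\gamma_L(T_1))$ (the latter is $O(N^\delta)$ by Assumption~(1), so the required curvature correction is mild and keeps $a\sim 1$, $b=O(1)$). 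Since $\varrho_{T_1}(\gamma_L(T_1))\ge c>0$, the support of $\varrho_\aux$ has length $\asymp 1$ and the density is bounded above and below on a neighborhood of $\gamma_L(T_1)$ of size $\asymp 1$. A second–order Taylor expansion of $\varrho_{T_1}$ around $\gamma_L(T_1)$, using the derivative bounds in~\eqref{le bounds in assumption 1}, then yields~\eqref{le almost matching densities}: on the scale $|y-z_{L-K-1}|\lesssim K/N$ the difference of the two densities is $O(N^\delta|y-z_{L-K-1}|)$ plus the error from matching at a point which is at distance $O(K/N)$ from $z_{L-K-1}$, contributing the $O(N^\xi/K)$ term once one tracks the relabeling. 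Here $V_\aux$ is manifestly ``regular'' in the sense of~\cite{KML} (quadratic potentials have square-root edges), so Proposition~\ref{rigidity for time dependent beta one} applies and gives part (2) after defining $(z_k)$: one sets $z_k\deq\gamma_{\aux,k}$ for $k$ not too close to the boundary of $I$, but uses the two endpoint constraints $z_{L-K-1}=y_{L-K-1}(T_1)$, $z_{L+K+1}=y_{L+K+1}(T_1)$ to pin down the configuration interval.

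The second step is to realize the endpoint constraints without destroying rigidity. The point is that by Assumption~(2)(a), on the event $\cG^1_{T_1}$ one has $|y_{L\pm(K+1)}(T_1)-\gamma_{L\pm(K+1)}(T_1)|\le N^\xi/N$, and by the density matching~\eqref{le almost matching densities} together with $\varrho_\aux\sim 1$ one has $|\gamma_{\aux,L\pm(K+1)}-\gamma_{L\pm(K+1)}(T_1)|\le N^\xi K/N$. Hence $\gamma_{\aux,k}$ for $k$ near $L$ is already within $N^\xi K/N$ of the desired endpoints; one then either (i) slightly reparametrizes $V_\aux$ by a final affine map chosen so that exactly $\gamma_{\aux,L-K-1}=y_{L-K-1}(T_1)$ and $\gamma_{\aux,L+K+1}=y_{L+K+1}(T_1)$ — possible because an affine map has exactly two degrees of freedom and the required shift/dilation is $1+O(N^\xi K/N)=1+o(1)$, so ``regularity'' and all quantitative bounds are preserved — or (ii) keeps $V_\aux$ and just \emph{defines} $z_{L-K-1},z_{L+K+1}$ to be the DBM points and interpolates. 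In either case $|z_k-\gamma_{\aux,k}|\le CN^\xi/(N^{2/3}\check k^{1/3})$ follows: for $k$ in the bulk from Proposition~\ref{rigidity for time dependent beta one} applied to a typical configuration of $\mu_\aux$, and near the boundary of $I$ from the above estimates (note the statement~\eqref{le rigidity for zk and gammaauxk} has $\check k$ rather than $\check k^{1/3}$, which is \emph{weaker} away from the spectral edge, so there is room). This also gives~\eqref{rigidity deterministic z}.

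The third step is part (3), the local rigidity of $\mu_\aux^\bz$ at scale $N^\xi/N$ with the \emph{equidistant} reference points $\alpha_i$. Since $V_\aux$ is regular and $\bz$ satisfies~\eqref{le rigidity for zk and gammaauxk}, the external configuration $\bz$ lies in the ``good set'' required by the general local-rigidity criterion (Theorem~4.2 of~\cite{EYSG}); applying it to $\mu_\aux^\bz$ gives $|x_i-\gamma^{\bz}_i|\le N^\xi/N$ with probability $1-O(N^\xi K/N)$, where $\gamma^{\bz}_i$ are the quantiles of the conditioned measure. One then checks $|\gamma^{\bz}_i-\alpha_i|\le CN^\xi/N$ by combining Lemma~\ref{le lemma le estimate on lenght of interval bz} (which fixes $|\boldsymbol J_\bz|=\cK/(N\varrho_{T_1}(z_\circ))+O(N^\xi/N)$) with the fact that $\varrho_\aux$ is essentially constant on $\boldsymbol J_\bz$, so the quantiles of $\mu_\aux^\bz$ are equidistant up to the same precision; this follows the proof of Lemma~4.5 of~\cite{EYSG}. \textbf{The main obstacle} I anticipate is step two: reconciling the two hard endpoint constraints $z_{L\pm(K+1)}=y_{L\pm(K+1)}(T_1)$ with the requirement that $\mu_\aux$ remain a genuine \emph{quadratic} (hence automatically regular, single-interval) ensemble and \emph{simultaneously} that its density match $\varrho_{T_1}$ at $\gamma_L(T_1)$ — three (or four) scalar conditions against only the few parameters of an affine transformation — so one must exploit that the constraints are mutually consistent up to $o(1)$ errors (guaranteed precisely by Assumptions~(1)–(2)) and absorb the residual mismatch into the $O(N^\xi/K)$ slack in~\eqref{le almost matching densities} and into the crude $\check k$ (rather than $\check k^{1/3}$) in~\eqref{le rigidity for zk and gammaauxk} rather than trying to satisfy everything exactly.
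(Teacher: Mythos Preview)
Your overall strategy---quadratic auxiliary ensemble, affine adjustment, transfer of rigidity---is the right one, but the plan to match both the \emph{value and the derivative} of $\varrho_{T_1}$ at $\gamma_L(T_1)$ has a genuine gap. A shifted semicircle $\varrho_\aux(x)=\tfrac{2}{\pi R^2}\sqrt{R^2-(x-b)^2}$ with $\varrho_\aux(x_0)=c\sim 1$ has $|\varrho_\aux'(x_0)|\le C/(R^3c)$; forcing $|\varrho_\aux'(x_0)|\sim N^\delta$ while $c\sim 1$ therefore requires $R\sim N^{-\delta/3}$, which destroys your claim ``$a\sim 1$'' and, worse, makes the quantile spacings of $\varrho_\aux$ of order $N^{-1-\delta/3}$ rather than $N^{-1}$---incompatible with part~(2). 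Since Assumption~(1) gives only the \emph{upper bound} $|\varrho_{T_1}'|\le CN^\delta$ (with $\delta>0$ allowed), you cannot rule this regime out. The good news is that derivative matching is also unnecessary: the error $O(N^\delta|y-z_{L-K-1}|)$ in~\eqref{le almost matching densities} is exactly the first-order Taylor remainder one gets after matching the \emph{value only}, and that is all the paper does.

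The paper's construction differs from yours in two further respects worth noting. First, it takes $\bz$ not as the quantiles $\gamma_{\aux,k}$ but as (the rescaled/shifted image of) a \emph{typical realization} $\tilde\by$ of the Gaussian ensemble, selected via Proposition~\ref{rigidity for time dependent beta one} and Markov's inequality so that both global rigidity and rigidity of the \emph{conditioned} measure $\mu_{\Gauss(\varsigma)}^{\tilde\by}$ hold simultaneously; it then uses the exact scale invariance of the quadratic potential---a dilation by $s=|\boldsymbol{J}_{\tilde\by}|/|\boldsymbol{J}_{\by(T_1)}|=1+O(N^\xi/K)$ followed by a shift---to pin the configuration interval \emph{exactly} onto $\boldsymbol{J}_{\by(T_1)}$, with no residual mismatch. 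Second, for part~(3) the paper does not invoke Theorem~4.2 of~\cite{EYSG} (which would need the expectation bound $|\E^{\mu_\aux^\bz}x_i-\alpha_i|\le N^{c\xi}/N$ as \emph{input}, leaving you to produce it from elsewhere); instead it transfers rigidity from $\mu_{\Gauss(\varsigma)}^{\tilde\by}$ to the rescaled ensemble $\mu_{\Gauss(\varsigma')}^{\tilde\by}$ by a short LSI/entropy computation, $S(\mu_{\Gauss(\varsigma')}^{\tilde\by}|\mu_{\Gauss(\varsigma)}^{\tilde\by})\le CN^{2\xi}K^2/N^2$, and then applies~\eqref{entropyneq}. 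This directly yields the $CN^\xi K/N$ probability bound with no circularity.
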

\begin{proof}
The proof is split into three steps. {\it Step 1:} We introduce the quadratic potential~$V_{\Gauss(\varsigma)}(x)\deq x^2/2\varsigma^2$, with some $\varsigma>0$, and consider the~$\beta$-ensemble,~$\mu_{\Gauss(\varsigma)}$, with Hamiltonian
\begin{align*}
 \cH_{\Gauss(\varsigma)}\deq\frac{1}{2}\sum_{i=1}^N V_{\Gauss(\varsigma)}(\lambda_i)-\frac{1}{N}\sum_{\substack{i,j=1 \\i<j}}^N\log |\lambda_j-\lambda_i|\,.
\end{align*}
It is easy to check that the limiting equilibrium density, $\varrho_{\Gauss(\varsigma)}$, of $\mu_{\Gauss(\varsigma)}$ satisfies $\varrho_{\Gauss(\varsigma)}(x)=\varrho_{sc}(x/\varsigma)/\varsigma$, with $\varrho_{sc}(x)=\frac{2}{\pi}\sqrt{(4-x^2)_+}$ the standard semicircle law. Similarly, the quantiles, $(\gamma_{\Gauss(\varsigma),i})$, of $\varrho_{\Gauss(\varsigma)}$ satisfy $\gamma_{\Gauss(\varsigma),i}=\varsigma\gamma_{sc,i}$, where $\gamma_{sc,i}$ denote the quantiles with respect the standard semicircle law, \ie $\int_{-\infty}^{\gamma_{sc,i}}\dd\varrho_{sc}=i/N$. Thus $\varrho_{\Gauss(\varsigma)}(\gamma_{\Gauss(\varsigma),i })=\varrho_{sc}(\gamma_{sc,i})/\varsigma$. In particular, we can fix $\varsigma$ such that $\varrho_{T_1}(\gamma_{L}(T_1))=\varrho_{\Gauss(\varsigma)}(\gamma_{\Gauss(\varsigma),L })$, \ie we set
\begin{align*}
 \varsigma\deq\frac{\varrho_{sc}(\gamma_{sc,L})}{\varrho_{T_1}(\gamma_{L}(T_1))}\,.
\end{align*}
We next choose boundary conditions $\tilde\by$ with the following properties: $(1)$ For any $\xi>0$,
\begin{align}\label{rigz aux pre}
 |\tilde{y}_k-\gamma_{sc,k}|\ge N^{\xi}/N\,,\qquad\qquad \forall k\in I^c \,,
\end{align}
for $N\ge N_0(\xi)$, (\ie $\tilde\by$ are rigid in the sense of sense of~\eqref{rigidity for b ensemble one} with $V=V_{\Gauss(\varsigma)}$); $(2)$ for any $\xi>0$, there are  $c_0',c_1'>0$ such that
\begin{align}\label{rigz aux}
    \P^{\mu_{\Gauss(\varsigma)}^{\tilde\by}} ( |x_i-\tilde\al_i|\ge N^\xi/N\,,\forall i\in I) \le \euler{-c_0'N^{c_1'}}\,,\qquad 
\end{align}
where $\tilde\al_i$ are the $\mathcal{K}$ equidistant points in the configuration interval $\boldsymbol{J}_{\tilde\by}=[\tilde y_{L-K-1},\tilde y_{L+K+1}]$. The precise choice of $\tilde\by$ is unimportant for our argument, as long as $\tilde\by$ satisfy~\eqref{rigz aux pre} and~\eqref{rigz aux}. That we can choose a $\tilde\by$ such that~\eqref{rigz aux pre} and~\eqref{rigz aux} are satisfied follows from Proposition~\ref{rigidity for time dependent beta one} and an application of Markov's inequality.

{\it Step 2:}
The length of the configuration intervals $\boldsymbol{J}_{\by(T_1)}$ and $\boldsymbol{J}_{\tilde\by}$ may differ slightly. Using the scale invariance of the Gaussian measure, we now adjust~$\varsigma$ and~$\tilde\by$ to guarantee that the lengths of the configuration intervals agree: Following the proof of Lemma~\ref{le lemma le estimate on lenght of interval bz} or the proof of Lemma~4.5 in~\cite{EYSG}, we get from the rigidity estimates for $\mu_{\Gauss(\varsigma)}$ that
\begin{align*}
 |\boldsymbol{J}_{\tilde\by}|&=|\tilde y_{L-K-1}-\tilde y_{L+K+1}|=\frac{\cK}{N\varrho_{\Gauss(\varsigma)}(\gamma_{\Gauss(\varsigma),L})}+O(N^{-1}N^{\xi})\,,
 \end{align*}
 and from Lemma~\ref{le lemma le estimate on lenght of interval bz} that
 \begin{align*}
 |\boldsymbol{J}_{\by(T_1)}|&=|y_{L-K-1}(T_1)-y_{L+K+1}(T_1)|=\frac{\cK}{N\varrho_{T_1}(\gamma_{L})}+O(N^{-1}N^{\xi})\,.
\end{align*}
Using that $\varrho_{T_1}(\gamma_L)=\varrho_{\Gauss(\varsigma)}(\gamma_{\Gauss(\varsigma),L})$, by our choice of $\varsigma$, we hence conclude that
\begin{align}\label{le s}
s\deq\frac{|\boldsymbol{J}_{\tilde\by}|}{|\boldsymbol{J}_{\by(T_1)}|}=1+O(N^\xi K^{-1})\,.
\end{align}
Setting $\tilde\bz\deq \tilde\by/s$ we have $|\boldsymbol{J}_{\tilde\bz}|=|\boldsymbol{J}_{\by(T_1)}|$ and
\begin{align}\label{le comparision of gaussian ps}
 \P^{\mu_{\Gauss(\varsigma)}^{\tilde\bz}} ( |x_i-\tilde\al_i/s|\ge N^\xi/N\,,\forall i\in I)= \P^{\mu_{\Gauss(\varsigma')}^{\tilde\by}} ( |x_i-\tilde\al_i|\ge s N^\xi/N\,,\forall i\in I)\,,
\end{align}
where we have set $\varsigma'\deq s\varsigma$. Using the rigidity of $\wt{\by}$ we get, similarly to~\eqref{lower bound on hessian 1}, that
 $\nabla^2_{\bx}\cH^{\wt\by}(\bx)\ge \frac{cN}{K}$,
for all $\bx\in(\boldsymbol{J}_{\widetilde{\by}})^\cK\cap \digamma^{\cK}$. Thus the logarithmic Sobolev inequality 
\begin{align*}
 S(\mu_{\Gauss(\varsigma')}^{\tilde\by}|\mu_{\Gauss(\varsigma)}^{\tilde\by})\le\frac{CK}{N}D(\mu_{\Gauss(\varsigma')}^{\tilde\by}|\mu_{\Gauss(\varsigma)}^{\tilde\by})\,,
\end{align*}
with the local Dirichlet form
\begin{align*}
 D(\mu_{\Gauss(\varsigma')}^{\tilde\by}|\mu_{\Gauss(\varsigma)}^{\tilde\by})\deq\frac{1}{\beta N}\sum_{i\in I}\int \left(\partial_i \left(\frac{\rd \mu_{\Gauss(\varsigma')}^{\tilde\by}}{\rd \mu_{\Gauss(\varsigma)}^{\tilde\by}}\right)(\bx) \right)^2\,\rd \mu_{\Gauss(\varsigma)}^{\tilde\by}(\bx)
\end{align*}
holds. A straightforward calculation together with~\eqref{le s} then shows that
\begin{align*}
 S(\mu_{\Gauss(\varsigma')}^{\tilde\by}|\mu_{\Gauss(\varsigma)}^{\tilde\by})&\le \frac{CK}{N^2}\sum_{i\in I}\int\left|\partial_i\euler{-\beta N\sum_{j\in I}(V_{\Gauss(\varsigma')}(x_j)-V_{\Gauss(\varsigma)}(x_j)}\right|^2\,\rd\mu_{\Gauss(\varsigma)}^{\tilde\by}\le C\frac{N^{2\xi}K^{2}}{N^2}\,.
\end{align*}
Thus, using first~\eqref{le comparision of gaussian ps} and then the entropy inequality~\eqref{entropyneq}, we get
\begin{align}\label{rigz pre}
 \P^{\mu_{\Gauss(\varsigma)}^{\tilde\bz}}\left( |x_i-\tilde\al_i/s|\ge N^\xi/N\,, \forall i\in I\right)&=\P^{\mu_{\Gauss(\varsigma')}^{\tilde\by}} \left( |x_i-\tilde\al_i|\ge s N^\xi/N\,,\forall i\in I\right)\nonumber\\
&\le    \P^{\mu_{\Gauss(\varsigma)}^{\tilde\by}} \left( |x_i-\tilde\al_i|\ge sN^\xi/N\,, \forall i\in I\right)+\sqrt{2S(\mu_{\Gauss(\varsigma')}^{\tilde\by}|\mu_{\Gauss(\varsigma)}^{\tilde\by}) }\nonumber\\
 &\le C\euler{-N^{\xi}}+C\frac{N^{\xi}K}{N}\,,
\end{align}
where we used~\eqref{rigz aux} (with an additional factor $s$) to get the last line.

{\it Step 3:} Finally, we achieve that $ \boldsymbol{J}_{\tilde\bz}=\boldsymbol{J}_{\by(T_1)}$  by a simple shift in the energy: we replace $V_{\Gauss(\varsigma')}(x)$ by $V_{\Gauss(\varsigma')}	(x-b)$, $b\deq y_{L-K-1}(T_1)-\tilde y_{L-K-1}$, $x\in\R$. We now choose $\mu_\aux$ as the Gaussian measure defined by the potential $V_{\Gauss(\varsigma')}(\cdot-b)$ and we set  $z_i\deq \tilde z_i-b$, for $i\in\llbracket1, N\rrbracket$. With these choices,~\eqref{rigz pre} asserts that
\begin{align}\label{rigz}
 \P^{\mu_{\aux}^{\bz}}\left(|x_i-\alpha_i|\ge N^{\xi}/N\,, \forall i\in I\right)\le C \frac{N^{\xi}K}{N}\,,
\end{align}
where $\alpha_i$ are the $\mathcal{K}$ equidistant points in the interval $\boldsymbol{J}_\bz=\boldsymbol{J}_{\by(T_1)}=[y_{L-K-1}(T_1),y_{L+K+1}(T_1)]$. 

In sum, we have established the following. We consider the $\beta$-ensemble $\mu_\aux$ with quadratic potential, whose equilibrium density $\varrho_\aux$ is a semicircle law with radius $\sqrt{2}\varsigma'$ which is centered at $b$. Taylor expanding the densities $\varrho_{T_1}$ and $\varrho_\aux$ around $y_{L-K-1}(T_1)$ and recalling~\eqref{le s} as well as~\eqref{assumption on regularity of varrho}, we obtain~\eqref{le almost matching densities}. This proves statement $(1)$ of Lemma~\ref{lemma: construction of the auxilary measure}. The points $\boldsymbol{z}=(z_i)$ are rigid as follows from~\eqref{rigz aux pre} and the choices $z_i=\widetilde{y}_i/s$, $z_i=\widetilde z_i-b$, $i\in\N_N$. This immediately implies statement $(2)$ of Lemma~\ref{lemma: construction of the auxilary measure}. Finally, the rigidity statement $(3)$ of Lemma~\ref{lemma: construction of the auxilary measure} for the localized measure $\mu_\aux^{\by}$ was obtained in~\eqref{rigz}. This concludes the proof of Lemma~\ref{lemma: 
construction of the auxilary measure}.
\end{proof}
 
 We conclude this subsection with a straightforward technical result that will be used in the next section. Recall the definition of the interval of integers $I,\IO$ and $\II$ in~\eqref{le definition of Is}.
 \begin{corollary}\label{corollary almost quantiles}
  Let $\bz$ be as in Lemma~\ref{lemma: construction of the auxilary measure} and let $\widetilde{\boldsymbol{\gamma}}$ be defined as in~\eqref{def:wtg}. Then, for any $\xi>0$,
  \begin{align}\label{le almost quantiles}
   |\wt\gamma_k-\gamma_{k}(T_1)|\le C\frac{N^\xi}{N}+C\cdot{\bf 1}(k\in \IO) \left(\frac{N^{\xi}|\wt\gamma_k-\wt\gamma_L|}{K}+{N^{\delta}|\wt\gamma_k-\wt\gamma_L|^2}\right)\,,\qquad k\in \II\backslash I\,,
  \end{align}
for $N$ sufficiently large. Moreover, we have, for any $\xi>0$,
\begin{align}\label{le almost quantiles zwei}
 \widetilde\gamma_k-\widetilde\gamma_{k-1}\le N^{\xi}/N\,,\qquad\quad k\in \II\backslash I\,,
\end{align}
for $N$ sufficiently large.
 \end{corollary}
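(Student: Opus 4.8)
The plan is to reduce both displays to a single estimate comparing the quantiles $\gamma_{\aux,k}$ of the auxiliary density $\varrho_\aux$ with the quantiles $\gamma_k(T_1)$ of $\varrho_{T_1}$ in the transition region of the mollifier $\iota$. Recall from~\eqref{def:wtg} that $\wt\gamma_k=\iota_k z_k+(1-\iota_k)y_k(T_1)$, that $\iota_k=0$ for $k\notin\IO$, $\iota_k=1$ for $|k-L|\le K^5$, and that $\iota$ is Lipschitz with constant $(N^\chi K^4)^{-1}$; here $\wt\gamma_L$ is read via the natural extension of~\eqref{def:wtg}, so $\wt\gamma_L=z_L$. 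For $k\in(\II\setminus I)\setminus\IO$ both claims are immediate: there $\wt\gamma_k=y_k(T_1)$, so on $\cG_{T_1}^1$ one has $|\wt\gamma_k-\gamma_k(T_1)|\le N^\xi/N$, and for consecutive indices $\wt\gamma_k-\wt\gamma_{k-1}\le|\gamma_k(T_1)-\gamma_{k-1}(T_1)|+2N^\xi/N\le CN^\xi/N$ because $\varrho_{T_1}\ge c$ on $[E_*-\Sigma/2,E_*+\Sigma/2]$ (Lemma~\ref{le density lemma}) and $\gamma_j(T_1)$ lies in that interval for $j\in\II$. So I would concentrate on $k\in\IO\cap(\II\setminus I)$, say $k>L$; the case $k<L$ is symmetric.

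The core step is to establish
\begin{align*}
  |\gamma_{\aux,k}-\gamma_k(T_1)|\le C\frac{N^\xi}{N}+C\Big(\frac{N^\xi w}{K}+N^\delta w^2\Big)\,,\qquad w\deq|\wt\gamma_k-\wt\gamma_L|\,,
\end{align*}
for such $k$. I would anchor at $j_0\deq L+K+1$, the right endpoint of $I$: by the construction in Lemma~\ref{lemma: construction of the auxilary measure} one has $z_{j_0}=y_{j_0}(T_1)$, while part $(2)$ of that lemma gives $|z_{j_0}-\gamma_{\aux,j_0}|\le CN^\xi/N$ and $\by(T_1)\in\cG_{T_1}^1$ gives $|y_{j_0}(T_1)-\gamma_{j_0}(T_1)|\le N^\xi/N$, so $\gamma_{\aux,j_0}=\gamma_{j_0}(T_1)+O(N^\xi/N)$. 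Then from $\tfrac{k-j_0}{N}=\int_{\gamma_{\aux,j_0}}^{\gamma_{\aux,k}}\varrho_\aux=\int_{\gamma_{j_0}(T_1)}^{\gamma_k(T_1)}\varrho_{T_1}$ I would subtract and linearise, obtaining
\begin{align*}
  \varrho_\aux(\,\cdot\,)\,\big(\gamma_{\aux,k}-\gamma_k(T_1)\big)=-\int_{\gamma_{j_0}(T_1)}^{\gamma_k(T_1)}\big(\varrho_\aux-\varrho_{T_1}\big)+O\Big(\frac{N^\xi}{N}\Big)+O\big(N^\delta(\gamma_{\aux,k}-\gamma_k(T_1))^2\big)\,,
\end{align*}
where all densities are evaluated inside $[E_*-\Sigma/2,E_*+\Sigma/2]$, hence are of order one with derivative $O(N^\delta)$ (Lemma~\ref{le density lemma}; the matching~\eqref{le almost matching densities} gives $\varrho_\aux\sim 1$ there too). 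The leading coefficient being $\asymp 1$ and the crude a priori bound $w\le C(K^5+N^\xi)/N$ (from $\varrho\sim 1$ and rigidity of $\bz$) making the quadratic correction negligible, it remains to bound the mismatch integral. Here I would insert~\eqref{le almost matching densities}, $|\varrho_\aux(y)-\varrho_{T_1}(y)|\le CN^\xi/K+CN^\delta|y-z_{L-K-1}|$, together with $|y-z_{L-K-1}|\le C(w+K/N+N^\xi/N)$ and $|\gamma_k(T_1)-\gamma_{j_0}(T_1)|\le Cw$ on the interval of integration; multiplying out and discarding the lower-order terms using $N^\chi\ll K\le N^\delta$ and $K^{10}N^\delta\le N$ yields the claimed bound.

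Given the core estimate, \eqref{le almost quantiles} follows by splitting $\wt\gamma_k-\gamma_k(T_1)=\iota_k(z_k-\gamma_k(T_1))+(1-\iota_k)(y_k(T_1)-\gamma_k(T_1))$, bounding the second term by $N^\xi/N$ and the first by $|z_k-\gamma_{\aux,k}|+|\gamma_{\aux,k}-\gamma_k(T_1)|$, and using part $(2)$ of Lemma~\ref{lemma: construction of the auxilary measure}. For~\eqref{le almost quantiles zwei} with $k\in\IO$ I would distinguish: if $|k-L|\le K^5$ then $\wt\gamma_k-\wt\gamma_{k-1}=z_k-z_{k-1}\le CN^\xi/N$ by part $(2)$ of Lemma~\ref{lemma: construction of the auxilary measure}; in the transition region $K^5\le|k-L|\le K^5+N^\chi K^4$ I would write $\wt\gamma_k-\wt\gamma_{k-1}=\iota_k(z_k-z_{k-1})+(1-\iota_k)(y_k(T_1)-y_{k-1}(T_1))+(\iota_k-\iota_{k-1})(z_{k-1}-y_{k-1}(T_1))$, bound the first two summands by $CN^\xi/N$ as above, and for the last use $|\iota_k-\iota_{k-1}|\le(N^\chi K^4)^{-1}$ together with $|z_{k-1}-y_{k-1}(T_1)|\le|\gamma_{\aux,k-1}-\gamma_{k-1}(T_1)|+O(N^\xi/N)\le CN^\xi K^4/N$, the last step being the core estimate with $w\asymp K^5/N$ (so $N^\xi w/K\asymp N^\xi K^4/N$ and $N^\delta w^2\le N^\delta K^{10}/N^2\le 1/N$ by~\eqref{le choice of K}); the product is then $O(N^\xi/N)$. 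The main obstacle I anticipate is exactly the core step: $\varrho_{T_1}$ need not be a semicircle and may be supported on several intervals, so there is no global relation between $\gamma_{\aux,k}$ and $\gamma_k(T_1)$. The resolution is that Lemma~\ref{lemma: construction of the auxilary measure} was designed to tie $\bz$ and $\by(T_1)$ together precisely at the endpoints $L\pm(K+1)$ of $I$, which allows one to anchor the comparison there and propagate it locally via~\eqref{le almost matching densities}; everything else is bookkeeping of competing error terms, governed by $N^\chi\ll K\le N^\delta$ and $K^{10}N^\delta\le N$.
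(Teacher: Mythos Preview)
Your proposal is correct and follows essentially the same route as the paper: anchor at the endpoint of $I$ where $\bz$ and $\by(T_1)$ coincide, compare the counting integrals for $\varrho_\aux$ and $\varrho_{T_1}$ using~\eqref{le almost matching densities}, and handle the transition regime by the convex combination in~\eqref{def:wtg}. The paper's argument is marginally more direct: it compares $\int_{z_k}^{y_{L-K-1}(T_1)}\varrho_\aux$ with $\int_{\gamma_k(T_1)}^{y_{L-K-1}(T_1)}\varrho_{T_1}$ without passing through $\gamma_{\aux,k}$ and without a Taylor linearization, so no quadratic self-correction term $O(N^\delta(\gamma_{\aux,k}-\gamma_k(T_1))^2)$ appears and no a~priori bound is needed to close the estimate; your linearization is fine but adds a step. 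For~\eqref{le almost quantiles zwei} the paper merely says it follows from rigidity of $\by(T_1)$, $\bz$ and~\eqref{le almost quantiles}; your case analysis using the Lipschitz constant of $\iota$ makes this explicit and is a welcome clarification.
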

\begin{proof}
 Recall that $\wt\gamma_k=y_k(T_1)$ for $k\not\in \IO$. Since $\by(T_1)\in\cG_{T_1}^1$ we immediately get
 \begin{align}\label{le almost quantiles 1}
   |\wt\gamma_k-\gamma_{k}(T_1)|\le C\frac{N^\xi}{N}\,,
 \end{align}
 for $k\in \II\backslash \IO$. Next, assume first that $K+1\le L-k\le K^5$. Then we have $\wt\gamma_k=z_k$, and we can write
 \begin{align*}
 \int_{\wt\gamma_{k}}^{y_{K-L-1}(T_1)}\varrho_\aux(y)\dd y=\frac{|L-K-1-k|}{N}+O\left(\frac{N^{\xi}}{N} \right)\,,
 \end{align*}
where we used $y_{K-L-1}(T_1)=z_{K-L-1}$, the rigidity estimate in~\eqref{le rigidity for zk and gammaauxk} and the fact that~$(\gamma_{\aux,k})$ are the quantiles of~$\varrho_\aux$. Using~\eqref{le almost matching densities}, we hence can write
\begin{align*}
 \int_{\wt\gamma_{k}}^{y_{K-L-1}(T_1)}\varrho_{T_1}(y)\,\dd y&=\frac{|L-K-1-k|}{N}+O\left(\frac{N^{\xi}}{N} \right)\nonumber\\
 &\qquad+O\left(\frac{N^{\xi}|\wt\gamma_k-y_{L-K-1}(T_1)|}{K} \right)+O\left(N^\delta|\wt\gamma_k-y_{L-K-1}(T_1)|^2 \right)\,.
\end{align*}
On the other hand, since $\by(T_1)\in\cG_{T_1}^1$, \ie $|y_{L-K-1}(T_1)- \gamma_{L-K-1}(T_1)|\le CN^{\xi}/N$, and using that~$\gamma_{k}(T_1)$ are the quantiles with respect to $\varrho_{T_1}$, we have
\begin{align*}
 \int_{\gamma_{k}(T_1)}^{y_{K-L-1}(T_1)}\varrho_{T_1}(y)\,\dd y=\frac{|L-K-1-k|}{N}+O\left(\frac{N^{\xi}}{N} \right)\,.
\end{align*}
Comparing these last two equations and using the lower bound on the density~$\varrho_{T_1}$, we conclude that
\begin{align}\label{le almost quantiles 2}
  |z_k-\gamma_{k}(T_1)|\le C\frac{N^{\xi}}{N}+C\frac{N^{\xi}|\wt\gamma_k-\wt\gamma_L|}{K}+C{N^{\delta}|\wt\gamma_k-\wt\gamma_L|^2}\,,
\end{align}
for $k$ such that  $K+1\le L-k\le K^5$. Here, we also used that $\wt\gamma_L -y_{L-K-1}(T_1)\le CK/N$. The same argument applies to the case $K+1\le k-L\le K^5$.

It remains to consider the transition regime $K^5\le|L-k|\le K^5+N^{\chi}K^4$. Using the definition of~$\wt{\boldsymbol{\gamma}}$ in~\eqref{def:wtg}, we can estimate
\begin{align*}
 |\widetilde\gamma_k-\gamma_{k}(T_1) |&\le \iota_k|z_k-\gamma_{k}(T_1)| +(1-\iota_k)|y_k(T_1)-\gamma_{k}(T_1)|\nonumber\\ &\le C\frac{N^{\xi}}{N}+C\frac{N^{\xi}|\wt\gamma_k-\wt\gamma_L|}{K}+C{N^\delta|\wt\gamma_k-\wt\gamma_L|^2}\,,
\end{align*}
for such~$k$, where we used~\eqref{le almost quantiles 1},~\eqref{le almost quantiles 2} and the rigidity of $\by(T_1)\in \cG^{1}_{T_1}$. This proves~\eqref{le almost quantiles}.

The estimate~\eqref{le almost quantiles zwei} follows directly from the rigidity of $\by(T_1)$, $\bz$ and~\eqref{le almost quantiles}.
\end{proof}

\subsection{Three measures and their properties}\label{subsection:three}
Having $\ol\bY\in\ol\cG$ fixed and having constructed the external points $\bz$, we have, up to this point, introduced three distinct measures on the internal particles: 

\begin{itemize}[noitemsep,topsep=0pt,partopsep=0pt,parsep=0pt]
\item[$(1)$] $\om_{T_1}$ is given by an explicit formula in~\eqref{def:om}. It is  a
 local  $\beta$-ensemble on  $\boldsymbol{J}_\bz$ which we refer to as the ``reference'' measure.
\item[$(2)$]  $g_t\,\om_{T_1}$ is the distribution of $\wt\bx(t)$ from the dynamics~\eqref{DBMwtx} on  $\boldsymbol{J}_\bz$.
\item[$(3)$]   $\P^{\ol\by(t)}$ is the measure of the $\ol\bx(t)$ dynamics~\eqref{DBMx}
at time $t$, it is also the
conditional measure $\P^{\ol\bY}$ of the original measure $\P$, conditioned
on the $\ol\bY$-trajectory at time $t\ge t_1$. This measure is also on $\cK$ particles, but
now the configuration
interval is time-dependent $\boldsymbol{J}_{\ol\by(t)}\deq [\ol y_{L-K-1}(t), \ol y_ {L+K+1}(t)]$.

\end{itemize}
In the remainder of this subsection, we establish rigidity for the measures $\om_{T_1}$ and~$\g_t\,\om_{T_1} $:

\begin{definition}\label{definition of regularity and rigidity} We say that the measure $\nu$ (on $\cK$-point configurations labeled with $I$, $|I|=\mathcal{K}$, in a fixed interval $\boldsymbol{J}$) is
rigid with exponent~$\xi$ if
\begin{align}
   \nu ( |\bx_i-\al_i|> N^{\xi}/N\,, \; \forall i\in I)\le C\euler{-cN^{\xi}}\,,
\end{align}
where $\al_i$ are the $\mathcal{K}$ equidistant points in $\boldsymbol{J}$ and where $c>0$. The path-space measure $Q$ for times $[T_1, t_2]$ on the same
configuration interval $\boldsymbol{J}$  is
rigid with exponent~$\xi$ if
\begin{align}
   Q ( \sup_{s\in[T_1, t_2]} |\bx_i(s)-\al_i|>N^{\xi}/N\,, \; \forall i\in I)\le C \euler{-cN^{\xi}}\,.
\end{align}
\end{definition}

Note that if for all $t$ the fixed-time marginals $Q_t$ of a space time
measure $Q$ satisfy rigidity, then~$Q$ satisfies rigidity
(since the trajectories typically have some mild continuity; see~Section~9.3 of~\cite{EYSG}). 

We will establish the following main technical input. Recall that $T_1'=T_1+K(K/N)$.

\begin{proposition} \label{lm:rigreg}
Let $\xi>0$ be sufficiently small and let $K$ satisfy~\eqref{le choice of K}. Then, for any $\ol\bY\in \ol\cG$ and any $t\in [T_1, t_2]$ the following holds.
\begin{itemize}[noitemsep,topsep=0pt,partopsep=0pt,parsep=0pt]
\item[$(1)$]  $\om_{T_1}$ (\ie the local ``reference'' measure) is rigid with exponent~$\xi$ and satisfies
\begin{align}\label{regi}
  \max_{i\in I}\E^{\om_{T_1}} \frac{1}{[N|x_i-x_{i\pm1}|]^p}\le C_p\,,
\end{align}
(with $x_{i\pm 1}(t)=y_{L\pm{(K+1)}}(T_1)$ if $i=L\pm K$), for any $p<2$.
\item[$(2)$]  $g_t\, \om_{T_1}$  (\ie the time marginals of the process $\widetilde{\boldsymbol{X}}=\{\widetilde{\bx}(s)\,:\,s\in[T_1',t_2]\}$) is rigid with exponent~$\xi$,
moreover, the whole process $\{\wt \bx(s) \; : \; s\in [T_1', t_2]\}$ with measure $\wt\P^{\ol\bY}$ is rigid with
 exponents $\xi$. Furthermore,
\begin{align}\label{regii}
  \max_{i\in I} \E^{g_t\,\om_{T_1}} 
  \frac{1}{[N|\wt x_i(t)-\wt x_{i\pm 1}(t)|]^p} \le
  C_{p}\,,
\end{align}
(with $\tilde x_{i\pm 1}(t)=y_{L\pm{(K+1)}}(T_1)$ if $i=L\pm L$), for any $p<2$ and $t\ge T_1'$.

\end{itemize} 
\end{proposition}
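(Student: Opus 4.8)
The plan is to prove part $(1)$ by invoking the general rigidity criterion for local $\beta$-ensembles, Theorem~4.2 of~\cite{EYSG}, applied to the measure $\om_{T_1}$, and then to transfer the conclusions to $g_t\,\om_{T_1}$ in part $(2)$ using the entropy bound of Lemma~\ref{lm:SD} together with the logarithmic Sobolev inequality~\eqref{LSI}. The key structural input for part $(1)$ is that the external points $\wt{\boldsymbol{\gamma}}$ defining $\om_{T_1}$ are close to a genuine set of quantiles on the relevant scale: by Corollary~\ref{corollary almost quantiles}, $|\wt\gamma_k-\gamma_k(T_1)|$ is bounded by $N^\xi/N$ plus error terms that are small on the window $\IO$ (of size $N^\chi K^4$ around $L$) because there $|\wt\gamma_k-\wt\gamma_L|\le C N^\chi K^5/N$ and $K^{10}N^\delta\le N$. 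Combined with the density matching $\varrho_\aux\approx\varrho_{T_1}$ from Lemma~\ref{lemma: construction of the auxilary measure}$(1)$, this means the external potential $V^{\wt{\boldsymbol{\gamma}}}$ in~\eqref{le external potential for omega} is, on the configuration interval $\boldsymbol J_\bz$ and in the relevant neighborhood, a regular single-interval potential with equilibrium density comparable to a constant $\sim N/\mathcal K$. First I would verify the hypotheses of Theorem~4.2 of~\cite{EYSG}: the convexity lower bound on the Hessian~\eqref{lower bound on hessian 1}, which gives $\cH_{T_1}''\ge cN/K$; the regularity/size bounds on $V^{\wt{\boldsymbol{\gamma}}}$ and its derivatives on a slightly larger interval, which follow from~\eqref{le bounds in assumption 1}, the rigidity~\eqref{rigidity deterministic z} of the $z_k$, and the weak rigidity encoded in $\ol\bY\in\ol\cG$ (specifically $\cG_s^2$, controlling $\frac1N\sum_{k\in\II^c}(y_k-x)^{-1}$); and the matching of the local equilibrium density of $V^{\wt{\boldsymbol{\gamma}}}$ with the equidistant spacing $|\boldsymbol J_\bz|/\mathcal K$, which is exactly what Lemma~\ref{le lemma le estimate on lenght of interval bz} provides. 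The conclusion of that theorem is precisely rigidity with exponent $\xi$ for $\om_{T_1}$.

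The level-repulsion-type bound~\eqref{regi} follows from Theorem~4.3 of~\cite{EYSG} once rigidity is established: that result gives, for a rigid local $\beta$-ensemble with regular external potential, a bound of the form $\P^{\om_{T_1}}(|x_i-x_{i\pm1}|\le u/N)\le (Cu)^{\beta+1}$ for $u\ge N^{-1}$ (with obvious modifications at the boundary of $I$, where $x_{L\pm K\pm 1}$ is identified with $y_{L\pm(K+1)}(T_1)$ and one uses the rigidity of $\bz$). Integrating $u^{-p}$ against this tail for $p<\beta+1$, in particular for $p<2$, yields the moment bound~\eqref{regi}; the near region $u\le 1$ is controlled by the level-repulsion tail and the far region $u\gtrsim 1$ by rigidity.

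For part $(2)$, the plan is standard: by Lemma~\ref{lm:SD}, $S_{\om_{T_1}}(g_t)\le \mathrm{e}^{-cK}$ for $t\in[T_1',t_2]$, so the entropy inequality~\eqref{entropyneq} lets one transfer any event of $\om_{T_1}$-probability $\ge 1-\mathrm{e}^{-cN^\xi}$ to a $g_t\,\om_{T_1}$-probability $\ge 1-\mathrm{e}^{-cN^\xi}-\mathrm{e}^{-cK/2}$, which is again of the required form since $K\ge N^\varpi\ge N^{\xi}$ for $\xi$ small; this gives rigidity of the time marginals, hence (by the remark after Definition~\ref{definition of regularity and rigidity}, using the H\"older continuity of the $\wt\bx$ trajectories from the finite-speed-of-propagation estimates in~\cite{EYSG}, or directly a crude continuity estimate for the local DBM~\eqref{DBMwtx}) rigidity of the whole path-space measure $\wt\P^{\ol\bY}$. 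For the inverse-gap moment bound~\eqref{regii}, one cannot directly transfer $L^p$ control of $1/(N|x_i-x_{i\pm1}|)$ through the entropy bound since $(N|x_i-x_{i\pm1}|)^{-p}$ is unbounded; instead I would decompose $\E^{g_t\om_{T_1}}[(N|\wt x_i-\wt x_{i\pm1}|)^{-p}]$ into the contribution of the region $\{|\wt x_i-\wt x_{i\pm1}|\ge N^{-1-\zeta}\}$, handled by Cauchy-Schwarz against the $\om_{T_1}$-bound~\eqref{regi} with a slightly larger exponent and $S_{\om_{T_1}}(g_t)^{1/2}$, plus the region $\{|\wt x_i-\wt x_{i\pm1}|< N^{-1-\zeta}\}$, which is negligible by a level repulsion estimate for $g_t\,\om_{T_1}$. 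That last level repulsion for the dynamical measure is itself obtained by the same argument used in~\cite{EYSG} (it can be derived from the local law / rigidity for $g_t\om_{T_1}$ plus the relaxation estimate), or imported from the analogous statement there.

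\textbf{Main obstacle.} The delicate point is checking the hypotheses of the $\beta$-ensemble rigidity criterion (Theorem~4.2 of~\cite{EYSG}) for the \emph{interpolated} external potential $V^{\wt{\boldsymbol{\gamma}}}$: one must show that $\wt{\boldsymbol{\gamma}}$ defines a one-interval, regular effective potential whose equilibrium density matches the uniform spacing $\mathcal K/|\boldsymbol J_\bz|$ up to the tolerated error, uniformly over all $\ol\bY\in\ol\cG$. This requires carefully combining the density-matching of the auxiliary ensemble (Lemma~\ref{lemma: construction of the auxilary measure}), the quantile comparison of Corollary~\ref{corollary almost quantiles} on the three scales (inside $I$, the transition region $\IO\setminus I$, and the far region $\II\setminus\IO$), the macroscopic Stieltjes-transform control from Assumption~$(1)$ and from $\cG_s^2$, and the constraint $K^{10}N^\delta\le N$ to absorb the $N^\delta$-Lipschitz errors in the density. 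Everything else — the Hessian bound, the entropy transfer, the integration of the level-repulsion tail — is routine once this regularity-of-the-reference-measure step is in place.
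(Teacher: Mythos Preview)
Your plan for part $(1)$ and for the rigidity portion of part $(2)$ matches the paper's approach: invoke Theorem~4.2 of~\cite{EYSG} for $\om_{T_1}$, Theorem~4.3 of~\cite{EYSG} for the level repulsion bound~\eqref{regi}, and transfer rigidity to $g_t\,\om_{T_1}$ via the entropy bound~\eqref{SD}. One point you leave implicit: the paper verifies the accuracy condition $|\E^{\om_{T_1}}x_i-\al_i|\le N^{c\xi}/N$ (your ``density matching'') not directly from the potential, but by first bounding the relative entropy $S(\mu_\aux^{\bz}\mid\om_{T_1})\le CN^{2\xi+2\chi}K^{-2}$ through an explicit Dirichlet-form computation, and then transferring the rigidity of $\mu_\aux^{\bz}$ from Lemma~\ref{lemma: construction of the auxilary measure}$(3)$. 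This step, rather than the regularity of $V^{\wt{\boldsymbol\gamma}}$, is where the careful choice of the auxiliary ensemble pays off.

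The genuine gap is in your argument for~\eqref{regii}. Your decomposition into $\{|\wt x_i-\wt x_{i\pm1}|\ge N^{-1-\zeta}\}$ and its complement requires, on the small-gap region, a level repulsion estimate for $g_t\,\om_{T_1}$ itself --- but that is exactly what you are trying to prove, and entropy transfer only gives $\P^{g_t\om_{T_1}}(|\wt x_i-\wt x_{i\pm1}|\le u/N)\le Cu^2+\euler{-cK/2}$, whose additive error makes $\int_0^{N^{-\zeta}} u^{-p-1}\,\dd u$ divergent. Your large-gap region is fine (the truncated observable is bounded by $N^{p\zeta}$, and $N^{p\zeta}\sqrt{S}$ is tiny), but the ``Cauchy--Schwarz with a slightly larger exponent'' does not help either: for $\beta=1$ the equilibrium bound~\eqref{regi} only holds for $p<2$, so $\E^{\om_{T_1}}[(N\,\mathrm{gap})^{-2p}]$ is not available when $p\ge1$, and you have no $L^{q'}$ control on $g_t-1$ beyond total variation.

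The paper resolves this by an $\e_*$-regularization of the measure and the dynamics (with $\e_*=\euler{-K^c}$, $c$ small), replacing $\log$ by $\log_{\e_*}$ in the Hamiltonian so that the regularized gap observable $[N|x_i-x_{i+1}|_{\e_*}]^{-p}$ is \emph{bounded} by $\e_*^{-p}$. The Bakry--\'Emery argument still applies to the regularized equilibrium $\om_{T_1}^{\e_*}$, so $S_{\om_{T_1}^{\e_*}}(g_t)\le\euler{-cK}$; then the total-variation inequality gives
\[
\E^{g_t\,\om_{T_1}^{\e_*}}\frac{1}{[N|x_i-x_{i+1}|_{\e_*}]^p}\le \E^{\om_{T_1}^{\e_*}}\frac{1}{[N|x_i-x_{i+1}|_{\e_*}]^p}+\e_*^{-p}\sqrt{2S_{\om_{T_1}^{\e_*}}(g_t)}\,,
\]
and $\e_*^{-p}\euler{-cK/2}=\euler{pK^c-cK/2}\to0$ for $c$ small. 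The first term is controlled by the regularized level repulsion (9.58) of~\cite{EYSG}. This regularization trick --- bounding the observable rather than trying to establish dynamical level repulsion independently --- is the missing ingredient in your proposal.
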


To simplify the exposition, we split the proof of Proposition~\ref{lm:rigreg} according to its statements. 

\subsubsection{Proof of statement $(1)$ of Proposition~\ref{lm:rigreg}}
We start with the rigidity of the reference measure~$\omega_{T_1}$. For notational simplicity, we write in the following
\begin{align*}
 \gamma_{k}\equiv\gamma_{\rl(k)}{(T_1)}\,,\qquad\qquad k\in\II\,,
\end{align*}
where the labeling $\rl$ is chosen according to~\eqref{le Cgs1}.

\begin{proof}[Proof of rigidity of $\omega_{T_1}$]
We first recall the following general result of Theorem~4.2 (see also the remark after Lemma~4.5) of~\cite{EYSG}. For any local equilibrium measure~$\mu^\by$ on $\cK$ points with potential~$V^\by$  on 
an interval~$\boldsymbol{J}$ of size~$|\boldsymbol{J}|\sim K/N$ rigidity (with exponent $\xi>0$) in the sense of Definition~\ref{definition of regularity and rigidity} holds if the following two conditions are satisfied:
\begin{align}\label{Vder}
    (V^\by)'(x) = \varrho(\cy)\log \frac{d_+(x)}{d_-(x)} + O\Big( \frac{N^{c\xi}}{N d(x)}\Big)\,,
\end{align}
and
\begin{align}\label{Ex}
  \Big|\E^{\mu^\by} x_i -\al_i \Big|\le N^{c\xi}/N\,,\qquad\qquad \forall i\in I\,,
\end{align}
where $\cy$ is the midpoint of the interval $\boldsymbol{J}$, $d(x)$ is the distance of $x$ to the boundary of $\boldsymbol{J}$, 
\begin{align}\label{le dpm}
d_-(x)\deq d(x)+\varrho(\cy) N^{\xi}/N\,,\qquad d_+(x)\deq\max\{|x-y_{L-K-1}|,|x-y_{L+K+1} |\}+\varrho(\cy)N^{\xi}/N\,,
\end{align}
 and $\al_i$ are the~$\mathcal{K}$ equidistant points in~$\boldsymbol{J}$.

We now apply this result with the choices $\by=\wt{\boldsymbol{\gamma}}$ and $\boldsymbol{J}=\boldsymbol{J}_\bz=\boldsymbol{J}_{\by(T_1)}$ to the reference measure~$\omega_{T_1}$. The condition~\eqref{Vder} will  follow from the 
global condition~\eqref{le once more the vL} below and from the fact that the reference points $\wt{\boldsymbol{\gamma}}$ are rigid in the sense of Corollary~\ref{corollary almost quantiles}. The details are as follows.

To check condition~\eqref{Vder}, we introduce the supplemental potential~$\wt V^{\wt{\boldsymbol\gamma}}$ by setting
\begin{align}\label{le tilde V}
 \wt V^{\wt{\boldsymbol\gamma}}(x)\deq\frac{1}{2}x^2+2\upsilon_L x-\frac{1}{N}\sum_{k\,:\, |k-L|\ge K+N^{\xi}}\log |x-\widetilde\gamma_k|\,.
 \end{align}
We then have
\begin{align}\label{le bound difference V wtV}
\left|{( V^{\wt{\boldsymbol\gamma}})}'(x)- {(\wt V^{\wt{\boldsymbol\gamma}})}'(x)\right|\le\frac{1}{N}\sum_{k\,:\,K<|k-L|<K+N^{\xi}}\frac{1}{|\wt \gamma_k-x|}\le\frac{N^{\xi}}{Nd(x)}\,,\qquad\qquad x\in\boldsymbol{J}_\bz\,.
\end{align}

To control~$\wt V^{\wt{\boldsymbol\gamma}}$, we can follow Appendix~A of~\cite{EYSG}. First, recall from Lemma~\ref{le lemma for new gamma} that $\upsilon_L$ satisfies

\begin{align}\label{le once more the vL}
\upsilon_L=\dot\gamma_L=-\int_\R\frac{\varrho_{T_1}(y)\dd y}{y-\gamma_{L}}-\frac{\gamma_{L}}{2}\,,\qquad\qquad \gamma_L\equiv\gamma_{L}(T_1)\,.
\end{align}
Thus,
\begin{align}\label{le almost equibrium relation}
 \left|\upsilon_L+\frac{x}{2}+\int_{\R}\frac{\varrho_{T_1}(y)\rd y}{y-x}\right|&=\left|\upsilon_L+\frac{\gamma_L}{2}+\int_{\R}\frac{\varrho_{T_1}(y)\rd y}{y-\gamma_L}\right| +O(|\re m_{T_1}(\gamma_L)- \re m_{T_1}(x)|)+O(|x-\gamma_L|)\nonumber\\
 &\le C\frac{N^\delta K}{N}+C\frac{K}{N}\,,\qquad\qquad x\in \boldsymbol{J}_\bz\,,
\end{align}
where we used~\eqref{le once more the vL}. To bound the second and third term on the right we used Assumptions $(1)$ of Theorem~\ref{main theorem} and the estimate on~$|\boldsymbol{J}_\bz|$ in~\eqref{le estimate on lenght of interval bz}. We may thus split
\begin{align*}
 (\wt V^{\wt{\boldsymbol\gamma}})'=\Omega_1+\Omega_2+\Omega_3+O\left(N^\delta\frac{K}{N}\right)\,,
\end{align*}
with
\begin{align*}
 \Omega_1(x)&\deq-\int_{\gamma_{L-K-N^{\xi}}}^{\gamma_{L+K+N^{\xi}}}\frac{\varrho_{T_1}(y)\rd y}{y-x}\,,\\
 \Omega_2(x)&\deq-\int_{\wt\gamma_1}^{\gamma_{L-K-N^{\xi}}}\frac{\varrho_{T_1}(y)\rd y}{y-x}+\frac{1}{N}\sum_{k=1}^{L-K-N^{\xi}}\frac{1}{\wt\gamma_k-x}\,,\\
 \Omega_3(x)&\deq-\int_{\gamma_{L+K+N^{\xi}}}^{\wt\gamma_{N}}\frac{\varrho_{T_1}(y)\rd y}{y-x}+\frac{1}{N}\sum_{k=L+K+N^{\xi}}^{N}\frac{1}{\wt\gamma_k-x}\,.
\end{align*}

To estimate $\Omega_1$ we use that $\varrho_{T_1}(y)=\varrho_{T_1}(x)+O(N^{\delta}|y-x|)$ (\cf~\eqref{assumption on regularity of varrho} and~\eqref{le estimate on lenght of interval bz}) to get
\begin{align}\label{le bound on omega 1}
 \Omega_1(x)&=-\varrho_{T_1}(\cy)\,\log\frac{d_+(x)}{d_-(x)}+O(N^\delta K/N)+O\left(\frac{N^{2\xi}}{{N^2d(x)}}\right)\,,\qquad\qquad x\in\boldsymbol{J}_\bz\,.
\end{align}
To obtain the third line, we used 
\begin{align*}
\gamma_{L+K+N^{\xi}}-x&=(\gamma_{L+K+N^{\xi}}-\gamma_{L+K+1})+(\gamma_{L+K+1}-x)=d_+(x)+O(N^{\xi}N^{-1})\,,
\end{align*}
respectively $x-\gamma_{L+K+N^{\xi}}=d_-(x)+O(N^{\xi}N^{-1})$, where we used the definition of $\wt{\boldsymbol{\gamma}}$ in~\eqref{def:wtg} and the definition of $d_\pm$ in~\eqref{le dpm}.

We next estimate $\Omega_2$ ($\Omega_3$ is estimated in the very same way): We split
\begin{align}\label{le splitting in estimating omega2}
 \frac{1}{N}\sum_{k=1}^{L-K-N^{\xi}}\frac{1}{\wt\gamma_k-x}=\frac{1}{N}\sum_{k=1}^{M-1}\frac{1}{y_k(T_1)-x}+\frac{1}{N}\sum_{k=M}^{L-K-N^{\xi}}\frac{1}{\wt\gamma_k-x}\,,
\end{align}
with $M=L-\sigma N$, such that we can estimate on one hand 
\begin{align}\label{le on the way to omega2 from assumption}
 \frac{1}{N}\sum_{k=1}^{M-1}\frac{1}{y_k(T_1)-x}=\int_{\wt\gamma_1}^{\gamma_{M-1}}\frac{\varrho_{T_1}(y)\dd y}{y-x}+O\left(\frac{N^{\xi}}{N^{\delta}}\right)\,,
\end{align}
since $\by(T_1)\in\cG_{T_1}$; \cf~\eqref{le Cgs2}. On the other hand, we estimate 
\begin{align}\label{le omega2 estimate the remainder}
 \frac{1}{N}\sum_{k=M}^{L-K-N^\xi}\frac{1}{\wt\gamma_k-x}&=\sum_{k=M}^{L-K-N^\xi}\int_{\gamma_{k-1}}^{\gamma_{k}}\frac{\varrho_{T_1}(y)\dd y}{\wt\gamma_k-x}\nonumber\\
&=\int_{\gamma_{M-1}}^{\gamma_{L-K-N^{\xi}}}\frac{\varrho_{T_1}(y)\dd y}{y-x}+O\left(\sum_{k=M}^{L-K-N^\xi}\int_{\gamma_{k-1}}^{\gamma_{k}}\frac{|\wt\gamma_k-y|\varrho_{T_1}(y)\dd y}{(y-x)^2}\right)\,.
\end{align}
Using Corollary~\ref{corollary almost quantiles} and recalling the definition of $I$ from~\eqref{le definition of Is}, we can bound the remainder in the above equation as
\begin{align}\label{le omega2 estimate the remainder with log}
 \sum_{k=M}^{L-K-N^\xi}\int_{\gamma_{k-1}}^{\gamma_{k}}\frac{|\wt\gamma_k-y|\varrho_{T_1}(y)\dd y}{(y-x)^2}&\le C\frac{N^\xi}{N}\int_{\gamma_{M-1}}^{\gamma_{L-K-N^{\xi}}}\frac{\dd y}{(y-x)^2}\nonumber\\ &\qquad+C\int_{\gamma_{L-K^5-N^{\chi}K^4-1}}^{\gamma_{L-K-N^{\xi}}}\frac{(N^{\xi}/K)(y-x)+N^\delta(y-x)^2}{(y-x)^2}\dd y\nonumber\\
 &\le C\frac{N^{\xi}}{Nd(x)} +C\frac{N^{\xi}}{K}\log\left(\frac{x-\gamma_{L-K^5-N^\chi K^4-1}}{x-\gamma_{L-K-N^\xi}}\right)+CN^\delta\frac{K^5}{N}\,.
\end{align}
Thus, using that $d(x)\le |\boldsymbol{J}_{\by(T_1)}|\le CK/N$, \ie $K\ge cNd(x)$, and that $K$ satisfies~\eqref{le choice of K}, we have
\begin{align*}
\sum_{k=M}^{L-K-N^\xi}\int_{\gamma_{k-1}}^{\gamma_{k}}\frac{|\wt\gamma_k-y|\varrho_{T_1}(y)\dd y}{(y-x)^2} \le C\frac{N^{2\xi}}{Nd(x)}\,,
\end{align*}
where we bounded the logarithmic term on the right side of~\eqref{le omega2 estimate the remainder with log} by $N^{\xi}$. Hence, combining this last estimate with~\eqref{le on the way to omega2 from assumption}, we find
\begin{align}\label{le bound on omega 2}
 |\Omega_2(x)|&\le C\frac{N^{2\xi}}{Nd(x)}+C\frac{N^{\xi}}{N^{\delta}}\le C\frac{N^{2\xi}}{Nd(x)}\,,
\end{align}
where we used that $K\ge cNd(x)$ and that $K$ satisfies~\eqref{le choice of K}. The same bounds holds for $\Omega_3$.

Combining~\eqref{le bound on omega 2},~\eqref{le bound on omega 1} and~\eqref{le bound difference V wtV}, we get~\eqref{Vder} for $ ( V^{\widetilde{\boldsymbol{\gamma}}})' $ (with $c=2)$.

It remains to check~\eqref{Ex} with the external points $ \by=\wt{\boldsymbol\gamma}$, \ie $|\E^{\omega_{T_1}}x_i-\alpha_i|\le N^{c\xi}/N$, $i\in I$.
First, we note that from~\eqref{rigz} we have $|\E^{\mu_{\aux}^{\bz}} x_i -\al_i|\le C N^\xi/N$. Then, using the logarithmic Sobolev inequality~\eqref{LSI}, we bound the relative entropy
\begin{multline}\label{le entropy estimate with mu aux}
   S(\mu_{\aux}^{\bz}  |\om_{T_1})  \le C \frac{K}{N}  \frac{1}{N}\sum_{i\in I} 
\E^{\om_{T_1}}\big| \partial_i \euler{-\beta N\sum_i [ V^{\bz}(x_i)- V^{\widetilde{\boldsymbol{\gamma}}}(x_i)]}\big|^2
\\
   \leq CK \E^{\mu_\aux}
\sum_{i\in I} \left| \frac{1}{2}V_{\aux}' (x_i) - \frac{1}{2}{x_i}-\upsilon_L -   \frac{1}{N} \sum_{k \; : \; |k-L|\ge K^{5}}
 \left[ \frac{1}{\widetilde\gamma_k-x_i}-\frac{1}{z_k-x_i}\right]
 \right|^2 \,.
\end{multline}
Note that  when $k$ is close to the interval~$I$ in the summation above, \ie when $|k-L|\le K^5$, then 
the corresponding terms exactly cancel by the choice of $\boldsymbol{\wt\gamma}$ in~\eqref{def:wtg}.

To bound the right side of~\eqref{le entropy estimate with mu aux}, we first recall that we have from~\eqref{regular potential U} the equilibrium relation
\begin{align}\label{le equilibrium relation for aux}
    V'_\aux(x) = -2\int_\R \frac{\varrho_\aux(y)}{y-x}\,\rd y \,,\qquad\qquad x\in\supp \varrho_\aux\,,
\end{align}
for the auxiliary $\beta$-ensemble $\mu_\aux$. We denote by $(\gamma_{\aux,i})_{i=1}^N$ the quantiles of the measure $\varrho_\aux$ and let $\gamma_{\aux,0}=a_\aux$, $\gamma_{\aux,N}=b_\aux$, where~$a_\aux,b_\aux$ are the endpoints of the support of $\varrho_{\aux}$.

We then bound the summation in~\eqref{le entropy estimate with mu aux} for all $k\le L-K^5$ as follows (the case $k\ge L+K^5$ is treated in the very same way),
\begin{align}\label{le estimate rhoaux exterior}
 \bigg|\sum_{k=1}^{L-K^5}\int_{\gamma_{\aux, k-1}}^{\gamma_{\aux, k}}\left(\frac{\varrho_{\aux}(y) \rd y}{z_k-x_i}-\frac{\varrho_{\aux}(y) \rd y}{y-x_i}\right)\bigg|&\le C\sum_{k=1}^{L-K^5}\int_{\gamma_{\aux, k-1}}^{\gamma_{\aux, k}}\frac{|y-z_k|\varrho_\aux(y)\dd y}{(y-x_i)^2}\nonumber\\
 &\le C\sum_{k=1}^{L-K^5}\frac{N^{\xi}}{N^{2/3} k^{1/3}}\int_{\gamma_{\aux, k-1}}^{\gamma_{\aux, k}}\frac{\varrho_\aux(y)\dd y}{(y-x_i)^2}\le C\frac{N^{\xi}}{K^5}\,,
\end{align}
for all $i\in I$, where we used the rigidity of $\bz$ (see~\eqref{le rigidity for zk and gammaauxk}) and that $\varrho_{\aux}$ vanishes like a square root at the endpoints $a_\aux,b_\aux$ of its support (recall that $\varrho_\aux$ is a rescaled and re-centered semicircle).

On the other hand, reasoning exactly as in~\eqref{le bound on omega 1}, we find that 
\begin{align}\label{le estimate rhoaux interior}
 \int_{\gamma_{\aux,L-K^5}}^{\gamma_{\aux,L+K^5}}\, \frac{\varrho_\aux(y)}{y-x_i}\,\rd y&= \int_{\gamma_{\aux,L-K^5}}^{\gamma_{\aux,L+K^5}}\, \frac{\varrho_{\aux}(x_i)+O(|y-x_i|)}{y-x_i}\,\rd y=O(K^{-4})+O(K^5/N)\,,
\end{align}
for $i\in I$. We therefore get, combining~\eqref{le equilibrium relation for aux},~\eqref{le estimate rhoaux exterior} and~\eqref{le estimate rhoaux interior}, 
\begin{align}\label{le rigidity estimate entropy 1}
    \left|\frac{1}{2} V_{\aux}' (x_i)+\frac{1}{N}\sum_{k \; : \; |L-k|\ge K^5} \frac{1}{z_k-x_i	} \right|\le \frac{C}{K^4}\,,\qquad\qquad i\in I\,.
\end{align}

Second, using the definition $\wt{\boldsymbol{\gamma}}$ in~\eqref{def:wtg}, we obtain similarly to~\eqref{le splitting in estimating omega2} and~\eqref{le on the way to omega2 from assumption}, 
\begin{align}\label{le nashorn}
 &\bigg|\frac{1}{N}\sum_{k=1}^{L-K^5}
 \frac{1}{\wt\gamma_k-x_i}-\int_{\wt\gamma_1}^{\gamma_{L-K^5}}\frac{ \varrho_{T_1}(y) \,\rd y}{y-x_i}\bigg|\le \bigg|\frac{1}{N}\sum_{k=M}^{L-K^5}
 \frac{1}{\wt\gamma_k-x_i}-\int_{\gamma_{M}}^{\gamma_{L-K^5}}\frac{ \varrho_{T_1}(y) \,\rd y}{y-x_i} \bigg|+C\frac{N^{\xi}}{N^{\delta}}\,,
\end{align}
with $M=L-\sigma N $. The first term on the right side of~\eqref{le nashorn} can be controlled, similarly to~\eqref{le omega2 estimate the remainder} and~\eqref{le omega2 estimate the remainder with log}, as 
\begin{align*}
 \bigg|\frac{1}{N}\sum_{k=M}^{L-K^5}
 \frac{1}{\wt\gamma_k-x_i}-\int_{\gamma_{M}}^{\gamma_{L-K^5}}\frac{ \varrho_{T_1}(y) \,\rd y}{y-x_i} \bigg|&\le C\frac{N^\xi}{K^5}+CN^\delta\frac{K^5}{N}+C\frac{N^{\xi+\chi}}{K^2}\,,
 \end{align*}
where we used $|\wt\gamma_{L-K^5}-x_i|\sim K^5/N$ and the assumption on $K$ in~\eqref{le choice of K}.
A similar estimate holds for the summations over $\llbracket L+K^5,N\rrbracket$. Further, repeating the arguments of~\eqref{le bound on omega 1}, we get
\begin{align*}
 \int_{\gamma_{L-K^5}}^{\gamma_{L+K^5}}\, \frac{\varrho_{T_1}(y)}{y-x_i}\,\rd y&=O(K^{-4})+O\left(N^\delta\frac{K^5}{N}\right)\,.
\end{align*}
Thus, combining the last two estimates and recalling~\eqref{le almost equibrium relation} as well as~\eqref{le choice of K} we find 
\begin{align}\label{le rigidity estimate entropy 2}
 \left|\frac{1}{2} x_i+\upsilon_L+\frac{1}{N}\sum_{k \; : \; |k-L|\ge K^5}
 \frac{1}{\wt\gamma_k-x_i}\right|\le \frac{C}{K^4}+C\frac{N^{\xi+\chi}}{K^2}\,.
\end{align}

Plugging~\eqref{le rigidity estimate entropy 2} and~\eqref{le rigidity estimate entropy 1} into~\eqref{le entropy estimate with mu aux} we get 
$S(\mu_{\aux}^{\bz}  |\om_{T_1}) \le CN^{2\xi+2\chi}K^{-2}$,
which finally leads, in combination with~\eqref{rigz}, to
\begin{align}\label{le probability estimate for rigidity in expectation}
    \P^{\om_{T_1}}\Big( |x_i-\al_i|\ge N^\xi N^{-1}\,,\forall i\in I\Big) &\le \P^{\mu_{\aux}^{\bz}} \Big( |x_i-\al_i|\ge N^\xi N^{-1}\,,\forall i\in I\Big)   
   +  \sqrt{2S(\mu_{\aux}^{\bz}  |\om_{T_1})} \nonumber\\ 
   &\le C\frac{N^{\xi}K}{N}+C\frac{N^{\xi+\chi}}{K}\,,
\end{align}
where we used~\eqref{entropyneq}. Together with the \emph{a priori} bound $|x_i-\alpha_i|\le C(K/N)$, this implies
\begin{align}\label{le rigidity for theorem 4.1}
 \left|\E^{\omega_{T_1}}x_i-\alpha_i\right|\le C\frac{N^{\xi}}{N}+C\frac{K}{N}\frac{N^{\xi+\chi}}{K}\le C\frac{N^{\xi+\chi}}{N}\,,\qquad\qquad i\in I\,.
\end{align}
Thus, choosing, \eg $\chi=\xi$, we get the bound~\eqref{Ex} for the measure $\omega_{T_1}$.

Applying Theorem~4.2 of~\cite{EYSG} as mentioned at the beginning of the proof, we see that the measure~$\om_{T_1}$ satisfies rigidity with exponent~$\xi$.
\end{proof}

The level repulsion estimate~\eqref{regi} in statement~$(2)$ of Proposition~\ref{lm:rigreg} is proved using the explicit Vandermonde structure of $\om_{T_1}$. The proof is essentially identical to the proof of Theorem~4.3 in~\cite{EYSG} given Section~7.2 of~\cite{EYSG}. We therefore leave the details aside.

\subsubsection{Proof of statement $(2)$ of Proposition~\ref{lm:rigreg}}
The rigidity for $g_t\,\om_{T_1}$, with fixed $t\ge T_1'$ in the sense of Definition~\ref{definition of regularity and rigidity} immediately follows from the rigidity for $\om_{T_1}$ and the entropy estimate~\eqref{SD}. Using the stochastic continuity of $(\wt\bx(t))$ and the rigidity of $g_t\,\om_{T_1}$ a sufficiently large set of discrete times, we can conclude that $\wt\P^{\ol \bY}$ itself is rigid; see Section~9.3 of~\cite{EYSG} for details.

It remains to prove the level repulsion for $g_t\,\om$ given in~\eqref{regii}.
\begin{proof}[Proof of~\eqref{regii}]

The level repulsion bound~\eqref{regii} follows from~\eqref{regi} and 
the entropy bound~\eqref{SD}. More precisely, we have
to introduce $\om_{T_1}^{\e_*}$,  an  $\e_*$-regularization in the $\om_{T_1}$ measure
in the same way as in Section 9.3 of~\cite{EYSG}. The parameter
$\e_*= \euler{-K^c}$ will be chosen tiny with a small $c>0$.  This regularization
modifies the interaction terms in~\eqref{DBMwtx} and in the
Hamiltonian $\cH_{T_1}$. In the latter the $\log$ becomes
$\log_{\e_*}$ defined
\begin{align}\label{logedef}
  \log_{\e_*}(x)\deq {\bf 1}(x\ge {\e_*})\log(x)+ {\bf 1}(x\le {\e_*})
    \Big\{\log \e_* + \frac{x-{\e_*}}{\e_*} - \frac{1}{2\e_*^2}(x-\e_*)^2\Big\}\,.
\end{align}
This has the property that $\pt_x^2\log_{\e_*}(x)$ is the same, $-x^{-2}$,
as before if $x>{\e_*}$, but it remains bounded by $\e_*^{-2}$ for all $x$. The Hamiltonian
is still convex.
The support of the measure $\om_{T_1}^{\e_*}$ is not $\boldsymbol{J}_\bz$ but the whole $\R$,
but it is still overwhelmingly supported on $\boldsymbol{J}_\bz$. In particular, $\omega_{T_1}$ and $\omega_{T_1}^{\e_*}$ are close in entropy sense, see (9.57) from~\cite{EYSG},
\begin{align}\label{epsent}
    S( \om_{T_1} | \om_{T_1}^{\e_*}) \le CK^C \e_*^2\,.
\end{align}
As a consequence, by the entropy inequality~\eqref{entropyneq} we may transfer  the 
rigidity bounds from the measure~$\om_{T_1}$  to the measure~$\om_{T_1}^{\e_*}$, \ie we have
\begin{align}\label{regrig}
  \P^{\om_{T_1}^{\e_*}}( |x_i-\al_i|\ge N^\xi/N) \le \euler{-K^c}\,.
\end{align}
Similar modifications occur in the SDE~\eqref{DBMwtx}; 
the $(\wt x_i-\wt x_j)^{-1}$ and also the $(\wt x_i -\wt\gamma_k)^{-1}$
terms get regularized to 
$$
 (\wt x_i-\wt x_j)^{-1}_{\e_*}\deq\pt_x \log_{\e_*} (\wt x_i -\wt x_j)\,,
$$
and they will be uniformly bounded by $\e_*^{-1}$. Now we can  prove~\eqref{regii}  with 
the regularization, since we can use the entropy inequality~\eqref{entropyneq} to get
\begin{align}\label{hg}
  \E^{g_t\,\om_{T_1}^{\e_*}} \frac{1}{[N|x_i- x_{i+1}|_{\e_*}]^p} \le 
\E^{\om_{T_1}^{\e_*}} \frac{1}{[N|x_i- x_{i+1}|_{\e_*}]^p}  + \e_*^{-p}\sqrt{2S_{\om_{T_1}^{\e_*}}(g_t)}
 \le C_p K^\xi\,,
\end{align}
Here in estimating the first term we used that the level repulsion bounds hold
for the regularized measure
$$
    \P^{\om_{T_1}^{\e_*}}( x_{i+1}-x_i\le s/N)\le CK^{\xi} s^2, \qquad\qquad s\ge K^\xi {\e_*}\,,
$$   
see~(9.58) of~\cite{EYSG}, \ie we have
\begin{align}\label{reglev}
\E^{\om_{T_1}^{\e_*}} \frac{1}{[N|x_i- x_{i+1}|_{\e_*}]^p} 
 \le C_p K^\xi\,.
\end{align}
 The exponential smallness  of the entropy~$ S_{\om_{T_1}^{\e_*}}(g_t)$ is proven exactly the same way as the
proof of~\eqref{SD}, since the Bakry-\'{E}mery type convexity argument
remains valid for the equilibrium measure $\om_{T_1}^{\e_*}$ as well.
This exponential smallness wins over $\e_*^{-p}$ if the constant $c$ in
the definition of $\e_* = \exp(-K^c)$ is small.

Since the only purpose of this regularization is to prove~\eqref{regii},
we will not carry the $\e_*$ superscript throughout the proof, \ie we 
continue to write $\om_{T_1}$ everywhere, although we really mean  $\om_{T_1}^{\e_*}$.
As we have seen, the key input information on $\om_{T_1}$ for our whole analysis, the rigidity~\eqref{regrig},
holds for the regularized measure. The other input, the level repulsion in the form
\eqref{regi} holds with an additional factor $K^\xi$, see~\eqref{reglev}, that plays no role in the applications of this estimate.
\end{proof}

\subsection{Local statistics of  $\om_{T_1}$}\label{subsection:local statitsics of omega}
In this subsection, we show that the gap statistics of the localized reference measure~$\omega_{T_1}$ are universal, \ie are given by the statistics of the Gaussian invariant ensemble up to negligible errors for large $N$. The precise universality statement for $\omega_{T_1}$ is as follows.
\begin{theorem}\label{thm gap statistics of omega0 is universal}
There is a small universal constants $\frak{e},\chi,\alpha>0$, such that for any $\by\in\cG_{T_1}$ (see~\eqref{le cGs}), for any fixed $j$ and for any smooth compactly supported function $\mathcal{O}$ of $n$ variables, we have
\begin{multline}\label{univ thm}
 \E^{\om_{T_1}}  \mathcal{O}\bigg( \Big((N\varrho_{T_1}(\gamma_L))\,( x_{i_0}-x_{i_0+j})\Big)_{j=1}^n\bigg)  =\E^{\Gauss}  \mathcal{O}\bigg(\Big((N\varrho_\#)\,( x_{i'_0}-x_{{i'_0}+j})\Big)_{j=1}^n\bigg) \\ +O(\|O'\|_\infty N^{-\frak{e}})\,,
\end{multline}
for $N$ sufficiently large, for any $i_0,i_0'\in\N_N$ satisfying $|i_0-L|\le N^\chi$, $|i_0'-L'|\le N^{\chi}$ with any $L'\in [\alpha N,(1-\alpha)N]$, and where $\varrho_\#\deq \varrho_{sc}(\gamma_{L',sc})$ denotes the density of the semicircle law $\varrho_{sc}$ at the location of the $L'$-th $N$-quantile of $\varrho_{sc}$. 
\end{theorem}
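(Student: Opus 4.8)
The plan is to deduce Theorem~\ref{thm gap statistics of omega0 is universal} directly from the gap universality theorem for local log-gases, Theorem~4.1 of~\cite{EYSG}, whose hypotheses have by this point essentially all been verified. That result states, roughly, that a local $\beta$-ensemble on $\cK=2K+1$ particles confined to an interval of length $\sim K/N$, with a fixed boundary configuration, has gap statistics that---after rescaling the gaps by $N$ times the local classical density---coincide with those of the Gaussian invariant $\beta$-ensemble at a bulk quantile, up to an error $O(N^{-\frak{e}})$, provided that the measure is rigid with a small exponent, satisfies a level repulsion bound, and has an external potential obeying a structural estimate of the type~\eqref{Vder} with the local density bounded above and below and with a controlled first derivative. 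The task therefore reduces to feeding these already-established facts about $\om_{T_1}$ into this black box and reading off the conclusion; the auxiliary $\beta$-ensemble of Lemma~\ref{lemma: construction of the auxilary measure} was engineered precisely so that its localized version is the reference Gaussian-type log-gas with the correct local density that serves as the natural interpolation anchor inside the proof of~\cite{EYSG}.

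Concretely, I would proceed as follows. First, the rigidity of $\om_{T_1}$ with exponent $\xi$ is statement $(1)$ of Proposition~\ref{lm:rigreg}, and the level repulsion bound~\eqref{regi} is the second half of that proposition. Second, the structural condition~\eqref{Vder} for the external potential $V^{\wt{\boldsymbol\gamma}}$ of~\eqref{def:om} was derived, with $c=2$ and with local density $\varrho_{T_1}$, in the course of the proof of the rigidity of $\om_{T_1}$; the bounds $c\le\varrho_{T_1}\le C$ and $|\partial_E\varrho_{T_1}|\le CN^\delta$ on a neighbourhood of $\gamma_L(T_1)$ come from Assumption~$(1)$ of Theorem~\ref{main theorem} together with Lemma~\ref{le density lemma}, and the length of the configuration interval $\boldsymbol{J}_\bz$ is $\mathcal{K}/(N\varrho_{T_1}(\cz))+O(N^\xi/N)$ by Lemma~\ref{le lemma le estimate on lenght of interval bz}. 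With these in hand, Theorem~4.1 of~\cite{EYSG} applies with the bulk energy taken to be $\gamma_L(T_1)$ and local density $\varrho_{T_1}(\gamma_L(T_1))$; its conclusion is exactly~\eqref{univ thm}, with the gap scaling $N\varrho_{T_1}(\gamma_L)$ on the left, $N\varrho_\#=N\varrho_{sc}(\gamma_{L',sc})$ on the right, and the stated uniformity over $|i_0-L|\le N^\chi$ and $L'\in[\alpha N,(1-\alpha)N]$ (note $N^\chi\ll K$, so the indices $i_0$ in question sit well inside the local window $I$ and the statement does not depend on which one we pick). Third, one has to be slightly careful that the measure actually handed to~\cite{EYSG} is the $\e_*$-regularized version $\om_{T_1}^{\e_*}$ introduced in Section~\ref{subsection:three}, for which rigidity and level repulsion hold in the forms~\eqref{regrig} and~\eqref{reglev}; since $S(\om_{T_1}|\om_{T_1}^{\e_*})\le CK^C\e_*^2$ by~\eqref{epsent} with $\e_*=\euler{-K^c}$, the entropy inequality~\eqref{entropyneq} transfers~\eqref{univ thm} back to $\om_{T_1}$ at the cost of an additive error much smaller than $N^{-\frak{e}}$.

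I do not expect a serious obstacle here: the genuinely hard work---rigidity, level repulsion, and the potential regularity~\eqref{Vder}---is precisely what the earlier subsections accomplished. The only point requiring care is the bookkeeping of the two normalizations, the local density $\varrho_{T_1}(\gamma_L)$ of $\om_{T_1}$ versus the semicircle density $\varrho_\#$ of the reference Gaussian ensemble: this matching is internal to Theorem~4.1 of~\cite{EYSG}, being built into the rescaling in its statement, and one only has to ensure the comparison energy for the Gaussian ensemble stays in the bulk, which is exactly why the index $L'$ is restricted to $[\alpha N,(1-\alpha)N]$; the small exponents $\frak{e},\chi,\alpha$ are then those produced by~\cite{EYSG}.
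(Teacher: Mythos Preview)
Your proposal is essentially correct and follows the same route as the paper: invoke Theorem~4.1 of~\cite{EYSG} and check its hypotheses, all of which were set up in Section~\ref{subsec:localizing the measures}. Two small bookkeeping differences are worth flagging. First, the paper phrases the two inputs to Theorem~4.1 of~\cite{EYSG} as (i) regularity of the external potential in the sense of Definition~4.4 of~\cite{EYSG}, meaning both the structural estimate~\eqref{VYder} \emph{and} the convexity lower bound $(V^{\wt{\boldsymbol\gamma}})''(x)\ge c/d(x)$ (the latter you do not mention; it follows in one line from the rigidity of $\wt{\boldsymbol\gamma}$), and (ii) the expectation bound $|\E^{\om_{T_1}}x_i-\al_i|\le CN^{c\xi}/N$, already recorded in~\eqref{le rigidity for theorem 4.1}. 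Rigidity and level repulsion are not listed by the paper as separate direct inputs here. Second, the $\e_*$-regularization step you include is unnecessary for this theorem: in the paper that regularization is used only to establish~\eqref{regii} in Proposition~\ref{lm:rigreg}~(2), and Theorem~4.1 of~\cite{EYSG} is applied directly to $\om_{T_1}$.
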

 In short, Theorem~\ref{thm gap statistics of omega0 is universal} assures that the gap statistics of the localized measure $\omega_{T_1}$ in the bulk is determined by the Gaussian invariant ensemble in the limit of large $N$. This result follows from Theorem~4.1 of~\cite{EYSG} and the properties of $\omega_{T_1}$ established in this section so far. 
\begin{proof}
 Theorem~4.1 of~\cite{EYSG}, as stated, directly compares two local measures, 
but together with Proposition~5.3 in~\cite{EYSG} it can also be stated as
a direct universality result: if the conditions of Theorem~4.1
of~\cite{EYSG} hold for a local measure, then it
has universal local gap statistics.

Theorem~4.1 (see also remark after Lemma~4.5) in~\cite{EYSG} has two types
of conditions.  

$(1)$ Regularity of the external potential in the sense of Definition 4.4 of~\cite{EYSG}. For the case at hand, the external potential $V^{\wt{\boldsymbol{\gamma}}}$ defined in~\eqref{le external potential for omega} is regular if
\begin{align}
   (V^{\wt{\boldsymbol{\gamma}}})'(x) &= \varrho_{T_1}( \cz) \log\frac{d_+(x)}{d_-(x)} + O\left(\frac{N^{c\xi}}{Nd(x)}\right)\,,\label{VYder}\\
  ( V^{{\wt{\boldsymbol{\gamma}}}})''(x)& \ge \frac{c}{d(x)}\,, \qquad \qquad \qquad\qquad \qquad x\in\boldsymbol{J}_\bz=[z_-,z_+]\,,\label{VY2der}
\end{align}
hold, with some fixed constant $c$, 
where $ \cz=(z_++z_-)/2$, $d(x)= \min\{ |x-z_+|, |x-z_-|\}$ and $d_\pm(x)$ as in~\eqref{le dpm}. Here we used the notation $z_-=z_{L-K-1}$, $z_+=z_{L+K+1}$.

In proving~\eqref{Vder} with external points $\widetilde{\boldsymbol{\gamma}}$, we already established~\eqref{VYder}.  The convexity estimate~\eqref{VY2der} follows from the rigidity of $\wt{\boldsymbol{\gamma}}$: there is a constant $c>0$ such that
$$
   (V^{\wt{\boldsymbol{\gamma}}})''(x) = V''(x) + \frac{2}{N}\sum_{j\not\in I} \frac{1}{(\wt{\gamma}_j-x)^2}
  \ge  \frac{1}{2}+ \frac{c}{d(x)}\,.
$$

$(2)$  The second input for  Theorem~4.1 of~\cite{EYSG} is 
\begin{align}\label{expx1}
   \big| \E^{ \om_{T_1}} x_i - \al_i\big| \le C\frac{N^{c\xi}}{N}\,,\qquad\qquad i\in I\,,
\end{align}
where $(\alpha_i)$ denote the $\cK$ equidistant points in $\boldsymbol{J}_\bz$. We have already established~\eqref{expx1} in~\eqref{le rigidity for theorem 4.1}.

Based upon these two inputs, Theorem~4.1 of~\cite{EYSG} implies~\eqref{univ thm}.
\end{proof}

\section{Universal gap statistics for small times}
In Section~\ref{subsec:localizing the measures}, we showed that the equilibrium measure $\omega_{T_1}$ of the dynamics~\eqref{DBMwtx} has universal gap statistics. In the present section, we compare the gaps of the two dynamics~\eqref{DBMwtx} and~\eqref{DBMx}. We proceed in three steps that are outlined in the Subsections~\ref{subsection:step 1},~\ref{Nashorn} and~\ref{le subsection removing forcing terms}. In Subsection~\ref{Tapir}, we then complete the proof of Theorem~\ref{main theorem}.

As in Section~\ref{subsec:localizing the measures}, we will fix a $\bY\in\cG$, or equivalently $\ol \bY\in \ol\cG$, but do not always indicate this choice in the notation. All estimates obtained will be uniform on $\cG$, so that we can integrate out $\bY$ at the very end of Subsection~\ref{Tapir}.

\subsection{Step 1: Small scale regularization}\label{subsection:step 1}
First we introduce a small regularization in~\eqref{DBMx} 
starting from the time $T_1$. This regularization is only needed for the critical case $\beta=1$, where the level repulsion, \cf Assumptions~$(3)$ of Theorem~\ref{main theorem}, is weakest. Level repulsion and the regularization introduced below will allow use to bound the kernel $B_{ij}$ defined below in~\eqref{def:Bij} as $\E|B_{ij}|\lesssim N$; see~\eqref{le expectation of Bij}. For $\beta>1$, a similar bound may be obtained without any regularization. In the following we carry the regularization along since the case $\beta=1$ is the hardest.

This regularization procedure is the same as in Section~3.1 of~\cite{FE}, but it is different from the regularization in the $\om_{T_1}$ measure
and in the $\wt\bx$ dynamics explained in part~(2) of the proof
of Proposition~\ref{lm:rigreg} (where the regularization parameter was called $\e_*$). Choose 
\begin{align}\label{defep}
 \e_{jk}\deq \begin{cases}\e\cdot {\bf 1}(j,k\in \II)\quad&\textrm{ if }j \ge k\,,\\ -\e\cdot {\bf 1}(j,k\in \II)\quad&\textrm{ if }j<k\,,\end{cases}\qquad\quad\textrm{ with }\quad \e\deq N^{-10C_1}\,,
\end{align}
for a large $C_1>1$. (Note that by the above choice $\e_*\ll \e$.)

Define the regularized version of~\eqref{DBMx} as
\begin{multline}\label{DBMxreg}
  \rd \wh x_i(t) = \sqrt{\frac{2}{\beta N}}\rd B_i(t) -\upsilon_L\,\dd t+\frac{1}{N} \sum_{j\in I} \frac{1}{\ol x_i(t)-\ol x_j(t)+\e_{ij}} \,\rd t\\ +
  \frac{1}{N} \sum_{k\not\in I} \frac{1}{ \ol x_i(t)-\ol y_k(t) +\e_{ik}}-\frac{\wh x_i(t)+\upsilon_L t}{2}\rd t\,, \qquad\qquad i\in I\,,\quad t\in[T_1,t_2]\,,
\end{multline}
with initial condition $\wh \bx(T_1)=\ol\bx(T_1)$, where the Brownian motions~$(B_i)$ are the same as in~\eqref{DBMx} and~\eqref{DBMwtx}. Note that~$\wh \bx$ may not preserve the ordering of the particles, but we will not need this property.

\begin{lemma}\label{lemma:xx} Define the event 
\begin{align}
 \Xi^1\deq \bigcap_{t\in[T_1,t_2]}\left\{\max_{i\in I}|\ol x_i(t)-\wh x_i(t) |\le N^{-5C_1}\right\}\,.
\end{align}
Under the conditions of Theorem~\ref{main theorem}, 
especially the level repulsion assumption \eqref{lever}, there is a set $\ol{\mathcal{G}}^* \subset\ol{\mathcal{G}}$
with $\P ( \ol{\mathcal{G}}^*)\ge 1-N^{-C_1}$ such that  $\P^{\ol\bY}( \Xi^1)\ge 1-CN^{-C_1}$ holds for any $\ol\bY\in \ol{\mathcal{G}}^*$. In particular, the local statistics of $\ol\bx(t)$ and $\wh\bx(t)$  are 
asymptotically the same for any $t\in [T_1,t_2]$. 
\end{lemma}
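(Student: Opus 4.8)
Since \eqref{DBMx} and \eqref{DBMxreg} are driven by the \emph{same} Brownian motions, the plan is to subtract the two equations and obtain a deterministic (given the paths $\ol\bx,\ol\by$) evolution for the difference $d_i(t)\deq\ol x_i(t)-\wh x_i(t)$, $i\in I$. Because the interaction drift in \eqref{DBMxreg} is evaluated at $\ol\bx$ and not at $\wh\bx$, subtracting leaves only a linear term from the confinement, so
\begin{align*}
 \frac{\rd d_i}{\rd t}=-\tfrac12 d_i(t)+F_i(t)\,,\qquad d_i(T_1)=0\,,\qquad i\in I\,,
\end{align*}
with forcing
\begin{align*}
 F_i(t)=\frac1N\sum_{\substack{j\in I\\ j\ne i}}\frac{\e_{ij}}{(\ol x_i-\ol x_j)(\ol x_i-\ol x_j+\e_{ij})}+\frac1N\sum_{k\notin I}\frac{\e_{ik}}{(\ol x_i-\ol y_k)(\ol x_i-\ol y_k+\e_{ik})}\,,
\end{align*}
all quantities at time $t$. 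By Duhamel, $\sup_{t\in[T_1,t_2]}|d_i(t)|\le\int_{T_1}^{t_2}|F_i(s)|\,\rd s$, so the supremum over $t$ in $\Xi^1$ is controlled for free, and it suffices to bound $\E^{\ol\bY}\int_{T_1}^{t_2}|F_i(s)|\,\rd s$ and then apply Markov's inequality together with a union bound over $i\in I$.

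To estimate $F_i$: by \eqref{defep} the coefficients $\e_{ij},\e_{ik}$ vanish unless both indices lie in $\II$, so the $k$-sum runs only over $k\in\II\setminus I$ and every surviving coefficient has modulus $\e=N^{-10C_1}$. The sign convention in \eqref{defep}, together with the fact that the $\ol x_i$ stay ordered and rigid (Assumption $(2)(a)$, valid for all $s$ on $\ol\cG$), guarantees $|\ol x_i-\ol x_j+\e_{ij}|\ge\max\{|\ol x_i-\ol x_j|,\e\}$ and likewise for the $k$-terms, so each summand is at most $\min\{|\ol x_i-\ol x_j|^{-1},\,\e\,|\ol x_i-\ol x_j|^{-2}\}$. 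For the ``far'' pairs rigidity gives $|\ol x_i-\ol x_j|\gtrsim|i-j|/N$ and $|\ol x_i-\ol y_k|\gtrsim K/N$, so the far contribution to $F_i(s)$ is $\le C\e N\sum_{m\ge1}m^{-2}\le C\e N\ll N^{-5C_1}$ (using $C_1>1$), with conditional probability $\ge1-N^{-D}$. For the finitely many near pairs (say $|i-j|\le N^\xi$) one uses the ordering $|\ol x_i-\ol x_j|\ge g_i^\ast(s)\deq\min\{\ol x_{i+1}-\ol x_i,\ol x_i-\ol x_{i-1}\}$ together with the level-repulsion bound \eqref{lever}, whose indicator is made redundant by the strong rigidity of $\ol\bx$; this yields $\P^{\ol\bY}(g_i^\ast(s)\le u/N)\le CN^\delta u^{\beta+1}$, and an integration by parts bounds $\E^{\ol\bY}[\e(g_i^\ast)^{-2}{\bf 1}(g_i^\ast\ge\e)]+\E^{\ol\bY}[(g_i^\ast)^{-1}{\bf 1}(g_i^\ast<\e)]$ by $N^{O(1)}\e^{\beta}$ for $\beta>1$ and by $N^{O(1)}\e\log(1/\e)$ for $\beta=1$. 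Multiplying by $N^\xi/N$ and collecting everything, $\E^{\ol\bY}|F_i(s)|\le N^{-10C_1+C}$ for some fixed $C$, uniformly in $i\in I$, $s\in[T_1,t_2]$ and $\ol\bY\in\ol\cG^\ast$, where $\ol\cG^\ast\subset\ol\cG$ (of probability $\ge1-N^{-C_1}$ by Lemma~\ref{le lemma probability estimates} with $D$ large) is the event carrying the conditional rigidity and level-repulsion estimates.

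Integrating over $s$ and using the Duhamel bound, $\E^{\ol\bY}\sup_t\max_{i\in I}|d_i(t)|\le|I|\,N^{-10C_1+C}\le N^{-10C_1+C+1}$, so Markov's inequality gives $\P^{\ol\bY}\big((\Xi^1)^c\big)\le N^{5C_1}N^{-10C_1+C+1}\le CN^{-C_1}$ once $C_1$ is chosen large relative to $C$; this is the main assertion. For the final sentence, on $\Xi^1$ the rescaled gaps $N\varrho_*(\ol x_{i_0}-\ol x_{i_0+j})$ and $N\varrho_*(\wh x_{i_0}-\wh x_{i_0+j})$ differ by $O(N\cdot N^{-5C_1})=o(1)$, so for smooth compactly supported $\mathcal O$ the corresponding expectations $\E^{\ol\bY}\mathcal O$ differ by $\|\mathcal O'\|_\infty o(1)+2\|\mathcal O\|_\infty\P^{\ol\bY}((\Xi^1)^c)\to0$, and integrating over $\ol\bY\in\ol\cG^\ast$ completes the argument. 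The one genuinely delicate step is the near-neighbour estimate for $\beta=1$: there the inverse gap has (barely) no finite second moment, and the cutoff at scale $\e$ built into the regularization \eqref{defep} is precisely what makes the relevant inverse moments finite; the choice $\e=N^{-10C_1}$ with $C_1$ large is dictated by the need for the resulting power of $N$ to survive the factors $N^{5C_1}$ from Markov and $|I|$ from the union bound. All remaining estimates are of the crude ``rigidity plus power counting'' type.
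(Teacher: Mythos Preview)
Your approach is correct and is essentially the same strategy the paper uses (by reference to Section~3.1 of~\cite{FE}): couple the two SDEs, note that the interaction drift in~\eqref{DBMxreg} is evaluated at $\ol\bx$ so that the difference $d_i=\ol x_i-\wh x_i$ satisfies a linear ODE with a small forcing $F_i$ of size $\e$ times inverse-gap quantities, and then control these via level repulsion. The paper packages the key input as the single estimate~\eqref{levrep}, i.e.\ $\E\,{\bf 1}(\mathcal R)\,[N|\ol x_{i+1}-\ol x_i+\e|]^{-2}\le N^{\delta+\xi}|\log\e|$, which is exactly what your layer-cake argument from the tail bound~\eqref{lever} produces; your splitting into ``far'' pairs (handled by rigidity, giving $O(\e N)$) and ``near'' pairs (handled by level repulsion, giving $N^{O(1)}\e|\log\e|$ for $\beta=1$) matches the structure of the argument in~\cite{FE}. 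One cosmetic remark: the confinement terms in~\eqref{DBMx} and~\eqref{DBMxreg} as written differ by a constant $\upsilon_L T_1/2$, which would add an extra deterministic forcing; you implicitly (and correctly) treated both as $\upsilon_L(t-T_1)$, which is the intended form.
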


\begin{proof} Let $\mathcal{R}$ be the rigidity set
\begin{align*}
  \mathcal{R} \deq \{  |\ol x_i(t)-\ol\gamma_i(t)|\le N^{\xi}/N\; : \; t\in [T_1,t_2]\,,\; i\in I\}\,.
\end{align*}
 First we claim that $\P({\mathcal{R}}\cap \Xi^1)\ge 1- N^{-2C_1}$. This estimate can be proved following the argument in Section~3.1 of~\cite{FE} for $\upsilon_L=0$. Mutatis mutandis the same proof applies for $\upsilon_L\not=0$. As an input, we need a level repulsion estimate of the form
\begin{align}\label{levrep}
  \E { \bf{1}(\mathcal{R})} \frac{1}{[N|\ol x_{i+1}(t)-\ol x_i(t) + \e|]^2} \le N^{\delta+\xi} |\log \e|\, , \qquad \qquad\forall t\in [T_1,t_2]\,,\qquad i\in I\,, 
\end{align}
that follows from \eqref{lever}.  Therefore, by conditioning,
 there is an event $\ol{\mathcal{G}}^*$ such that $\P(\ol{\mathcal{G}}^*)
\ge 1- N^{-C_1}$ and 
$$
     \P^{\ol\bY}({\mathcal{R}}\cap \Xi^1)\ge 1-N^{-C_1}, \qquad \quad\forall \;\;\ol\bY\in \ol{\mathcal{G}}^*\,.
$$
Since $\P(\ol{\mathcal{G}})\ge 1- N^{-D}$ for any $D>0$, see \eqref{proba cG},
without loss of generality we can assume that $ \ol{\mathcal{G}}^*\subset\ol{\mathcal{G}}$.
Note that for $\ol\bY\in\ol{\mathcal{G}}$ we have $\mathbb{P}^{\ol \bY}(\mathcal{R})\ge 1-N^{-D}$, for any large $D>0$; \cf~\eqref{le event calG}. Choosing $D$ larger than $C_1$, completes the proof.
\end{proof}

\subsection{Step 2: H\"older regularity}\label{Nashorn}
To compare the gaps of $\wh \bx $ and $\wt \bx$, we introduce
\begin{align}\label{le bv}
 \bv\equiv \bv (t)\deq\euler{(t-T_1')/2}(\wh \bx(t)-\wt\bx(t))  \,,\qquad\qquad t\ge T_1'\,.
\end{align}
Subtracting~\eqref{DBMxreg} from~\eqref{DBMwtx} and dropping the $t$ argument for brevity, we have
\begin{multline}\label{le preheat}
  \frac{ \rd v_i}{\rd t}  = - \frac{1}{N}\sum_{\substack{j\in I \\ j\not=i}} \frac{v_i-v_j}{(\ol x_i- \ol x_j+\e_{ij})(\wt x_i - \wt x_j)}
 - v_i \frac{1}{N}\sum_{k\not\in I} \frac{1}{(\ol x_i-\ol y_k +\e_{ik})(\wt x_i - \wt\gamma_k)}-\frac{1}{2}{\euler{(t-T_1')/2}}\upsilon_L( t-T_1)\,\\
 + \frac{1}{N}\sum_{\substack{j\in I \\ j\not=i}} \frac{(\wh x_i -\ol x_i) - (\wh x_j - \ol x_j)+\e_{ij}}{(\ol x_i-\ol x_j+\e_{ij})(\wt x_i - \wt x_j)}
+\frac{1}{N}\sum_{k\not\in I} \frac{(\ol y_k-\wt\gamma_k) + (\wh x_i - \ol x_i)+\e_{ik}}{(\ol x_i-\ol y_k+\e_{ik})(\wt x_i -\wt \gamma_k)}\,,\qquad\qquad t\ge T_1'\,,
\end{multline}
$i\in I$. We rewrite~\eqref{le preheat} in the form
\begin{align}\label{heat}
  \frac{ \rd v_i}{\rd t}  = - (\cB \bv)_i + F_i^{(1)}+F_i^{(2)}\,, \qquad   (\cB \bv)_i\deq \sum_{j\in I} B_{ij} (v_i-v_j) + W_i v_i\,,
\end{align}
with time-dependent (symmetric) coefficients\footnote{Sometimes we write $B_{i,j}$ instead of $B_{ij}$ to clarify the notation. }, $i,j\in I$,
\begin{align}\label{def:Bij}
    B_{ij}  \deq  \frac{1}{N(\ol x_i- \ol x_j+\e_{ij})(\wt x_i - \wt x_j)}\,, \qquad W_i\deq  
\frac{1}{N}\sum_{k{ \not\in I}} \frac{1}{(\ol x_i- \ol y_k+\e_{ik})(\wt x_i -\wt \gamma_k)}\,, 
  \end{align}
  and with the ``forcing terms''
\begin{align}
F_i^{(1)}&\deq   \frac{1}{N}\sum_{\substack{j\in I \\ j\not=i}} \frac{(\wh x_i - \ol x_i) - (\wh x_j -\ol  x_j)+\e_{ij}}{(\ol x_i-\ol x_j+\e_{ij})(\wt x_i - \wt x_j)}-\frac{1}{2}\euler{(t-T_1')/2}\upsilon_L (t-T_1)\,,\\
F_i^{(2)}&\deq\frac{1}{N}\sum_{k\not\in I} \frac{ \wh x_i -\ol x_i+\e_{ik}}{(\ol x_i-\ol y_k+\e_{ik})(\wt x_i - \wt\gamma_k)}+\frac{1}{N}\sum_{k\not\in I} \frac{\ol y_k-\wt \gamma_k}{(\ol x_i-\ol y_k+\e_{ik})(\wt x_i -\wt \gamma_k)}\,.
\end{align} 
(Since $\ol x_i, \wt x_i,\widehat x_i$ and $\ol y_k$ depend on time, we have $B_{ij}\equiv B_{ij}(t)$, $W_i\equiv W_i(t)$, {\it etc}.)

We first study in Subsection~\ref{le subsubsection hoelder regularity of the free dynamics}  the ``free dynamics'' generated by~$\mathscr{B}$ and then deal with the forcing terms $(F_i^{(1)}),(F_i^{(2)})$ with a perturbative argument in Subsection~\ref{le subsection removing forcing terms}.

\subsubsection{H\"older regularity of the free dynamics}\label{le subsubsection hoelder regularity of the free dynamics}
Let $\wt\bv$ solve~\eqref{heat} without the forcing terms, \ie
\begin{align}\label{heat1}
  \frac{ \rd \wt v_i}{\rd t}  = -  (\cB\wt\bv)_i = - \sum_j B_{ij} (\wt v_i-\wt v_j) - W_i \wt v_i\,,\qquad\qquad t\ge T_1'\,,
\end{align}
with initial condition $\wt\bv(T_1')=\bv(T_1')$.

 We will need a certain upper bound on the coefficients~$B_{ij}$ in a space-time averaged sense. 
Let $\cT\deq[T_1', T_1'']$. Mimicking
Definition~9.7 in~\cite{EYSG}, we say that the Equation~\eqref{heat1} is {\it regular} at a space-time
point $(Z, \theta)\in I\times \cT$ with exponent $\rho>0$ if
\begin{align}\label{def:reg}
\sup_{t\in \cT}\;\sup_{1\le M\le \cK} \frac{1}{N^{-1} + |t-\theta|} \int_t^\theta \frac{1}{M}\sum_{i\in I\, :\, |i-Z|\le M}
 \sum_{j\in I\, :\, |j-Z|\le M} |B_{ij}(s)| \rd s \le N^{1+\rho}\,.
\end{align}

Furthermore, we say that the equation is {\it strongly regular} at a space-time
point $(Z, \theta)\in I\times \cT$ with exponent $\rho>0$ if it is regular at all points
$\{ Z\}\times \{\theta +\Omega\}$, where the set $\Omega$ is defined as
$$
   \Omega\deq \Big\{ -\frac{K}{N}\cdot 2^{-m}(1+2^{-k}) \; : \; 0\le m,k\le C\log K\Big\}\,.
$$

From Theorem~10.1 of~\cite{EYSG} and Lemma~\ref{lemma:xx} we obtain the following result.
\begin{lemma}\label{prop hoelder}
Let $c_1\sim 1/100$ and choose $T_1''=T_1'+K^{c_1}/N$. Then, there is an event $\Xi^2$ and constants $C$ and $\rho\equiv\rho(\xi)>0$ such that on the event~$\Xi^2$ the Equation~\eqref{heat1} is strongly regular at~$(L, T_1'')$, 
 and
  \begin{align}\label{le conclusion of hoelderregularity}
    {\bf 1}(\Xi^1\cap\Xi^2) |\wt v_{i+1}(T_1'')-  \wt v_i(T_1'')|&\le  CN^{-1+\xi} K^{-\fq/4}\,, \qquad\qquad  |i-L|\le C\,,
 \end{align}
 where $\fq>0$ is a universal constant.
 Moreover, we have the estimate $\mathbb{P}^{\ol\bY}(\Xi^1\cap\Xi^2)\ge 1-N^{-\rho/8}$, for~$N$ sufficiently large.  
\end{lemma}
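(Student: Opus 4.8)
The plan is to derive Lemma~\ref{prop hoelder} as an application of the H\"older regularity theory of~\cite{EYSG} (Theorem~10.1 there) to the parabolic equation~\eqref{heat1}, once the required space-time regularity bound on the kernel $B_{ij}$ is established on a high-probability event. First I would verify the \emph{strong regularity} condition~\eqref{def:reg} at the point $(L, T_1'')$. The key estimate is that, on the rigidity event $\mathcal{R}$ from the proof of Lemma~\ref{lemma:xx}, the coefficients $B_{ij}$ are bounded in the space-time averaged sense required in~\eqref{def:reg}. Concretely, from the definition $B_{ij} = [N(\ol x_i - \ol x_j + \e_{ij})(\wt x_i - \wt x_j)]^{-1}$ one has $|B_{ij}| \lesssim [N|i-j|^2/N^2]^{-1} = N/|i-j|^2$ whenever the particles $\ol x_i, \ol x_j, \wt x_i, \wt x_j$ are all at their rigid locations (so that $|\ol x_i - \ol x_j| \gtrsim |i-j|/N$ and likewise for $\wt x$), up to factors $N^{\xi}$ coming from the rigidity precision and the regularization $\e_{ij}$. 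The contribution of nearly-coinciding pairs ($|i-j|$ small, where the denominator can be as small as $\e_*$ or be controlled only by level repulsion) must be handled by the level-repulsion input~\eqref{levrep}; integrating over the short time interval $\cT = [T_1', T_1'']$ of length $K^{c_1}/N$ yields $\E\,\mathbf 1(\mathcal R)\int_t^\theta |B_{ij}(s)|\,\rd s \lesssim N^{\delta+\xi}|\log\e|\cdot K^{c_1}/N \ll N^{1+\rho}$, so that~\eqref{def:reg} holds with a suitable $\rho\equiv\rho(\xi)$ after a union bound over the finitely many time-shifts in $\Omega$ and over $1\le M \le \cK$. This defines the event $\Xi^2$ on which strong regularity at $(L, T_1'')$ holds, and Markov's inequality together with~\eqref{levrep} gives $\mathbb P^{\ol\bY}(\Xi^2) \ge 1 - N^{-\rho'}$ for some $\rho' > 0$; intersecting with $\Xi^1$ from Lemma~\ref{lemma:xx} and absorbing constants yields the claimed probability bound $\mathbb P^{\ol\bY}(\Xi^1\cap\Xi^2)\ge 1-N^{-\rho/8}$.

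Next I would invoke Theorem~10.1 of~\cite{EYSG}: it asserts precisely that if the parabolic equation~\eqref{heat1} (a discrete, possibly signed, space-inhomogeneous heat equation with the extra diagonal decay $W_i \ge 0$, which only helps) is strongly regular at $(Z,\theta)$ with exponent $\rho$, then the solution gains H\"older regularity at that point at scale of order $K/N$; quantitatively the discrete derivative $\wt v_{i+1}(\theta) - \wt v_i(\theta)$ is smaller than its a priori size by a power of $K$. The a priori size is controlled by the rigidity bounds: on $\Xi^1$ we have $|\ol x_i - \wh x_i| \le N^{-5C_1}$, and $|\wh x_i - \wt x_i|$ is bounded using the rigidity of $\wh\bx$, $\wt\bx$ and the matching of configuration intervals, so $\|\bv(T_1')\|_\infty \lesssim N^{-1+\xi}$. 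Feeding this into the conclusion of Theorem~10.1 gives $|\wt v_{i+1}(T_1'') - \wt v_i(T_1'')| \le C N^{-1+\xi} K^{-\fq/4}$ for $|i-L|\le C$, with $\fq > 0$ the universal H\"older exponent produced by that theorem and with the choice $T_1'' = T_1' + K^{c_1}/N$, $c_1\sim 1/100$, which is exactly what is needed so that the elapsed time $K^{c_1}/N$ is a small but polynomial-in-$K$ multiple of the microscopic scale $1/N$ while still staying far below $K/N$. This establishes~\eqref{le conclusion of hoelderregularity}.

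The main obstacle I expect is the verification of the space-time regularity condition~\eqref{def:reg} uniformly in the scale parameter $M$ and over the shifted times in $\Omega$, because $B_{ij}$ is genuinely singular when two particles almost collide and this is where the critical case $\beta=1$ bites: the level-repulsion exponent $\beta+1 = 2$ is exactly the borderline that makes $\E\,\mathbf 1(\mathcal R)[N|\ol x_{i+1}-\ol x_i+\e|]^{-2}$ only logarithmically divergent in $\e$, so one must track the $|\log\e|$ factor and the choice $\e = N^{-10C_1}$ carefully to see it is harmless. A secondary technical point is that $\wh\bx$ need not be ordered (as noted after~\eqref{DBMxreg}), so the lower bounds on $|\wt x_i - \wt x_j|$ must be taken from the \emph{ordered} dynamics $\wt\bx$, whose rigidity is supplied by Proposition~\ref{lm:rigreg}(2), rather than from $\wh\bx$; keeping the roles of $\ol x$, $\wt x$, $\wh x$ straight in each denominator is where the bookkeeping is heaviest. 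Once these are in place, the rest is a direct citation of~\cite{EYSG} and the conclusion follows.
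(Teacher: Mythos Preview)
Your overall strategy is the paper's: verify the hypotheses of Theorem~10.1 of~\cite{EYSG} and read off~\eqref{le conclusion of hoelderregularity}. Two inputs, however, are not correctly supplied.

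First, your sketch of the bound on $\E|B_{ij}|$ is incomplete. The kernel $B_{ij} = [N(\ol x_i-\ol x_j+\e_{ij})(\wt x_i-\wt x_j)]^{-1}$ involves two \emph{different} processes, and for small $|i-j|$ neither rigidity nor~\eqref{levrep} alone controls the $\wt x$ factor: rigidity gives no lower bound on consecutive gaps, and~\eqref{levrep} concerns only $\ol\bx$. The paper separates the product by H\"older's inequality with exponents $p=2+\phi$, $q$ conjugate (see~\eqref{le expectation of Bij}), uses~\eqref{levrep} on the $\ol x$ factor (this is where the $|\log\e|$ arises, and why the $\e$-regularization was introduced for $\beta=1$), and uses the \emph{level repulsion}~\eqref{regii} from Proposition~\ref{lm:rigreg}(2) on the $\wt x$ factor. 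The outcome is $\E B_{ij}\le CN^{1+\delta+2\xi}$; then $\rho\deq\delta+3\xi$ and a Markov/union-bound over dyadic $(s,M)$ and the $O((\log K)^2)$ shifted times in $\Omega$ gives strong regularity on an event $\wt\Xi^2$ of the required probability. Your displayed estimate $\E\,\mathbf 1(\mathcal R)\int_t^\theta |B_{ij}|\,\dd s \lesssim N^{\delta+\xi}|\log\e|\cdot K^{c_1}/N$ drops a full factor of $N$ and cannot be right as written.

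Second, you omit condition $(\mathbf{C2})_\xi$ of Theorem~10.1 entirely. That theorem also requires the \emph{two-sided} long-range bounds $B_{ij}\sim N/|i-j|^2$ for $|i-j|\ge C'N^\xi$ (equations~\eqref{c2 1}--\eqref{c2 2}) and two-sided bounds on the pinning $W_i$ (equation~\eqref{c2 3}); the remark that $W_i\ge 0$ ``only helps'' is not sufficient. These bounds are deterministic consequences of the rigidity of $\ol\bx$, $\wt\bx$, $\ol\by$ and $\wt{\boldsymbol\gamma}$, but they constitute a separate high-probability event $\widehat\Xi^2$ in the paper, and one sets $\Xi^2=\wt\Xi^2\cap\widehat\Xi^2$. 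Once both $(\mathbf{C1})_\rho$ and $(\mathbf{C2})_\xi$ are in place, your final paragraph (the a~priori bound $\|\wt\bv(T_1')\|_\infty\le CN^{-1+\xi}$ on $\Xi^1\cap\Xi^2$ and the invocation of the H\"older estimate~\eqref{le conclusion of hoelderregularity pre}) matches the paper exactly.
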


\begin{proof}

We apply the H\"older regularity result, Theorem~10.1 of~\cite{EYSG}, to the evolution equation~\eqref{heat1}. Thanks to the regularization introduced in Step~1, \cf Subsection~\ref{subsection:step 1}, we have, for any $i,j\in I$ and $t\in[T_1',t_2]$, that
\begin{align}\label{le expectation of Bij}
  \E B_{ij} &\le N \left(\max_{i\in I} \E  \frac{1}{[N |\ol x_i-\ol x_{i-1}+\e|]^p}\right)^{1/p}
   \left( \max_{i\in I}\E  \frac{1}{ [N|\wt x_i -\wt x_{i-1}|]^q}\right)^{1/q}\nonumber\\
   &\le N \left(\max_{i\in I}\frac{1}{(N\e)^\phi} \E  \frac{1}{[N |\ol x_i-\ol x_{i-1}+\e|]^2}\right)^{1/p}
   \left( \max_{i\in I}\E  \frac{1}{ [N|\wt x_i -\wt x_{i-1}|]^q}\right)^{1/q}\nonumber\\
&   \le N \big( \e^{-\phi}  N^{\delta+\xi+\phi} |\log \e|\big)^{1/p} C_{q(\phi)} \nonumber\\
&\le C N^{1+\delta+2\xi}\,,
\end{align}
where we first applied H\"older's inequality with conjugate exponents $p,q$, with $p=2+\phi$, $\phi>0$, then used~\eqref{levrep} and~\eqref{regii}, and finally chose $\phi$ sufficiently small depending on $C_1$ in~\eqref{defep}.

Notice that to guarantee regularity in the sense of~\eqref{def:reg} (modulo a constant factor), 
instead of taking suprema over all $s\in \cT$, $M\in \llbracket 1, \cK
\rrbracket$,  it suffices to take suprema over a dyadic sequence of times $s_k= \theta \pm 2^{-k}$ and parameters $M_l=2^l$, $k,l\in \llbracket 1,C\log N\rrbracket$, since space-time averages on comparable scales are comparable.
Using~\eqref{le expectation of Bij}, setting $\rho \deq\delta+3\xi$ and applying Markov inequality,
for any fixed values of $s$ and $M$, the space-time average in~\eqref{def:reg} is bounded by $N^{1+\rho}$
with probability at least $1-N^{-\rho/2}$. Taking the union bound for not more than $C(\log N)^2$ times, we can guarantee regularity at any space-time point with 
probability at least $1-N^{-\rho/3}$. Since the definition of strong regularity requires
regularity at not more than $C(\log N)^2$ space-time points, an additional union bound guarantees strong regularity at any fixed space-time point with probability at least $1-N^{-\rho/4}$.
Defining
$$
  \wt\Xi^2\deq \big\{ \mbox{Equation
~\eqref{heat1} is strongly regular at $(L, T_1'')$ } \big\}\,,
$$
this proves that $\P^{\ol\bY}(\wt\Xi^2)\ge 1-N^{-\rho/4}$ and verifies condition ${\bf (C1)}_\rho$ in Theorem~10.1 of~\cite{EYSG} on~$\wt\Xi^2$. 

The other condition~${\bf (C2)}_\xi$ in Theorem~10.1 of~\cite{EYSG}
concerns large distance estimates of $B_{ij}$. More precisely, condition~${\bf (C2)}_\xi$ requires that
\begin{align}\label{c2 1}
     B_{ij}(t) \ge & \frac{N^{1-\xi}}{|i-j|^2}\,,\qquad\qquad t\in[T_1',T_1'']\,,
\end{align}
for any $i,j$ with $|L-i|\le K/C\,, |L-j|\le K/C$, and that 
     \begin{align}\label{c2 2}
     \frac{N{\bf 1}(\min \{ |L-i|,|L-j|\} \ge K/C)}{ C |i-j|^2} \le  B_{ij} (t) \le & \frac{CN}{ |i-j|^2}\,,\qquad\qquad t\in[T_1',T_1'']\,,
     \end{align}
for any $|i-j|\ge C'N^{\xi}$, with some constants $C, C'>10$. Further, $W_i$ is required to satisfy
\begin{align}\label{c2 3}
 \frac{CN^{1-\xi}}{\Delta_i}\le W_i(s)\le \frac{CN^{1+\xi}}{\Delta_i}\,,\qquad \qquad t\in[T_1',T_1'']\,,
\end{align}
where $\Delta_i\deq\min\{L+K+1-i,L-K-1-i \}$. Using the rigidity estimates for $\bx,\wt\bx$ of Lemma~\ref{lm:rigreg}, it is easy to check that, for any $\xi>0$, there is an event $\widehat \Xi^2$ and constants $c$, with $\P^{\ol\bY}(\widehat \Xi^2)\ge 1-\euler{-cN^{\xi}}$, such that~\eqref{c2 3},~\eqref{c2 2} and~\eqref{c2 1} hold. Set $\Xi^2=\wt \Xi^2\cap\widehat \Xi^2$. Then for all sufficiently small $\xi>0$, we have $\P(\Xi^2)\ge 1-CN^{-\rho/4}$.

Let $\|A\|_\infty\deq \sup_{i\in I} |A_i|$, $A\in\C^N$. Then the conclusion of Theorem~10.1 of~\cite{EYSG} for the equation~\eqref{heat1} is that
\begin{align}\label{le conclusion of hoelderregularity pre}
   {\bf 1}(\Xi^2) |\wt v_{i+1}(T_1'')-  \wt v_i(T_1'')|&\le CK^{-\fq/4}\|\tilde \bv(T_1')\|_\infty\,, \qquad\qquad  |i-L|\le C\,, 
\end{align}
where $\fq>0$ is a universal exponent and where $T_1''=T_1'+ K^{c_1}/N$. More precisely,~\eqref{le conclusion of hoelderregularity pre} follows from~(10.6) of~\cite{EYSG} after rescaling space and time by setting the constant $\alpha$ equal to $1/4$ (this $\alpha$ is different from the $\alpha$ used in the present paper).

Next, recalling from~\eqref{heat1} that $\tilde v_i(T_1')=v_i(T_1')$ and that $v_i(T_1')=\widehat x_i(T_1')-\widetilde x_i(T_1')$, we get
\begin{align}\label{le initial estimate for the hoelder}
 \|\bv(T_1')\|_\infty&\le\|\widehat{\bx}(T_1')-\ol\bx(T_1')\|_{\infty}+\|\ol\bx(T_1')-\wt \bx(T_1') \|_{\infty}\le CN^{-5C_0}+CN^{-1+\xi}\,,
\end{align}
on $\Xi^1\cap\Xi^2$, where we used Lemma~\ref{lemma:xx} and that the processes $(\widetilde\bx(t)), (\ol\bx(t))$,  are both rigid in the sense of Definition~\ref{definition of regularity and rigidity} for $t\in[T_1,t_2]$. Thus, combining~\eqref{le conclusion of hoelderregularity pre} with~\eqref{le initial estimate for the hoelder} we get 
\begin{align}\label{le conclusion of hoelderregularity bis}
    {\bf 1}(\Xi^1\cap\Xi^2) |\wt v_{i+1}(T_1'')-  \wt v_i(T_1'')|&\le  CN^{-1+\xi} K^{-\fq/4}\,, \qquad\qquad  |i-L|\le C\,,
\end{align}
where the event $\Xi^1\cap \Xi^2$ satisfies $\P^{\ol\bY}(\Xi^1\cap \Xi^2)\ge 1-N^{-\rho/8}$, $\rho\equiv\rho(\xi)>0$, for sufficiently small~$\xi>0$. 
\end{proof}
 
 \subsection{Step 3: Removing the forcing terms}\label{le subsection removing forcing terms}
 Having established the H\"older regularity of the free dynamics of~\eqref{heat1}, we now deal with the ``full dynamics''~\eqref{le preheat}. The main result is as follows.
\begin{proposition}\label{proposition vwtv}
Let $c_1\sim 1/100$ and choose $T_1''=T_1'+K^{c_1}/N$. Then there is an event $\Xi$ and constants $C$ and $c_2,c_3>0$ such that, for any $\ol\by\in\ol{\mathcal{G}}$,
\begin{align}\label{vwtv}
 {\bf 1}(\Xi)\,\max_{i\in \frac{1}{4}I}|v_i(T_1'')-\wt v_i(T_1'')|\le \frac{N^{-c_2}}{N}\,,
\end{align}
for $N$ sufficiently large, where $\frac{1}{4}I\deq\llbracket L-K/4,L+K/4\rrbracket$. Moreover the event $\Xi$ is such that $\Xi\subset\Xi^1$ and satisfies $\P(\Xi\cap\cG)\ge 1-N^{-c_3}$,
for $N$ sufficiently large.

\end{proposition}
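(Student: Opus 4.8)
The strategy is to treat $\bv$ as a perturbation of the free solution $\wt\bv$ from Lemma~\ref{prop hoelder} by Duhamel's formula, and to show that the contribution of the forcing terms $F_i^{(1)}, F_i^{(2)}$, once propagated by the semigroup generated by $-\cB$, is smaller than the target scale $1/N$ at the single point $(L,T_1'')$. Writing $U^{\cB}(t,s)$ for the (time-inhomogeneous) semigroup solving $\partial_t w = -\cB w$, we have
\begin{align*}
  v_i(T_1'') = \big(U^{\cB}(T_1'',T_1')\bv(T_1')\big)_i + \int_{T_1'}^{T_1''}\big(U^{\cB}(T_1'',s)\bF(s)\big)_i\,\dd s\,,
\end{align*}
and $\wt v_i(T_1'')$ is exactly the first term. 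So the whole task is to bound the Duhamel integral for $i\in\frac14 I$, using the finite-speed-of-propagation estimate from~\cite{EYSG} for $U^{\cB}$, which is available because we work on the event $\Xi^2$ where $\cB$ has the required large-distance structure~\eqref{c2 1}--\eqref{c2 3}. First I would fix $\ol\bY\in\ol\cG$ and intersect $\Xi^1\cap\Xi^2$ with a further high-probability rigidity event (still calling the result $\Xi$, with $\Xi\subset\Xi^1$) on which all the pointwise bounds below hold.

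\textbf{Key steps.} (1) \emph{Bounding the forcing terms in $\ell^\infty$.} On the rigidity events, $\ol y_k - \wt\gamma_k$ is controlled: for $k\notin\II$ it is $O(N^\xi/N^\delta)$ by~\eqref{assumption:rigidity weak} transferred to $\cG^2_{T_1}$-type bounds and the closeness of $\wt\gamma_k$ to $\gamma_k(T_1)$; for $k$ between the boundary of $I$ and the transition region $\IO$ one uses Corollary~\ref{corollary almost quantiles}, so that $|\ol y_k-\wt\gamma_k|\lesssim N^\xi/N + \mathbf{1}(k\in\IO)(N^\xi|\wt\gamma_k-\wt\gamma_L|/K + N^\delta|\wt\gamma_k-\wt\gamma_L|^2)$. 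Dividing by $(\ol x_i-\ol y_k)(\wt x_i-\wt\gamma_k)$ and summing over $k\notin I$, one obtains $\|F^{(2)}\|_\infty \lesssim N^{-1+\xi'}$ for some small $\xi'$ (the $\IO$-part giving an extra $\log$ that is absorbed; this is the analogue of the $\Omega_2$ estimate in the proof of rigidity of $\om_{T_1}$). For $F^{(1)}$: the $\e_{ij}$ and $(\wh x_i-\ol x_i)-(\wh x_j-\ol x_j)$ terms are $O(N^{-5C_1})$ by Lemma~\ref{lemma:xx}, divided by the nearly-singular denominators; here one again invokes the level-repulsion bound~\eqref{levrep} together with the regularization $\e$ so that $\E B_{ij}\lesssim N^{1+\delta+2\xi}$ as in~\eqref{le expectation of Bij}, and one gets a bound of the form $N^{-5C_1}\cdot N^{1+\delta+2\xi}\cdot\mathcal{K}\lesssim N^{-4C_1}$ after summing. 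The mean-drift piece $\tfrac12\euler{(t-T_1')/2}\upsilon_L(t-T_1)$ is deterministic and of size $O(|\upsilon_L|\cdot(t_2-T_1))\lesssim N^{-1+2\delta}$ on $[T_1',T_1'']$ since $|\upsilon_L|\le C$ and $t-T_1\le t_2-T_1\lesssim N^{-1+2\delta}$. In all, $\sup_{s\in[T_1',T_1'']}\|\bF(s)\|_\infty \le N^{-1-c_2'}$ for some $c_2'>0$, after choosing $C_1$ large enough relative to $\delta,\xi$.

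(2) \emph{Propagating the forcing by the semigroup.} The operator $-\cB$ generates a contraction on $\ell^\infty$ up to the $W_i$ terms, which only help (they are nonnegative, $W_i\ge 0$, so $U^{\cB}$ is sub-Markovian); hence $\|U^{\cB}(t,s)\bF(s)\|_\infty \le \|\bF(s)\|_\infty$. Integrating over $s\in[T_1',T_1'']$, an interval of length $K^{c_1}/N \le 1/N$, gives
\begin{align*}
  \mathbf{1}(\Xi)\,\Big|\int_{T_1'}^{T_1''}\big(U^{\cB}(T_1'',s)\bF(s)\big)_i\,\dd s\Big| \le \frac{K^{c_1}}{N}\cdot N^{-1-c_2'} \le \frac{N^{-c_2}}{N}
\end{align*}
for $i\in\frac14 I$, with $c_2<c_2'$, using $K\le N^\delta$. (If one wants the refined gain one can instead use the finite-speed-of-propagation kernel decay of~\cite{EYSG} to localize, but for the crude bound~\eqref{vwtv} the simple contraction estimate suffices.) This yields~\eqref{vwtv}. (3) \emph{Probability bookkeeping.} $\Xi$ is the intersection of $\Xi^1\cap\Xi^2$ (probability $\ge 1-N^{-\rho/8}$ conditionally on $\ol\bY$, from Lemma~\ref{prop hoelder}) with the rigidity events for $\ol\bx,\wt\bx$ (probability $\ge 1-\euler{-cN^\xi}$ conditionally) and the level-repulsion event $\mathcal R$ of Lemma~\ref{lemma:xx}; intersecting with $\cG$ (probability $\ge 1-N^{-cD}$ by Lemma~\ref{le lemma probability estimates}) and integrating out $\ol\bY$ over $\ol{\mathcal G}^*$ gives $\P(\Xi\cap\cG)\ge 1-N^{-c_3}$. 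The inclusion $\Xi\subset\Xi^1$ is by construction.

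\textbf{Main obstacle.} The delicate point is \emph{step (1) for $F^{(2)}$ in the transition region}: the reference points $\wt\gamma_k$ agree with $y_k(T_1)$ only for $|k-L|\ge K^5+N^\chi K^4$ and equal the auxiliary-ensemble points $z_k$ only for $|k-L|\le K^5$, and the bound on $|\ol y_k-\wt\gamma_k|$ there is only $O(N^\xi|\wt\gamma_k-\wt\gamma_L|/K) + O(N^\delta|\wt\gamma_k-\wt\gamma_L|^2)$ from Corollary~\ref{corollary almost quantiles}. One must check that, after dividing by $(\ol x_i-\ol y_k)(\wt x_i-\wt\gamma_k)\sim (k-L)^2/N^2$ and summing over $K\le |k-L|\lesssim K^5+N^\chi K^4$, the resulting contribution is genuinely $o(1/N)$; this requires the constraint $K^{10}N^\delta\le N$ from~\eqref{le choice of K} and the choice $\chi<\varpi$, and is exactly parallel to the $\Omega_2,\Omega_3$ bookkeeping already carried out in the proof of rigidity of $\om_{T_1}$ (equations~\eqref{le omega2 estimate the remainder}--\eqref{le bound on omega 2}). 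The secondary subtlety is making sure the regularization parameters are consistent ($\e_*\ll\e$, $\e = N^{-10C_1}$) so that the $O(N^{-5C_1})$ bounds from Lemma~\ref{lemma:xx} beat the $N^{1+\delta}$ blow-up of $B_{ij}$ with room to spare; this fixes how large $C_1$ must be chosen.
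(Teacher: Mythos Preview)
Your Duhamel setup is correct and matches the paper, as does your treatment of $F^{(1)}$. The gap is in your handling of $F^{(2)}$: the claim $\|F^{(2)}\|_\infty\lesssim N^{-1+\xi'}$ (and a fortiori $\|\bF\|_\infty\le N^{-1-c_2'}$) is false, and the simple $\ell^\infty$--contraction of $U^{\cB}$ is \emph{not} sufficient. Concretely, take $i=L+K$ and $k=L+K+1$. Then $\wt\gamma_k=z_{L+K+1}=y_{L+K+1}(T_1)$, so $|\ol y_k(s)-\wt\gamma_k|=|\ol y_k(s)-\ol y_k(T_1)|\sim N^{\xi}/N$ by Lemma~\ref{lemma speed}, while both $|\ol x_i-\ol y_k|$ and $|\wt x_i-\wt\gamma_k|$ are single eigenvalue gaps of size $O(1/N)$ (or smaller). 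Hence this one term in $F_i^{(2)}$ is already of order $N^\xi$, and in expectation (using~\eqref{le expectation of Bij}) it is $\sim N^{\delta+3\xi}$; it is certainly not $o(1)$. Even for $i\in\frac12 I$, your bound is too optimistic by a factor $N/K$: the paper's estimate~\eqref{le apriori estimate on Fi2} gives only $|F_i^{(2,\mathrm{in})}|\le CN^{2\xi}/K$, not $N^{-1+\xi'}$. This weaker bound still suffices after time integration over $[T_1',T_1'']$, but your arithmetic does not track it.

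The missing idea is the split $F^{(2)}=F^{(2,\mathrm{in})}+F^{(2,\mathrm{out})}$ according to whether $i\in\frac12 I$ or $i\in I\setminus\frac12 I$. For $F^{(2,\mathrm{in})}$ the contraction bound works (with the correct $N^{2\xi}/K$ input). For $F^{(2,\mathrm{out})}$ the forcing is large but is supported at distance $\ge K/4$ from the target index $i\in\frac14 I$, and here the finite-speed-of-propagation kernel bound $|\cU_{ij}(t,s)|\le CK^{1/2}\sqrt{N(t-s)+1}/|i-j|$ (Lemma~9.6 of~\cite{EYSG}) gives the needed smallness; for the nearly-singular boundary terms with $|j-k|<CN^\xi$ one also needs a separate regularity event at the edge (the paper's $\Xi^3$, Lemma~\ref{le lemma regular at the edge}), since pointwise control on $B_{j,j\pm1}$ fails and only a time-averaged bound is available. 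Your remark that finite speed of propagation is an optional ``refined gain'' is therefore incorrect: it is essential. A minor additional point: for $k\notin\II$ the weak-rigidity assumption~\eqref{assumption:rigidity weak} controls only the \emph{averaged} sum, not individual $|\ol y_k-\wt\gamma_k|$; the paper instead uses $\wt\gamma_k=y_k(T_1)$ for such $k$ together with Assumption~(4) to get $|\ol y_k(t)-\wt\gamma_k|\le N^\xi\sqrt{t-T_1}$, see~\eqref{le speed estimate 4}.
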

The proof of Proposition~\ref{proposition vwtv} is given in the following subsections.

\subsubsection{Removing the forcing terms $F_i^{(1)}$}\label{le subsubsection f1}
Subtracting~\eqref{heat1} from~\eqref{heat} we have
$$
    \frac{ \rd (v_i-\wt v_i)}{\rd t}  = - \big[ \cB(\bv-\wt\bv)\big]_i + F_i\,,\qquad\qquad i\in I\,,\qquad t\in[T_1',T_1'']\,.
$$
Eventually, we will choose $i\in \frac{1}{4}I\deq \llbracket L-K/4,L+K/4\rrbracket$, yet here we can take $i\in I$.

Let $\cU_\cB(t,s)$ denote the time-dependent propagator of the equation~\eqref{heat1} from time $s$ to $t$, with $s\le t$. From Duhamel formula we have
$$
    v_i(t)-\wt v_i(t) = \int_{T_1'}^t \left(\cU_{\cB}(t, s) F(s)\right)_i\,\rd s\,,\qquad\qquad i\in I\,,\qquad t\ge T_1'\,.
$$
Note that $\cU_\cB$ is a contraction in the sup norm by maximum principle (recall that~$W_i\ge 0$). Thus
\begin{align}\label{vvv}
   |v_i(t) -\wt v_i(t)|\le \int_{T_1'}^t \max_{i\in I} |F_i^{(1)}(s)| \rd s+\int_{T_1'}^t \left|\left(\cU_{\cB}(t,s)\, F^{(2)}(s)\right)_i\, \right|\rd s \,,\qquad\qquad  t\ge T_1'\,.
\end{align}
Fixing $t=T_1''$, using Lemma~\ref{lemma:xx} and the choice of~$\e$ in~\eqref{defep}, we estimate
\begin{multline}\label{EF}
   {\bf 1}(\Xi^1\cap \Xi^2) \int_{T_1'}^{T_1''}\max_{i\in I} |F_i^{(1)}(s)| \rd s \le{\bf 1}(\Xi^2) C  N^{-5C_1}\int_{T_1'}^{T_1''}\sum_{i\in I}\sum_{j\in I} 
   |B_{ij}(s)|\,\dd s +C\upsilon_L(T_1''-T_1)^2\,,
\end{multline}
where we estimated the maximum  by the sum. Thus recalling~\eqref{def:reg} and using~\eqref{EF},~\eqref{vvv} we get
\begin{align*}
   {\bf 1}(\Xi^1\cap \Xi^2) \int_{T_1'}^{T_1''}\max_{i\in I} |F_i^{(1)}(s)| \rd s &\le CN^{-5C_1} K(T_1''-T_1')N^{1+\rho}+C\upsilon_L(T_1''-T_1)^2\nonumber\\
   &\le C K^{1+c_1}N^{-5C_1+\rho}+C \frac{K^{1+c_1}}{N^2}\,.
\end{align*}
Since $C_1>1$, we conclude that the effect due to $F^{(1)}$ is below the precision we are interested in, \ie there is $c>0$ such that, for any $i\in I$,
\begin{align}\label{Sa 1}
{\bf 1}(\Xi^1\cap \Xi^2)| v_i(T_1'') -\wt v_i(T_1'')|
&\le CN^{-1-c}+{\bf 1}(\Xi^1\cap \Xi^2)\int_{T_1'}^{T_1''} \left|\left(\cU_{\cB}(T_1'',s)\, F^{(2)}(s)\right)_i\right|\, \rd s\,.
\end{align}

\subsubsection{Removing the forcing terms $F_i^{(2,\mathrm{in})}$}\label{le subsubsection f2 one}
To estimate the influences of the forcing terms $(F_i^{(2)})$, we write
\begin{align*}
    F_i^{(2)} = F_i^{(2,\mathrm{in})} + F_i^{(2, \mathrm{out})}\,,
\end{align*}
with
\begin{align*}
  F^{(2,\mathrm{in})}_i \deq F^{(2)}_i{\bf 1}\Big(i\in \frac{1}{2}I\Big)\,,\qquad  F^{(2,\mathrm{out})}_i \deq F^{(2)}_i{\bf 1}\Big(i\in I\,, i\not\in \frac{1}{2}I\Big)\,,
\end{align*}
where we introduced $\frac{1}{2}I\deq\llbracket L-K/2,L+K/2\rrbracket$.

To control the inside part $F_i^{(2,\mathrm{in})}$, we use the following lemma. Recall the definition of the event $\cG$ in~\eqref{le event calG} and the definitions of the intervals of consecutive integers $\IO$ and $\II$ in~\eqref{le definition of Is}.

\begin{lemma}\label{lemma speed}
Let $K$ satisfy~\eqref{le choice of K} and fix $\ol\bY\in\ol\cG$. Then we have the following estimates.
 \begin{itemize}[noitemsep,topsep=0pt,partopsep=0pt,parsep=0pt]
  \item[$(1)$] For all $k\in \II\backslash I$, we have
 \begin{align}\label{lemma speed 2}
  |\ol y_k(t)-\ol y_k(T_1)|\le C\frac{N^\xi}{{N}}+CN^\delta(t-T_1)\frac{|L-k|}{N}+CN^\delta (t-T_1)^2\,,\qquad t\in[T_1,t_2]\,.
 \end{align}
\item[$(2)$] For all $k\in \IO\backslash I$, we have
\begin{align}\label{lemma speed 3}
 |\ol y_k(t)-\widetilde\gamma_k|\le C\frac{N^\xi}{N}+C\frac{N^{\xi}}{K}\frac{|L-k|}{N} \,,\qquad t\in[T_1,t_2]\,.
\end{align}
 \end{itemize}
 \end{lemma}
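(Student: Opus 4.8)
The plan for part $(1)$ is to pass from the random eigenvalue trajectory to the deterministic quantile flow using the rigidity encoded in $\ol\cG$, and then to integrate the velocity estimates of Lemma~\ref{le lemma for new gamma}. Since $\ol\bY\in\ol\cG$ forces $|\ol y_k(s)-\ol\gamma_k(s)|\le N^\xi/N$ for all $k\in\II$ and all $s\in[t_1,t_2]$, I would first reduce to the quantiles: $\ol y_k(t)-\ol y_k(T_1)=\ol\gamma_k(t)-\ol\gamma_k(T_1)+O(N^\xi/N)$, using also $\ol y_k(T_1)=y_k(T_1)$. Recalling $\ol\gamma_k(s)=\gamma_k(s)-\upsilon_L(s-T_1)$ and $\upsilon_L=\dot\gamma_L(T_1)$, the barred quantiles obey $\tfrac{\rd}{\rd s}\ol\gamma_k(s)=\dot\gamma_k(s)-\dot\gamma_L(T_1)$, which I would split as $(\dot\gamma_k(s)-\dot\gamma_L(s))+(\dot\gamma_L(s)-\dot\gamma_L(T_1))$. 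By~\eqref{le estimate on dotgamma} (valid uniformly on $[T_1,t_2]$ and for the relevant index range) these are bounded by $CN^{-1+\delta}|k-L|$ and $CN^\delta(s-T_1)$; integrating over $[T_1,t]$ produces exactly the two error terms $CN^{-1+\delta}|k-L|(t-T_1)$ and $CN^\delta(t-T_1)^2$ of~\eqref{lemma speed 2}, and combining with the $O(N^\xi/N)$ rigidity error finishes part $(1)$.

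For part $(2)$ I would write, using $\ol y_k(T_1)=y_k(T_1)$,
\begin{align*}
 \ol y_k(t)-\wt\gamma_k=\big(\ol y_k(t)-\ol y_k(T_1)\big)+\big(y_k(T_1)-\gamma_k(T_1)\big)+\big(\gamma_k(T_1)-\wt\gamma_k\big)\,,
\end{align*}
and bound the three pieces. The middle one is $\le N^\xi/N$ by rigidity of $\by(T_1)\in\cG^1_{T_1}$ (note $k\in\IO\backslash I\subset\II$). The last is handled by Corollary~\ref{corollary almost quantiles}: since $k\in\IO$, the bound~\eqref{le almost quantiles} contributes $CN^\xi/N+C\big(N^\xi|\wt\gamma_k-\wt\gamma_L|/K+N^\delta|\wt\gamma_k-\wt\gamma_L|^2\big)$, and telescoping the increment bound~\eqref{le almost quantiles zwei} (together with $|\boldsymbol J_\bz|\le CK/N$ to bridge across the gap $I$) gives $|\wt\gamma_k-\wt\gamma_L|\le C|k-L|N^\xi/N$; this makes the linear term immediately of the target form $CN^\xi K^{-1}|k-L|/N$, and for the quadratic term I would bound $N^\delta|\wt\gamma_k-\wt\gamma_L|^2\le N^\delta(CK^5N^\xi/N)(C|k-L|N^\xi/N)$ using $|k-L|\le K^5+N^\chi K^4\le CK^5$, whose prefactor $N^{\delta+2\xi}K^5/N$ is $\le CN^\xi/K$ by~\eqref{le choice of K}. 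To the first piece I would apply part $(1)$; since the lemma is only needed within the H\"older horizon, i.e. for $t-T_1=O(K^2/N)$, the factors $N^\delta(t-T_1)^2\le N^\delta K^4/N^2$ and $N^\delta(t-T_1)|k-L|/N\le N^\delta K^2|k-L|/N^2$ are $\le N^\xi/N$ and $\le N^\xi K^{-1}|k-L|/N$ respectively, again by $N^\delta K^{10}\le N$. Summing the three contributions and renaming $\xi$ gives~\eqref{lemma speed 3}.

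I expect the main obstacle to be the scale bookkeeping in part $(2)$: one must check that every error produced by Corollary~\ref{corollary almost quantiles} and by part $(1)$ can be absorbed into the comparatively tight target $CN^\xi/N+CN^\xi K^{-1}|k-L|/N$, and this is most delicate for indices $k$ near the outer edge of $\IO$, where $|k-L|$ reaches $\sim K^5$ and the quadratic error $N^\delta|\wt\gamma_k-\wt\gamma_L|^2$ is largest. Each borderline bound closes only because of the relation $N^\delta K^{10}\le N$ in~\eqref{le choice of K}, so the real work is tracking which power of $K$ every step costs. A secondary point, essentially bookkeeping too, is that $t-T_1$ must be restricted to the range where the lemma is actually invoked (of order $K^2/N$) in order for the right-hand side of~\eqref{lemma speed 3} to be genuinely independent of $t$.
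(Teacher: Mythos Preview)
Your argument is correct and follows essentially the same route as the paper. For part~$(1)$ both you and the paper pass to the barred quantiles via rigidity and then integrate the two velocity estimates of Lemma~\ref{le lemma for new gamma}; the decompositions are identical. For part~$(2)$ the paper splits as $(\ol y_k(t)-\ol\gamma_k(t))+(\ol\gamma_k(t)-\gamma_k(T_1))+(\gamma_k(T_1)-\wt\gamma_k)$ rather than routing through $\ol y_k(T_1)$ and invoking part~$(1)$, but this is only a regrouping of the same three ingredients: rigidity at time~$t$, the deterministic drift estimate~\eqref{le speed estimate 2}, and Corollary~\ref{corollary almost quantiles}. Your explicit power-counting for the quadratic term $N^\delta|\wt\gamma_k-\wt\gamma_L|^2$ is more detailed than the paper's, which simply records $|\wt\gamma_k-\wt\gamma_L|\le CK^5/N$ and appeals to~\eqref{le choice of K}.

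One point you flag that the paper leaves implicit is genuine: the bound~\eqref{lemma speed 3} is time-independent, yet the drift contribution from~\eqref{le speed estimate 2} carries factors of $(t-T_1)$ and $(t-T_1)^2$. These are absorbed into the target only under the restriction $t-T_1=O(K^2/N)$, which is exactly the regime in which the lemma is invoked (see~\eqref{le what to be summed up} and~\eqref{chuchu}). The paper's statement $t\in[T_1,t_2]$ is therefore slightly loose for part~$(2)$; your observation that the relevant horizon is the H\"older window is the correct reading.
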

 We complement Lemma~\ref{lemma speed} with the estimate
\begin{align}\label{le speed estimate 4}
 |\ol y_k(t)-\wt\gamma_{k}|\le C N^{\xi}\sqrt{t}\,,\qquad\qquad t\ge T_1\,,\qquad k\in \II^c\,,
\end{align}
on $\cG$, as follows immediately from the definition of $\cG$ in~\eqref{le event calG}; \cf Assumption~$(4)$ of Theorem~\ref{main theorem}.

\begin{proof}
To prove~\eqref{lemma speed 2}, we estimate
\begin{align}\label{portello}
 |\ol y_k(t)-\ol y_k(T_1)|&\le |\ol y_k(t)-\ol \gamma_{k}(t)|+|\ol \gamma_{k}(t)-\ol \gamma_{k}(T_1)|+|\ol y_k(T_1)-\ol \gamma_{k}(T_1)|\nonumber\\
 &\le C\frac{N^\xi}{N}+|\ol\gamma_{k}(t)-\gamma_{k}(T_1)|\,,
\end{align}
on the event $\ol\cG$, where we used the rigidity bound in~\eqref{le Cgs1} for $k\in \II$. Next, we write
\begin{align*}
 \ol\gamma_{k}(t)-\gamma_{k}(T_1)=\gamma_k(t)-\upsilon_L(t-T_1)-\gamma_k(T_1)=\int_{T_1}^t\dot\gamma_k(s)\,\dd s-\upsilon_L(t-T_1)\,.
\end{align*}
Then by Lemma~\ref{le lemma for new gamma} we have
\begin{align*}
 \int_{T_1}^t\dot\gamma_k(s)\,\dd s&=\int_{T_1}^t\dot\gamma_L(s)\dd s+O(N^{-1+\delta}(t-T_1)|k-L|)\nonumber\\
&=\dot\gamma_L(T_1)(t-T_1)+O(N^{-1+\delta}(t-T_1)^2)+O(N^{-1+\delta}(t-T_1)|k-L|)\,.\nonumber
\end{align*}
Thus recalling that $\upsilon_L=\dot\gamma_L(T_1)$ by definition, we conclude that
\begin{align}\label{le speed estimate 2}
 |\ol\gamma_{k}(t)-\gamma_{k}(T_1)|\le CN^{-1+\delta}(t-T_1)^2+CN^{-1+\delta}(t-T_1)|k-L|\,.
\end{align}
Together with~\eqref{portello} this implies~\eqref{lemma speed 2}.

To bound the left side of~\eqref{lemma speed 3}, we split
\begin{align}\label{le speed estimate 3}
 |\ol y_k(t)-\widetilde\gamma_k|\le |\ol y_k(t)-\ol\gamma_{k}(t)|+\left|\ol\gamma_{k}(t)-\gamma_{k}(T_1)\right|+| \gamma_{k}(T_1)-\wt{\gamma}_k|\,.
\end{align}
Then, using the rigidity from the definition of $\cG^{1}_t$ in~\eqref{le Cgs1}, the first term on the right side can be bounded by $CN^{\xi}/N$. The second term on the right side is controlled by~\eqref{le speed estimate 2}. To bound the third term on the right side we apply Corollary~\ref{corollary almost quantiles} to find
\begin{align*}
 \left|\gamma_{k}(T_1)-\wt{\gamma}_k\right|\le C\frac{N^{\xi}}{N}+C\frac{N^{\xi}|\wt\gamma_k-\wt\gamma_L|}{K}+C N^\delta|\wt\gamma_k-\wt\gamma_L|^2\,.
\end{align*}
Recalling that $|\wt\gamma_k-\wt\gamma_L|\le CK^5/N$, for $k\in \IO\backslash I$, and using that $K$ satisfies~\eqref{le choice of K}, we get~\eqref{lemma speed 3}. 
\end{proof}

We now bound the term $F_i^{(2,\mathrm{in})}$.  Abbreviate 
\begin{align}\label{le wtbij}
 \widetilde B_{ik}\deq \frac{1}{N(\ol x_i-\ol y_k+\e_{ik}) \,(\wt x_i-\wt \gamma_k)}\,,\qquad\qquad i\in I\,,\quad k\in I^c\,.
\end{align}

Recall the bound on ${\bf 1}(\Xi^1)|\ol x_i-\widehat x_i|$ from Lemma~\ref{lemma:xx} and the definition of~$(\epsilon_{ik})$ in~\eqref{defep}. Using Lemma~\ref{lemma speed} and recalling that $t-T_1\le CK^2/N$, we obtain
\begin{align}\label{le what to be summed up}
{\bf 1}( \Xi^1)\big|F_i^{(2,\mathrm{in})}(s)\big|\le &C\sum_{k\in \IO\backslash I}\left(\frac{1}{N^{5C_1}}+\frac{N^{\xi}}{N}+\frac{N^{\xi}}{K}\frac{|k-L|}{N}\right)\,|\wt B_{ik}(s)|\nonumber\\
&\qquad+ C\sum_{k\in \II\backslash \IO}\left(\frac{N^{\xi}}{N}+N^\delta\frac{K^2}{N}\frac{|L-k|}{N}\right)|\wt B_{ik}(s)|\nonumber\\
&\qquad+ C\sum_{k\not\in \II}\left(N^{\xi}\frac{K}{\sqrt{N}}\right)|\wt B_{ik}(s)|\,,\qquad\qquad i\in \frac{1}{2}I\,,\qquad\qquad s\in[T_1',T_1'']\,,
\end{align}
where we also used~\eqref{le speed estimate 4} together with $T_1''-T_1\le CK^2/N $ to get the last term on the right of~\eqref{le what to be summed up}.

To perform the sums over $k$ in the first two terms on the right, we recall~\eqref{le wtbij} and we note that there are two constants $c,c'>0$, such that
\begin{align}\label{le to sum trivially}
 |\ol y_k-\ol x_i|\ge c|L-k|/N\,,\qquad\quad | \wt\gamma_k-\wt x_i|\ge c'|L-k|/N\,,\qquad\qquad k\in \II\backslash I\,,
\end{align}
where we used the rigidity estimate for $\by$ (embodied in $\cG$; see~\eqref{le event calG}), the rigidity estimate for~$\boldsymbol{\wt\gamma}$ obtained in Corollary~\ref{corollary almost quantiles} and the rigidity estimate for~$\bx$, $\wt\bx$ obtained in Proposition~\ref{lm:rigreg}, as well as the choice $i\in \frac{1}{2}I=\llbracket L-K/2,L+K/2\rrbracket$. The summation over $k\not\in \II$ in the third term is estimated using that $|\ol y_k-\ol x_i|\,|  \wt\gamma_k-\wt x_i|\ge c''(\sigma)>0 $, $k\not\in \II$. Hence, after summing up the right side of~\eqref{le what to be summed up}, we get
\begin{align}\label{le apriori estimate on Fi2}
{\bf 1}(\Xi^1)\big|F_i^{(2,\mathrm{in})}(s)\big|&\le \frac{1}{N^{5C_1-1}K}+\frac{N^{\xi}}{K}+\frac{N^{2\xi}}{K}
+ CN^\delta\frac{N^{\xi}}{NK^3}+C\frac{N^{\xi}K}{\sqrt{N}}\nonumber\\
&\le C\frac{N^{2\xi}}{K}\,,\qquad\qquad\quad i\in \frac{1}{2}I\,,\qquad s \in[T_1',t_2]\,,
\end{align}
where we used $5C_1-1>1$ and that $K$ satisfies~\eqref{le choice of K}. Thus by~\eqref{le apriori estimate on Fi2} we have
\begin{align*}
 {\bf 1}( \Xi^1)   \int_{T_1'}^{T_1''} \big| F_i^{(2,\mathrm{in})}(s)\big|\,\rd s \le (T_1''-T_1')  \frac{N^\xi}{K} = \frac{K^{c_1}}{N} \frac{N^{2\xi}}{K}\,,
\end{align*}
for all $i\in \frac{1}{2}I$, so this error is below the precision we are interested in: For some $c>0$ we have that
\begin{align}\label{Sa 2}
 {\bf 1}( \Xi^1) \big| v_i(T_1'') -\wt v_i(T_1'')\big| 
&\le C\frac{N^{-c}}{N}+{\bf 1}( \Xi^1)\int_{T_1'}^{T_1''}\left|\left(\cU_{\cB}(T_1'',s)\, F^{(2,\mathrm{out})}(s)\right)_i\right|\, \rd s\,,\qquad  i\in \frac{1}{2}I\,.
\end{align}

The outside part $F^{(2, \mathrm{out})}$ is treated with a finite speed of propagation estimate.
\subsubsection{Removing the forcing term $F^{(2,\mathrm{out})}$}\label{le subsubsection f2 two}
We first recall the finite of propagation estimate for the propagator $\mathscr{U}_\mathscr{B}$. Abbreviate for simplicity $\mathscr{U}(t,s)\equiv\mathscr{U}_{\mathscr{B}}(t,s)$ and denote its kernel by $\mathscr{U}_{ij}(t,s)$, $i,j\in I$. By Lemma~9.6 of~\cite{EYSG} there is $C$ such that
\begin{align}\label{le propagtation estimate}
 \left|\mathcal{U}_{ij}(t,s)\right|\le C\,\frac{K^{1/2}\sqrt{N(t-s)+1}}{|i-j|}\,,\qquad\qquad i,j\in I\,,\qquad\qquad t\ge s\ge T_1\,.
\end{align}
on $\Xi^2$. We refer to~\eqref{le propagtation estimate} as a finite speed of propagation estimate.

Next recall that we want to control
\begin{align*}
 \max_{i\in  \frac{1}{4}I}&\sum_{j\in I}\mathcal{U}_{ij}(t,s)F_{j}^{(2,\mathrm{out})}=\max_{i\in  \frac{1}{4}I}\frac{1}{N}\sum_{j\in I\backslash \frac{1}{2}I}\sum_{k\in I^c}\mathcal{U}_{ij}(t,s)F^{(2)}_{jk}(s)\,, 
\end{align*}
where $T_1'\le s\le t\le T_1''$, and where we have introduced
\begin{align*}
 F_{jk}^{(2)}(s)\deq\left( \frac{ \wh x_j -\ol x_j+\e_{jk}}{(\ol x_j-\ol y_k+\e_{jk})(\wt x_j - \wt\gamma_k)}+\frac{\ol y_k-\wt \gamma_k}{(\ol x_j-\ol y_k+\e_{jk})(\wt x_j -\wt \gamma_k)}\right)(s)\,.
\end{align*}
With some large $C$, we next split the summations over $k$ and $j$ as
\begin{multline}\label{le splitting of the propagation estimate}
 \sum_{j\in I}\mathcal{U}_{ij}(t,s)F_{j}^{(2,\mathrm{out})}=\frac{1}{N}\sum_{j\in I\backslash \frac{1}{2}I}\sum_{k\in I^c}{\bf 1}(|j-k|\ge CN^{\xi})\, \mathcal{U}_{ij}(t,s)F^{(2)}_{jk}(s)\\
 +\frac{1}{N} \sum_{j\in I\backslash \frac{1}{2}I}\sum_{k\in I^c}{\bf 1}(|j-k|< CN^{\xi})\, \mathcal{U}_{ij}(t,s)F^{(2)}_{jk}(s)\,,\qquad i\in  \frac{1}{4}I\,.
 \end{multline}

We start with bounding the first term on the right side of~\eqref{le splitting of the propagation estimate}. On the event~$\Xi^1$, we can bound
\begin{align}\label{chuchu}
 \frac{1}{N}\sum_{k\in I^c}\left|{\bf 1}(|j-k|\ge CN^{\xi})\,F^{(2)}_{jk}(s)\right|&\le C\sum_{k\in \IO\backslash I}{\bf 1}(|j-k|\ge CN^{\xi})\left(\frac{1}{N^{5C_1}}+\frac{N^\xi}{N}+\frac{N^{\xi}}{K}\frac{|k-L|}{N} \right)|\wt B_{jk}(s)|\nonumber\\
 &+ C\sum_{k\in \II\backslash \IO}{\bf 1}(|j-k|\ge CN^{\xi})\left(\frac{N^{\xi}}{N}+N^\delta\frac{K^2}{N}\frac{|k-L|}{N}\right)\,|\wt B_{jk}(s)|\nonumber\\
 &+C\sum_{k\not\in \II}{\bf 1}(|j-k|\ge CN^{\xi})\frac{N^\xi K}{\sqrt{N}}\,|\wt B_{jk}(s)|\,,
\end{align}
here we used~\eqref{le propagtation estimate} and the Lemmas~\ref{lemma:xx} and~\ref{lemma speed}. We further used that $s\le t$, $t-T_1\le CK^2/N$ by assumption. Thus, summing over $k$, we get
\begin{align*}
{\bf 1}(\Xi^1)\frac{1}{N}\sum_{k\in I^c}\left|{\bf 1}(|j-k|\ge CN^{\xi})\,F^{(2)}_{jk}(s)\right|\le C\frac{N^{2\xi}}{|j-
L + K +N^\xi|}\,,\qquad \qquad j\in I\backslash \frac{1}{2}I\,,
\end{align*}
where we used the estimates in~\eqref{le to sum trivially}. See~\eqref{le what to be summed up} and~\eqref{le apriori estimate on Fi2} for a similar estimate. Returning to~\eqref{le splitting of the propagation estimate} we see that the first term on the right side is bounded as
\begin{align}\label{le final estimate on first term in propagation}
 \frac{1}{N} \sum_{j\in I\backslash \frac{1}{2}I}\sum_{k\not\in I}{\bf 1}(|j-k|\ge CN^{\xi})\, |\mathcal{U}_{ij}(t,s)F^{(2)}_{jk}(s)|&\le C\sum_{j\,:\,K/2\le |j-L|\le K}\frac{N^{2\xi}K^{1/2}\sqrt{N(t-s)+1}}{|j-L+K/4|\,|j-L+K+N^\xi|}\nonumber\\
 &\le C N^{2\xi}K^{-1/2+c_1/2}\,,
\end{align}
on $\Xi^1\cap \Xi^2$, where we used that $t-s\le K^{c_1}/N$.

It remains to control the second term in~\eqref{le splitting of the propagation estimate}. Similar to~\eqref{chuchu}, we have on $\Xi^1$, for $j\in I\setminus \frac{1}{2}I$, that
\begin{align*}
\frac{1}{N}\sum_{k\in I^c}\left|{\bf 1}(|j-k|< CN^{\xi})\,F^{(2)}_{jk}(s)\right|&\le C\sum_{k\in \IO\backslash I}{\bf 1}(|j-k|< CN^{\xi})\left(\frac{1}{N^{5C_1}}+\frac{N^\xi}{N}+\frac{N^{2\xi}}{K}\frac{1}{N} \right)\,|\wt B_{jk}(s)|\nonumber\\
 &\le \frac{CN^\xi}{N}\sum_{k\in \IO\backslash I}{\bf 1}(|j-k|< CN^{\xi})|\wt B_{jk}(s)|\,.
\end{align*}
We thus have, for $i\in \frac{1}{4}I$, 
\begin{align}
 &\frac{1}{N}\sum_{k\in I^c}\left|\int_{T_1'}^{T_1''}\,\dd s\,{\bf 1}(|j-k|< CN^{\xi})\,\mathcal{U}_{ij}(T_1'',s)\,F^{(2)}_{jk}(s)\right|\nonumber\\
 &\qquad\le C \frac{N^{\xi}}{N}\int_{T_1'}^{T_1''}\dd s\sum_{k\in I^c}\,\sum_{j\in I} {\bf 1}(|j-k|< CN^{\xi})\frac{K^{1/2} \sqrt{N(T_1''-s)+1}}{|i-j|}\,| \wt B_{jk}(s)|\nonumber\\
 &\qquad\le C \frac{N^{2\xi}K^{1/2}}{\sqrt{N}K}{\sqrt{T_1''-T_1'}}\int_{T_1'}^{T_1''}\dd s\,\sum_{j=L+K-\lfloor CN^{\xi}\rfloor}^{L+K}| B_{j,j+1}(s)|\nonumber\\
 &\qquad\qquad \qquad+C \frac{N^{2\xi}K^{1/2}}{\sqrt{N}K}{\sqrt{T_1''-T_1'}}\int_{T_1'}^{T_1''}\dd s\,\sum_{j=L-K}^{L-K+\lceil CN^{\xi}\rceil}| B_{j,j-1}(s)|\,,\label{le at the edge of I}
\end{align}
where we used that $| \wt B_{jk}|\le|  B_{j,j+1}|$, $k> j$, respectively  $|\wt B_{jk}|\le  |B_{j,j-1}|$, $k< j$, for all $k\in I^c$, $j\in I$. (Here and below we use the convention that, for $j\in I$, $B_{j,L\pm(K+1)}=\wt B_{j,L\pm(K+1)}$, respectively $B_{j,k}=0$ if $|k|> L+K+1$.) To bound the two terms on the right side of~\eqref{le at the edge of I}, we use that the evolution equation~\eqref{heat1} is ``regular'' at the space-time points $(L+K,T_1'')$ and $(L-K,T_1'')$.
\begin{lemma}\label{le lemma regular at the edge}
 There is an event $\Xi^3$ such that evolution equation~\eqref{heat1} is regular at the space-time points $(L+K,T_1'')$ and $(L-K,T_1'')$ in the sense that
 \begin{align}
  {\bf 1}(\Xi^3)\sup_{s\in\cT}\sup_{1\le M\le \cK}\frac{1}{N^{-1}+|s-T_1''|}\int_s^{T_1''}\frac{1}{M}\sum_{i\in I\,:\, |i-L\pm K|\le M}|B_{i,i\pm 1}(s)| \dd s\le N^{1+\rho}\,.
 \end{align}
Moreover, we have the estimate $\P^{\ol\bY}(\Xi^1\cap\Xi^2\cap\Xi^3)\ge 1-N^{-\rho/10}$.
\end{lemma}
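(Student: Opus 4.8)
The plan is to mirror the verification of condition ${\bf (C1)}_\rho$ carried out in the proof of Lemma~\ref{prop hoelder}, now applied to the nearest-neighbour coefficients $B_{i,i\pm1}$ localized near the two edge points $L\pm K$ rather than near the interior point $L$. The quantitative input is the first-moment bound
$$
 \E^{\ol\bY}\, |B_{i,i\pm 1}(s)| \le C N^{1+\delta+2\xi}\,,\qquad i\in I\,,\qquad s\in[T_1',t_2]\,,
$$
obtained exactly as in~\eqref{le expectation of Bij}: one applies H\"older's inequality with conjugate exponents $p=2+\phi$ and $q$, uses the level repulsion estimate~\eqref{levrep} for the increments $\ol x_i-\ol x_{i-1}+\e$ of the regularized process (together with the factor $(N\e)^{-\phi}$ furnished by the choice~\eqref{defep} of $\e$) and the level repulsion bound~\eqref{regii} for the increments $\wt x_i-\wt x_{i-1}$ of $\wt\bx$, and finally chooses $\phi>0$ small depending on $C_1$. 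For the boundary indices $i=L\pm K$ one invokes the convention $B_{L\pm K,L\pm(K+1)}=\wt B_{L\pm K,L\pm(K+1)}$ together with the fact that~\eqref{levrep} holds for the pair $(L\pm K,L\pm(K+1))$ (both indices lie in $\II$ and $\ol x_{L\pm(K+1)}=\ol y_{L\pm(K+1)}$), while the factor $\wt x_{L\pm K}-\wt\gamma_{L\pm(K+1)}$ is controlled by~\eqref{regii} with its stated boundary convention $\wt x_{i\pm1}(t)=y_{L\pm(K+1)}(T_1)$ for $i=L\pm K$.

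Granting this moment bound, I would proceed as in the paragraph following~\eqref{le expectation of Bij}. By Fubini's theorem, for any fixed $s\in\cT$ and $M\in\llbracket1,\cK\rrbracket$ the expectation under $\P^{\ol\bY}$ of the space-time average
$$
 \frac{1}{N^{-1}+|s-T_1''|}\int_s^{T_1''}\frac{1}{M}\sum_{i\in I\,:\,|i-L\pm K|\le M}|B_{i,i\pm1}(s')|\,\rd s'
$$
is at most $CN^{1+\delta+2\xi}$, since the inner sum has at most $2M+1$ terms, each of expectation $\le CN^{1+\delta+2\xi}$, and $\int_s^{T_1''}\rd s'\le N^{-1}+|s-T_1''|$. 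With $\rho\deq\delta+3\xi$ as in Lemma~\ref{prop hoelder}, Markov's inequality bounds by $CN^{-\xi}$ the probability that this average exceeds $N^{1+\rho}$. Exactly as in Lemma~\ref{prop hoelder}, it suffices to control the average over a dyadic mesh of times $s_k=T_1''-2^{-k}$ (restricted to $s_k\ge T_1'$) and of scales $M_l=2^l$, $k,l\in\llbracket1,C\log N\rrbracket$, since space-time averages on comparable scales are comparable up to a constant. A union bound over these $O((\log N)^2)$ pairs and over the two edge points $(L+K,T_1'')$ and $(L-K,T_1'')$ then produces an event $\Xi^3$ on which Equation~\eqref{heat1} is regular at both edge points in the nearest-neighbour sense displayed in the lemma (the analogue of~\eqref{def:reg} with the single sum over $B_{i,i\pm1}$), with $\P^{\ol\bY}(\Xi^3)\ge 1-N^{-\rho/4}$ for $N$ large.

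Finally, combining this with the estimate $\P^{\ol\bY}(\Xi^1\cap\Xi^2)\ge 1-N^{-\rho/8}$ established in Lemma~\ref{prop hoelder} yields
$$
 \P^{\ol\bY}(\Xi^1\cap\Xi^2\cap\Xi^3)\ge 1-N^{-\rho/8}-N^{-\rho/4}\ge 1-N^{-\rho/10}
$$
for $N$ sufficiently large, which is the asserted bound. I expect the only genuinely delicate point to be the boundary moment estimate: one must check that the H\"older pairing of the regularized level-repulsion bound for $\ol\bx$ (available with an exponent slightly above $2$ only thanks to the $\e_{ik}$-regularization of Step~1) with the bound~\eqref{regii} for $\wt\bx$ (available only for exponents below $2$) closes, and that the convention identifying $B_{L\pm K,L\pm(K+1)}$ with $\wt B_{L\pm K,L\pm(K+1)}$ is compatible with all the rigidity and level-repulsion inputs at the boundary of $I$; everything else is a verbatim repetition of the argument already carried out for the interior point $L$ in the proof of Lemma~\ref{prop hoelder}.
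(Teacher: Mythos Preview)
Your proposal is correct and follows essentially the same approach as the paper's proof, which simply says to repeat the first part of the proof of Lemma~\ref{prop hoelder} verbatim: bound $\E|B_{i,i\pm1}|$ via~\eqref{levrep} and~\eqref{regii} as in~\eqref{le expectation of Bij}, then apply dyadic decomposition and Markov's inequality around the edge points $(L\pm K,T_1'')$. Your write-up is in fact more explicit than the paper's, particularly in handling the boundary convention $B_{L\pm K,L\pm(K+1)}=\wt B_{L\pm K,L\pm(K+1)}$ and in spelling out the final probability combination.
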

\begin{proof}
 We can follow almost verbatim the first part of the proof of Lemma~\ref{prop hoelder}. Using~\eqref{levrep} and~\eqref{regii}, we can bound $\E |B_{i,i\pm 1}|$ as in~\eqref{le expectation of Bij}. Then dyadic decompositions around the space-time points $(L\pm K,T_1'')$ combined with applications of Markov inequality yield the claim.
\end{proof}
Next, returning to the estimate in~\eqref{le at the edge of I}, we conclude from Lemma~\ref{le lemma regular at the edge} that the first term on the right can be bounded as
\begin{align*}
 {\bf 1}(\Xi^1\cap \Xi^3)\frac{N^{2\xi}K^{1/2}}{\sqrt{N}K}{\sqrt{T_1''-T_1'}}\int_{T_1'}^{T_1''}\dd s\,\sum_{j=L+K-\lfloor CN^{\xi}\rfloor}^{L+K}| B_{j,j+1}(s)|&\le C\frac{N^{3\xi}}{\sqrt{KN}}(T_1''-T_1')^{3/2} N^{1+\rho}\nonumber\\
 &\le C{N^{3\xi}K^{3c_1/2}}N^{-1+\rho}\,.
\end{align*}
Using the same argument to bound the second term on the right side of~\eqref{le at the edge of I}, we conclude that
\begin{align}\label{Sa 3}
 {\bf 1}(\Xi^1\cap \Xi^3)\frac{1}{N}\sum_{k\in I^c}\left|\int_{T_1'}^{T_1''}\,\dd s\,{\bf 1}(|j-k|< CN^{\xi})\,\mathcal{U}_{ij}(T_1'',s)\,F^{(2)}_{jk}(s)\right|\le C\frac{N^{3\xi+\rho}K^{3c_1/2}}{N}\,,\quad i\in \frac{1}{4}I\,. 
\end{align}

Summarizing the estimates above, we can now state the proof of Proposition~\ref{proposition vwtv}.
\begin{proof}[{Proof of Proposition~\ref{proposition vwtv}}]
 Let $\Xi\deq\Xi^1\cap\Xi^2\cap\Xi^3$. Note that $\P(\Xi)\ge 1-N^{-c_2}$,  for any $0<c_2\le \rho $ (with $\rho=\delta+3\xi$). Adding up the estimates~\eqref{Sa 3},~\eqref{Sa 2} and~\eqref{Sa 1}, and recalling that $K$ satisfies~\eqref{le choice of K} and that $c_1\sim 1/100$, we conclude that there is $c_3\equiv c_3(\xi)>0$ such that ~\eqref{vwtv} holds, for $\xi>0$ sufficiently small and $N$ large enough. 
 \end{proof}

\subsection{Conclusion of the Proof of Theorem~\ref{main theorem}}\label{Tapir}
Recall from~\eqref{le bv} the definition of $(v_i(t)$. Combining~\eqref{vwtv} and~\eqref{le conclusion of hoelderregularity} we obtain
\begin{align}\label{le final estimate on the gaps}
    {\bf 1}(\Xi) | v_{i+1} (t_2)-   v_i(t_2)|\le N^{-1-c_4}\,,\qquad\qquad |i-L|\le C\,,
    \end{align}
with some small $c_4>0$. Moreover, we have $\P(\Xi\cap{\cG})\ge 1-N^{-c_5}$, for some $c_5>0$. This exactly proves the following result.

\begin{lemma}\label{lemma:xxxx} The
gap statistics of $\wh\bx(T_1'')$ and $\wt\bx(T_1'')$ for indices near $L$ coincide in the limit of large~$N$.
\end{lemma}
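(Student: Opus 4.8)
The plan is to read the statement off directly from the pointwise control of the discrete gradient of $\wh\bx-\wt\bx$ that has already been assembled in Steps~2 and~3, combined with the smoothness of the test function. First I would recall from~\eqref{le bv} that $\bv(t)=\euler{(t-T_1')/2}(\wh\bx(t)-\wt\bx(t))$, so that at $t=T_1''$ the discrete gradients of $\wh\bx-\wt\bx$ and of $\bv$ differ only by the scalar factor $\euler{-(T_1''-T_1')/2}=1+o(1)$, since $T_1''-T_1'=K^{c_1}/N\le N^{-\epsilon}$. Then I would combine the H\"older estimate~\eqref{le conclusion of hoelderregularity} for the free evolution $\wt\bv$ with the bound~\eqref{vwtv} of Proposition~\ref{proposition vwtv} for $\bv-\wt\bv$ via the triangle inequality; on the event $\Xi=\Xi^1\cap\Xi^2\cap\Xi^3$ this yields $|v_{i+1}(T_1'')-v_i(T_1'')|\le CN^{-1+\xi}K^{-\fq/4}+CN^{-1-c_2}$ for $|i-L|\le C$, which is $\le N^{-1-c_4}$ once one uses $K\ge N^{\varpi}$ and takes $\xi$ small, and hence is precisely~\eqref{le final estimate on the gaps}.

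Next I would upgrade this pointwise bound to a statement on observables. Fixing $i_0$ with $|i_0-L|$ small, writing $c_*\deq N\varrho_{T_1}(\gamma_L)$—which is of order $N$ since $\varrho_{T_1}(\gamma_L)$ is bounded above and below by~\eqref{assumption on regularity of varrho}—and telescoping over the at most $n$ consecutive gaps, one gets $c_*\big|(\wh x_{i_0}-\wh x_{i_0+j})-(\wt x_{i_0}-\wt x_{i_0+j})\big|(T_1'')\le CnN^{-c_4}$ on $\Xi$ for every $j\le n$. A single application of the mean value theorem to the smooth compactly supported $\mathcal{O}$ then bounds the difference of the two observables by $C\|\mathcal{O}'\|_\infty n^2N^{-c_4}$ on $\Xi$; on $\Xi^c$—which has probability at most $N^{-c_5}$, uniformly for $\ol\bY\in\ol\cG$ because $\P(\Xi\cap\cG)\ge1-N^{-c_5}$ and $\P(\cG)\ge1-N^{-D}$—the integrand is crudely at most $2\|\mathcal{O}\|_\infty$. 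Taking expectations over the Brownian motions, and then integrating out the boundary trajectory $\ol\bY$ over $\ol\cG$ (legitimate since all estimates obtained in Section~\ref{subsec:localizing the measures} and in the present section are uniform in $\ol\bY\in\ol\cG$), gives the coincidence of gap statistics with an error $O(\|\mathcal{O}'\|_\infty N^{-\frak{f}})$ for some $\frak{f}>0$.

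Since the analytic substance has been carried out in Lemma~\ref{prop hoelder} (H\"older regularity of the free dynamics) and Proposition~\ref{proposition vwtv} (removal of the forcing terms), I expect no genuine obstacle at this last step: the only points requiring a moment of care are, first, verifying that the H\"older contribution $N^{-1+\xi}K^{-\fq/4}$ is of strictly smaller order than the target scale $1/N$, which is exactly where the lower bound $K\ge N^{\varpi}$ on the block size and the freedom to shrink $\xi$ enter, and second, bookkeeping the exceptional events so that $\Xi\cap\cG$ still has probability $1-N^{-c_5}$ and the final exponent $\frak{f}$ is positive. If one later wishes to phrase the conclusion for $\wh\bx(T)$ and $\wt\bx(T)$ at a general time $T\in[T_1'',t_2]$ rather than at $T=T_1''$, it suffices to note that $\wt\bx(t)$ has the same (equilibrium) gap statistics for all $t\in[T_1',t_2]$ by Lemma~\ref{lm:SD}, and to rerun Steps~2--3 with $T_1''$ replaced by $T$, which is admissible since the H\"older regularity only requires a time lapse of order $K^{c_1}/N\le t_2-T_1'$.
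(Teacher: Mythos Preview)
Your proposal is correct and follows essentially the same route as the paper: combine the H\"older estimate~\eqref{le conclusion of hoelderregularity} with Proposition~\ref{proposition vwtv} via the triangle inequality to obtain~\eqref{le final estimate on the gaps}, and then pass to observables. The paper's own argument is extremely terse---it simply records~\eqref{le final estimate on the gaps} and declares the lemma proved---so your added detail (the harmless exponential prefactor, the telescoping over $n$ gaps, the mean value theorem, and the bookkeeping of $\Xi^c$) only makes explicit what the paper leaves to the reader; your remark about general $T\in[T_1'',t_2]$ is also consistent with how the paper concludes Theorem~\ref{main theorem} in Subsection~\ref{Tapir}.
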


Combining this with Lemma~\ref{lemma:xx}, we need only to understand the local statistics of $\wt\bx$. 
But by Lemma~\ref{lm:SD}, this is the same as the gap statistics of the local equilibrium measure $\om_{t_0}$. The latter one is universal as we showed in Subsection~\ref{subsection:local statitsics of omega}. To conclude the proof of Theorem~\ref{main theorem}, we note that we can integrate over~$\cG$, as follows from Lemma~\ref{le lemma probability estimates} and the assumption that the observable $\mathcal{O}$ is compactly supported.  Finally, choosing $T_1\ge t_1$ such that $T_1''=T$, we obtain~\eqref{equation main result}. This completes the proof of Theorem~\ref{main theorem}.

\begin{appendix}

\section{Semicircular flow}\label{section classical semicircle flow}

In this appendix we study the semicircular flow in more detail. In Subsection~\ref{sec:class} we prove Lemma~\ref{le density lemma} and Lemma~\ref{le lemma for new gamma}. In Subsection~\ref{appendix hoelder} we discuss the Assumption~$(1)$ of Theorem~\ref{main theorem} in more detail by arguing that it is satisfied for a large number of random matrix models.

\subsection{Classical flow of the density}\label{sec:class}
Recall from~\eqref{free convolution first} that $m_t$ satisfies 
\begin{align}\label{free convolution}
m_t(z)=\int_\R\frac{\varrho(y)\rd y}{\euler{-t/2}y-(1-\euler{-t})m_t(z)-z}\,,\qquad \im m_t(z)> 0\,,\qquad\im z>0\,,
\end{align}
for all $t\ge 0$, and that $m_t$ determines a density $\varrho_t$ via the Stieltjes inversion formula, \ie $\varrho_t(x)=\frac{1}{\pi}\lim_{\eta\searrow 0}\im m_t(x+\ii\eta)$, $x\in\R$. We call the map $t\mapsto\varrho_t$ the semicircular flow started from $\varrho$. 

It was shown in~\cite{B} that the density $\varrho_t$ is a real analytic function inside support for fixed $t>0$. Yet, without any further assumptions on $\varrho$, estimates on the derivatives of $\varrho_t$ deteriorate for small~$t$, since the Equation~\eqref{free convolution} may lose its stability properties (\ie the denominator on the right side can become singular). This can, for example, be remedied by imposing the conditions in Assumption~$(1)$ of Theorem~\ref{main theorem}. Consider for $\Sigma>0$ the domain
\begin{align*}
 \mathcal{D}_{\Sigma}\deq\{ z=x+\ii\eta \in \C\,:\,  x\in [E-\Sigma,E+\Sigma]\,, \eta \ge 0\}\,.
\end{align*}
Denote by $m_{0}$ the Stieltjes transform of $\varrho$. In accordance with the Assumption~$(1)$ of Theorem~\ref{main theorem}, we assume here that $m_0$ extends to a continuous function on $\mathcal{D}_\Sigma$ and that there is a small $\delta\ge 0$ and a constant $C$ such that
\begin{align}\label{nochmals assumptions}
 \sup_{z\in\mathcal{D}}|m_{0}(z)|\le C\,,\qquad\quad \sup_{z\in\mathcal{D}}|\partial_z^n m_{0}(z)|\le C(N^{\delta})^n\,,\quad\qquad n=1,2\,.
\end{align}

\begin{lemma}\label{le lemma mfc}
Consider the semicircular flow $\varrho_t$ started from $\varrho$. Let $\varrho$ satisfy Assumption~\eqref{nochmals assumptions} with exponent $\delta>0$ and $\Sigma>0$. Then there is $C'>0$, such that $m_t(z)$ is uniformly bounded on $\mathcal{D}_\Sigma$, for all $0\le t\le CN^{-2\delta}$. Moreover, there are constants $C,C'$, depending only on $\varrho$, such that
\begin{align}\label{le first bound in lemma mfc}
\sup_{z\in\mathcal{D}_{\Sigma/2} }|m_t(z)-m_0(z)|\le C t N^\delta\,,
\end{align}
for all $0\le t\le C'N^{-2\delta}$. Further, there are constants $C,C'$, depending only on~$\varrho$ , such that we have the bounds
\begin{align}\label{le second bound in lemma mfc}
 \sup_{z\in\mathcal{D}_{\Sigma/2}}|\partial_z^n m_t(z)|\le C(N^{\delta})^n\,,\qquad\qquad n=1,2\,,\qquad 0\le t\le C'N^{-2\delta}\,.
\end{align}
\end{lemma}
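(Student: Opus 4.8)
The plan is to treat the subordination equation~\eqref{free convolution} as a fixed-point problem for $m_t$ near $m_0$, and to run a standard continuity (bootstrap) argument in $t$ on the domain $\mathcal{D}_\Sigma$, shrinking to $\mathcal{D}_{\Sigma/2}$ to absorb the loss coming from Cauchy estimates on the derivatives. Write~\eqref{free convolution} as $m_t(z) = G\bigl(z + (1-\euler{-t})m_t(z), \euler{-t/2}\bigr)$, where $G(w,\theta)\deq \theta^{-1}\int_\R \varrho(y)\,\dd y/(y - \theta^{-1} w)= \theta^{-1} m_0(\theta^{-1}w)$ after a change of variables; at $t=0$ this reduces to $m_0(z)$. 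The point is that for $t$ small the ``shift'' $z\mapsto z + (1-\euler{-t})m_t(z)$ is a small perturbation of $z$ (of size $O(t)$ since $|m_t|$ is bounded a priori on the relevant set), and $\euler{-t/2} = 1 + O(t)$, so the right-hand side is Lipschitz in $m_t$ with a constant that is $O(tN^\delta)$ once we know $|\partial_z m_0|\le CN^\delta$. Hence for $t\le C'N^{-2\delta}$ — actually $t\le c N^{-\delta}$ already suffices for the Lipschitz contraction, but the stronger restriction $t\le C'N^{-2\delta}$ is what makes the derivative estimates close — the map is a contraction on a small ball around $m_0$ in the sup-norm on $\mathcal{D}_\Sigma$ (or a slightly shrunk version), giving existence, uniqueness, the uniform bound $|m_t|\le C$, and the linear-in-$t$ estimate~\eqref{le first bound in lemma mfc} directly from iterating once: $|m_t - m_0|\le |G(z+(1-\euler{-t})m_t,\euler{-t/2}) - m_0(z)|\le C t N^\delta$ using $|m_t|\le C$ and the first derivative bound on $m_0$.

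For the derivative bounds~\eqref{le second bound in lemma mfc} I would differentiate the fixed-point equation. Differentiating~\eqref{free convolution} in $z$ gives a linear equation for $m_t'$ of the schematic form $m_t' = (\partial_w G)(\cdots)\bigl(1 + (1-\euler{-t}) m_t'\bigr)$, i.e. $m_t'\bigl(1 - (1-\euler{-t})(\partial_w G)(\cdots)\bigr) = (\partial_w G)(\cdots)$. Here $\partial_w G$ is $\euler{-t/2}$ times $m_0'$ evaluated at the shifted point, so $|\partial_w G|\le C N^\delta$ on $\mathcal{D}_{\Sigma/2}$ provided the shifted argument stays inside $\mathcal{D}_\Sigma$ — which holds because the shift is $O(t\cdot C) = O(N^{-2\delta})\ll \Sigma/2$. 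Then $|(1-\euler{-t})(\partial_w G)|\le C t N^\delta\le C C' N^{-\delta}\le 1/2$ for $C'$ small, so the prefactor is invertible with norm $\le 2$, yielding $|m_t'|\le 2CN^\delta$ on $\mathcal{D}_{\Sigma/2}$. The second derivative is handled identically: differentiating once more produces a linear equation for $m_t''$ with the same invertible prefactor $1-(1-\euler{-t})(\partial_w G)$ and a right-hand side built from $m_0''$ (bounded by $C N^{2\delta}$), $(m_0')^2$ (bounded by $C^2 N^{2\delta}$), and the already-controlled $m_t'$, times factors of $(1-\euler{-t}) = O(N^{-2\delta})$ where they multiply the worst terms; balancing these is exactly where the quadratic constraint $t\le C'N^{-2\delta}$ is needed so that the dangerous terms $(1-\euler{-t})\cdot N^{2\delta}\cdot(m_t')^2$-type contributions stay $O(N^{2\delta})$.

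The order I would carry this out: (i) set up the reformulation $m_t = \Phi_t[m_t]$ with $\Phi_t$ the subordination map, record that $|m_0|$ and its first two derivatives are controlled on $\mathcal{D}_\Sigma$ by assumption; (ii) establish a crude a priori bound $|m_t|\le C$ on $\mathcal{D}_\Sigma$ for small $t$ — this can be gotten from the general theory cited (the $\liminf$-finiteness from~\cite{Pastur,B}) together with a short continuity argument, or directly from the contraction; (iii) prove the contraction on a ball of radius $O(tN^\delta)$ around $m_0$ in $C(\mathcal{D}_{\Sigma}')$ for suitable shrunk $\mathcal{D}_\Sigma'\supset \mathcal{D}_{\Sigma/2}$, yielding uniqueness, the uniform bound, and~\eqref{le first bound in lemma mfc}; (iv) differentiate the fixed-point equation once and twice and solve the resulting linear equations using invertibility of $1-(1-\euler{-t})\partial_w G$, obtaining~\eqref{le second bound in lemma mfc}. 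The main obstacle is bookkeeping in step (iv): one must be careful that every time a factor $N^\delta$ or $N^{2\delta}$ is produced by a derivative of $m_0$, it is either compensated by a factor $(1-\euler{-t}) = O(t) = O(N^{-2\delta})$ or it is a ``top-order'' term that is simply allowed to survive at the claimed size $N^{n\delta}$; getting the inductive structure right (control $m_t'$ before attacking $m_t''$, and keep the shifted evaluation point inside $\mathcal{D}_\Sigma$ at every stage) is the only real subtlety, and the threshold $C'N^{-2\delta}$ is dictated precisely by requiring $(1-\euler{-t})|\partial_w G|\le 1/2$ and by the second-derivative balance.
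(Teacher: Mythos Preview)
Your approach is essentially the same as the paper's: both reformulate~\eqref{free convolution} as a subordination relation $m_t(z)=\widetilde m_t(z+\sigma_t m_t(z))$ with $\widetilde m_t(z)=\euler{t/2}m_0(\euler{t/2}z)$, use this to get boundedness and the closeness estimate~\eqref{le first bound in lemma mfc}, and then differentiate the fixed-point equation and invert $1-\sigma_t(\partial_z\widetilde m_t)(\cdots)$ to obtain~\eqref{le second bound in lemma mfc}. The one substantive difference is in your step~(ii): the paper obtains the crude a priori input directly via Schwarz's inequality on the integral, giving $|m_t(z)|\le\sigma_t^{-1/2}$, which after one application of the subordination relation yields $|m_t-\widetilde m_t|\le CN^\delta\sigma_t^{1/2}$ and hence $|m_t|\le C$ for $t\le CN^{-2\delta}$; your contraction-mapping alternative also works but requires a word on why the contraction fixed point coincides with the genuine $m_t$, whereas the Schwarz trick is entirely self-contained.
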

\begin{proof}
Set $\sigma_t\deq 1-\euler{-t}$. Starting from~\eqref{free convolution} we obtain, for $t>0$,
\begin{align*}
 |m_t(z)|^2&\le \left(\int_\R\frac{\varrho(y)\rd y }{|\euler{-t/2}y-z-\sigma_t m_t(z)|^2} \right)=\frac{\im m_t(z)}{\eta+\sigma_t\im m_t(z)}\,,\qquad\qquad z\in\C^+\,,
\end{align*}
where we first used Schwarz inequality for the probability measure $\varrho$ to get the second line. Then we used once more~\eqref{free convolution}. We thus obtain the rough \emph{a priori} bound
\begin{align}\label{le a priori bound on mt}
 |m_t(z)|\le \sigma_t^{-1/2}\,, \qquad\qquad t>0\,,
\end{align}
for $z\in\C^+\cup\R$. Next, we introduce
\begin{align}\label{le widetilde mt}
 \widetilde m_t(z)\deq\int_\R\frac{\varrho(\euler{t/2}v)\euler{t/2}\rd v}{v-z}\,.
\end{align}
Note that $\widetilde m_t$ is uniformly bounded on, say, $\mathcal{D}_{\Sigma/2}$ for, say, $t\le 1$. This may be seen by writing $\widetilde m_t(z)=\euler{t/2}m_0(\euler{t/2}z)$. Then we can write
\begin{align*}
m_t(z)=\widetilde m_t(z+\sigma_t m_t(z))\,.
\end{align*}
Thus, using the estimates on $m_0$ in~\eqref{nochmals assumptions}, we have
\begin{align}\label{le to be interated}
 | m_t(z)-\widetilde m_t(z)|&= |\widetilde m_t(z+\sigma_t m_t(z))-\widetilde m_t(z))|\le CN^\delta\sigma_t|m_t(z)|\le CN^\delta\sigma_t^{1/2}\,,
\end{align}
$0<t\le 1$, on $\mathcal{D}_{\Sigma/2}$, where we used the \emph{a priori} bound~\eqref{le a priori bound on mt}. It follows that
\begin{align*}
 |m_t(z)|\le |\widetilde m_t(z)|+CN^\delta\sigma_t^{1/2}\le C \,,\qquad\qquad 0<t\le CN^{-2\delta}\,.
\end{align*}
But then reasoning once more as in~\eqref{le to be interated}, we must have
\begin{align}\label{lelel first}
 | m_t(z)-\widetilde m_t(z)|&\le |\widetilde m_t(z+\sigma_t m_t(z))-\widetilde m_t(z))|\le C\sigma_tN^\delta|m_t(z)|\le C\sigma_t N^\delta\,,
\end{align}
 $0<t\le CN^{-2\delta}$, for all $z\in\mathcal{D}_{\Sigma/2}$. We hence obtain that $|m_{t}(z)|\le C$ on $\mathcal{D}_{\Sigma}$ and $0<t\le CN^{-2\delta}$. Next, we observe that
\begin{align}\label{lelel second}
 |\widetilde m_t(z)-m_0(z)|&=|\euler{t/2}m_0(\euler{-t/2}z)-m_0(z)|\le C(\euler{t/2}-1)+CN^{\delta}(\euler{t/2}-1)\,.
\end{align}
Combining~\eqref{lelel first} and~\eqref{lelel second},~\eqref{le first bound in lemma mfc} follows since $t\le C'N^{-2\delta}$ by assumption.

To deal with the derivatives of $m_t(z)$, we first note that we for $z\in\mathcal{D}_{\Sigma/2}$, we have $\euler{t/2}(z+\sigma_tm_t(z))\in\mathcal{D}_{\Sigma}$, for $t\le1$. Thus we can bound, for $z\in \mathcal{D}_{\Sigma/2}$ and $t\le N^{-2\delta}$,
\begin{align}\label{to be used twice}
 \left|\sigma_t\int_\R\frac{\varrho(v)\rd v}{(\euler{-t/2}v-z-\sigma_t m_t(z))^2}\right|&=\sigma_t|(\partial_z\widetilde m)(z+\sigma_t m_t(z))|\le CN^{\delta} \sigma_t\le CN^{-\delta}\,,
\end{align}
where we used the definition of $\widetilde m(z)$ in~\eqref{le widetilde mt} and the assumptions in~\eqref{nochmals assumptions}.

Next, differentiating~\eqref{free convolution} with respect to $z$, we obtain
\begin{align*}
 (\partial_z m_t(z))\left(1-\sigma_t\int_\R\frac{\varrho(v)\rd v}{(\euler{-t/2}v-z-\sigma_t m_t(z))^2}\right)=\int_\R\frac{\varrho(v)\rd v}{(\euler{-t/2}v-z-\sigma_t m_t(z))^2}\,.
\end{align*}
Hence, using twice~\eqref{to be used twice}, we get $ |\partial_z m_t(z)|\le CN^{\delta}$, for $z\in\mathcal{D}_{\Sigma/2}$ and $0\le t\le CN^{-2\delta}$. Repeating the argument, we see that there is another constant $C$ such that $ |\partial_z^2m_t(z)|\le C N^{2\delta}$, $0\le t\le N^{-2\delta}$,
for all $z\in\mathcal{D}_{\Sigma/2}$. This proves~\eqref{le second bound in lemma mfc}.
\end{proof}

\subsubsection{Quantiles}
From~\eqref{le second bound in lemma mfc}, we see that the derivatives of $m_t(z)$ are bounded inside $\mathcal{D}_{\Sigma/2}$ for $t\ll 1$. Without further assumptions on $\varrho$ we have little control on $m_t(z)$ outside $\mathcal{D}_{\Sigma/2}$. Yet, we can circumvent this problem, by introducing a regularization of $m_t(z)$, respectively the measure $\varrho_t$, as follows. Throughout the rest of this appendix, let $\regu>0$ satisfy 
\begin{align}\label{le reguregu}
\regu N^\delta\ll \frac{1}{N}\,.
\end{align}
Recall the definition of the Poisson kernel $P_\cdot$ in~\eqref{le poisson kernel}. We then set
\begin{align}\label{le regularization}
 \varrho^\regu_t(x)\deq(P_{\regu}*\varrho_t)(x)=\frac{1}{\pi}\im m_{t}(x+\ii\regu) \,.
\end{align}
We claim that
\begin{align}\label{le regularization trick}
\int_\R\frac{\varrho_{t}^\regu(y)\rd y}{y-z}=m_{t}(z+\ii\regu)\,,\qquad \qquad z\in\C^+\,.
\end{align}
Indeed, using~\eqref{le 1eleven} and~\eqref{le regularization}, we have
\begin{align*}
 \frac{1}{\pi}\im \int_\R\frac{\varrho_{t}^\regu(y)\rd y}{y-E-\ii\eta}&=(P_\eta * \varrho_t^\regu)(E)=(P_{\eta+\regu} * \varrho_t)(E)=\frac{1}{\pi}\im m_{t}(E+\ii\eta+\ii\regu)\,,
\end{align*}
where we used $P_{\eta}*P_{\regu}=P_{\eta+\regu}$. Since $z=E+\ii\eta$ and since the Stieltjes transform is analytic in the upper half plane, we get~\eqref{le regularization trick}. In the following we write $  m_{t}^\regu(z)\deq m_{t}(z+\ii\regu)$. Note that $\varrho_t^\regu$ is a probability measure. It follows from basic properties of the Poisson kernel that $\varrho_t^\regu$ converges uniformly on compact sets to $\varrho_t$ as $\regu\searrow 0$. Using~\eqref{nochmals assumptions} it is then easy to check that $|\varrho_0^\regu(x)-\varrho_0(x) |\le CN^\delta\regu\ll N^{-1}$, for all $x\in[E_*-\Sigma,E_*+\Sigma]$. Since the semicircular flow preserve regularity (for short times see Lemma~\ref{le lemma mfc}), we also get $|\varrho_t^\regu(x)-\varrho_t(x)|\le CN^\delta\regu\ll N^{-1}$, for all $x\in[E_*-\Sigma/2,E_*+\Sigma/2]$, $0\le t\le N^{-2\delta}$. 

As a consequence of the regularization in~\eqref{le regularization}, $\varrho_t^\regu$ is smooth with bounded derivatives (in terms of inverse powers of $\regu$) that all lie in $\mathrm{L}^p(\R)$, $1\le p\le \infty$.  Consequently, the following basic properties of the Hilbert transform can be justified easily (see, \eg~\cite{Pandey}): For $n\in\N$,
\begin{align}\label{le simple consequence of regu}
 \partial^n \Hi{\varrho_t^\regu}=(-1)^n\Hi{(\partial^n\varrho_t^\regu)}\,,
\end{align}
(here $\partial^n$ denotes the $n$-th spatial derivative). Further, we have $\mathrm{T}\Hi{\varrho_t^\regu}=-\varrho_t^\regu$.

Next, we define the continuous quantile, $\gamma^\regu_{w}(t)$, $w\in[0,N]$, of the measure $\varrho_t^\regu$ by
\begin{align}\label{definition of gammaw}
 \int_{-\infty}^{\gamma^\regu_{w}(t)}\varrho^\regu_t(y)\,\rd y=\frac{w}{N}\,,\qquad \qquad\varrho_0^\regu=\varrho^\regu\,,
\end{align}
\cf~\eqref{the gammas}. Note that $\gamma^\regu_w(t)$ is defined for any $w$ by~\eqref{definition of gammaw}, since the measure $\varrho_t^\regu$ is supported on the whole real axis. The measure $\varrho_t$ (without regularization) may be supported on several disjoint intervals. This leads to some ambiguities in the definition of the quantiles, \cf~\eqref{the gammas} for one way of resolving the ambiguity. Using the regularized density $\varrho_t^\regu$ is another way of avoiding this ambiguity. Nonetheless, we emphasize that the $\regu$-regularization is simply a technical tool: for every practical purpose we have $\regu=0$ and the reader may forget about it in the subsequent arguments.

\begin{corollary}\label{corollary le not so fast}
 Under the assumption of Lemma~\ref{le lemma mfc}, the following holds. For $0\le t\le C' N^{-2\delta}$, with~$C'$ sufficiently small, we have the estimates
 \begin{align}\label{le not so fast}
 \left|\varrho_{t}^\regu(E)-\varrho^\regu(E)\right|\le C N^\delta t\,,\qquad\qquad \left| \Hi{\varrho_t^\regu}(E)- \Hi{\varrho^\regu}(E)\right|\le CN^\delta t\,,
 \end{align}
for all $E\in\R$. Further, for $w$ such that $\gamma_w^\regu(0)\in[E_*-\Sigma,E_*+\Sigma]$, we have the estimate
\begin{align}\label{le not so fast gamma}
|\gamma^\regu_w(t)-\gamma^\regu_w(0)|\le C{N^{-\delta/2}}\,,\qquad\qquad 0\le t\le N^{-2\delta}\,,
\end{align}
for some $C$. In particular, if $\gamma_w^\regu(0)\in[E_*-\Sigma/4,E_*+\Sigma/4]$, then $\gamma_w^\regu(t)\in [E_*-\Sigma/2,E_*+\Sigma/2]$, for all $0\le t\le C'N^{-2\delta}$.
\end{corollary}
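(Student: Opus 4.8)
\textbf{Proof plan for Corollary~\ref{corollary le not so fast}.}
The plan is to deduce all three estimates from Lemma~\ref{le lemma mfc} and the identity~\eqref{le regularization trick}, \ie from $m_t^\regu(z) = m_t(z+\ii\regu)$, which transports the derivative bounds on $m_t$ inside $\mathcal{D}_{\Sigma/2}$ into pointwise bounds on $\varrho_t^\regu$ and its Hilbert transform on all of $\R$ (the regularization by $\regu$ removes the singularity at the edge, so there is nothing delicate off the reference interval). First I would prove the two estimates in~\eqref{le not so fast}. Since $\varrho_t^\regu(E) = \frac{1}{\pi}\im m_t(E+\ii\regu)$ and, similarly, $\Hi{\varrho_t^\regu}(E) = \re m_t(E+\ii\regu)$ (this is just the real part of the Stieltjes transform at a point with positive imaginary part $\regu$; see~\eqref{le def of hilbert transform} and~\eqref{le regularization trick}), both quantities are controlled by $|m_t(E+\ii\regu)-m_0(E+\ii\regu)|$. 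For $E$ in the reference window $[E_*-\Sigma/2,E_*+\Sigma/2]$ this is $\le C N^\delta t$ directly by~\eqref{le first bound in lemma mfc}. For $E$ outside that window I would instead use the general bound $|\partial_t m_t(z)|\le C$ valid for $\im z = \regu > 0$ bounded away from $0$ (differentiating~\eqref{free convolution} in $t$ and using that the denominator has imaginary part $\ge \regu$, together with $\regu N^\delta \ll 1/N$ from~\eqref{le reguregu}); integrating in $t$ gives $|m_t - m_0|\le Ct$, which is even stronger than claimed. Combining the two regimes yields~\eqref{le not so fast} for all $E\in\R$.

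Next I would establish the quantile estimate~\eqref{le not so fast gamma}. Fix $w$ with $\gamma_w^\regu(0)\in[E_*-\Sigma,E_*+\Sigma]$ and write $a\deq\gamma_w^\regu(0)$, $b\deq\gamma_w^\regu(t)$. By definition~\eqref{definition of gammaw}, $\int_{-\infty}^{a}\varrho_0^\regu = \int_{-\infty}^{b}\varrho_t^\regu = w/N$, so
\begin{align*}
 \int_a^b \varrho_t^\regu(y)\,\rd y = \int_{-\infty}^a\big(\varrho_0^\regu(y)-\varrho_t^\regu(y)\big)\,\rd y\,.
\end{align*}
The right-hand side is bounded by $\|\varrho_0^\regu-\varrho_t^\regu\|_{L^1}$, and using that these are probability densities together with the pointwise bound~\eqref{le not so fast} on the compact window and a crude tail bound off it (the tails of $\varrho_t^\regu$ are controlled uniformly in $t\le 1$ via the finite second moment assumed in Assumption~$(1)$ of Theorem~\ref{main theorem}, propagated along the flow), one gets $\big|\int_a^b\varrho_t^\regu\big|\le C N^\delta t \le C N^{-\delta}$. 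On the other hand, by the lower bound $\varrho_t^\regu\ge c$ on $[E_*-\Sigma/2,E_*+\Sigma/2]$ — which follows from~\eqref{le positivity of the density} plus the first estimate of~\eqref{le not so fast} — as long as $b$ stays in that interval we have $\big|\int_a^b\varrho_t^\regu\big|\ge c|b-a|$, hence $|b-a|\le CN^{-\delta} \ll N^{-\delta/2}$. A short continuity/bootstrap argument in $t$ (the map $t\mapsto\gamma_w^\regu(t)$ is continuous, it starts inside $[E_*-\Sigma/4,E_*+\Sigma/4]$, and the displacement bound prevents it from ever reaching the boundary of $[E_*-\Sigma/2,E_*+\Sigma/2]$ for $t\le C'N^{-2\delta}$ with $C'$ small) closes the estimate and simultaneously gives the final assertion that $\gamma_w^\regu(t)\in[E_*-\Sigma/2,E_*+\Sigma/2]$.

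I expect the main obstacle to be the off-window control: the sharp bounds of Lemma~\ref{le lemma mfc} only hold on $\mathcal{D}_{\Sigma/2}$, so away from the reference energy one must fall back on the soft bound $|\partial_t m_t(z)|\le C$ at height $\regu$ and argue that, since $\regu$ is tiny, $\varrho_t^\regu$ is still close to $\varrho_0^\regu$ in $L^1$ (and its Hilbert transform close in sup norm) with only $O(t)$ error. One should also be slightly careful that $\varrho_t^\regu$ is genuinely a probability density for all $t$ (this follows from $\int P_\regu = 1$ and the fact that each $\varrho_t$ is a probability measure, \eg by~\eqref{le varrhot in inversion}), so that the quantile $\gamma_w^\regu(t)$ is well defined and the cancellation of the $w/N$ normalization in the displayed identity is legitimate. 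Everything else is routine.
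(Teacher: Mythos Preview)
Your overall plan matches the paper's proof almost exactly: identify $\varrho_t^\regu(E)=\pi^{-1}\im m_t(E+\ii\regu)$ and $\Hi{\varrho_t^\regu}(E)=\re m_t(E+\ii\regu)$, read off~\eqref{le not so fast} from~\eqref{le first bound in lemma mfc}, and then use the quantile identity together with the density lower bound on the window to get~\eqref{le not so fast gamma}. The paper does precisely this, each step in one line.

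There is, however, a real error in your off-window argument. You claim that $|\partial_t m_t(z)|\le C$ holds ``for $\im z=\regu>0$ bounded away from $0$'', but by~\eqref{le reguregu} one has $\regu N^\delta\ll N^{-1}$, so $\regu$ is \emph{tiny}, not order one. At height $\regu$ and outside the regularity window none of the derivative bounds of Lemma~\ref{le lemma mfc} are available; differentiating~\eqref{free convolution} in $t$ only yields terms of size $|\partial_z m_t|\cdot|m_t|$, and for general $E$ one has at best $|\partial_z m_t(E+\ii\regu)|\lesssim \regu^{-1}\im m_t$, which is enormous. So your route to a global pointwise bound for~\eqref{le not so fast} fails. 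Consequently your claimed $L^1$ bound $\|\varrho_t^\regu-\varrho_0^\regu\|_{L^1}\le CN^\delta t$ is not justified: the region off the window $[E_*-\Sigma/2,E_*+\Sigma/2]$ but inside $[-R,R]$ is not covered by either the pointwise bound or the second-moment tail.

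The paper handles this differently: in~\eqref{le equation star} it combines the pointwise bound on the window with the finite-second-moment tail control and optimizes, obtaining only $O(\sqrt{N^\delta t})$ for $\int_{-\infty}^{\gamma_w^\regu(t)}(\varrho_t^\regu-\varrho_0^\regu)$. After dividing by the density lower bound this gives exactly the $CN^{-\delta/2}$ claimed in~\eqref{le not so fast gamma}. Your sharper intermediate bound $|b-a|\le CN^{-\delta}$ therefore rests on the flawed off-window estimate and should be weakened to match the paper's.
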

\begin{proof}
 The estimates in~\eqref{le not so fast} follow from~\eqref{le first bound in lemma mfc} by noticing that $\varrho_t^\regu(E)=\pi^{-1}\im m_{t}(E+\ii\regu)$, respectively $\Hi{\varrho^\regu_t}(E)=\re m_t(E+\ii\regu)$.  To establish~\eqref{le not so fast gamma}, we note that, by definition,
\begin{align*}
 \int_{-\infty}^{\gamma_w^\regu(t)}\varrho^\regu_t(y)\,\dd y=\int_{-\infty}^{\gamma_w^\regu(0)}\varrho^\regu(y)\,\dd y=\frac{w}{N}\,.
\end{align*}
Thus, using~\eqref{le not so fast}, we get
\begin{align}\label{le equation star}
 \int_{-\infty}^{\gamma_w^\regu(t)}\varrho^\regu(y)\,\dd y=\int_{-\infty}^{\gamma_w^\regu(0)}\varrho^\regu(y)\,\dd y+O(\sqrt{N^\delta t})\,,
\end{align}
where we also used that $\varrho$ has finite second moment. By our assumption on $w$ we must have $\varrho^\regu(\gamma_w^\regu(0))\ge c$, for some $c>0$. We thus get from~\eqref{le equation star} and~\eqref{nochmals assumptions} that
\begin{align*}
\varrho^\regu(\gamma_w^\regu(0))|\gamma_w^\regu(t)-\gamma_w^\regu(0)|&\le C \left| \int_{-\infty}^{\gamma_w^\regu(t)}\varrho^\regu(y)\dd y-\int_{-\infty}^{\gamma_w^\regu(0)}\varrho^\regu(y)\dd y \right|\le C(N^\delta t)^{1/2}\,.
 \end{align*}
Thus, for $0\le t\le CN^{-2\delta}$,  we get  $ |\gamma_w^\regu(t)-\gamma_w^\regu(0)|\le CN^{-\delta/2}$.
\end{proof}

The estimate~\eqref{le not so fast gamma} will serve as {\it a priori} bound below. To get precise estimates, we derive next the equation of motion of~$\gamma^\regu_w(t)$ under the semicircular flow $t\to \varrho_t^\regu$.
\begin{lemma}\label{lem: constant weight}
For $t>0$, we have for all $w\in[0,N]$,
 \begin{align}\label{evolution of gammaw}
 \frac{\rd\gamma_{w}^\regu(t)}{\rd t}&=-\Hi{\varrho_t^\regu}(\gamma^\regu_{w}(t))-\frac{\gamma^\regu_{w}(t)}{2}-\frac{\regu}{2}\frac{\Hi{\varrho_t^\regu}(\gamma^\regu_{w}(t))}{\varrho_t^\regu(\gamma^\regu_w(t))}\,,
 \end{align}
and 
\begin{align}\label{le spatial derivative of gammaw}
 \frac{\dd \gamma_{w}^\regu(t)}{\dd w}&=\frac{1}{N\varrho_t^\regu(\gamma_w^\regu(t))}\,.
\end{align}
In particular, if $\varrho_t^\regu(\gamma_w^\regu(t))\ge c$, for some fixed $c>0$, we have the uniform estimates
\begin{align}\label{le estimate in eom}
 \frac{\rd\gamma_{w}^\regu(t)}{\rd t}=-\Hi{\varrho_t^\regu}(\gamma^\regu_{w}(t))-\frac{\gamma^\regu_{w}(t)}{2}+O(\regu)\,,\qquad\qquad \frac{\dd \gamma_{w}^\regu(t)}{\dd w}=O(N^{-1})\,.
\end{align}

\end{lemma}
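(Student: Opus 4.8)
The plan is to obtain \eqref{evolution of gammaw} and \eqref{le spatial derivative of gammaw} from a continuity equation for the regularized density $\varrho_t^\regu$, which itself follows from a complex Burgers equation for $m_t$. First I would treat $m_t(z)$, for $z\in\C^+$, as the function implicitly defined by \eqref{free convolution}; its joint smoothness in $(t,z)$ follows from the implicit function theorem applied to \eqref{free convolution} (equivalently to $m_t(z)=\euler{t/2}m_0(\euler{t/2}(z+\sigma_t m_t(z)))$ with $\sigma_t\deq 1-\euler{-t}$). Differentiating \eqref{free convolution} in $z$ and in $t$, and abbreviating $J\deq\int_\R\varrho(y)(\euler{-t/2}y-\sigma_t m_t(z)-z)^{-2}\,\rd y$, one gets
\begin{align*}
\partial_z m_t(z)=\frac{J}{1-\sigma_t J}\,,\qquad\qquad \partial_t m_t(z)=\frac{\tfrac12 m_t(z)+\big(\tfrac z2+m_t(z)(1-\tfrac{\sigma_t}{2})\big)J}{1-\sigma_t J}\,,
\end{align*}
where $1-\sigma_t J\ne 0$ since $m_t$ is the stable (Nevanlinna) solution of \eqref{free convolution}. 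Eliminating $J$ between these two identities and simplifying yields the complex Burgers equation in conservation form,
\begin{align}\label{pl-burg}
\partial_t m_t(z)=\partial_z\!\Big(\tfrac12\,z\,m_t(z)+\tfrac12\,m_t(z)^2\Big)\,,\qquad\qquad z\in\C^+\,.
\end{align}

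Next I would transfer \eqref{pl-burg} to $m_t^\regu(z)\deq m_t(z+\ii\regu)$, which is analytic in the strip $\{\im z>-\regu\}$, so that $\partial_t m_t^\regu(z)=\partial_z\big(\tfrac12(z+\ii\regu)m_t^\regu(z)+\tfrac12 m_t^\regu(z)^2\big)$ holds in particular at real $z=E$. Inserting the decomposition into real and imaginary parts $m_t^\regu(E)=\Hi{\varrho_t^\regu}(E)+\ii\pi\varrho_t^\regu(E)$ (recalled in the proof of Corollary~\ref{corollary le not so fast}) and taking the imaginary part gives the continuity equation
\begin{align}\label{pl-cont}
\partial_t\varrho_t^\regu(E)=\partial_E\!\left(\varrho_t^\regu(E)\Big(\tfrac E2+\Hi{\varrho_t^\regu}(E)\Big)+\tfrac{\regu}{2\pi}\,\Hi{\varrho_t^\regu}(E)\right)\,.
\end{align}
(The real part gives the companion evolution for $\Hi{\varrho_t^\regu}$, which is not needed here.)

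With \eqref{pl-cont} in hand the rest is a direct computation. Differentiating the defining identity $\int_{-\infty}^{\gamma_w^\regu(t)}\varrho_t^\regu(y)\,\rd y=w/N$ in $t$ gives
\begin{align*}
0=\varrho_t^\regu(\gamma_w^\regu(t))\,\dot\gamma_w^\regu(t)+\int_{-\infty}^{\gamma_w^\regu(t)}\partial_t\varrho_t^\regu(y)\,\rd y\,,
\end{align*}
and by \eqref{pl-cont} and the fundamental theorem of calculus the integral equals $\varrho_t^\regu(\gamma_w^\regu(t))\big(\tfrac{\gamma_w^\regu(t)}{2}+\Hi{\varrho_t^\regu}(\gamma_w^\regu(t))\big)+\tfrac{\regu}{2\pi}\Hi{\varrho_t^\regu}(\gamma_w^\regu(t))$; the boundary term at $-\infty$ vanishes because $\varrho$, hence $\varrho_t^\regu$, has finite second moment, so that $E\varrho_t^\regu(E)\to 0$ and $\Hi{\varrho_t^\regu}(E)\to 0$ as $E\to-\infty$. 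Since $\varrho_t^\regu(\gamma_w^\regu(t))=\tfrac1\pi\im m_t(\gamma_w^\regu(t)+\ii\regu)>0$, dividing through yields \eqref{evolution of gammaw} (up to the precise numerical normalization of the last term). Differentiating the same identity in $w$ gives $\varrho_t^\regu(\gamma_w^\regu(t))\,\partial_w\gamma_w^\regu(t)=1/N$, which is \eqref{le spatial derivative of gammaw}. Finally, under the lower bound $\varrho_t^\regu(\gamma_w^\regu(t))\ge c$ the term $\tfrac{\regu}{2\pi}\Hi{\varrho_t^\regu}(\gamma_w^\regu)/\varrho_t^\regu(\gamma_w^\regu)$ is $O(\regu)$, using $|\Hi{\varrho_t^\regu}|=|\re m_t(\cdot+\ii\regu)|\le C$ on the region controlled by Lemma~\ref{le lemma mfc}, and $\partial_w\gamma_w^\regu=(N\varrho_t^\regu(\gamma_w^\regu))^{-1}=O(N^{-1})$, giving \eqref{le estimate in eom}.

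The only delicate points are bookkeeping: justifying the joint differentiability of $(t,z)\mapsto m_t(z)$ and the interchanges of $\partial_t$ with the integrals in \eqref{free convolution} and in $\int_{-\infty}^{\gamma_w^\regu(t)}\varrho_t^\regu$, and keeping track of the $\regu$-correction term through \eqref{pl-cont}. None of this is serious once \eqref{pl-burg} is available; indeed the $\regu$-regularization is exactly what makes all these manipulations valid by classical calculus in a neighborhood of the real axis, so that no boundary-value limit $\im z\searrow 0$ ever has to be taken.
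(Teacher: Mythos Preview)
Your proof is correct and follows essentially the same route as the paper: derive the complex Burgers equation for $m_t$, pass to a continuity equation for $\varrho_t^\regu$, and then differentiate the defining relation $\int_{-\infty}^{\gamma_w^\regu}\varrho_t^\regu=w/N$ in $t$ and in $w$. The only minor difference is that the paper extracts the continuity equation via the identity $m_t^\regu(z)^2=2\int\frac{\Hi{\varrho_t^\regu}(x)\varrho_t^\regu(x)}{x-z}\,\rd x$ followed by Stieltjes inversion, whereas you take the imaginary part of Burgers directly at real $E$ (legitimate because $m_t^\regu$ is analytic in a strip containing $\R$); your hedge on the constant in front of the $\regu$-correction term is harmless since that term is $O(\regu)$ in any case.
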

\begin{remark} Lemma~\ref{lem: constant weight} directly controls $\gamma_w^\regu(t)$ in the bulk. With some more effort the third term on the right of~\eqref{evolution of gammaw} can by controlled at the edges. For example, assuming that~$\varrho_t$ vanishes as a square root at, say, its lowest endpoint, it can also be shown that~$\varrho_t^\regu(\gamma^\regu_w(t))\gtrsim\sqrt{\regu}$, for small~$w$. Thus the ``error'' term in~\eqref{evolution of gammaw} is of order~$\sqrt{\regu}$, as~$\regu\searrow 0$, at the lowest edge of the density~$\varrho^\regu_t$.
\end{remark}

 \begin{proof}
  We recall that $m_t(z)$, $z\in \C^+$, defined in~\eqref{free convolution}, satisfies the following complex Burgers' equation~\cite{Pastur} (see also~\cite{Voi86})
 \begin{align}\label{Burgers}
  \frac{\rd m_t(z)}{\rd t}=\frac{1}{2}\frac{\rd }{\rd z}\Big(m_t(z)\left(m_t(z)+z\right)\Big)\,,\qquad \qquad z\in\C^+\,,\,t\ge0\,.
 \end{align}
 Indeed differentiating~\eqref{free convolution} with respect to time we obtain~\eqref{Burgers} after a series of elementary manipulations. We use~\eqref{Burgers} with $m_t^\regu(z)=m_t(z+\ii\regu)$ replacing $m_t(z)$ in the following.

To deal with the right side of~\eqref{Burgers}, we note that
\begin{align}\label{small loop equation}
 m_t^\regu(z)^2&=\int_{\R^2}\frac{\varrho^\regu_t(x)\rd x}{x-z}\frac{\varrho^\regu_t(y)\rd y}{y-z}
 =2\int_{\R^2}\frac{\varrho_t^\regu(x)\rd x}{x-z}\frac{\varrho_t^\regu(y)\rd y}{y-x}
 =2\int_\R\frac{\Hi{\varrho_t^\regu}(x)\varrho_t^\regu(x)\rd x}{x-z}\,,
\end{align}
$z\in\C^+$. Plugging~\eqref{small loop equation} into~\eqref{Burgers}, we get
\begin{align*}
 \frac{\rd m_t^\regu(z)}{\rd t}
 &=\frac{1}{2}\frac{\rd }{\rd z}\left(\int_\R\frac{2\Hi{\varrho_t^\regu}(x)\varrho_t^\regu(x)\rd x}{x-z}+\int_\R\frac{x\varrho_t^\regu(x)\rd x}{x-z}+\int_\R\frac{\ii\regu\varrho_t^\regu(x)}{x-z}\right)\,,\qquad z\in\C^+\,,
\end{align*}
where we used that $\int_\R\varrho_t^\regu(x)\dd x=1$. Differentiating the right side with respect to $z$, we get
\begin{align*}
  \frac{\rd m_t^\regu(z)}{\rd t}&=-\frac{1}{2}\int_\R\frac{\left(2\Hi{\varrho_t^\regu}(x)+x+\ii\regu\right)\varrho_t^\regu(x)\rd x}{(x-z)^2}\,,\qquad\qquad z\in\C^+\,.
\end{align*}
Integrating by parts in $x$, taking the imaginary part and the limit $\eta\searrow 0$ (with $\regu>0$), we obtain
\begin{align}\label{continuityregu}
 \dot\varrho^\regu_t(E)=\frac{1}{2}\left[\left(2\Hi{\varrho_t^\regu}(E)+E\right)\varrho_t^\regu(E)\right]'+\frac{\regu}{2}\left[\Hi{\varrho_t^\regu}(E)\right]'\,,
\end{align}
where we use the notation $\dot{a}\equiv \partial_t a$ and $a'\equiv \partial_E a$, for any function $a\equiv a(t,E)$. On the other hand, differentiating the defining equation~\eqref{definition of gammaw} of $\gamma_{w}^\regu(t)$ with respect to $t$, we get
\begin{align}\label{samas}
\dot\gamma_{w}^\regu(t)=-\frac{1}{\varrho_t^\regu(\gamma^\regu_w(t))}\int_{-\infty}^{\gamma^\regu_{w}(t)}\dot\varrho^\regu_t(y)\rd y\,.
\end{align}
Hence, combining~\eqref{samas} and~\eqref{continuityregu} we get~\eqref{evolution of gammaw}. 

Finally, to establish the first estimate in~\eqref{le estimate in eom}, we note that $\Hi{\varrho_t^\regu}$ is uniformly bounded by Lemma~\ref{le lemma mfc}. Together with the assumption $\varrho_t^\regu(\gamma_w^\regu(t))\ge c>0$, ~\eqref{le estimate in eom} follows. To prove~\eqref{le spatial derivative of gammaw} and the second estimate in~\eqref{le estimate in eom} it suffices to differentiate~\eqref{definition of gammaw} with respect to~$w$.
\end{proof}
\begin{remark} For $\varrho\in \mathcal{M}(\R)$, let
\begin{align}
 \mathrm{Ent}[\varrho]\deq\int_\R\frac{1}{2}x^2\varrho(x)\,\dd x-\int_\R \log |x-y|\dd \varrho(x)\,\dd\varrho(y)\,,
\end{align}
Voiculescu's free entropy. Then the limiting equation of~\eqref{continuityregu}, \ie as $\eta_*\searrow 0$, is the gradient flow of $\mathrm{Ent}[\varrho_t]$ on the Wasserstein space $\mathcal{P}_2(\R)$; see~\cite{B2,BS,LLX}.
\end{remark}

To conclude this subsection, we give an estimate on the second derivative $\ddot\gamma_w^\regu({t})$ inside the bulk.
\begin{lemma}\label{le lemma estimate on ddotgamma}
Under the assumptions of Lemma~\ref{le lemma mfc} the following holds. For $w\in[0,N]$, such that $\gamma_w^\regu(0)\in[E_*-\Sigma/4,E_*+\Sigma/4]$ we have
 \begin{align}\label{le estimate on ddotgamma}
 |\ddot\gamma_w^\regu(t)|&\le CN^{\delta}\left(1+|\dot\gamma_w^\regu(t)|\right)\,, \qquad\qquad 0<t\le C'N^{-2\delta}\,.
 \end{align}
\end{lemma}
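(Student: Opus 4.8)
The plan is to differentiate the equation of motion~\eqref{evolution of gammaw} for the continuous quantile once more with respect to $t$ and then to estimate every term that appears, using the bounds on $m_t^\regu$ and its first two $z$-derivatives from Lemma~\ref{le lemma mfc}, together with the \emph{a priori} estimate~\eqref{le not so fast gamma} which keeps $\gamma_w^\regu(t)$ inside the good domain $\mathcal{D}_{\Sigma/2}$ for the relevant range of $t$.

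First I would record the pointwise bounds that Lemma~\ref{le lemma mfc} yields on the interval $[E_*-\Sigma/2,E_*+\Sigma/2]$ and for $0\le t\le C'N^{-2\delta}$. Writing $\Hi{\varrho_t^\regu}(E)=\re m_t(E+\ii\regu)$ and $\varrho_t^\regu(E)=\pi^{-1}\im m_t(E+\ii\regu)$ and using the Cauchy--Riemann equations to pass derivatives inside the real/imaginary parts, one gets $|\Hi{\varrho_t^\regu}(E)|+\varrho_t^\regu(E)\le C$ and $|\partial_E\Hi{\varrho_t^\regu}(E)|+|\partial_E\varrho_t^\regu(E)|\le CN^\delta$. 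For the $t$-derivative I would differentiate the complex Burgers equation~\eqref{Burgers} at the point $z+\ii\regu$, obtaining $\partial_t m_t^\regu(z)=\tfrac12\big(2m_t^\regu(z)+z+\ii\regu\big)\partial_z m_t^\regu(z)+\tfrac12 m_t^\regu(z)$, whence $|\partial_t m_t^\regu(z)|\le CN^\delta$ on $\mathcal{D}_{\Sigma/2}$ and therefore $|\partial_t\Hi{\varrho_t^\regu}(E)|+|\partial_t\varrho_t^\regu(E)|\le CN^\delta$ on the same set. I also need the lower bound $\varrho_t^\regu(E)\ge c>0$ there, which follows from the positivity assumption~\eqref{le positivity of the density} together with the stability estimate $|\varrho_t^\regu(E)-\varrho^\regu(E)|\le CN^\delta t\ll 1$ from Corollary~\ref{corollary le not so fast}. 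By~\eqref{le not so fast gamma} the point $\gamma_w^\regu(t)$ indeed stays in $[E_*-\Sigma/2,E_*+\Sigma/2]$, so all these bounds apply along the trajectory.

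Then I would differentiate~\eqref{evolution of gammaw}. Abbreviating $g(t)=\gamma_w^\regu(t)$, $h(t)=\Hi{\varrho_t^\regu}(g(t))$ and $\rho(t)=\varrho_t^\regu(g(t))$, the equation reads $\dot g=-h-\tfrac12 g-\tfrac{\regu}{2}h/\rho$, hence $\ddot g=-\dot h-\tfrac12\dot g-\tfrac{\regu}{2}(\dot h\,\rho-h\,\dot\rho)/\rho^2$. The chain rule gives $\dot h=(\partial_t\Hi{\varrho_t^\regu})(g)+(\partial_E\Hi{\varrho_t^\regu})(g)\,\dot g$ and similarly for $\dot\rho$, so by the bounds of the previous paragraph $|\dot h|+|\dot\rho|\le CN^\delta(1+|\dot g|)$. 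Substituting, using $|h|\le C$ and $\rho\ge c$ and the constraint $\regu N^\delta\ll N^{-1}$ to see that the last term is of lower order, and bounding $\tfrac12|\dot g|\le 1+|\dot g|$, one arrives at $|\ddot g|\le CN^\delta(1+|\dot g|)$, which is exactly~\eqref{le estimate on ddotgamma}.

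The main obstacle is not in these estimates themselves but in justifying the regularity needed to carry them out: that $t\mapsto m_t$, hence $t\mapsto\gamma_w^\regu(t)$, is genuinely twice differentiable on $[0,C'N^{-2\delta}]$ and that differentiating~\eqref{Burgers} at $z+\ii\regu$ is legitimate. This rests on the analyticity of $m_t$ from~\cite{B} and, crucially, on the fact that the uniform bounds of Lemma~\ref{le lemma mfc} prevent the self-consistent equation~\eqref{free convolution} from losing stability on this short time window; twice-differentiability of the quantile map then follows from the implicit function theorem since $\varrho_t^\regu$ is smooth and bounded below near $E_*$. I would dispatch this point with a reference to these facts rather than reproving them, and concentrate the write-up on the chain-rule computation above.
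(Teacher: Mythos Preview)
Your argument is correct and is in fact more streamlined than the paper's. Both proofs differentiate~\eqref{evolution of gammaw} once in $t$ and then estimate the resulting terms, using the lower bound $\varrho_t^\regu(\gamma_w^\regu(t))\ge c>0$ guaranteed by Corollary~\ref{corollary le not so fast}. The difference lies in how the time derivatives of $\Hi{\varrho_t^\regu}$ and $\varrho_t^\regu$ along the trajectory are controlled. The paper works entirely on the real axis: it inserts the continuity equation~\eqref{continuityregu} for $\dot\varrho_t^\regu$, then invokes the Hilbert-transform identities $\tfrac12(\Hi{\varrho_t})^2-\tfrac12\varrho_t^2=\mathrm{T}(\Hi{\varrho_t}\varrho_t)$ and $\Hi{(\varrho_t(\cdot)\,\cdot)}(x)=1+x\Hi{\varrho_t}(x)$ to compute $\partial_t\Hi{\varrho_t^\regu}$ explicitly, and only then applies the bounds from Lemma~\ref{le lemma mfc}. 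You instead go through the upper half plane: you differentiate the Burgers equation~\eqref{Burgers} at $z+\ii\regu$ to get $\partial_t m_t^\regu=\tfrac12(2m_t^\regu+z+\ii\regu)\partial_z m_t^\regu+\tfrac12 m_t^\regu$, read off $|\partial_t m_t^\regu|\le CN^\delta$ directly from the bounds on $m_t^\regu$ and $\partial_z m_t^\regu$, and then take real and imaginary parts. This bypasses the algebraic identities entirely and uses only the bounds that Lemma~\ref{le lemma mfc} already provides; the paper's route, on the other hand, yields a more explicit expression for $\ddot\gamma_w^\regu$ before estimating, which is not needed here but is closer in spirit to the other explicit computations in Appendix~\ref{section classical semicircle flow}. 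Your remark that the twice-differentiability in $t$ is justified by analyticity and the stability of~\eqref{free convolution} on the short time window is appropriate; the paper handles this implicitly in the same way.
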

\begin{proof}
 Let $0<t\le C'N^{-2\delta}$. For notational simplicity, we abbreviate here $\gamma_{w,t}\equiv\gamma_w^\regu(t)$ and $\varrho_t\equiv \varrho_t^\regu $. We first compute
\begin{align*}
 \frac{\dd}{\dd t} \left[\Hi{\varrho_t}(\gamma_{w,t})\right]&=\Hi{\dot\varrho_t}(\gamma_{w,t})+\left[\Hi{\varrho_t}\right]'(\gamma_{w,t})\,\dot\gamma_{w,t}\nonumber\\
&=-\frac{1}{2}\left[\mathrm{T}{\left((2\Hi{\varrho_t}(\cdot)+\cdot)\varrho_t(\cdot)\right)}\right]'(\gamma_{w,t})- \frac{\regu}{2}\left[\mathrm{T}{\Hi{\varrho_t}}\right]'(\gamma_{w,t})+\left[\mathrm{T}{\varrho_t}\right]'(\gamma_{w,t})\,\dot\gamma_{w,t}\,,
\end{align*}
where we used~\eqref{continuityregu} and~\eqref{le simple consequence of regu}. (Here $\mathrm{T}{\left((2\Hi{\varrho_t}(\cdot)+\cdot)\varrho_t(\cdot)\right)}(x)$ denotes the Hilbert transform of the function $y\to (\Hi{\varrho_t}(y)+y)\varrho_t(y) $ evaluated at $x$.)  Next, we note the identities
 \begin{align*}
  \frac{1}{2}\left(\mathrm{T}{\varrho_t}\right)^2-\frac{1}{2}\varrho_t^2=\mathrm{T}\left({\Hi{\varrho_t}\varrho_t}\right)\,,
 \end{align*}
which follows from~\eqref{small loop equation} and 
$\Hi{\left(\varrho_t(\,\cdot\,)\,\cdot\,\right)}(x)=1+x \Hi{\varrho_t}(x)$, $x\in\R$,
which can be checked by hand. We hence obtain
\begin{align*}
  \frac{\dd}{\dd t} \Hi{\varrho_t}(\gamma_{w,t})&=-\frac{1}{2}\left[\Hi{\varrho_t}^2\right]'(\gamma_{w,t})+\frac{1}{2}\left[\varrho_t^2\right]'(\gamma_{w,t})-\frac{1}{2}\left[\,\cdot\,\Hi{\varrho_t}(\,\cdot\,) \right]'(\gamma_{w,t})\nonumber\\ &\qquad\qquad+\left[\mathrm{T}\varrho_t\right]'(\gamma_{w,t})\,\dot\gamma_{w,t}+\frac{\regu}{2}[\varrho_t]
 '(\gamma_{w,t})\,,
\end{align*}
where we also used that $\Hi{\Hi{\varrho_t}}=-\varrho_t$. Simplifying further and using~\eqref{evolution of gammaw}, we eventually obtain
\begin{align*}
  \frac{\dd}{\dd t}\left[ \Hi{\varrho_t}(\gamma_{w,t})\right]&=-\frac{1}{2}\left[\varrho_t^2\right]'(\gamma_{w,t})+\frac{1}{2}[\Hi{\varrho_t}](\gamma_{w,t})+2[\Hi{\varrho_t}]'(\gamma_{w,t})\,\dot\gamma_{w,t}\nonumber\\ &\qquad\qquad+\frac{\regu}{2}[\varrho_t]'(\gamma_{w,t})-\frac{\regu}{2}\left[\frac{\Hi{\varrho_t}'\Hi{\varrho_t}}{\varrho_t}\right](\gamma_{w,t})\,.
\end{align*}
We further compute
\begin{multline}\label{le ddot gamma 1}
 \frac{\dd}{\dd t}\left[\left[\frac{\Hi{\varrho_t}}{\varrho_t}\right](\gamma_{w,t})\right]=\left[\frac{\partial_t \Hi{\varrho_t}}{\varrho_t}\right](\gamma_{w,t})-\left[\frac{\Hi{\varrho_t}\,\dot\varrho_t}{\varrho_t^2}\right](\gamma_{w,t})\\+\left[\frac{\left[\Hi{\varrho_t}\right]'}{\varrho_t}\right](\gamma_{w,t})\,\dot\gamma_{w,t}-\left[\frac{\Hi{\varrho_t}\,\varrho_t'}{\varrho_t^2}\right](\gamma_{w,t})\,\dot\gamma_{w,t}\,.
\end{multline}
We next recall that we have the bounds $|\varrho_t(\gamma_{w,t})|+|\Hi{\varrho_t}(\gamma_{w,t})|\le C$, as follows from the boundedness of $m_t$ (see Lemma~\ref{le lemma mfc}), and
 \begin{align}\label{le ddot gamma 2}
  |\partial_x \im m_{t}(x+\ii\regu)|+|\partial_x \re m_{t}(x+\ii\regu)|\le CN^{\delta}\,,
 \end{align}
for any $x\in[E_*-\Sigma/2,E_*+\Sigma/2]$, see~\eqref{le second bound in lemma mfc}. Thus, recalling that $\Hi\varrho_t^\regu(E)=\re m_t(E+\ii\regu)$, $\pi\varrho^\regu_t(E)=\im m_t(E+\ii\regu)$, we find
\begin{align*}
|\Hi{\varrho_t}'(\gamma_{w,t})|+|\varrho_t'(\gamma_{w,t})|\le CN^\delta\,,\qquad|\dot\varrho_t(\gamma_{w,t})|\le CN^\delta\,. 
\end{align*}
Hence, differentiating~\eqref{evolution of gammaw} with respect to $t$ and using~\eqref{le ddot gamma 1},~\eqref{le ddot gamma 2}, we can bound
\begin{align*}
 |\ddot\gamma_{w,t}|\le \frac{1}{2}|\dot\gamma_{w,t}|+C\left(1+\regu+\frac{\regu}{\varrho_t(\gamma_{w,t})}+\frac{\regu}{(\varrho_t(\gamma_{w,t}))^2} \right)\left(1+CN^\delta+N^\delta|\dot\gamma_{w,t}|\right)\,.
\end{align*}
Finally, using that $\varrho_t(\gamma_{w,0})\ge c/2$, for $0\le t\le CN^{-2\delta}$, as follows from the estimates~\eqref{le not so fast} and~\eqref{le not so fast gamma}, and our assumption $\varrho(\gamma_{w,0})\ge c>0$, we immediately~\eqref{le estimate on ddotgamma}.
\end{proof}
\begin{corollary}\label{corollary le dot gamma estimate}
Under the assumptions of Lemma~\ref{le lemma estimate on ddotgamma} the following holds true. Let $w,w_0\in[0,N]$ such that $\gamma_w(0),\gamma_{w_0}(0)\in[E_*-\Sigma/4,E_*+\Sigma/4]$. Then we have the estimates
\begin{align}
 |\dot\gamma^\regu_{w_0}(t)|\le C\,,\qquad\qquad |\ddot\gamma^\regu_{w_0}(t)|\le CN^\delta\,,
\end{align}
and
\begin{align}
 |\dot\gamma_{w}^\regu(t)-\dot\gamma_{w_0}^\regu(t)|\le CN^\delta\frac{|w-w_0|}{N}+C\regu\,,\qquad \qquad |\dot\gamma_{w_0}^\regu(t)-\dot\gamma_{w_0}^\regu(0)|\le  CN^\delta t\,,\label{le dot gammaw is almost zero}
\end{align}
uniformly in $0\le t\le CN^{-2\delta}$, with constants depending only on $\delta$, $\varrho$ and $E$ and $\Sigma$.
\end{corollary}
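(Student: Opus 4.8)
The plan is to obtain all four estimates as direct consequences of the equation of motion for the quantiles in Lemma~\ref{lem: constant weight}, the second-derivative bound of Lemma~\ref{le lemma estimate on ddotgamma}, the a priori localization $\gamma_w^\regu(t)\in[E_*-\Sigma/2,E_*+\Sigma/2]$ together with the lower bound $\varrho_t^\regu(\gamma_w^\regu(t))\ge c/2$ (valid for $0\le t\le C'N^{-2\delta}$, from Corollary~\ref{corollary le not so fast} combined with~\eqref{le not so fast} and the positivity of the density near $E_*$), and the bounds $|\re m_t(E+\ii\regu)|\le C$ and $|\partial_E\re m_t(E+\ii\regu)|\le CN^\delta$ on $\mathcal{D}_{\Sigma/2}$ from Lemma~\ref{le lemma mfc} (recall $\Hi{\varrho_t^\regu}(E)=\re m_t(E+\ii\regu)$). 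All constants produced this way are uniform over $0\le t\le C'N^{-2\delta}$, which is exactly the range claimed in the statement.

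First I would bound $\dot\gamma_{w_0}^\regu$. Since $\gamma_{w_0}^\regu(0)\in[E_*-\Sigma/4,E_*+\Sigma/4]$, Corollary~\ref{corollary le not so fast} gives $\gamma_{w_0}^\regu(t)\in[E_*-\Sigma/2,E_*+\Sigma/2]$ and $\varrho_t^\regu(\gamma_{w_0}^\regu(t))\ge c/2$, so the simplified equation of motion~\eqref{le estimate in eom} applies; combined with $|\Hi{\varrho_t^\regu}(\gamma_{w_0}^\regu(t))|\le C$ and the boundedness of $\gamma_{w_0}^\regu(t)$ this yields $|\dot\gamma_{w_0}^\regu(t)|\le C$. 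Feeding this into Lemma~\ref{le lemma estimate on ddotgamma} gives $|\ddot\gamma_{w_0}^\regu(t)|\le CN^\delta(1+C)\le CN^\delta$, and integrating in time from $0$ to $t$ produces $|\dot\gamma_{w_0}^\regu(t)-\dot\gamma_{w_0}^\regu(0)|\le CN^\delta t$.

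For the spatial difference I would use that $u\mapsto\gamma_u^\regu(0)$ is strictly increasing (as $\varrho^\regu>0$), so every $u$ between $w_0$ and $w$ has $\gamma_u^\regu(0)$ between $\gamma_{w_0}^\regu(0)$ and $\gamma_w^\regu(0)$, hence in $[E_*-\Sigma/4,E_*+\Sigma/4]$; by Corollary~\ref{corollary le not so fast} this gives $\varrho_t^\regu(\gamma_u^\regu(t))\ge c/2$ for all such $u$. Integrating~\eqref{le spatial derivative of gammaw} over $u$ between $w_0$ and $w$ then yields $|\gamma_w^\regu(t)-\gamma_{w_0}^\regu(t)|\le C|w-w_0|/N$. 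Subtracting the two copies of~\eqref{le estimate in eom}, the difference $\dot\gamma_w^\regu(t)-\dot\gamma_{w_0}^\regu(t)$ is controlled by $|\Hi{\varrho_t^\regu}(\gamma_w^\regu(t))-\Hi{\varrho_t^\regu}(\gamma_{w_0}^\regu(t))|+\tfrac12|\gamma_w^\regu(t)-\gamma_{w_0}^\regu(t)|+C\regu$; bounding the first term by $CN^\delta|\gamma_w^\regu(t)-\gamma_{w_0}^\regu(t)|$ via $|\partial_E\re m_t(E+\ii\regu)|\le CN^\delta$ on $\mathcal{D}_{\Sigma/2}$ and inserting the bound on $|\gamma_w^\regu(t)-\gamma_{w_0}^\regu(t)|$ gives $|\dot\gamma_w^\regu(t)-\dot\gamma_{w_0}^\regu(t)|\le CN^\delta|w-w_0|/N+C\regu$.

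I do not expect a genuine obstacle: the corollary is essentially bookkeeping on top of the preceding lemmas. The only points needing a little care are the monotonicity argument guaranteeing that the intermediate quantiles $\gamma_u^\regu(t)$ stay in the good region so that the lower bound on $\varrho_t^\regu$ and the $N^\delta$-bound on $\partial_E m_t$ are available along the whole path $u\mapsto\gamma_u^\regu(t)$, and the (routine) verification that the constants in the cited estimates are uniform over $0\le t\le C'N^{-2\delta}$.
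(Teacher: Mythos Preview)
Your proposal is correct and follows essentially the same route as the paper's proof: use Corollary~\ref{corollary le not so fast} to keep the quantiles in the good region with $\varrho_t^\regu\ge c$, read off $|\dot\gamma_{w_0}^\regu|\le C$ from~\eqref{evolution of gammaw}/\eqref{le estimate in eom} and the boundedness of $m_t$, obtain the spatial difference via~\eqref{le spatial derivative of gammaw} together with the $N^\delta$ Lipschitz bound on $\re m_t$, and then invoke Lemma~\ref{le lemma estimate on ddotgamma} for the second derivative and its time integral. Your explicit monotonicity argument for the intermediate $u$'s is a small elaboration on what the paper leaves implicit, but the logic is identical.
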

\begin{proof}
Since $\gamma_w(0),\gamma_{w_0}(0)\in[E_*-\Sigma/4,E_*+\Sigma/4]$, we have by Corollary~\ref{corollary le not so fast} that $\gamma_w(t),\gamma_{w_0}(t)\in\in[E_*-\Sigma/2,E_*+\Sigma/2]$, for $t\le CN^{-2\delta}$, in particular we have $\varrho_t(\gamma_w(t)),\varrho_t(\gamma_{w_0}(t))\ge c>0$ for such $t$.

Recalling the identity $\Hi{\varrho_{t_0}^\regu}(E)=\re m_{t_0}(E+\ii\regu)$ (as follows from ~\eqref{le regularization trick} and~\eqref{le def of hilbert transform}), and the estimates on $m_{t}$, $\partial_z m_{t}$ derived in Lemma~\ref{le lemma mfc}, we conclude from that from~\eqref{evolution of gammaw} and~\eqref{le spatial derivative of gammaw} that
\begin{align*}
|\dot\gamma_{w_0}(t)|\le C\,,\qquad\qquad| \dot\gamma_w^\regu(t)-\dot\gamma_{w_0}^\regu(t)|\le C N^{-1+\delta}{|w-w_0|}+C\regu\,,
\end{align*}
for all $0\le t\le CN^{-2\delta}$. We further get from~\eqref{le estimate on ddotgamma} that
\begin{align*}
 |\ddot\gamma_{w_0}^\regu(t)|\le CN^{\delta}\,,\qquad\qquad|\dot\gamma_{w_0}^\regu(t)-\dot\gamma_{w_0}^\regu(0)|\le CN^\delta t\,.
\end{align*}
This proves~\eqref{le dot gammaw is almost zero}.
\end{proof}

\subsubsection{Proof of Lemma~\ref{le density lemma} and of Lemma~\ref{le lemma for new gamma}}\label{le subsubsection proof of classical flow result}

\begin{proof}[Proof of Lemma~\ref{le density lemma}]
 Without lost of generality, we can assume that $t_1=0$. Fix $N\in\N$. From Lemma~\ref{le lemma mfc}, we directly get $|\varrho_t(x)-\varrho(t)|\ll CN^{\delta}t$. Thus for $t\le C'N^{-2\delta}$, we get the first estimate in~\eqref{assumption on regularity of varrho}. Next, we note that first and second derivative of $m_t(z)$, $z=E+\ii\eta$, are bounded on~$\mathcal{D}_{\Sigma/2}$ by~\eqref{le second bound in lemma mfc}. Thus the first derivative converges uniformly as $\eta\searrow 0$, $E\in[E_*-\Sigma/2,E_*+\Sigma/2]$, and we have $\pi\partial_E\varrho_t(E)=\lim_{\eta\searrow 0}\im m_t(E+\ii\eta)$. In particular, we obtain from~\eqref{le second bound in lemma mfc} the second estimate~in~\eqref{assumption on regularity of varrho}.
\end{proof}

\begin{proof}[Proof of Lemma~\ref{le lemma for new gamma}]
 Without lost of generality, we can assume that $t_1=0$ here. Let~$\regu>0$ as in~\eqref{le reguregu}. We then recall that we have
 $|\varrho_t^\regu(x)-\varrho_t|\le CN^\delta\regu$, for all $x\in [E_*-\Sigma,E_*+\Sigma]$ and $0\le t\le C'N^{-2\delta}$. This follows directly from the definition of the Poisson kernel in~\eqref{le poisson kernel} and Lemma~\ref{le lemma mfc}. Thus, using the definition of $\gamma_w^\regu(t)$ in~\eqref{definition of gammaw}, we must have $ |\gamma_i^\regu(t)-\gamma_i(t)|\le CN^\delta\eta_*$, $ i\in \II$. By Assumption $(1)$ of Theorem~\ref{main theorem}, $m_0(z)$ extends to a continuous function on $\mathcal{D}_{\Sigma}$. Thus reasoning as in the proof of Lemma~\ref{le lemma mfc}, we conclude that $m_t(z)$ extends to a continuous function on $\mathcal{D}_{\Sigma/2}$ for $0\le t\le C'N^{-2\delta}$. Hence, considering now $\regu$ as a free parameter (not depending on $N$) and taking $\regu\searrow 0$, we conclude from~\eqref{evolution of gammaw} that
\begin{align}\label{le to be taken the limit of}
 \lim_{\eta_*\searrow 0}\frac{\dd \gamma_i^\regu(t)}{\dd t}=-\int_\R\frac{\varrho_t(y)\dd y}{y-\gamma_{i}(t)}-\frac{\gamma_{i}(t)}{2}\,,\qquad\qquad i\in\II\,,
\end{align}
for all $i\in \II$ and $0\le t\le C'N^{-2\delta}$. Here, we also used that $\varrho_t^\regu(\gamma^\regu_i(t))>0$, $\varrho_t(\gamma_i(t))>0$. Further, since $\dot\gamma_i^\regu(t)$ converges uniformly to $\dot\gamma_i(t)$ for all $0\le t\le C'N^{-2\delta}$, we can also exchange derivative and limit on the left side of~\eqref{le to be taken the limit of} and we obtain~\eqref{le eom dotgamma}. In particular, we have $\lim_{\regu\searrow 0}\dot\gamma^\regu_i(t)=\dot\gamma_i(t)$, $i\in \II$, $0\le t\le C'N^{-2\delta}$. Thus~\eqref{le estimate on dotgamma} follows from~\eqref{le dot gammaw is almost zero}.

Now we show ~\eqref{le eom dotgamma 2}. For any fixed $t$ 
we  define the ``continuous'' quantiles
\begin{align}\label{halfquantile}
     \int_{-\infty}^{\gamma_u(t)}\varrho_t (x)\dd x = \frac{u}{N}\,, \qquad\qquad u\in[0,N]\,,
\end{align}
and also the ``half-quantiles'' $ \wh\gamma_i(t)\deq \gamma_{i-1/2}(t)$ for any integer $i$. Since $t$ is fixed throughout the proof, we drop the $t$ argument. From \eqref{halfquantile} we get
the regularity of the continuous quantiles:
\begin{align}\label{gamer}
  \frac{\dd \gamma_u}{\dd u} = \frac{1}{N\varrho(\gamma_u)} = O(N^{-1})\,,\qquad \qquad \frac{\dd^2 \gamma_u}{\dd u^2} = -\frac{\varrho'(\gamma_u)}{N^2\varrho(\gamma_u)^3} = O(N^{-2+\delta})\,,
\end{align}
in the bulk regime, where we used \eqref{assumption on regularity of varrho}.

Setting $j=\ell(i)$ for brevity, we can write
\begin{align}\label{3int}
   \int_\R\frac{\varrho_t(y)\dd y}{y-\gamma_{\rl(i)}} =\Bigg[
   \int_{-\infty}^{\wh\gamma_{j-\sigma N}} + \int_{\wh\gamma_{j-\sigma N}}^{\wh\gamma_{j+\sigma N+1}}
   + \int_{\wh\gamma_{j+\sigma N+1}}^\infty\Bigg] \frac{\varrho(y)\,\dd y}{y-\gamma_j}\,.
\end{align}
The first integral can be written as (with $\wh\gamma_0 =-\infty$ and using \eqref{halfquantile})
\begin{align}\label{mid0}
  \sum_{k=0}^{j-\sigma N-1}\int_{\wh\gamma_k}^{\wh \gamma_{k+1}}  \frac{\varrho(y)\dd y}{y-\gamma_{j}}
  =\frac{1}{N}  \sum_{k=1}^{j-\sigma N-1}\frac{1}{\gamma_k-\gamma_{j}} +
   \sum_{k=1}^{j-\sigma N-1} \int_{\wh\gamma_{k}}^{\wh\gamma_{k+1}} \frac{(\gamma_k-y)\varrho(y)\dd y}{(y-\gamma_{j})(\gamma_k-\gamma_{j})} + \int_{-\infty}^{\wh \gamma_1}  \frac{\varrho(y)\,\dd y}{y-\gamma_{j}}\,.
\end{align}
The last term is $O(N^{-1})$.
The error term in the middle is bounded by
$$
  \Bigg| \sum_{k=1}^{j-\sigma N-1} \int_{\wh\gamma_{k}}^{\wh\gamma_{k+1}} \frac{(\gamma_k-y)\varrho(y)\dd y}{(y-\gamma_{j})(\gamma_k-\gamma_{j})}
   \Bigg|\le \frac{C}{N} \sum_{k=1}^{j-\sigma N-1} \int_{\wh\gamma_{k}}^{\wh\gamma_{k+1}} \frac{\varrho(y)\dd y}{(\gamma_k-\gamma_j)^2} \le C\sum_{k=1}^{j-\sigma N-1} \frac{1}{(k-j)^2} \le CN^{-1}.
 $$
 The third integral in \eqref{3int} is estimated similarly.
 Finally, for the second integral we write 
 \begin{align}\label{middle}
    \int_{\wh\gamma_{j-\sigma N}}^{\wh\gamma_{j+\sigma N+1}}\frac{\varrho(y)\dd y}{y-\gamma_j}
    =\sum_{k:|k-j|=1}^{\sigma N} \left(\frac{1}{N(\gamma_k-\gamma_j)}
    + \int_{\wh\gamma_{k}}^{\wh\gamma_{k+1}} \frac{(\gamma_k-y)\varrho(y)\dd y}{(y-\gamma_{j})(\gamma_k-\gamma_{j})} \right)
    + \int_{\wh\gamma_{j}}^{\wh\gamma_{j+1}} \frac{\varrho(y)\dd y}{y-\gamma_j}\,.
 \end{align}
 In the last integral we Taylor expand $\varrho(y) = \varrho(\gamma_j) + O(N^\delta|\gamma_j -y|)$ by
 \eqref{assumption on regularity of varrho} to get
 $$
   \int_{\wh\gamma_{j}}^{\wh\gamma_{j+1}} \frac{\varrho(y)\dd y}{y-\gamma_j} 
   = \varrho(\gamma_j) \int_{\wh\gamma_{j}}^{\wh\gamma_{j+1}} \frac{\dd y}{y-\gamma_j} + O(N^{-1})\,.
$$
Computing the integral explicitly and using $\wh \gamma_j =\gamma_{j-1/2}$, $\wh\gamma_{j+1} =\gamma_{j+1/2}$ we have
$$
 \int_{\wh\gamma_{j}}^{\wh\gamma_{j+1}} \frac{\dd y}{y-\gamma_j} = \log \Big| 1 + \frac{\gamma_{j-1/2} - 2\gamma_j
  + \gamma_{j+1/2}}{\gamma_{j-1/2} - \gamma_j}\Big| = O(N^{-1+\delta})\,.
$$
Here we used  $\gamma_{j-1/2} - \gamma_j \ge c/N$ and \eqref{gamer}  to estimate the second order discrete derivative.

 Finally, we estimate the integral in the middle term in \eqref{middle}
 and we will show that
 \begin{align}\label{mid4}
  \sum_{k:|k-j|=1}^{\sigma N} 
   \int_{\wh\gamma_{k}}^{\wh\gamma_{k+1}} 
   \frac{(\gamma_k-y)\varrho(y)\dd y}{(y-\gamma_{j})(\gamma_k-\gamma_{j})}  = O(N^{-1+\delta}\log N)\,.
  \end{align}
 Clearly, \eqref{mid4} together with \eqref{3int}, \eqref{mid0}, \eqref{middle} and \eqref{mid2}
 imply~\eqref{le eom dotgamma 2}. 
 
 In the rest of the proof we show \eqref{mid4}. We write
 \begin{align}\label{mid2}
 \sum_{k:|k-j|=1}^{\sigma N} 
   \int_{\wh\gamma_{k}}^{\wh\gamma_{k+1}} & \frac{(\gamma_k-y)\varrho(y)\dd y}{(y-\gamma_{j})(\gamma_k-\gamma_{j})} \\
   & =\sum_{m=1}^{\sigma N}\left( \int_{\wh\gamma_{j-m}}^{\wh\gamma_{j-m+1}} \frac{(\gamma_{j-m}-y)\varrho(y)\dd y}{(y-\gamma_{j})(\gamma_{j-m}-\gamma_{j})}
   + \int_{\wh\gamma_{j+m}}^{\wh\gamma_{j+m+1}} \frac{(\gamma_{j+m}-y)\varrho(y)\dd y}{(y-\gamma_{j})(\gamma_{j+m}-\gamma_{j})}\right)\,. \nonumber
\end{align}

In the first integral, we replace $\varrho(y)$ with $\varrho(\gamma_{j})$. From Taylor expansion, $|\varrho(y)-\varrho(\gamma_{j})|\le CmN^{-1+\delta}$, the error
in this replacement is bounded by 
$$
   \frac{CmN^\delta}{N}\int_{\wh\gamma_{j-m}}^{\wh\gamma_{j-m+1}} \frac{|\gamma_{j-m}-y|\dd y}{|y-\gamma_{j}||\gamma_{j-m}-\gamma_{j}|}\le \frac{CmN^\delta}{N^2}\int_{\wh\gamma_{j-m}}^{\wh\gamma_{j-m+1}} \frac{\dd y}{|y-\gamma_{j}||\gamma_{j-m}-\gamma_{j}|}
\le \frac{CN^\delta}{Nm}\,,
$$
since $|\gamma_{j-m} -\gamma_j|\sim m/N$. We get a similar error when replacing $\varrho(y)$ with $\varrho(\gamma_{j})$
in the second integral in~\eqref{mid2}. These errors, even after summation over $m$, are still of order $O(N^{-1+\delta}\log N)$,
hence negligible. Thus we get that~\eqref{mid2} equals
\begin{multline*}
 \sum_{m=1}^{\sigma N}\left( \frac{\varrho(\gamma_j)}{\gamma_{j-m}-\gamma_{j}}
  \int_{\wh\gamma_{j-m}}^{\wh\gamma_{j-m+1}} \frac{\gamma_{j-m}-y}{y-\gamma_{j}} \dd y
   +  \frac{\varrho(\gamma_j)}{\gamma_{j+m}-\gamma_{j}}\int_{\wh\gamma_{j+m}}^{\wh\gamma_{j+m+1}} \frac{\gamma_{j+m}-y}{y-\gamma_{j}}\dd y\right)+O(N^{-1+\delta}\log N)\,.
 \end{multline*}
 Next, using~\eqref{gamer}, we have 
 $$
   \frac{1}{\gamma_{j-m}-\gamma_{j}} = -\frac{\varrho(\gamma_j)N}{m} + O(N^{\delta})\,, \qquad\qquad 
   \gamma_{j+m}-\gamma_{j}= \frac{m}{\varrho(\gamma_j)N} + O(N^{-2+\delta}{ m^2})\,,
$$
so we get, after a change of variables, that~\eqref{mid2} equals
\begin{align*}
  & \sum_{m=1}^{\sigma N} \frac{N}{m}\left(
-  \int_{\wh\gamma_{j-m}}^{\wh\gamma_{j-m+1}} \frac{\gamma_{j-m}-y}{y-\gamma_{j}} \dd y
   +  \int_{\wh\gamma_{j+m}}^{\wh\gamma_{j+m+1}} \frac{\gamma_{j+m}-y}{y-\gamma_{j}}\dd y\right) +O(N^{-1+\delta}\log N)
   \\
   = & -\sum_{m=1}^{\sigma N} \frac{N}{m}\left(
  \int_{\wh\gamma_{j-m}-\gamma_{j-m}}^{\wh\gamma_{j-m+1}-\gamma_{j-m}} \frac{u\,\dd u}{\gamma_j-\gamma_{j-m}-u} 
   +  \int_{\wh\gamma_{j+m}-\gamma_{j+m}}^{\wh\gamma_{j+m+1}-\gamma_{j+m}} \frac{u\,\dd u}{\gamma_{j+m}-\gamma_{j}+u}\right)+O(N^{-1+\delta}\log N)\,.
 \end{align*}
 The limits of integrations can be approximated as follows:
 $$
   \wh\gamma_{j-m}-\gamma_{j-m} = -\frac{1}{2N\varrho(\gamma_{j})} + O(mN^{-2+\delta})\,,\qquad \qquad
    \wh\gamma_{j+m}-\gamma_{j+m}  =
   -\frac{1}{2N\varrho(\gamma_{j})} + O(mN^{-2+\delta})\,,
   $$
   $$ \wh\gamma_{j-m+1}-\gamma_{j-m}= \frac{1}{2N\varrho(\gamma_{j})} + O(mN^{-2+\delta})\,,\qquad \qquad 
   \wh\gamma_{j+m+1}-\gamma_{j+m}= \frac{1}{2N\varrho(\gamma_{j})} + O(mN^{-2+\delta})\,.
 $$
 Replacing these limits with their common values yields negligible errors, for example:
 $$
   \sum_{m=1}^{\sigma N} \frac{N}{m}\int_{\frac{1}{2N\varrho(\gamma_{j})}}^{\wh\gamma_{j-m+1}-\gamma_{j-m}}
   \frac{u\,\dd u}{|\gamma_j-\gamma_{j-m}-u|} \le C\sum_{m=1}^{\sigma N} \frac{N}{m}\cdot \frac{mN^\delta}{N^2} \cdot \frac{1}{N}
    \frac{1}{m/N} = CN^{-1+\delta	}\log N\,.
 $$
 Thus, with the notation $d\deq 1/2N\varrho(\gamma_j)$, we get
 \begin{align}
  \eqref{mid2} 
   = & -\sum_{m=1}^{\sigma N} \frac{N}{m}\left(
  \int_{-d}^{d} \frac{u\,\dd u}{\gamma_j-\gamma_{j-m}-u} 
   +  \int_{-d}^{d} \frac{u\,\dd u}{\gamma_{j+m}-\gamma_{j}+u}\right) +O(N^{-1+\delta}\log N)\,. \label{mid3}
 \end{align} 
Using that $\gamma_j -\gamma_{j-m} = \gamma_{j+m}-\gamma_j + O(m^2N^{-2+\delta})$ from \eqref{gamer}, we can
write
$$
    \frac{1}{\gamma_{j+m}-\gamma_{j}+u} =  \frac{1}{\gamma_{j}-\gamma_{j-m}+u} + O(N^\delta)\,,
 $$
 for any $u$ in the second integration regime, since here
  $|u|\le \frac{1}{2N\varrho(\gamma_{j})}$ and $\gamma_{j+m}-\gamma_{j}\ge 
  \frac{m}{N\varrho(\gamma_{j}) }+ O(m^2N^{-2+\delta})$. Thus, replacing $\gamma_{j+m}-\gamma_{j}$ 
  with $\gamma_{j}-\gamma_{j-m}$ in the denominator of the second integral in \eqref{mid3}
  yields an error of order $\sum_m (N/m) N^{-2+\delta} = CN^{1+\delta}\log N$.
 After this replacement the two integrals in \eqref{mid3} cancel out:
 $$
   \int_{-d}^{d} \frac{u\,\dd u}{\gamma_j-\gamma_{j-m}-u} 
   +  \int_{-d}^{d} \frac{u\,\dd u}{\gamma_{j}-\gamma_{j-m}+u} = \int_{-d}^{d} \frac{u\,\dd u}{(\gamma_j-\gamma_{j-m})^2-u^2}=0\,. 
 $$
 We have shown that \eqref{mid2} is of order $O(N^{-1+\delta}\log N)$, which finishes the proof of \eqref{mid4}. 
\end{proof}

\subsection{Remarks on Assumption~$(1)$ of Theorem~\ref{main theorem}}\label{appendix hoelder}
We conclude this Appendix with some remarks on Assumption~$(1)$ of Theorem~\ref{main theorem}. We consider the semicircular flow $\varrho_t=\mathcal{F}_t(\varrho)$ started from~$\varrho$, for $t\ge 0$. As remarked earlier, the semicircle law~$\varrho_{sc}$ is invariant under the flow. It is then easy to check that $m_{sc}$, the Stieltjes transform of $\varrho_{sc}$ satisfies the bound in~\eqref{le bounds in assumption 1} for all $t$ with $\delta=1$. 

For many matrix models the distribution $\varrho$, and hence also $\varrho_t$, are not explicit and checking Assumption~$(1)$ directly may be not an easy task. In many situations, one can however use the smoothing effect of the semicircle flow to establish these estimates. The following example may be of some interest.

Denote by $C^{0,\alpha}(\R)$, $C^{0,\alpha}(\C^+)$ the spaces of uniformly $\alpha$-H\"older continuous functions on $\R$, $\C^+$. Assume that $\varrho\in\C^{0,\alpha}(\R)$, for some $\alpha>0$. Then the Stieltjes transform, $m_0$, of $\varrho$ is in $\C^{0,\alpha}(\C^+)$. 
Adapting the proof of Lemma~\ref{le lemma mfc} one can establish the following result. Abbreviate $\sigma_t=1-\euler{-t}$. 

\begin{lemma}\label{le mfc hoelder}
Assume that $\varrho\in\C^{0,\alpha}(\R)$. Then, $m_t(z)$, the Stieltjes transform of $\varrho_t=\mathcal{F}_t(\varrho)$, is uniformly bounded for all $z\in\C^+\cup\R$ and $t\ge 0$. Moreover, there is a constant $C$, depending only on~$\varrho$, such that
\begin{align}
 |m_t(z)-m_0(z)|\le C\sigma_t^\alpha\,,\qquad\qquad 0\le t\le 1\,,
\end{align}
and all $z\in\C^+\cup\R$. Further, for all $n\in\N$, there is $C_n$ such that we have the bounds
\begin{align}
 |\partial_z^n m_t(z)|\le C_n(\sigma_t\im m_t(z))^{\alpha-n}\,,\qquad\qquad t>0\,,
\end{align}
for all $z\in\C^+\cup\R$.
\end{lemma}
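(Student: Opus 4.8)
My plan is to transcribe the proof of Lemma~\ref{le lemma mfc}, replacing its polynomial-in-$N^\delta$ control of the input Stieltjes transform by uniform H\"older estimates and tracking how the exponent $\alpha$ propagates through the subordination relation $m_t(z)=\widetilde m_t(z+\sigma_t m_t(z))$. The first ingredient is the classical fact that for $\varrho\in C^{0,\alpha}(\R)$ its Stieltjes transform $m_0(z)=\int\varrho(y)(y-z)^{-1}\dd y$ extends continuously and $\alpha$-H\"older-continuously to $\C^+\cup\R$, is uniformly bounded there, and satisfies $|\partial_z^n m_0(z)|\le C_n(\im z)^{\alpha-n}$ for all $n\ge1$. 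I would prove the derivative bound by the standard device: since $\int_\R(y-\zeta)^{-k}\dd y=0$ for $k\ge2$ and $\im\zeta\neq0$, inside $\partial_z^n m_0(\zeta)=n!\int\varrho(y)(y-\zeta)^{-n-1}\dd y$ one may subtract $\varrho(\re\zeta)$, bound the kernel by $C|y-\re\zeta|^\alpha|y-\zeta|^{-n-1}$ for $|y-\re\zeta|\le1$ and by $2\|\varrho\|_\infty|y-\zeta|^{-n-1}$ on the tails, and integrate by the change of variables $y=\re\zeta+\im\zeta\cdot u$.

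For the uniform boundedness of $m_t$ and the estimate $|m_t(z)-m_0(z)|\le C\sigma_t^\alpha$ I would reuse the quantities from Lemma~\ref{le lemma mfc}: $\sigma_t\deq1-\euler{-t}$, $\widetilde m_t(w)\deq\euler{t/2}m_0(\euler{t/2}w)$, the subordination identity $m_t(z)=\widetilde m_t(z+\sigma_t m_t(z))$, and the a priori bound $|m_t(z)|\le\sigma_t^{-1/2}$. Since $\widetilde m_t$ inherits the $\alpha$-H\"older bound of $m_0$ uniformly for $t\in[0,1]$, one gets $|m_t(z)-\widetilde m_t(z)|=|\widetilde m_t(z+\sigma_t m_t(z))-\widetilde m_t(z)|\le C\sigma_t^\alpha|m_t(z)|^\alpha$; feeding in the a priori bound gives $|m_t(z)|\le C$ for $t\le1$, and re-feeding this bound gives $|m_t(z)-\widetilde m_t(z)|\le C\sigma_t^\alpha$. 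Combined with $|\widetilde m_t(z)-m_0(z)|\le C\sigma_t^\alpha$ — which follows from $\widetilde m_t(z)=\euler{t/2}m_0(\euler{t/2}z)$, the boundedness and H\"older continuity of $m_0$, and, for large $|z|$, from $|m_t(z)|,|m_0(z)|=O(|z|^{-1})$ — this yields the claim on all of $\C^+\cup\R$.

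For the derivatives I would differentiate the self-consistent equation $m_t(z)=\int\varrho(y)D(y)^{-1}\dd y$ with $D(y)\deq\euler{-t/2}y-z-\sigma_t m_t(z)$, obtaining $m_t'(z)=A_1/(1-\sigma_t A_1)$ with $A_1\deq\int\varrho(y)D(y)^{-2}\dd y$ and, inductively, $(1-\sigma_t A_1)\,m_t^{(n)}(z)=A_n+R_n$, where $A_n\deq n!\int\varrho(y)D(y)^{-n-1}\dd y$ and $R_n$ is a polynomial in $\sigma_t$, $A_1,\ldots,A_n$ and $m_t',\ldots,m_t^{(n-1)}$. Writing $D(y)=\euler{-t/2}(y-\zeta)$ with $\im\zeta=\euler{t/2}(\im z+\sigma_t\im m_t(z))$ and using the vanishing-moment identity as in the first paragraph, one gets $|A_n|\le C_n(\im\zeta)^{\alpha-n}\le C_n(\sigma_t\im m_t(z))^{\alpha-n}$ for $t\in[0,1]$. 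The main obstacle is then the \emph{uniform} lower bound $|1-\sigma_t A_1|\ge c$ with $c$ independent of $z$ (and of $t$ on any compact interval). In the regime $\im z\ge\sigma_t\im m_t(z)$ this is elementary: from $\int\varrho(y)|D(y)|^{-2}\dd y=\im m_t(z)/(\im z+\sigma_t\im m_t(z))$ one has $|A_1|\le\im m_t(z)/(\im z+\sigma_t\im m_t(z))$, hence $|1-\sigma_t A_1|\ge\im z/(\im z+\sigma_t\im m_t(z))$, and combining with $|A_1|\le C(\im z+\sigma_t\im m_t(z))^{\alpha-1}$ gives $|m_t'(z)|\le C(\im z)^{\alpha-1}\le C(\sigma_t\im m_t(z))^{\alpha-1}$. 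In the complementary ``deep bulk'' regime I would invoke the stability of the subordination equation of the free convolution with a semicircle (cf.~\cite{B}), which for $z$ away from the edges of $\supp\varrho_t$ gives $|1-\sigma_t A_1|\ge c$; this is only delicate for $t$ of order one, since $\sigma_t A_1=O(\sigma_t^\alpha)$ for small $t$. Granting this lower bound, dividing and inducting on $n$ — with $R_n$ controlled by the already-established bounds on $A_k$ and $m_t^{(j)}$, $j<n$ — yields $|\partial_z^n m_t(z)|\le C_n(\sigma_t\im m_t(z))^{\alpha-n}$, completing the proof.
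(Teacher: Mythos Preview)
The paper does not supply a proof of this lemma; it merely states ``Adapting the proof of Lemma~\ref{le lemma mfc} one can establish the following result.'' Your proposal carries out precisely this adaptation. The first two parts---H\"older regularity of $m_0$ up to the real line with $|\partial_z^n m_0(z)|\le C_n(\im z)^{\alpha-n}$, and the bootstrap $|m_t(z)|\le\sigma_t^{-1/2}\Rightarrow|m_t-\widetilde m_t|\le C\sigma_t^{\alpha/2}\Rightarrow|m_t|\le C\Rightarrow|m_t-\widetilde m_t|\le C\sigma_t^\alpha$---are the direct analogues of~\eqref{le a priori bound on mt}--\eqref{lelel second} and are correctly executed.

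For the derivative bounds you again follow the template of~\eqref{to be used twice}, and you correctly isolate the one point where the H\"older setting differs from the $N^\delta$ setting: in Lemma~\ref{le lemma mfc} the smallness of $\sigma_t A_1$ came from the time restriction $t\le C'N^{-2\delta}$, whereas here the lemma is asserted for all $t>0$, so a uniform lower bound on $|1-\sigma_t A_1|$ does not come for free. Your split into the regime $\im z\ge\sigma_t\im m_t(z)$ (where $|1-\sigma_t A_1|\ge\tfrac12$ by the identity $\int\varrho|D|^{-2}=\im m_t/(\im z+\sigma_t\im m_t)$) and the complementary regime is sound. In the latter regime your appeal to Biane's subordination analysis~\cite{B} is the natural move, but note that the reference as stated gives analyticity of $\varrho_t$ inside its support, not a quantitative lower bound on $|1-\sigma_t A_1|$ uniform up to the edges; what you actually need is a bound of the form $|1-\sigma_t A_1|\ge c(\sigma_t\im m_t)^{1-\alpha}$ near the edge (since both sides vanish there), which does follow from the subordination picture but requires an extra line of argument. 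Aside from making this step explicit, your proof matches what the paper has in mind.
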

Thus, running the semicircular flow from time $t=0$ to time $t_1=N^{-\tau_1}$, $\tau_1>0$, we see that Lemma~\ref{le mfc hoelder} implies the Assumption~$(1)$ of Theorem~\ref{main theorem} for energies inside the ``bulk'' for the choice $\delta\ge (1-\alpha)\tau_1$. For the Wigner-like matrices of~\cite{AEK1,AEK2} typical choices for $\alpha$ are $1/3$ or $1/2$.

\section{Persistent trailing of the DBM}\label{appendix:persist} 
In this section, we prove that the time-dependent quantiles~$\gamma_k(t)$
persistently trail the DBM  up to a time-independent shift in the indices. 
More precisely, we have the following
\begin{proposition}\label{prop:persist}  Consider a time interval $[t_1, t_2]$ of length $t_2-t_1=O(N^{-\epsilon})$
with some small $\epsilon>2\delta$, where $\delta$, given in \eqref{le bounds in assumption 1}, is the regularity exponent
of the initial data of the quantiles.
Let $\bla(t)$ be the solution of~\eqref{le original dbm} and let $\boldsymbol{\gamma}(t)$ be given by~\eqref{the gammas}.
Suppose that
\begin{align}\label{riggi}
   \P \Big\{ |\lambda_i(t) - \lambda_j(t)|\le \frac{N^{\xi}|i-j|}{N}, \quad i,j \in \II \Big\}\le N^{-D}\,,
\end{align}
for any $D$.
Fix an index $L$ in the bulk and let $\ell(L)$ such that 
\begin{align}\label{inL}
     |\lambda_L(t_1) -\gamma_{\ell(L)}(t_1)|\le CN^{-1+\xi}.
\end{align}
Then in the probability space of the Brownian motions   $\{ B_i(t)\; : \; i\in \N_N\,,t\in [t_1,t_2]\}$
we have
\begin{align}\label{eq:per}
   \P\bigg(   \sup_{t\in [t_1, t_2]} |\lambda_L(t) -\gamma_{\ell(L)}(t)|\le CN^{-1+2\xi}  
   \bigg)\ge 1- N^{-\xi}\,.
\end{align}
\end{proposition}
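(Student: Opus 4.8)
\textbf{Proof proposal for Proposition~\ref{prop:persist}.} The plan is to track the distance $D(t) \deq \lambda_L(t) - \gamma_{\ell(L)}(t)$ directly via its equation of motion and a stopping-time (continuity) argument. Write down the SDE for $\lambda_L(t)$ from~\eqref{le original dbm} and the ODE for $\gamma_{\ell(L)}(t)$ from~\eqref{le eom dotgamma 2} of Lemma~\ref{le lemma for new gamma}. Subtracting, and using that $\dot\gamma_{\ell(L)}(t) = -\frac{1}{N}\sum_{k=1}^N \frac{1}{\gamma_k(t)-\gamma_{\ell(L)}(t)} - \frac{\gamma_{\ell(L)}(t)}{2} + O(N^{\delta}/N)$, we get
\begin{align*}
 \dd D(t) = \sqrt{\tfrac{2}{\beta N}}\,\dd B_L(t) - \frac{1}{2} D(t)\,\dd t - \frac{1}{N}\sum_{j\neq L}\Big(\frac{1}{\lambda_j(t)-\lambda_L(t)} - \frac{1}{\gamma_j(t)-\gamma_{\ell(L)}(t)}\Big)\dd t + O\Big(\frac{N^{\delta}}{N}\Big)\dd t\,,
\end{align*}
where I have relabeled the $\gamma$-sum by the (fixed) labeling $\ell$ so that the $j=L$ term is excluded. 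The key structural point is that the drift term $-\frac{1}{N}\sum_{j\neq L}(\cdots)$ is, to leading order, a \emph{restoring force}: expanding $\frac{1}{\lambda_j-\lambda_L}-\frac{1}{\gamma_j - \gamma_{\ell(L)}} \approx -\frac{(\lambda_j - \gamma_{\ell(j)}) - D}{(\gamma_j-\gamma_{\ell(L)})^2}$, the diagonal part $\frac{D}{N}\sum_{j\neq L}\frac{1}{(\gamma_j-\gamma_{\ell(L)})^2}$ has a definite sign pushing $D$ back toward zero with strength $\gtrsim N|D|/(N d_{\min}) \gtrsim $ (something like $N\log N$ scale after the sum), while the off-diagonal part is a genuine ``noise'' fed by the rigidity fluctuations $|\lambda_j - \gamma_{\ell(j)}|\le N^{\xi}/N$.

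The second step is to run a continuity/bootstrap argument. Define the stopping time $\tau \deq \inf\{t\ge t_1 : |D(t)| > N^{-1+2\xi}\} \wedge t_2$. On $[t_1,\tau]$ all particles near $L$ satisfy $|\lambda_i(t)-\gamma_{\ell(i)}(t)| \le N^{-1+\xi}$ with the required high probability (this is exactly the strong rigidity Assumption~$(2a)$, or~\eqref{riggi} in the statement; note the proposition assumes a rigidity input), so in particular $|\lambda_j(t)-\lambda_L(t)| \ge c|j-L|/N$ and the denominators are controlled. Then Gronwall on $|D(t)|$ using the sign of the restoring drift, together with a martingale estimate (Doob/BDG) for $\sup_{t}|\int_{t_1}^t \sqrt{2/\beta N}\,\dd B_L|$, which is of size $\sqrt{(t_2-t_1)/N} \lesssim N^{-1+\epsilon/2}/\sqrt{N}$ — much smaller than $N^{-1+2\xi}$ since $\epsilon > 2\delta$ and the relevant scales are chosen so that $N^{-1/2-\epsilon/2}\ll N^{-1+2\xi}$ — shows that $|D(t)|$ stays below $N^{-1+2\xi}$ with probability $\ge 1-N^{-\xi}$ (the error $N^{-\xi}$ comes from the probability that rigidity or the martingale bound fails; the exponent is dictated by the weakest available estimate). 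Hence $\tau = t_2$ on that event, giving~\eqref{eq:per}. One must be careful that the initial condition~\eqref{inL} gives $|D(t_1)|\le CN^{-1+\xi}$, comfortably inside the target radius $N^{-1+2\xi}$.

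The main obstacle, I expect, is controlling the off-diagonal ``noise'' drift $-\frac{1}{N}\sum_{j\neq L}\frac{\lambda_j(t)-\gamma_{\ell(j)}(t)}{(\gamma_j(t)-\gamma_{\ell(L)}(t))^2}$ and the second-order Taylor remainders. A naive bound using $|\lambda_j - \gamma_{\ell(j)}|\le N^{\xi}/N$ termwise gives $\frac{N^{\xi}}{N^2}\sum_{j\neq L}\frac{1}{(|j-L|/N)^2} \sim \frac{N^{\xi}}{N^2}\cdot N^2 \cdot \sum_j \frac{1}{|j-L|^2} = O(N^{\xi}/N) \cdot O(1)$ per unit time — wait, more carefully this is $O(N^{\xi})$ which integrated over time $t_2-t_1 = N^{-\epsilon}$ gives $O(N^{\xi-\epsilon})$, and one needs $\xi$ small compared to $\epsilon$ and $\delta$ for this to be $\ll N^{-1+2\xi}$; this forces the hierarchy of exponents and is exactly why the statement demands $\epsilon > 2\delta$ and $\xi$ chosen last. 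Resolving this cleanly — and likewise handling the fact that the labeling $\ell$ could in principle ``jump'' (which is ruled out precisely by~\eqref{riggi}, since a jump would require two $\lambda_j$'s to collide or cross on a scale incompatible with rigidity) — is the delicate part; everything else is a standard Gronwall-plus-martingale estimate. Alternatively, one could avoid tracking a single $D(t)$ and instead invoke the level-counting identity~\eqref{nee} to argue that the number of eigenvalues to the left of any reference point is locked to the number of quantiles, so no relabeling can occur over the short interval; I would use this as a complementary bookkeeping device rather than the main mechanism.
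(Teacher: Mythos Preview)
Your martingale step contains a genuine gap. You estimate $\sup_t\big|\int_{t_1}^t\sqrt{2/\beta N}\,\dd B_L\big|$ in isolation and obtain $\sqrt{(t_2-t_1)/N}=N^{-(1+\epsilon)/2}$. For the small $\epsilon$ in the statement this is essentially $N^{-1/2}$, not $N^{-1}$; the inequality $N^{-(1+\epsilon)/2}\ll N^{-1+2\xi}$ would require $\epsilon>1-4\xi$, which is never satisfied in the regime of the proposition. (The same arithmetic failure hits your off-diagonal drift: $N^{\xi-\epsilon}\ll N^{-1+2\xi}$ again forces $\epsilon>1-\xi$.) The paper flags exactly this obstacle in the remark immediately following the proposition: the naive white-noise scaling overestimates the true fluctuation of $\lambda_L(t)$ by a large power of $N$. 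Your restoring-force observation is correct and is the right mechanism, but you only feed it into a Gronwall bound on the deterministic drift; the point is that the same fast relaxation (rate of order $N$) must also damp the \emph{stochastic} input. Concretely, the damped stochastic integral $\int_{t_1}^t e^{-cN(t-s)}\sqrt{2/\beta N}\,\dd B_L(s)$ has variance $\lesssim N^{-2}$, which is what you need---but that is not the quantity you bounded.

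The paper does not run a single-index argument. It sets $v_i(t)=\lambda_i(t)-\gamma_{\ell(i)}(t)$ for all $i\in I$ and obtains the exact linear parabolic system $\dd v_i=\sqrt{2/\beta N}\,\dd B_i-\sum_j\mathcal{B}_{ij}(v_i-v_j)\,\dd t-\mathcal{W}_i v_i\,\dd t+\kappa_i\,\dd t$ with $\mathcal{B}_{ij}=[N(\lambda_j-\lambda_i)(\gamma_j-\gamma_i)]^{-1}$ (no Taylor expansion is needed). Writing $\cU(s,t)$ for the propagator of this system, Duhamel gives $v_i(t)-v_i(t_1)=\int_{t_1}^t\sum_j[\cU(s,t)]_{ij}\sqrt{2/\beta N}\,\dd B_j(s)+O(N^{-1})$, and the variance of the stochastic term is $\tfrac{2}{\beta N}\int_{t_1}^t\sum_j|[\cU(s,t)]_{ij}|^2\,\dd s$. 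The key input is the heat-kernel decay $\sum_j|[\cU(s,t)]_{ij}|^2\le CN^\xi[N(t-s)]^{-1/(1+\nu)}$ from Proposition~9.4 of~\cite{EYSG} (an $L^{1+\nu}\to L^\infty$ bound combined with $L^p$-contraction), which after integration yields $\E|v_L(t)-v_L(t_1)|^2\le CN^{2\xi}/N^2$; Doob and Markov then give~\eqref{eq:per}. This is the quantitative expression of the damping you had in mind, but packaged as dispersion of the full semigroup rather than exponential decay of a single coordinate.
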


Notice that $\gamma_{\ell(L)}(t)$ is a deterministic trajectory. 
This result therefore also shows that the typical fluctuation of the solution of the DBM
is much smaller than the white noise term in~\eqref{le original dbm} naively indicates. Indeed, the variance of
the integral of this term is
$$ 
   \E\left| \int_{t_1}^{t_2} \sqrt{\frac{2}{\beta N}} \dd B_L\right|^2 \simeq \frac{t_2-t_1}{N}\, , 
$$
which would indicate a behavior $|\lambda_L(t_2)-\lambda_L(t_1)|\gtrsim (t_2-t_1)^{1/2} N^{-1/2}$.
This is much larger than the actual value  $|\lambda_L(t_2)-\lambda_L(t_1)|\le C N^{-1+\xi}$.

\begin{proof}
Let 
$$
  v_i (t) \deq \lambda_i(t) - \gamma_{\ell(i)} (t)\,.
$$
Subtracting the DBM~\eqref{le original dbm} from~\eqref{le eom dotgamma 2}
and localizing it for the indices $i\in I$, we get
$$
    \dd v_i(t) = \sqrt{\frac{2}{\beta N}}\,\dd B_i(t) - \sum_{j\in I} \mathcal{B}_{ij} (v_i- v_j)\dd t - \mathcal{W}_i v_i\, \dd t + \kappa_i(t)
    \,\dd t\,, \qquad i\in I\, ,
 $$
with (time-dependent) coefficients
$$
  \mathcal{B}_{ij} = \frac{1}{N}  \frac{1}{(\lambda_j-\lambda_i)(\gamma_j -\gamma_i)}, \quad i,j \in I\, ,
   \qquad \qquad\mathcal{W}_i =\frac{1}{2}+ \sum_{k\not\in I}
    \frac{1}{N}  \frac{1}{(\lambda_k-\lambda_i)(\gamma_k -\gamma_i)}, \quad i\in I\, ,
$$
and a deterministic error term $|\kappa_i (t)|\le N^{-1+\delta}$.

By \eqref{riggi} and the spacing of the quantiles in the bulk, we know that
\begin{align}\label{cond}
   \mathcal{B}_{ij}(s) \ge \frac{b}{|i-j|}\,,\qquad \qquad \mathcal{W}_i(s) \ge \frac{b}{ \big| |j-L| - K\big| +1}\,,
\end{align}
with $b\deq N^{1-\xi}$ uniformly in time $s\in [t_1,t_2]$ with very high probability.

Let $\cU(s,t )$ be the propagator of the parabolic equation
\begin{align}\label{pareq}
   \frac{\dd u_i(t)}{\dd t} =- \sum_{j\in I} \mathcal{B}_{ij} (u_i- u_j) -\mathcal{ W}_i u_i\,, \qquad\qquad i\in I\,,
\end{align}
 then
\begin{align}\label{veq}
   v_i (t) = v_i(t_1) +  \int_{t_1}^t \sum_j [\cU(s,t)]_{ij} \left[\sqrt{\frac{2}{\beta N}}\,\dd B_j(s) + \kappa_j(s)\dd s\right]  \,.
\end{align}
Since the propagator is a contraction in the supremum norm, the $\kappa(s)$ error term, after integration,
gives a negligible error at most $C(t_2-t_1) N^{-1+\delta} \le CN^{-1}$.
To estimate the main term, notice that the propagator depends on the sigma algebra $\Sigma_s:=\{ \{ B_i(u)\}_{i\in I}\; : \; 
u\in (s,t]\}$ and is independent of the white noise at time $s$. Therefore
\begin{align}\label{veq2}
 \E | v_i(t)- v_i(t_1)|^2 = \frac{2}{\beta N} \E \int_{t_1}^t \sum_j |[\cU(s,t)]_{ij}|^2 \dd s +  O(N^{-2})\,.
\end{align}
Fix $i\in I$, $s$ and $t$  and define
$w_j \deq [\cU(s,t)]_{ij}$, 
which is the same as $[\cU(s,t)]_{ji}$ by symmetry.
Then for any $\nu>0$, we have
$$
\Big| \sum_j |[\cU(s,t)]_{ij}|^2 \Big| \le \|  \cU(s,t) \bw \|_\infty \le \frac{C N^\xi}{[N(t-s)]^{\frac{1}{1+\nu}}} 
\| \bw\|_{1+\nu}\,,
$$
by the heat kernel estimate on the Equation~\eqref{pareq};
see Proposition~9.4  in~\cite{EYSG} (the conditions of this proposition
are guaranteed by~\eqref{cond}). By the~$L^p$-contraction of the semigroup for any~$p\ge 1$, we have
$$
   \| \bw\|_{1+\nu} = \| \cU(s,t) \delta_i\|_{1+\nu} \le \|\delta_i\|_{1+\nu}=1\,.
$$
Thus we get
\begin{align}\label{veq4}
\E | v_i(t)- v_i(t_1)|^2 \le \frac{CN^\xi}{N^{1+\frac{1}{1+\nu}}}  \int_{t_1}^t \frac{1}{(t-s)^{\frac{1}{1+\nu}}} \dd s  
+ O(N^{-2})\le C_\nu\frac{N^{\xi+2\nu}}{N^2} \le C\frac{N^{2\xi}}{N^2}\,,
\end{align}
after choosing $\nu=\xi/2$.
Using Doob martingale inequality, one also has
$$
   \E \sup_{t\le t_2} | v_i(t)- v_i(t_1)|^2 \le C\E | v_i(t_2)- v_i(t_1)|^2\le C\frac{N^{2\xi}}{N^2}\,.
$$
Setting $i=L$ and using Markov inequality and combining it with assumption~\eqref{inL},
 we get~\eqref{eq:per}.
\end{proof}

\end{appendix}

\small{

\thebibliography{hhhhh}

\bibitem{AGZ}  Anderson, G., Guionnet, A., Zeitouni, O.:  {\it An Introduction
to Random Matrices.} Studies in Advanced Mathematics, {\bf 118}, Cambridge
University Press, 2009.

\bibitem{AEK1} Ajanki, O.,  Erd{\H o}s, L., Kr\"uger, T.:
{\it Quadratic Vector Equation in the Complex Upper Half Plane}, in preparation.

\bibitem{AEK2} Ajanki, O.,  Erd{\H o}s, L., Kr\"uger, T.:
{\it Local Spectral Density for a Wigner Type Matrix with a Non-stochastic
Matrix of Variances}, in preparation.

\bibitem{BK2} Aptekarev, A., Bleher, P., Kuijlaars, A.: \emph{Large n Limit of Gaussian Random Matrices with External Source, Part II}, Comm. Math. Phys \textbf{259}, 367-389 (2005).

\bibitem{BakEme} Bakry, D., \'Emery, M.:  {\it Diffusions Hypercontractives}, S\'eminaire
de probabilit\'es, XIX, 1983\slash 84, vol. 1123 of Lecture Notes in Math., Springer,
Berlin, 1985.

\bibitem{BFG}  Bekerman, F., Figalli, A., Guionnet, A.: \emph{Transport Maps for Beta-matrix Models and Universality},
preprint, arXiv:1311.2315

\bibitem{BP} Ben Arous, G., P\'ech\'e, S.: {\it Universality of Local
 Eigenvalue Statistics for Some Sample Covariance Matrices}, Comm. Pure Appl. Math. {\bf LVIII.}, 1-42 (2005).

\bibitem{B} Biane, P.:
{\it On the Free Convolution with a Semi-circular Distribution}, Indiana Univ. Math. J., \textbf{46}, 705-718 (1997).

\bibitem{B2} Biane, P.: \emph{Logarithmic Sobolev Inequalities, Matrix Models and Free Entropy}, Acta. Math.
Sin. (Engl. Ser.)  \textbf{19.3}, 497-506 (2003).

\bibitem{BS} Biane, P., Speicher, R.: \emph{Free Diffusions, Free Energy and Free Fisher Information}, Ann.
Inst. H. Poincar\'e Probab. Stat. \textbf{37}, 581-606 (2001).

\bibitem{BK1} Bleher, P., Kuijlaars, A.: \emph{Large n Limit of Gaussian Random Matrices with External Source, Part I}, Comm. Math. Phys. \textbf{252}, 43-76 (2004).

\bibitem{BPS} Boutet de Monvel, A., Pastur, L. and Shcherbina, M.: {\it On the Statistical Mechanics Approach in the Random Matrix Theory:
Integrated Density of States}, J. Stat. Phys. \textbf{79}, 585-611  (1995).
 
\bibitem{BEY} Bourgade, P., Erd{\H o}s, Yau, H.-T.:
{\it Universality of General $\beta$-Ensembles}, Duke Math. J. \textbf{163}, no. 6, 1127-1190 (2014).

\bibitem{BEY2} Bourgade, P., Erd{\H o}s, Yau, H.-T.:
{\it Bulk Universality of General $\beta$-Ensembles with Non-convex Potential}, J. Math. Phys., special issue in honor of E. Lieb's 80th birthday, \textbf{53}, (2012).

\bibitem{FE} Bourgade, P., Erd{\H o}s, Yau, H.-T., Yin, J.:
{\it Fixed Energy Universality for Generalized Wigner Matrices}, preprint. arXiv:1407.5606

\bibitem{CW} Claeys, T., Wang, D.: \emph{Random Matrices with Equispaced External Source}, Comm. Math. Phys \textbf{328}, 1023-1077 (2014).

\bibitem{De1} Deift, P.: {\it Orthogonal Polynomials and Random Matrices: A Riemann-Hilbert Approach},
  Courant Lecture Notes in Mathematics {\bf 3},
 American Mathematical Society, Providence RI, 1999.

\bibitem{DG1} Deift, P., Gioev, D.: {\it Random Matrix Theory: Invariant
Ensembles and Universality}, Courant Lecture Notes in Mathematics {\bf 18},
 American Mathematical Society, Providence RI, 2009.

\bibitem{DyB} Dyson, F.J.: {\it A Brownian-motion Model for the Eigenvalues
of a Random Matrix}, J. Math. Phys. {\bf 3}, 1191-1198 (1962).

\bibitem{EKYY1} Erd{\H o}s, L.,  Knowles, A.,  Yau, H.-T.,  Yin, J.:
 {\it Spectral Statistics of Erd\H{o}s-R\'enyi Graphs I: Local Semicircle Law.}
 Annals Probab. {\bf 41}, no. 3B, 2279--2375 (2013)

\bibitem{EKYY2} Erd{\H o}s, L.,  Knowles, A.,  Yau, H.-T.,  Yin, J.:
{\it Spectral Statistics of Erd{\H o}s-R\'enyi Graphs II:
 Eigenvalue Spacing and the Extreme Eigenvalues},
Comm. Math. Phys. {\bf 314}, no. 3., 587--640 (2012).

\bibitem{EPRSY}
Erd\H{o}s, L.,  P\'ech\'e, G.,  Ram\'irez, J., Schlein, B., Yau, H.-T.: {\it Bulk Universality 
for Wigner Matrices}, Comm. Pure Appl. Math. {\bf 63}, no. 7,  895-925 (2010).

\bibitem{ERSTVY}  Erd{\H o}s, L.,  Ramirez, J.,  Schlein, B.,  Tao, T., 
 Vu, V., Yau, H.-T.: {\it Bulk Universality for Wigner Hermitian Matrices with Subexponential Decay},  Math. Res. Lett. {\bf 17}, no. 4, 667-674 (2010).

\bibitem{ESY3} Erd{\H o}s, L., Schlein, B., Yau, H.-T.: {\it Universality
of Random Matrices and Local Relaxation Flow}, Invent. Math. {\bf 185}, no. 1, 75-119  (2011).

\bibitem{ESY4} Erd{\H o}s, L., Schlein, B., Yau, H.-T.:
{\it Wegner Estimate and Level Repulsion for Wigner Random Matrices},
 Int. Math. Res. Not. IMRN
   {\bf 2010}, no. 3, 436-479.

\bibitem{ESYY} Erd{\H o}s, L., Schlein, B., Yau, H.-T., Yin, J.:
{\it The Local Relaxation Flow Approach to Universality of the Local
Statistics for Random Matrices}, 
Annales Inst. H. Poincar\'e (B),  Probability and Statistics {\bf 48}, no. 1, 1-46 (2012).

\bibitem{EYBull} Erd{\H o}s, L.,  Yau, H.-T.: 
{\it Universality of Local Spectral Statistics of Random Matrices}, Bull. Amer. Math. Soc. {\bf 49}, no. 3, 377-414  (2012).

\bibitem{EYSG} Erd{\H o}s, L.,  Yau, H.-T.: {\it Gap Universality of 
Generalized Wigner and $\beta$-Ensembles,} preprint, arxiv:1211.3786 

\bibitem{EYY} Erd{\H o}s, L.,  Yau, H.-T., Yin, J.: 
{\it Bulk Universality for Generalized Wigner Matrices},
Prob. Theor. Rel. Fields {\bf 154}, no. 1-2, 341-407 (2012).

\bibitem{EYYrigi}  Erd{\H o}s, L.,  Yau, H.-T., Yin, J.: 
{\it  Rigidity of Eigenvalues of Generalized Wigner Matrices}, Adv. Math.
 {\bf 229}, no. 3, 1435-1515 (2012).

\bibitem{FIK} Fokas, A. S., Its, A. R., Kitaev, A. V.: {\it The Isomonodromy Approach to
 Matrix Models in 2D Quantum Gravity}, Comm. Math. Phys.  {\bf 147}, 395-430 (1992).

 \bibitem{HP} Hiai, F.,  Petz, D.: \emph{The Semicircle Law, Free Random Variables and Entropy}, Vol. 77.,  American Mathematical Society, Providence RI,  2000.
 
 \bibitem{J} Johansson, K.: {\it Universality of the Local Spacing
Distribution in Certain Ensembles of Hermitian Wigner Matrices}, Comm. Math. Phys. {\bf 215}, no. 3, 683-705  (2001).

\bibitem{KY}  Knowles, A., Yin, J.: \emph{Eigenvalue Distribution of Wigner Matrices}, Prob. Theor. Rel. Fields {\bf 154}, no. 3-4, 543-582 (2013).

\bibitem{KY1} Knowles, A., Yin, J.: \emph{The Isotropic Semicircle Law and Deformation of Wigner Matrices}, Comm. Pure Appl. Math. {\bf  66}, no. 11, 1663-1749 (2013).

\bibitem{KY2} Knowles, A., Yin, J.: \emph{Anisotropic Local Laws for Random Matrices}, preprint, arXiv:1410.3516
 
\bibitem{KML} Kuijlaars, A.\ B.\ J.,  McLaughlin, K.\ T.-R.: \emph{Generic Behavior of the Density of States in Random Matrix Theory and Equilibrium Problems in the Presence of Real Analytic External Fields}, Comm. Pure Appl. Math. \textbf{53}, 736-785 (2000).

\bibitem{LY} Landon, B., Yau, H.-T.: \emph{Convergence of local statistics of Dyson Brownian motion}, preprint,  arXiv:1504.03605

\bibitem{LS} Lee, J.\ O., Schnelli, K.: \emph{Local Deformed Semicircle Law and Complete Delocalization for Wigner Matrices with Random Potential}, J.\ Math.\ Phys. \textbf{54} 103504 (2013).

\bibitem{LSSY} Lee, J.\ O., Schnelli, K., Stetler, B., Yau, H.-T.: \emph{Bulk Universality for Deformed Wigner Matrices}, preprint, arXiv:1405.6634

\bibitem{LSY} Lee, J. O., Schnelli, K., Yau, H.-T.: \emph{Edge Universality for Deformed Wigner Matrices}, preprint, arXiv:1407.8015

\bibitem{LLX} Li, S., Li, X.\ D., Xie, Y.\ X.: \emph{On the Law of Large Numbers for the Empirical Measure Process of Generalized Dyson Brownian motion}, preprint, arXiv:1407.7234

\bibitem{Maa} Maassen, H.: \emph{Addition of Freely Independent Random Variables}, J. Fund. Anal. \textbf{106}, 409-438 (1992).

\bibitem{M} Mehta, M.L.: {\it Random Matrices.} Third Edition, Academic Press, New York, 1991.

\bibitem{NiSp} Nica, A., Speicher, R.: \emph{On the Multiplication of Free N-tuples of Noncom-
mutative Random Variables}, Amer. J. Math. \textbf{118}, 799-837  (1996).

\bibitem{OV} O'Rourke, S., Vu, V.: {\it Universality of Local Eigenvalue Statistics in Random Matrices with External Source}, Random Matrices: Theory and Applications 3.02 (2014). 

\bibitem{Pandey} Pandey, J.\ N.: \emph{The Hilbert Transform of Schwartz Distributions and Applications}, Wiley-Interscience, 1996.

\bibitem{Pastur} Pastur, L.: \emph{On the Spectrum of Random Matrices},  Teor. Math. Phys. \textbf{10}, 67-74 (1972).

\bibitem{PS:97} Pastur, L., Shcherbina, M.: {\it Universality of the Local Eigenvalue Statistics for a Class of Unitary Invariant Random
 Matrix Ensembles}, J. Stat. Phys. \textbf{86}, 109-147 (1997).

 \bibitem{PS} Pastur, L., Shcherbina M.: {\it Bulk Universality and Related Properties of Hermitian Matrix Models}, J. Stat. Phys. {\bf 130} no. 2., 205-250 (2008).

\bibitem{SM} Shcherbina, M.: \emph{Change of Variables as a Method to Study General $\beta$-models: Bulk Universality},
J. Math. Phys. {\bf 55}, 043504 (2014).

\bibitem{S1} Shcherbina, T.: \emph{On Universality of Bulk Local Regime of the Deformed Gaussian Unitary Ensemble}, Math. Phys. Anal. Geom. \textbf{5}, 396-433 (2009). 

\bibitem{TV} Tao, T., Vu, V.: \emph{Random Matrices: Universality of the Local 
Eigenvalue Statistics}, Acta Math. \textbf{206}, 127-204 (2011).

\bibitem{VV} Valk\'o, B., Vir\'ag, B.: {\it Continuum Limits of Random Matrices and the Brownian Carousel}, Invent. Math.
 {\bf  177}, no. 3, 463-508 (2009).

\bibitem{Voi86} Voiculescu, D.: \emph{Addition of Certain Non-commuting Random Variables}, J. Funct. Anal. \textbf{66.3}, 323-346 (1986).

\bibitem{VDN} Voiculescu, D., Dykema, K. J., Nica, A.: \emph{Free Random Variables: A Noncommutative Probability Approach to Free Products with Applications to Random Matrices, Operator Algebras and Harmonic Analysis on Free Groups}, American Mathematical Society, Providence RI, 1992.

\bibitem{W} Wigner, E.: {\it Characteristic Vectors of Bordered Matrices 
with Infinite Dimensions}, Ann. of Math. {\bf 62}, 548-564 (1955). 
 
\bibitem{Y} Yau, H.-T.: \emph{Relative Entropy and the Hydrodynamics of Ginzburg-Landau Models},  Lett. Math. Phys. \textbf{22},  63-80 (1991).

}

\end{document}